\newcommand{\A}{\mathbb{A}}
\newcommand{\C}{\mathbb{C}}
\newcommand{\G}{\mathbb{G}}
\newcommand{\Q}{\mathbb{Q}}
\newcommand{\R}{\mathbb{R}}
\newcommand{\Z}{\mathbb{Z}}
\newcommand{\N}{\mathbb{N}}
\renewcommand{\P}{\mathbb{P}}
\newcommand{\fa}{\mathfrak{a}}
\newcommand{\fb}{\mathfrak{b}}
\newcommand{\fc}{\mathfrak{c}}
\newcommand{\fm}{\mathfrak{m}}
\newcommand{\tX}{\widetilde{X}}
\newcommand{\tB}{\widetilde{B}}
\newcommand{\tL}{\tilde{L}}
\newcommand{\tN}{\tilde{N}}
\newcommand{\tcL}{\tilde{\cL}}
\newcommand{\tcX}{\widetilde{\cX}}
\newcommand{\tmu}{\tilde{\mu}}
\newcommand{\cB}{\mathcal{B}}
\newcommand{\cF}{\mathcal{F}}
\newcommand{\cH}{\mathcal{H}}
\newcommand{\cL}{\mathcal{L}}
\newcommand{\cM}{\mathcal{M}}
\newcommand{\cO}{\mathcal{O}}
\newcommand{\cV}{\mathcal{V}}
\newcommand{\cX}{\mathcal{X}}
\newcommand{\cY}{\mathcal{Y}}
\newcommand{\Xan}{X^{\mathrm{an}}}
\newcommand{\Xdiv}{X^{\mathrm{div}}}
\newcommand{\Lan}{L^{\mathrm{an}}}
\renewcommand{\a}{\alpha}
\renewcommand{\d}{\delta}
\newcommand{\e}{\varepsilon}
\newcommand{\la}{\lambda}
\newcommand{\om}{\omega}
\newcommand{\Ga}{\Gamma}
\newcommand{\La}{\Lambda}
\newcommand{\cf}{{\rm cf.\ }} 
\newcommand{\eg}{{\rm e.g.\ }} 
\newcommand{\ie}{{\rm i.e.\ }} 
\newcommand{\triv}{\mathrm{triv}}
\newcommand{\red}{\mathrm{red}} 
\newcommand{\amp}{\mathrm{amp}}
\renewcommand{\DH}{\mathrm{DH}}
\newcommand{\lo}{\mathrm{log}}
\renewcommand{\=}{:=}
\newcommand{\an}{\mathrm{an}}
\DeclareMathOperator{\Spec}{Spec}
\DeclareMathOperator{\MA}{MA}
\DeclareMathOperator{\DF}{DF}
\DeclareMathOperator{\supp}{supp}
\DeclareMathOperator{\vol}{vol}
\DeclareMathOperator{\Pic}{Pic}
\DeclareMathOperator{\ord}{ord}
\DeclareMathOperator{\Hom}{Hom}
\DeclareMathOperator{\Aut}{Aut}
\DeclareMathOperator{\Proj}{Proj}
\DeclareMathOperator{\ratrk}{rat.rk}
\DeclareMathOperator{\CH}{CH}
\DeclareMathOperator{\hCH}{\widehat{CH}}
\DeclareMathOperator{\GL}{GL}
\DeclareMathOperator{\depth}{depth}
\DeclareMathOperator{\codim}{codim}
\DeclareMathOperator{\lct}{lct}
\DeclareMathOperator{\trdeg}{tr.deg}
\DeclareMathOperator{\ch}{ch}
\newcommand{\NA}{\mathrm{NA}}
\newcommand{\Td}{\mathrm{Td}}
\newcommand{\cro}[1]{[\![#1]\!]}
\newcommand{\lau}[1]{(\!(#1)\!)}
\numberwithin{equation}{section}       
\newtheorem{prop} {Proposition} [section]
\newtheorem{thm}[prop] {Theorem} 
\newtheorem{defi}[prop] {Definition}
\newtheorem{lem}[prop] {Lemma}
\newtheorem{cor}[prop]{Corollary}
\newtheorem{prop-def}[prop]{Proposition-Definition}
\newtheorem*{thmA}{Theorem A}
\newtheorem*{thmC}{Theorem C}
\newtheorem*{corB}{Corollary B} 
\newtheorem*{corD}{Corollary D} 
\newtheorem{exam}[prop]{Example}
\newtheorem{rmk}[prop]{Remark}
\theoremstyle{remark}
\newtheorem*{ackn}{Acknowledgment} 
\title[Uniform K-stability]{Uniform K-stability, Duistermaat-Heckman measures and singularities of pairs}
\date{\today}
\author{S{\'e}bastien Boucksom
  \and
  Tomoyuki Hisamoto
  \and 
  Mattias Jonsson}
\address{CNRS-CMLS\\
  \'Ecole Polytechnique\\
  F-91128 Palaiseau Cedex\\
  France}
\email{sebastien.boucksom@polytechnique.edu}
\address{Dept of Mathematics\\
  University of Michigan\\
  Ann Arbor, MI 48109--1043\\
  USA}
\address{Mathematical Sciences\\
  Chalmers University of Technology
  and University of Gothenburg\\
  SE-412 96 G\"oteborg\\
  Sweden}
\email{mattiasj@umich.edu}
\address{Graduate School of Mathematics\\
Nagoya University\\
Furocho\\
Chikusa\\
Nagoya\\ 
Japan}
\email{hisamoto@math.nagoya-u.ac.jp}
\begin{document}

\begin{abstract} The purpose of this paper is to set up a formalism
  inspired by non-Archimedean geometry to study K-stability. We first
  provide a detailed analysis of Duistermaat-Heckman measures in the
  context of test configurations for arbitrary polarized schemes,
  characterizing in particular almost trivial test
  configurations. Second, for any normal polarized variety (or, more generally, polarized pair in the sense of the Minimal Model Program), we introduce and study non-Archimedean analogues of certain classical functionals in K\"ahler geometry. These functionals are defined on the space of test configurations, and the Donaldson-Futaki invariant is in particular interpreted as the non-Archimedean version of the Mabuchi functional, up to an explicit error term. 
Finally, we study in detail the relation between uniform K-stability and singularities of pairs, reproving and strengthening Y.~Odaka's results in our formalism. This provides various examples of uniformly K-stable varieties. 
\end{abstract}

\maketitle

\setcounter{tocdepth}{1}
\tableofcontents
%
%
%
%



\section*{Introduction}
%
%
%
%
Let $(X,L)$ be a polarized complex manifold, \ie a smooth complex projective variety $X$ endowed with an ample line bundle $L$. Assuming for simplicity that the reduced automorphism group $\Aut(X,L)/\C^*$ is discrete (and hence finite), the Yau-Tian-Donaldson conjecture predicts that the first Chern class $c_1(L)$ contains a constant scalar curvature K\"ahler metric (cscK metric for short) iff $(X,L)$ satisfies a certain algebro-geometric condition known as \emph{K-stability}. Building on~\cite{Don1,AP}, it was proved in~\cite{Sto} that K-stability indeed follows from the existence of a cscK metric. When $c_1(X)$ is a multiple of $c_1(L)$, the converse was recently established (\cite{CDS15}, see also~\cite{Tian15}); in this case a cscK metric is the same as a K\"ahler-Einstein metric. 

In the original definition of~\cite{Don2}, $(X,L)$ is K-semistable if the Donaldson-Futaki invariant $\DF(\cX,\cL)$ of every (ample) test configuration $(\cX,\cL)$ for $(X,L)$ is non-negative, and K-stable if we further have $\DF(\cX,\cL)=0$ only when $\cX=X\times\C$ is trivial (and hence $\cL=p_1^*L$ with $\C^*$ acting through a character). However, as pointed out in~\cite{LX}, $(X,L)$ always admits test configurations $(\cX,\cL)$ with $\cX$ non-trivial, but \emph{almost trivial} in the sense that its normalization $\tcX$ is trivial. Such test configurations automatically satisfy $\DF(\cX,\cL)=0$, and the solution adopted in~\cite{Sto2,Oda4} was therefore to replace `trivial' with `almost trivial' in the definition of K-stability. 

On the other hand, G.~Sz\'ekelyhidi~\cite{Sze1,Sze2} proposed that a
\emph{uniform} notion of K-stability should be used to formulate the
Yau-Tian-Donaldson conjecture for general polarizations. In this
uniform version, $\DF(\cX,\cL)$ is bounded below by a positive
multiple of the $L^p$-norm $\|(\cX,\cL)\|_p$. Since uniform
K-stability should of course imply K-stability, one then faces the
problem of showing that test configurations with norm zero are almost trivial. 

In the first part of the paper, we prove that this is
indeed the case. In fact, the $L^p$-norm $\|(\cX,\cL)\|_p$ of a test
configuration $(\cX,\cL)$ can be computed via the 
\emph{Duister\-maat-Heckman measure} $\DH_{(\cX,\cL)}$ associated to the test
configuration. We undertake a quite thorough study of 
Duistermaat-Heckman measures and prove in particular that $\DH_{(\cX,\cL)}$ is a 
Dirac mass iff $(\cX,\cL)$ is almost trivial.

The second main purpose of the paper is to introduce a \emph{non-Archimedean}
perspective on K-stability, in which test configurations for $(X,L)$
are viewed as non-Archimedean metrics on (the Berkovich
analytification with respect to the trivial norm of) 
$L$. We introduce non-Archimedean analogues of many
classical functionals in K\"ahler geometry, and interpret uniform
K-stability as the non-Archimedean counterpart of the coercivity of
the Mabuchi K-energy. 

Finally, in the third part of the paper, we use this formalism to analyze the interaction between singularities of pairs (in the sense of the Minimal Model Program) and uniform K-stability, revisiting Y.~Odaka's work~\cite{Oda1,Oda3,OSa,OSu}. 

\medskip
We now describe the contents of the paper in more detail.
\subsection*{Duistermaat-Heckman measures} 
Working, for the moment, over any arbitrary alge\-braically closed ground field, let $(X,L)$ 
be a polarized scheme, \ie a (possibly non-reduced) scheme $X$ together with an 
ample line bundle $L$ on $X$.
Given a $\G_m$-action on $(X,L)$, let $H^0(X,mL)=\bigoplus_{\la\in\Z}H^0(X,mL)_\la$ be the weight decomposition. For each $d\in\N$, the finite sum $\sum_{\la\in\Z}\la^d \dim H^0(X,mL)_\la$ is a polynomial function of $m\gg 1$, of degree at most $\dim X+d$ (cf.~Theorem~\ref{thm:equivRR}, as well as Appendix B). 

Setting $N_m:=\dim H^0(X,mL)$, we get, as a direct consequence,
the existence of the \emph{Duistermaat-Heckman measure}
$$
\DH_{(X,L)}:=\lim_{m\to\infty}\frac{1}{N_m}\sum_{\la\in\Z}\dim H^0(X,mL)_\la\d_{m^{-1}\la},
$$
a probability measure with compact support in $\R$ describing the asymptotic distribution as $m\to\infty$ of the (scaled) weights of $H^0(X,mL)$, counted with multiplicity. The \emph{Donaldson-Futaki invariant} $\DF(X,L)$ appears in the subdominant term of the expansion
$$
\frac{w_m}{mN_m}=\frac{1}{N_m}\sum_{\la\in\Z}m^{-1}\la\dim H^0(X,mL)_\la=\int_\R\la\,\DH_{(X,L)}(d\la)-(2m)^{-1}\DF(X,L)+O(m^{-2}),
$$
where $w_m$ is the weight of the induced action on the determinant line $\det H^0(X,mL)$. 

\smallskip
Instead of a $\G_m$-action on $(X,L)$, consider more generally a
\emph{test configuration} $(\cX,\cL)$ for $(X,L)$, \ie a
$\G_m$-equivariant partial compactification of
$(X,L)\times(\A^1\setminus\{0\})$. It comes with a proper, flat,
$\G_m$-equivariant morphism $\pi\colon\cX\to\A^1$, together with a
$\G_m$-linearized $\Q$-line bundle $\cL$ extending $p_1^*L$ on
$X\times(\A^1\setminus\{0\})$. When the test configuration is ample,
\ie $\cL$ is $\pi$-ample, the central fiber $(\cX_0,\cL_0)$ is a
polarized $\G_m$-scheme, and the Duistermaat-Heckman measure
$\DH_{(\cX,\cL)}$ and Donaldson-Futaki invariant $\DF(\cX,\cL)$ are
defined to be those of $(\cX_0,\cL_0)$. 

In the previous case, where
$(X,L)$ comes with a $\G_m$-action, the Duistermaat-Heckman measure
and Donaldson-Futaki as defined above coincide with those of the
corresponding \emph{product} test configuration $(X,L)\times\A^1$ with the
diagonal action of $\G_m$. 
Such a test configuration is called \emph{trivial} if the action on
$X$ is trivial. 

Our first main result may be summarized as follows. 
\begin{thmA} Let $(X,L)$ be a polarized scheme and $(\cX,\cL)$ an ample test configuration for $(X,L)$, with Duistermaat-Heckman measure $\DH_{(\cX,\cL)}$. 
\begin{itemize}
\item[(i)] The absolutely continuous part of $\DH_{(\cX,\cL)}$ has piecewise polynomial density, and its singular part is a finite sum of point masses.
\item[(ii)] The measure $\DH_{(\cX,\cL)}$ is a finite sum of point masses iff $(\cX,\cL)$ is \emph{almost trivial} in the sense that the normalization of each top-dimensional irreducible component of $\cX$ is trivial. 
\end{itemize}  
\end{thmA}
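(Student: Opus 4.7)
The plan is to reduce both parts to the central fiber $(\cX_0,\cL_0)$---a polarized $\G_m$-scheme whose Duistermaat-Heckman measure coincides with that of $(\cX,\cL)$ by definition in the ample case---and then to analyze one top-dimensional irreducible component at a time. Since $\dim H^0(Y,m\cL_0|_Y)=O(m^{\dim Y})$ while $N_m\asymp m^{\dim X}$, any irreducible component $Y\subset\cX_0$ with $\dim Y<\dim X$ contributes mass $o(1)$ to the discrete approximants and drops out in the limit. Writing $\cX_0=\sum_i m_i Y_i$ as a Weil cycle, with $Y_i$ the top-dimensional reduced irreducible components and $m_i$ the scheme-theoretic multiplicity along $Y_i$, additivity of $h^0$ modulo lower-order terms should yield
$$\DH_{(\cX,\cL)}=\sum_i c_i\,\DH_{(Y_i,M_i)}, \qquad M_i:=\cL_0|_{Y_i}, \quad c_i=\frac{m_i\,(M_i)^{\dim X}}{(\cL_0)^{\dim X}},$$
reducing both (i) and (ii) to the case of an irreducible polarized $\G_m$-variety $(Y,M)$.

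For part (i), I would invoke the formalism of concave transforms on Okounkov bodies attached to multiplicative $\Z$-filtrations of a graded linear series. Choosing a $\G_m$-equivariant flag on $Y$ (possible because projective $\G_m$-varieties always have fixed points), the weight filtration $F^\lambda H^0(Y,mM):=\bigoplus_{\mu\ge\lambda}H^0(Y,mM)_\mu$ produces a concave function $G\colon\Delta(Y,M)\to\R$ on the Okounkov body such that $\DH_{(Y,M)}$ is the pushforward of normalized Lebesgue measure on $\Delta(Y,M)$ by $G$. Finite generation of the bigraded algebra $\bigoplus_{m,\lambda}H^0(Y,mM)_\lambda$ (a consequence of finite generation of the section ring together with the $\G_m$-action) forces $G$ to be piecewise affine with finitely many linear pieces. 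The pushforward of Lebesgue measure by such a function has piecewise polynomial density, plus finitely many Dirac masses at values over which $G$ is locally constant on a set of positive measure; aggregating over the $Y_i$ yields (i).

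For the easy direction $\Leftarrow$ of (ii), triviality of the normalization of each top-dimensional component of $\cX$ means that on each such normalized component the $\G_m$-action factors through a single character on the line bundle, so the weight filtration there is supported at a single weight and the corresponding DH measure is a single Dirac. Since normalization is finite and birational, the DH measures of a component and its normalization agree (the discrepancy only enters via $O(m^{\dim X-1})$ corrections to $h^0$), so the weighted sum over components is a finite sum of point masses.

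The converse is the main obstacle. Assuming $\DH_{(\cX,\cL)}$ is purely atomic, part (i) and the decomposition force each $\DH_{(Y_i,M_i)}$ to be purely atomic, hence the concave transform $G_i$ to be locally constant, hence constant since the Okounkov body $\Delta(Y_i,M_i)$ is connected. This tells us that essentially all weights of $H^0(Y_i,mM_i)$ concentrate at a single value; the delicate step is to upgrade this asymptotic statement to the exact claim that, for $m\gg 1$, the $\G_m$-representation $H^0(Y_i,mM_i)$ is isotypical. Granted that, a $\G_m$-equivariant Kodaira embedding into $\P(H^0(Y_i,mM_i)^\vee)$ shows that the induced action on projective space is trivial, so $\G_m$ fixes the image pointwise and hence acts trivially on the normalization $\tY_i$. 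The key difficulty is passing from the asymptotic Okounkov-body information back to the exact $\G_m$-module structure on finite-dimensional $H^0$, and controlling the non-reduced and non-normal phenomena along the $Y_i$; the equivariant Riemann-Roch machinery of Appendix B should be crucial here.
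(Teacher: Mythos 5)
Your global reduction -- pass to the central fiber, decompose over the top-dimensional irreducible components weighted by multiplicities and degrees (this is Proposition~\ref{prop:DHaction}~(ii)), and then replace each component by its normalization (Theorem~\ref{thm:DHinv}) -- is exactly the paper's. But both remaining steps have gaps. For (i), you assert that finite generation of the bigraded algebra forces the concave transform $G$ on the Okounkov body to be piecewise affine with finitely many pieces. This is strictly stronger than what is needed and is not justified: the superlevel set $\{G\ge\la\}$ is the Okounkov body of the graded subalgebra $R^{(\la)}$, and Okounkov bodies of finitely generated graded linear series need not be polytopes for a general flag, so there is no reason for $G$ to be piecewise affine. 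The paper sidesteps this by proving piecewise polynomiality of $\la\mapsto\vol(R^{(\la)})$ directly (Theorem~\ref{thm:PP}): using~\cite[Proposition 4.7]{ELMNP}, on each of finitely many slope intervals $[a_i,a_{i+1}]$ the base ideals interpolate between two fixed ideals, so $\vol(R^{(\la)})$ equals $\bigl(\mu^*L-f_i(\la)E_i-f_{i+1}(\la)E_{i+1}\bigr)^n$ on a fixed normalized blow-up, with $f_i,f_{i+1}$ affine. If you want to keep the Okounkov-body language you still need an argument of this type.

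For (ii), the converse is the entire content of the theorem and your proposal stops exactly where the difficulty starts, as you acknowledge. That $\DH$ is a Dirac mass only says the number of weights of $H^0(\cX_0,m\cL_0)$ away from $m\la_0$ is $o(N_m)$; it does not give exact isotypicality for $m\gg1$, and without that the equivariant Kodaira embedding argument never gets started. (Indeed, non-trivial almost trivial test configurations exist precisely because $H^0(\cX_0,m\cL_0)$ can fail to be isotypical while the measure degenerates.) The paper's route is different and is worth internalizing: after reducing to $\cX$ a normal variety and choosing a model dominating $X_{\A^1}$ with $\cL=\rho^*L_{\A^1}+D$, Theorem~\ref{thm:supp} identifies the support of $\DH$ as the interval between $\min_Eb_E^{-1}\ord_E(D)$ and $\max_Eb_E^{-1}\ord_E(D)$. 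The delicate point -- that the limit measure does not collapse onto the top of this interval -- is Lemma~\ref{lem:suppdiv}, proved via Izumi's theorem applied to the divisorial valuations $v_E=b_E^{-1}r(\ord_E)$: for a nontrivial such valuation, the proportion of sections $s\in H^0(X,mL)$ with $v_E(s)\ge\e m$ stays bounded away from $1$. A Dirac mass then forces $D=c\,\cX_0'$, and uniqueness of the ample model yields $(\cX,\cL-c\cX_0)=(X_{\A^1},L_{\A^1})$. Any repair of your argument must supply an input of this Izumi type; equivariant Riemann-Roch alone will not convert the asymptotic concentration into the required rigidity.
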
 
The piecewise polynomiality in (i) generalizes a well-known property of
Duistermaat-Heckman measures for polarized complex manifolds with a
$\C^*$-action~\cite{DH}. In (ii), the normalization of $\cX$ is viewed
as a test configuration for the normalization of $X$. The notion of
almost triviality is compatible with the one introduced
in~\cite{Sto2,Oda4} for $X$ reduced and equidimensional, cf.~Proposition~\ref{prop:almosttriv}. 

In Theorem~A, $X$ is a possibly non-reduced scheme.
If we specialize to the case when $X$ is a (reduced, irreducible)
variety, Theorem~A and its proof yield the following characterization
of almost trivial test configurations:
\begin{corB}
  Let $(X,L)$ be a polarized variety and $(\cX,\cL)$ an ample test
  configuration for $(X,L)$, with Duistermaat-Heckman measure
  $\DH_{(\cX,\cL)}$. Then the following conditions are equivalent:
  \begin{itemize}
  \item[(i)]
    the Duistermaat-Heckman measure $\DH_{(\cX,\cL)}$ is a Dirac mass;
  \item[(ii)]
    for some (or, equivalently, any) $p\in[1,\infty]$, we have
    $\|(\cX,\cL)\|_p=0$;
  \item[(iii)]
    $(\cX,\cL)$ is almost trivial, that is, the normalization 
    $(\tcX,\tcL)$ is trivial.
  \end{itemize}
\end{corB}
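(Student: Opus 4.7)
The plan is to deduce Corollary B directly from Theorem A: $(i)\Leftrightarrow(ii)$ is a purely measure-theoretic statement based on the definition of the $L^p$-norms, while $(i)\Leftrightarrow(iii)$ follows from Theorem A(ii) once one records that $\cX$ is irreducible. First, recall that the $L^p$-norm of a test configuration is defined from its Duistermaat-Heckman measure by
$$\|(\cX,\cL)\|_p^p = \int_\R |\la-\bar\la|^p\,\DH_{(\cX,\cL)}(d\la) \quad (p<\infty), \qquad \|(\cX,\cL)\|_\infty = \mathrm{diam}\,\supp\DH_{(\cX,\cL)},$$
where $\bar\la$ denotes the barycenter. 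Since $\DH_{(\cX,\cL)}$ is a probability measure, the vanishing of any one of these norms is equivalent to the measure being concentrated at a single point, which gives $(i)\Leftrightarrow(ii)$.

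For the implication $(i)\Rightarrow(iii)$: as $X$ is a variety (irreducible and reduced), the test configuration $\cX\to\A^1$ is flat with irreducible generic fibre, so every top-dimensional irreducible component of $\cX$ must dominate $\A^1$, which forces $\cX$ to be irreducible. A Dirac mass is a fortiori a finite sum of point masses, so Theorem~A(ii) applies and yields that $(\cX,\cL)$ is almost trivial. Since $\cX$ has a single top-dimensional component, this means precisely that $(\tcX,\tcL)$ is trivial.

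For the converse implication $(iii)\Rightarrow(i)$: if $(\tcX,\tcL)$ is trivial, then $\tcX \simeq X^\nu \times \A^1$ with $\G_m$ acting trivially on $X^\nu$, and $\tcL$ pulled back from $X^\nu$ with the $\G_m$-action twisted by a single rational character $\la_0$. Consequently, every $H^0(\tcX_0,m\tcL_0)$ sits in a single weight space and $\DH_{(\tcX,\tcL)}=\d_{\la_0}$. By Theorem~A(ii), $\DH_{(\cX,\cL)}$ is already a finite sum of point masses; to identify it with $\d_{\la_0}$, one compares with $\DH_{(\tcX,\tcL)}$. The normalization morphism $\tcX\to\cX$ being finite and birational, the $\G_m$-weight-decomposed cohomology groups $H^0(\cX_0,m\cL_0)$ and $H^0(\tcX_0,m\tcL_0)$ differ by $O(m^{n-1})$ in each weight, with $n=\dim X$, so after normalization by $N_m\sim a m^n$ both discrete measures converge to the same limit $\d_{\la_0}$.

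The main obstacle is exactly this last compatibility step, namely that Duistermaat-Heckman measures are invariant under finite birational maps on the total space of the test configuration. This should be essentially built into the detailed analysis of DH measures carried out in the first part of the paper. Alternatively, one may circumvent it by inspecting the proof of Theorem~A(ii) itself: that proof ought to produce an explicit description of $\DH_{(\cX,\cL)}$ in the almost trivial case as a convex combination of Dirac masses indexed by the top-dimensional irreducible components of $\cX$, which in the present irreducible setting automatically collapses to a single Dirac.
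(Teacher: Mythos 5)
Your overall strategy coincides with the paper's: (i)$\Leftrightarrow$(ii) is immediate from the definition of the $L^p$-norms, and (i)$\Leftrightarrow$(iii) is deduced from Theorem~A~(ii) together with the irreducibility of $\cX$ (Proposition~\ref{prop:basic}~(vi)) and the invariance of Duistermaat--Heckman measures under normalization. That invariance is exactly Theorem~\ref{thm:DHinv} of the paper, so the ingredient you single out as the main obstacle is indeed available, and the paper's own deduction of Corollary~B (via Theorem~\ref{thm:normzero}, which in turn rests on Theorem~\ref{thm:supp}) runs through the same circle of ideas.

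The one step whose proposed justification would not survive scrutiny is precisely that invariance. You assert that $H^0(\cX_0,m\cL_0)$ and $H^0(\tcX_0,m\tcL_0)$ ``differ by $O(m^{n-1})$ in each weight''. By Lemma~\ref{L201}, these weight spaces are the graded pieces $F^\la/F^{\la+1}$ of the Rees filtrations on $H^0(X,mL)$ and $H^0(\tX,m\tL)$; one does have an inclusion $F^\la H^0(X,mL)\hookrightarrow F^\la H^0(\tX,m\tL)$ at the level of the filtrations, but the graded pieces are not comparable term by term---the difference of their dimensions can have either sign---so the claimed per-weight bound is not clear. Moreover, even if it held it would not suffice: the number of weights grows linearly in $m$, so a per-weight error of $O(m^{n-1})$ only controls the total discrepancy between the two weight measures by $O(m^{n})$, the same order as $N_m$, which says nothing about their rescaled weak limits. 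The paper's proof of Theorem~\ref{thm:DHinv} instead compares the tail distributions, using $\dim F^{\lceil m\la\rceil}H^0(X,mL)\le\dim F^{\lceil m\la\rceil}H^0(\tX,m\tL)$ together with $\tN_m=N_m+O(m^{n-1})$ (valid since $X$ is reduced) to obtain the one-sided inequality $\tmu\{x\ge\la\}\ge\mu\{x\ge\la\}$ for the limit measures, and then upgrades this to equality because both measures have the same barycenter $(\bar\cL^{n+1})/((n+1)V)$ by Proposition~\ref{prop:DHDFsemi}~(iii) and the projection formula. So you should either invoke Theorem~\ref{thm:DHinv} outright or reproduce that tail-distribution-plus-barycenter argument; the weight-by-weight comparison is a dead end.
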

Here the $L^p$-norm $\|(\cX,\cL)\|_p$ is defined,
following~\cite{Don3,WN12,His12}, as
the $L^p$ norm of $\la\mapsto\la-\bar\la$ with respect to $\DH_{(\cX,\cL)}$,
where $\bar\la$ is the barycenter of this measure.
%
%
%
\subsection*{Uniform K-stability and non-Archimedean functionals}
A polarized scheme $(X,L)$ is \emph{K-semistable} if $\DF(\cX,\cL)\ge 0$ for
each ample test configuration. It is \emph{K-stable} if, furthermore, $\DF(\cX,\cL)=0$ only when $(\cX,\cL)$ is almost trivial, in the sense of Theorem~A~(ii). 

Assume from now on that $X$ is irreducible and normal. 
By Corollary~B, the almost triviality of an ample test configuration 
can then be detected by the $L^p$-norm $\|(\cX,\cL)\|_p$ with
$p\in[1,\infty]$.
We say that $(X,L)$ is \emph{$L^p$-uniformly K-stable} if $\DF(\cX,\cL)\ge\d\|(\cX,\cL)\|_p$ for some uniform constant $\d>0$. For $p=1$, we simply speak of \emph{uniform K-stability}, which is therefore implied by $L^p$-uniform K-stability since $\|(\cX,\cL)\|_p\ge\|(\cX,\cL)\|_1$. 

These notions also apply when $((X,B);L)$ is a \emph{polarized pair}, consisting of a normal polarized variety and a $\Q$-Weil divisor on $X$ such that $K_{(X,B)}:=K_X+B$ is $\Q$-Cartier, using the \emph{log Donaldson-Futaki invariant} $\DF_B(\cX,\cL)$ of a test configuration $(\cX,\cL)$. We show that $L^p$-uniformly K-stability can in fact only hold for $p\le \tfrac{n}{n-1}$ (cf.~Proposition~\ref{prop:thresh}). 

One of the points of the present paper is to show that ($L^1$-)uniform K-stability of polarized pairs can be understood in terms of the non-Archimedean counterparts of well-known functionals in K\"ahler geometry. In order to achieve this, we interpret a test configuration for $(X,L)$ as a \emph{non-Archimedean metric} on the Berkovich analytification of $L$ with respect to the \emph{trivial} norm on the ground field, see~\S\ref{sec:NAmetrics}. In this language,
ample test configurations become positive metrics.

Several classical functionals on the space of Hermitian metrics in K\"ahler geometry have natural counterparts in the non-Archimedean setting. 
For example, the \emph{non-Archimedean Monge-Amp\`ere energy} is
$$
E^{\NA}(\cX,\cL)=\frac{(\bar\cL^{n+1})}{(n+1)V}=\int_\R\la\,\DH_{(\cX,\cL)}(d\la),
$$
where $V=(L^n)$, $(\bar\cX,\bar\cL)$ is the natural $\G_m$-equivariant compactification of $(\cX,\cL)$ over $\P^1$ and $\DH_{(\cX,\cL)}$ is the Duistermaat-Heckman measure of $(\cX,\cL)$.
The \emph{non-Archimedean J-energy} is 
$$
J^{\NA}(\cX,\cL)
=\la_{\max}-E^{\NA}(\cX,\cL)
=\la_{\max}-\int_\R\la\,\DH_{(\cX,\cL)}(d\la)\ge 0,
$$
with $\la_{\max}$ the upper bound of the support of
$\DH_{(\cX,\cL)}$. We show that this quantity is equivalent to the
$L^1$-norm in the following sense:
$$
c_n J^\NA(\cX,\cL)\le\|(\cX,\cL)\|_1\le 2 J^\NA(\cX,\cL)
$$
for some numerical constant $c_n>0$.  

Given a boundary $B$, we define the \emph{non-Archimedean Ricci
  energy} $R_B^{\NA}(\cX,\cL)$ in terms of intersection numbers on a
suitable test configuration dominating $(\cX,\cL)$. The \emph{non-Archimedean entropy} $H_B^{\NA}(\cX,\cL)$ is defined in terms of the log discrepancies with respect to $(X,B)$ of certain divisorial valuations, and will be described in more detail below. 

The \emph{non-Archimedean Mabuchi functional} is now defined so as to satisfy the analogue of the \emph{Chen-Tian formula} (see~\cite{Che2} and also~\cite[Proposition 3.1]{BB})
$$
M_B^{\NA}(\cX,\cL)=H_B^{\NA}(\cX,\cL) +R_B^{\NA}(\cX,\cL)+\bar S_B E^{\NA}(\cX,\cL)
$$
with 
$$
\bar S_B:=-nV^{-1}\left(K_{(X,B)}\cdot L^{n-1}\right), 
$$ 
which, for $X$ smooth over $\C$ and $B=0$, gives the mean value of the scalar curvature of any K\"ahler metric in $c_1(L)$. The whole point of these constructions is that $M_B^{\NA}$ is essentially the same as the log Donaldson-Futaki invariant.\footnote{The interpretation of the Donaldson-Futaki invariant as a non-Archimedean Mabuchi functional has been known to Shou-Wu Zhang for quite some time, cf.~\cite[Remark 6]{PRS}.} 
We show more precisely that every normal, ample test configuration $(\cX,\cL)$ satisfies
\begin{equation}\label{equ:Mslope}
\DF_B(\cX,\cL)=M_B^{\NA}(\cX,\cL)+V^{-1}\left((\cX_0-\cX_{0,\red})\cdot\cL^n\right). 
\end{equation}
Further, $M^{\NA}_B$ is homogeneous with respect to $\G_m$-equivariant 
base change, a property which is particularly useful in relation with semistable reduction, and fails for the Donaldson-Futaki invariant when the central fiber is non-reduced. Using this, we show that uniform K-stability of $((X,B);L)$ is equivalent to the apparently stronger condition $M_B^{\NA}\ge\d J^{\NA}$, which we interpret as a counterpart to the \emph{coercivity} of the Mabuchi energy in K\"ahler geometry~\cite{Tian97}. 

\medskip
The relation between the non-Archimedean functionals above and their classical
counterparts will be systematically studied in~\cite{BHJ2}. Let us indicate the main idea.
Assume $(X,L)$ is a smooth polarized complex variety, and $B=0$.
Denote by $\cH$ the space of K\"ahler metrics on $L$
and by $\cH^{\NA}$ the space of non-Archimedean metrics. 
The general idea is that $\cH^{\NA}$ plays the role of the `Tits boundary' of the (infinite dimensional) symmetric space $\cH$. Given an ample test configuration $(\cX,\cL)$ (viewed as an element of $\cH^{\NA}$) and a smooth ray $(\phi_s)_{s\in(0,+\infty)}$ corresponding to a smooth $S^1$-invariant metric on $\cL$, we shall prove in~\cite{BHJ2} that 
\begin{equation}\label{e301}
\lim_{s\to+\infty}\frac{F(\phi_s)}{s}=F^{\NA}(\cX,\cL),
\end{equation}
where $F$ denotes the Monge-Amp\`ere energy, $J$-energy, entropy, or 
Mabuchi energy functional and $F^{\NA}$ is the corresponding non-Archimedean functional defined above.
In the case of the Mabuchi energy, this result is closely related to~\cite{PT1,PT2,PRS}.
%
%
%
%
\subsection*{Singularities of pairs and uniform K-stability}
A key point in our approach to K-stability is to relate the birational geometry of $X$ and that of its test configurations using the language of \emph{valuations}. 

More specifically, let $(X,L)$ be a normal polarized variety, and
$(\cX,\cL)$ a normal test configuration. Every irreducible component
$E$ of $\cX_0$ defines a divisorial valuation $\ord_E$ on the function
field of $\cX$. Since the latter is canonically isomorphic to
$k(X\times\A^1)\simeq k(X)(t)$, we may consider the restriction
$r(\ord_E)$ of $\ord_E$ to $k(X)$; this is proved to be a divisorial valuation as well when $E$ is non-trivial, \ie not the strict transform of the central fiber of the trivial test configuration. 

This correspondence between irreducible components of $\cX_0$ and divisorial valuations on $X$ is analyzed in detail in~\S\ref{sec:valtest}. In particular, we prove that the Rees valuations of a closed subscheme $Z\subset X$, \ie the divisorial valuations associated to the normalized blow-up of $X$ along $Z$, coincide with the valuations induced on $X$ by the normalization of the deformation to the normal cone of $Z$. 

Given a boundary $B$ on $X$, we define the non-Archimedean entropy of a normal test configuration $(\cX,\cL)$ mentioned above as
$$
H^{\NA}_B(\cX,\cL)=V^{-1}\sum_E A_{(X,B)}(r(\ord_E))(E\cdot\cL^n),
$$
the sum running over the non-trivial irreducible components of $\cX_0$ and $A_{(X,B)}(v)$ denoting the log discrepancy of a divisorial valuation $v$ with respect the pair $(X,B)$. Recall that the pair $(X,B)$ is log canonical (lc for short) if $A_{(X,B)}(v)\ge 0$ for all divisorial valuations on $X$, and Kawamata log terminal (klt for short) if the inequality is everywhere strict. Our main result here is a characterization of these singularity classes in terms of the non-Archimedean entropy functional. 

\begin{thmC} Let $(X,L)$ be a normal polarized variety, and $B$ an effective boundary on $X$. Then 
$(X,B)$ is lc (resp.\ klt) iff $H_B^{\NA}(\cX,\cL)\ge 0$ (resp.\ $>0$) for every non-trivial normal, ample test configuration $(\cX,\cL)$. In the klt case, there automatically exists $\d>0$ such that $H_B^{\NA}(\cX,\cL)\ge\d J^{\NA}(\cX,\cL)$ for all $(\cX,\cL)$. 
\end{thmC}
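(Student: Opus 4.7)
My plan is to prove the three assertions of Theorem~C in sequence, using the expansion
\[
H_B^\NA(\cX,\cL)=V^{-1}\sum_E A_{(X,B)}(r(\ord_E))(E\cdot\cL^n)
\]
together with the correspondence, developed in \S\ref{sec:valtest}, between non-trivial irreducible components of $\cX_0$ and divisorial valuations on $X$.

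\emph{Step 1: the easy direction.} For each component $E$, the coefficient $(E\cdot\cL^n)=((\cL|_E)^n)$ is strictly positive, since $E$ is an $n$-dimensional integral projective variety and $\cL|_E$ is ample (restriction of the $\pi$-ample $\cL$ to the closed subvariety $E\subset\cX_0$). Hence if $(X,B)$ is lc, then $A_{(X,B)}(r(\ord_E))\ge 0$ for every non-trivial $E$, giving $H_B^\NA(\cX,\cL)\ge 0$. If $(X,B)$ is klt and $(\cX,\cL)$ is non-trivial, normality of $\cX$ forces the birational map $\cX\dashrightarrow X\times\A^1$ to contract an exceptional divisor in $\cX_0$, and such a component $E$ yields a non-trivial divisorial valuation $r(\ord_E)$, so $A_{(X,B)}(r(\ord_E))>0$ and $H_B^\NA(\cX,\cL)>0$.

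\emph{Step 2: the converse direction.} Suppose $(X,B)$ is not lc (resp.\ not klt) and pick a divisorial valuation $v$ on $X$ with $A_{(X,B)}(v)<0$ (resp.\ $=0$). By the dictionary of \S\ref{sec:valtest}, the Rees valuations of an ideal $\fa\subset\cO_X$ occur as $r(\ord_E)$ for components of the central fiber of the normalized deformation to the normal cone of $\fa$. Taking $\fa:=\fa_m(v)=\{f\in\cO_X:v(f)\ge m\}$ for $m\gg 0$ and twisting by a large multiple of $L$ to achieve ampleness produces a normal ample test configuration; for $m$ large, the Rees valuations of $\fa_m(v)$ cluster around $v$ via Izumi's inequality, so the $v$-summand dominates in $H_B^\NA$, giving it the same sign as $A_{(X,B)}(v)$.

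\emph{Step 3: the uniform klt estimate.} This is the main obstacle. My plan is to exploit the openness of klt: for $(X,B)$ klt, one can find $\e>0$ and an effective $\Q$-divisor $D\in|L|_\Q$ (say, a generic element of a sufficiently divisible linear system) such that $(X,B+\e D)$ remains klt. Applying Step~1 to $(X,B+\e D)$ and using $A_{(X,B+\e D)}(v)=A_{(X,B)}(v)-\e v(D)$ gives
\[
H_B^\NA(\cX,\cL)\ge\e V^{-1}\sum_E r(\ord_E)(D)(E\cdot\cL^n).
\]
On the $\P^1$-compactification $\bar\cX$, the principal divisor of a local equation of $D$ (viewed as a rational function on $\bar\cX$) equals $\bar D+\sum_E r(\ord_E)(D)E$, where $\bar D$ denotes the horizontal closure of $D$; intersecting with $\bar\cL^n$ converts the bound above into
\[
H_B^\NA(\cX,\cL)\ge -\e V^{-1}(\bar D\cdot\bar\cL^n).
\]
The crux of the proof is then to establish the comparison $-V^{-1}(\bar D\cdot\bar\cL^n)\ge cJ^\NA(\cX,\cL)$ for some $c>0$ depending only on $D$. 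Using the numerical identity $\bar\cL\equiv\bar D+M$ with $M$ a vertical $\Q$-divisor on $\bar\cX$ encoding the twist of $\bar\cL$ from the trivial extension, this amounts to a comparison between the intersection $(M\cdot\bar\cL^n)$ and the $\G_m$-weight data of $\cL|_{\cX_0}$, specifically $\lambda_{\max}$ and the average $E^\NA=(\bar\cL^{n+1})/((n+1)V)$. I expect this step, a non-Archimedean counterpart of the Archimedean fact that a positive class dominates the $J$-functional, to be the technical heart of the proof, with the constant $c$ extracted from the positivity of $D$ in $|L|_\Q$.
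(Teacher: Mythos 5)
Your Step 1 is correct and coincides with the paper's easy direction. The two converse steps, however, each contain a genuine gap.

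\emph{Step 2.} The functional $H_B^{\NA}$ is a sum over \emph{all} non-trivial components $E$ of $\cX_0$ with strictly positive weights $(E\cdot\cL^n)$. To force $H_B^{\NA}<0$ (resp.\ $\le 0$) you therefore need a subscheme $Z$ \emph{all} of whose Rees valuations satisfy $A_{(X,B)}<0$ (resp.\ $\le 0$); a single bad valuation $v$ does not suffice. Your proposal to take the valuation ideals $\fa_m(v)=\{f\mid v(f)\ge m\}$ does not deliver this: the Rees valuations of $\fa_m(v)$ need not include $v$, and Izumi's inequality compares the valuations as functions on the local ring but gives no control whatsoever on the log discrepancies of the other Rees valuations, nor on the relative sizes of the intersection numbers $(E\cdot\cL^n)$. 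The "domination" heuristic is particularly hopeless in the non-klt-but-lc case, where every summand is $\ge 0$ and you need \emph{all} of them to vanish, not just one to dominate. The paper's proof replaces this step by MMP input: the Odaka--Xu lc blow-up (Theorem~\ref{thm:OX}, via Corollary~\ref{cor:lc}) produces a $Z$ with all Rees valuations of negative discrepancy in the non-lc case, and a BCHM-type relative canonical model argument (Proposition~\ref{prop:klt}) handles the non-klt case; the deformation to the normal cone of $Z$ (Theorem~\ref{thm:reescone}, Corollary~\ref{cor:rees}) then realizes exactly these valuations as the $v_E$. One also needs the perturbation $\phi_\e=\e\phi+(1-\e)\phi_\triv$ and the expansion of Proposition~\ref{prop:neartriv} for the K-semistability statement, but for the entropy statement alone the sign of $H_B^\NA$ is immediate once all $A_{(X,B)}(v_E)$ have the right sign.

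\emph{Step 3.} Fixing a single generic $D\in|L|_\Q$ cannot work: if $(\cX,\cL)$ is the (normalized) deformation to the normal cone of a point $x\notin\supp D$, then every non-trivial $E$ has center $\{x\}\not\subset\supp D$, so $r(\ord_E)(D)=0$ and your proposed lower bound $\e V^{-1}\sum_E r(\ord_E)(D)(E\cdot\cL^n)$ vanishes, while $J^{\NA}(\cX,\cL)>0$. The divisor (or ideal) must be adapted to the test configuration. The paper does this via the flag ideals $\fa^{(m)}=\sum_\la t^{-\la}\fa^{(m)}_\la$ of $(\cX,m\cL)$: global generation of $\cO_X(mL)\otimes\fa^{(m)}_\la$ (Proposition~\ref{prop:filtrflag}) makes $L\otimes(\fa^{(m)}_\la)^{1/m}$ nef, so that $v(\fa^{(m)}_\la)\le m\,\d^{-1}A_{(X,B)}(v)$ with $\d=\lct((X,B);L)$ the \emph{global} log canonical threshold, i.e.\ the infimum of $\lct_{(X,B)}$ over all such ideals (equivalently over all $D\sim_\Q L$). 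Summing over $E$ yields $H_B^{\NA}\ge\d\,I^{\NA}\ge\tfrac{\d}{n}J^{\NA}$ (Proposition~\ref{prop:lct}). The genuinely non-trivial input is then that $\d>0$, which is Theorem~\ref{thm:izumi} and rests on a Skoda/Seshadri estimate on a log resolution --- this is where an Izumi-type bound actually enters, not in Step 2.
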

The strategy to prove the first two points is closely related to that of~\cite{Oda3}. In fact, we also provide a complete proof of the following mild generalization (in the normal case) of the main result of~\textit{loc.cit}:
$$
((X,B);L)\text{ K-semistable }\Longrightarrow (X,B)\text{ lc}.  
$$
The non-normal case is discussed in \S\ref{sec:slc}. If $(X,B)$ is not lc (resp.\ not klt), then known results from the Minimal Model Program allow us to construct a closed subscheme $Z$ whose Rees valuations have negative (resp.\ non-positive) discrepancies; the normalization of the deformation to the normal cone of $Z$ then provides a test configuration $(\cX,\cL)$ with $H^{\NA}_B(\cX,\cL)<0$ (resp.\ $\le 0$). 
To prove uniformity in the klt case, we exploit the 
the strict positivity of the \emph{global log canonical threshold} $\lct((X,B);L)$ of $((X,B);L)$. 

As a consequence, we are able to analyze uniform K-stability in the 
\emph{log K\"ahler-Einstein case}, \ie when $K_{(X,B)}$ is numerically proportional to $L$. 

\begin{corD} Let $(X,L)$ be a normal polarized variety, $B$ an effective boundary, and assume that $K_{(X,B)}\equiv\la L$ with $\la\in\Q$. 
\begin{itemize}
\item[(i)] If $\la>0$, then $((X,B);L)$ is uniformly K-stable iff $(X,B)$ is lc;
\item[(ii)] If $\la=0$, then $((X,B);L)$ is uniformly K-stable iff $(X,B)$ is klt; 
\item[(iii)] If $\la<0$ and $\lct((X,B);L)>\frac{n}{n+1}|\la|$, then $((X,B);L)$ is uniformly K-stable. 
\end{itemize}
\end{corD}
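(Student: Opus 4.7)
The plan is to combine the Chen-Tian decomposition $M_B^{\NA}(\cX,\cL)=H_B^{\NA}(\cX,\cL)+R_B^{\NA}(\cX,\cL)+\bar S_B\,E^{\NA}(\cX,\cL)$ with the equivalence of uniform K-stability and the coercivity condition $M_B^{\NA}\ge\d J^{\NA}$, and to invoke Theorem~C. The key first step is to simplify the combination $R_B^{\NA}+\bar S_B E^{\NA}$ under $K_{(X,B)}\equiv\la L$. Direct calculation gives $\bar S_B=-n\la$, and the assumption implies that, on any normal test configuration compactified to $\bar\cX\to\P^1$, the class $K_{\bar\cX/\P^1,\bar B}-\la\bar\cL$ is numerically supported on $\cX_0$. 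Expanding $R_B^{\NA}$ as an intersection number and using $V^{-1}(\bar\cL^{n+1})=(n+1)E^{\NA}$, together with the homogeneity of $M_B^{\NA}$ under $\G_m$-equivariant base change to reduce to the reduced central fiber case, should yield the clean identity
\begin{equation*}
M_B^{\NA}(\cX,\cL)=H_B^{\NA}(\cX,\cL)+n\la\,J^{\NA}(\cX,\cL).
\end{equation*}

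In case (ii), with $\la=0$, this collapses to $M_B^{\NA}=H_B^{\NA}$, and the claim is a direct reading of Theorem~C together with Corollary~B: $(X,B)$ is klt iff $H_B^{\NA}\ge\d J^{\NA}$ for some $\d>0$, iff uniformly K-stable (the implication uses Corollary~B to rule out $J^{\NA}=0$ on non-almost-trivial test configurations). In case (i), with $\la>0$: if $(X,B)$ is lc then Theorem~C gives $H_B^{\NA}\ge 0$, so the above identity yields $M_B^{\NA}\ge n\la\,J^{\NA}$, \ie uniform K-stability with the explicit constant $\d=n\la$. Conversely, uniform K-stability implies K-semistability, which implies lc by the K-semistability version of Theorem~C that is already recorded in the excerpt following Theorem~C.

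Case (iii), with $\la<0$, is a non-Archimedean log Tian alpha invariant criterion. The hypothesis $\lct((X,B);L)>\tfrac{n}{n+1}|\la|$ provides $\alpha>\tfrac{n}{n+1}|\la|$ with $A_{(X,B)}(v)\ge\alpha\,T(v)$ for every divisorial valuation $v$ on $X$, where $T(v):=\sup_m m^{-1}\max_{s\in H^0(mL)\setminus 0}v(s)$. Rewriting $H_B^{\NA}(\cX,\cL)=V^{-1}\sum_E A_{(X,B)}(r(\ord_E))(E\cdot\cL^n)$ with the sum over non-trivial components of $\cX_0$, the lct inequality combined with a direct estimate controlling $\sum_E T(r(\ord_E))(E\cdot\cL^n)$ in terms of $\la_{\max}(\cX,\cL)$ and $V^{-1}(\bar\cL^{n+1})=(n+1)E^{\NA}$ should produce $H_B^{\NA}(\cX,\cL)\ge\alpha(n+1)J^{\NA}(\cX,\cL)$. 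Combined with the identity from the first paragraph,
\begin{equation*}
M_B^{\NA}\ge\bigl[\alpha(n+1)-n|\la|\bigr]J^{\NA}=(n+1)\bigl[\alpha-\tfrac{n}{n+1}|\la|\bigr]J^{\NA}>0,
\end{equation*}
which is uniform K-stability. The main technical obstacles are the Ricci simplification of the first paragraph—verifying that the contribution from the non-reduced part of $\cX_0$ disappears exactly after semistable reduction, thanks to the homogeneity of $M_B^{\NA}$—and, in (iii), establishing the lct-based lower bound $H_B^{\NA}\ge\alpha(n+1)J^{\NA}$, which plays the role of Tian's original Kähler-geometric estimate in this non-Archimedean setting.
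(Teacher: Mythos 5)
Your overall architecture (Chen--Tian decomposition, the coercivity reformulation of uniform K-stability, and Theorem~C) is the same as the paper's, but the pivotal identity you build everything on is false. Under $K_{(X,B)}\equiv\la L$ one has $\bar S_B=-n\la$ and $R_B^{\NA}(\phi)=\la V^{-1}(\phi_\triv\cdot\phi^n)$, and the computation gives
\[
R_B^{\NA}(\phi)+\bar S_B E^{\NA}(\phi)=\la\bigl(I^{\NA}(\phi)-J^{\NA}(\phi)\bigr),
\]
so the correct statement (Lemma~\ref{lem:MKE}) is $M_B^{\NA}=H_B^{\NA}+\la(I^{\NA}-J^{\NA})$. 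The quantity $I^{\NA}-J^{\NA}$ is \emph{not} equal to $nJ^{\NA}$; one only has the two-sided bound $\tfrac1n J^{\NA}\le I^{\NA}-J^{\NA}\le nJ^{\NA}$ of Proposition~\ref{prop:J}, with the right-hand equality attained only in special cases (e.g.\ $n=1$). Relatedly, the ``technical obstacle'' you flag about the non-reduced central fiber is a red herring: base change is needed to compare $\DF_B$ with $M_B^{\NA}$, not to compute the Ricci term inside the Chen--Tian formula. Cases (i) and (ii) survive the correction: for (i), lc gives $H_B^{\NA}\ge0$ and hence $M_B^{\NA}\ge\la(I^{\NA}-J^{\NA})\ge\tfrac{\la}{n}J^{\NA}$ (so your constant $n\la$ should be $\la/n$), and for (ii) the identity degenerates to $M_B^{\NA}=H_B^{\NA}$ regardless; your use of Theorem~C and Corollary~B for the converse directions is fine.

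Case (iii) has a second, independent gap: the inequality $H_B^{\NA}\ge\a(n+1)J^{\NA}$ with $\a=\lct((X,B);L)$ is asserted with only a sketch and is stronger than what the lct method delivers. The flag-ideal argument (Proposition~\ref{prop:lct}) yields $H_B^{\NA}\ge\a I^{\NA}$, and since $I^{\NA}$ can be as small as $\tfrac{n+1}{n}J^{\NA}$, the resulting bound is only $H_B^{\NA}\ge\tfrac{n+1}{n}\a J^{\NA}$, a factor of $n$ weaker than your claim. The correct way to close (iii) is to keep everything in terms of $I^{\NA}$: with $|\la|=-\la$,
\[
M_B^{\NA}\ \ge\ \a I^{\NA}-|\la|\,(I^{\NA}-J^{\NA})\ =\ (\a-|\la|)I^{\NA}+|\la|J^{\NA}\ \ge\ \bigl[(n+1)\a-n|\la|\bigr]J^{\NA}>0,
\]
where the last step uses $I^{\NA}\le(n+1)J^{\NA}$ when $\a<|\la|$ (and is trivial otherwise); this is Proposition~\ref{prop:alpha} after rescaling $L$ to $-K_{(X,B)}$. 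Your two incorrect ingredients happen to combine to a final constant of the right sign, but neither holds as stated, so the argument for (iii) does not stand as written.
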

This result thus gives `uniform versions' of~\cite{Oda1,OSa}. 

In the last case, when $-K_{(X,B)}$ is ample, we also prove that uniform
K-stability is equivalent to \emph{uniform Ding stability},
defined as $D^\NA_B\ge\delta J^\NA$, where $D^\NA$ is the
\emph{non-Archimedean Ding functional} that appeared in the work 
of Berman~\cite{Berm16}; see also~\cite{BBJ15,Fuj15b,Fuj16}.
%
%
\subsection*{Relation to other works} 
Since we aim to give a systematic introduction to uniform K-stability, and to set
up some non-Archimedean terminology, we have tried to make the exposition 
as self-contained as possible. This means that we reprove or slightly generalize some 
already known results~\cite{Oda1,Oda3,OSa,Sun,OSu}. 

During the preparation of the present paper, we were informed of
R.~Dervan's independent work~\cite{Der1}(see also~\cite{Der2}), which
has a substantial overlap with the present paper. First, when $X$ is
normal, ample test configurations with trivial norm were also
characterized in~\cite[Theorem 1.3]{Der1}. Next, the \emph{minimum
  norm} introduced in~\textit{loc.cit} turns out to be equivalent to
our non-Archimedean J-functional, up to multiplicative constants
(cf.~Remark~\ref{rmk:derJ}). As a result, \emph{uniform K-stability
  with respect to the minimum norm} as in~\cite{Der1} is the same as
our concept of uniform K-stability. Finally, Corollary C above is to a
large extent contained in~\cite[\S3]{Der1} and~\cite{Der2}. 

Several papers exploring K-stability through valuations have
appeared since the first version of this paper. We mention in
particular~\cite{Fuj15a,Fuj15b,Fuj16,FO16,Li15,Li16,Liu16}.
%
%
\subsection*{Structure of the paper}
Section~\ref{sec:prelim} gathers a number of preliminary facts on filtrations and valuations, with a special emphasis on the Rees construction and the relation between Rees valuations and integral closure. 

In Section~\ref{sec:test} we provide a number of elementary facts on test
configurations, and discus in particular some scheme theoretic aspects. 

Section~\ref{sec:DHDF} gives a fairly self-contained treatment of Duistermaat-Heckman measures and Donaldson-Futaki invariants in the context of polarized schemes. The existence of asymptotic expansions for power sums of weights is established in Theorem~\ref{thm:equivRR}, following an idea of Donaldson. 

The correspondence between irreducible components of the central fiber of a normal test configuration and divisorial valuations on $X$ is considered in Section~\ref{sec:valtest}.
In particular, Theorem~\ref{thm:reescone} relates Rees valuations and the deformation to the normal cone. 

Section~\ref{sec:DHfiltr} contains an in-depth study of
Duistermaat-Heckman measures in the normal case, leading to the proof
of Theorem~A and Corollary~B. 
 
Certain non-Archimedean metrics on $L$ are introduced in
Section~\ref{sec:NAmetrics} as equivalence classes of test
configurations. This is inspired by~\cite{siminag,nama,simons}.

In Section~\ref{S201} we introduce non-Archimedean analogues of
the usual energy functionals and in Section~\ref{sec:Kstab} we use these to 
define and study uniform K-stability. In the Fano case, we relate
(uniform) K-stability to the notion of (uniform) Ding stability.

Section~\ref{sec:kstabsing} is concerned with the interaction between
uniform K-stability and singularities of pairs. Specifically, Theorem~\ref{thm:lc} and Theorem~\ref{thm:klt} establish Theorem~C as well as the generalization of~\cite{Oda3} mentioned above. Corollary~D is a combination of Corollary~\ref{cor:canpol}, Corollary~\ref{cor:CY} and Proposition~\ref{prop:alpha}.

Finally, Appendix A provides a proof of the two-term Riemann-Roch theorem on a normal variety, whose complete proof we could not locate in the literature, and Appendix B summarizes Edidin and Graham's equivariant version of the Riemann-Roch theorem for schemes, yielding an alternative proof of Theorem~\ref{thm:equivRR}. 

\begin{ackn} The authors would like to thank Robert Berman, Michel Brion, Ama\"el Broustet, Kento Fujita, Vincent Guedj, Marco Maculan, Mircea Musta\c{t}\u{a}, Yuji Odaka and Ahmed Zeriahi for helpful conversations. We are also grateful to the anonymous referee for several very useful comments. S.B. was partially supported by the ANR projects MACK and POSITIVE\@. T.H. was supported by JSPS Research Fellowships for Young Scientists (25-6660).
M.J. was partially supported by NSF grant DMS-1266207,
the Knut and Alice Wallenberg foundation,
and the United States---Israel Binational Science Foundation.
\end{ackn}

%
%
%
%
\section{Preliminary facts on filtrations and valuations}\label{sec:prelim}
We work over an algebraically closed field $k$, whose characteristic
is arbitrary unless otherwise specified. Write $\G_m$ for the
multiplicative group over $k$ and $\A^1=\Spec k[t]$ for the affine
line. The \emph{trivial absolute value} $|\cdot|_0$ on $k$ is defined
by $|0|_0=0$ and $|c|_0=1$ for $c\in k^*$.

All schemes are assumed to be separated and of finite type over $k$. We restrict the use of \emph{variety} to denote a reduced and irreducible scheme. A reduced scheme is thus a finite union of varieties, and a normal scheme is a disjoint union of normal varieties. 

By an \emph{ideal} on a scheme $X$ we mean a coherent ideal sheaf, whereas
a \emph{fractional ideal} is a coherent $\cO_X$-submodule 
of the sheaf of rational functions. 

If $X$ is a scheme and $L$ a line bundle on $X$, then $\G_m$-action on
$(X,L)$ means a $\G_m$-action on $X$ together with a
$\G_m$-linearization of $L$. This induces an action on 
$(X,rL)$ for any $r\in\Z_{>0}$. If $L$ is a $\Q$-line bundle on $X$,
then a $\G_m$-action on $(X,L)$ means a 
compatible family of actions on $(X,rL)$ for all 
sufficiently divisible $r\in\Z_{>0}$.

A polarized scheme (resp.\ variety) is a pair $(X,L)$ where $X$ is a
projective scheme (resp.\ variety) and $L$ is an ample $\Q$-line
bundle on $X$.
%
%
%
%
\subsection{Norms and filtrations}\label{sec:filtr}   
Let $V$ be a finite dimensional $k$-vector space. In this paper, a \emph{filtration} of $V$ will mean a decreasing, left-continuous, separating and exhaustive $\R$-indexed filtration $F^\bullet V$. In other words, it is a family of subspaces $(F^\lambda V)_{\la\in\R}$ of $V$ such that
\begin{itemize}
\item[(i)] $F^\lambda V\subset F^{\lambda'}V$ when $\la\ge\la'$; 
\item[(ii)] $F^\la V=\bigcap_{\la'<\la} F^{\la'}V$; 
\item[(iii)] $F^\la V=0$ for $\la\gg 0$; 
\item[(iv)] $F^\la V=V$ for $\la\ll 0$.
\end{itemize}
A \emph{$\Z$-filtration} is a filtration $F^\bullet V$ such that $F^\la  V=F^{\lceil\la\rceil} V$ for $\la\in\R$. Equivalently, it is a family of subspaces $(F^\la  V)_{\la\in\Z}$ satisfying (i), (iii) and (iv) above. 
  
With these conventions, filtrations are in one-to-one correspondence with non-Archimedean
norms on $V$ compatible with the trivial absolute value on $k$,
\ie functions $\|\cdot\|\colon V\to\R_+$ such that
\begin{itemize}
\item[(i)] $\|s+s'\|\le\max\left\{\|s\|,\|s'\|\right\}$ for all $s,s'\in V$;
\item[(ii)] $\|c s\|=|c|_0\cdot\|s\|=\|s\|$ for all $s\in V$ and $c\in k^*$;
\item[(iii)] $\|s\|=0\Longleftrightarrow s=0$. 
\end{itemize}
The correspondence is given by
\begin{equation*}
  -\log \|s\|=\sup\left\{\la\in\R\mid s\in F^\la  V\right\}
  \quad\text{and}\quad
  F^\la  V=\left\{s\in V\mid \|s\|\le e^{-\la}\right\}.
\end{equation*}
The \emph{successive minima} of the filtration $F^\bullet V$ is the decreasing sequence 
$$
\la_{\max}=\la_1\ge\dots\ge\la_N=\la_{\min} 
$$ 
where $N=\dim V$, defined by
$$
\la_j=\max\left\{\la\in\R\mid\dim F^\la  V\ge j\right\}.
$$
From the point of view of the norm, they are indeed the analogues of
the (logarithmic) successive minima in Minkowski's geometry of
numbers. Choosing a basis $(s_j)$ compatible with the flag $F^{\la_1} V\subset\dots\subset F^{\la_N}V$ diagonalizes the associated norm $\|\cdot\|$, in the sense that
$$
\|\sum c_i s_i\|=\max|c_i|_0 e^{-\la_i}.  
$$

\medskip

Next let $R:=\bigoplus_{m\in\N} R_m$ be a graded $k$-algebra with finite dimensional graded pieces $R_m$. A filtration $F^\bullet R$ of $R$ is defined as the data of a filtration $F^\bullet R_m$ for each $m$, satisfying
$$
F^\la  R_m\cdot F^{\la'}R_{m'}\subset F^{\la+\la'}R_{m+m'}
$$
for all $\la,\la'\in\R$ and $m,m'\in\N$. The data of $F^\bullet R$ is equivalent to the data of a non-Archimedean submultiplicative norm $\|\cdot\|$ on $R$, \ie a non-Archimedean norm $\|\cdot\|_m$ as above on each $R_m$, satisfying
$$
\|s\cdot s'\|_{m+m'}\le\|s\|_m\|s'\|_{m'}
$$
for all $s\in R_m$, $s'\in R_{m'}$. We will use the following terminology. 

\begin{defi}\label{defi:fingen} We say that a $\Z$-filtration $F^\bullet R$ of a graded algebra $R$ is \emph{finitely generated} if the bigraded algebra 
$$
\bigoplus_{(\la,m)\in\Z\times\N} F^\la  R_m
$$ 
is finitely generated over $k$. 
\end{defi}
The condition equivalently means that the graded $k[t]$-algebra
$$
\bigoplus_{m\in\N}\left(\bigoplus_{\la\in\Z}t^{-\la}F^\la  R_m\right)
$$
is finitely generated. 
%
%
%
\subsection{The Rees construction}\label{sec:reesfiltr}   
We review here a classical construction due to Rees, which yields a geometric interpretation of $\Z$-filtrations. 

Start with a $\G_m$-linearized vector bundle $\cV$ on $\A^1$, and set $V=\cV_1$. The weight decomposition
$$
H^0(\A^1,\cV)=\bigoplus_{\la\in\Z} H^0(\A^1,\cV)_\la
$$
yields a $\Z$-filtration $F^\bullet V$, with $F^\la  V$ defined as the image of the weight-$\la$ part of $H^0(\A^1,\cV)$ under the restriction map $H^0(\A^1,\cV)\to V$. Since $t$ has weight $-1$ with respect to the $\G_m$-action on $\A^1$, multiplication by $t$ induces an injection $F^{\la+1} V\subset F^\la  V$, so that this is indeed a decreasing filtration. 

Conversely, consider a $\Z$-filtration $F^\bullet V$ of a $k$-vector space $V$. Then $\bigoplus_{\la\in\Z} t^{-\la}F^\la  V$ is a torsion free, finitely generated $k[t]$-module. It can thus be written as the space of global sections of a unique vector bundle $\cV$ on $\A^1=\Spec k[t]$. The grading provides a $\G_m$-linearization of $\cV$, and the corresponding weight spaces are given by $H^0(\A^1,\cV)_\la\simeq t^{-\la} F^\la  V$. 

\begin{lem}\label{lem:graded} In the above notation, we have a $\G_m$-equivariant vector bundle isomorphism,  
\begin{equation}\label{equ:reesgen}
\cV|_{\A^1\setminus\{0\}}\simeq V\times\left(\A^1\setminus\{0\}\right)
\end{equation}
as well as
\begin{equation}\label{equ:reessp}
\cV_0\simeq\mathrm{Gr}^F_\bullet V=\bigoplus_{\la\in\Z}F^\la  V/F^{\la+1}V. 
\end{equation}
\end{lem}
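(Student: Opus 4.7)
The plan is to work directly with the Rees module $M := \bigoplus_{\la\in\Z} t^{-\la}F^\la V$, regarded as a graded $k[t]$-submodule of $V\otimes_k k[t,t^{-1}]$, which by construction satisfies $H^0(\A^1,\cV)=M$. Under the identifications $\A^1\setminus\{0\}=\Spec k[t,t^{-1}]$ and $\{0\}=\Spec k[t]/(t)$, the two claims amount to showing $M[t^{-1}]\simeq V\otimes_k k[t,t^{-1}]$ and $M/tM\simeq\gr^F_\bullet V$ respectively, equivariantly for the grading (with the convention that $t^{-\la}$ has weight $\la$, consistent with $t$ having weight $-1$, and that $V\subset V\otimes_k k[t,t^{-1}]$ sits in weight $0$).

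For (i), I would observe that $M\subset V\otimes_k k[t,t^{-1}]$ by definition, and that the reverse inclusion after inverting $t$ follows from the exhaustion property of the filtration: any $v\in V$ lies in $F^{\la_0}V$ for $\la_0\ll 0$, so $v=t^{\la_0}(t^{-\la_0}v)\in t^{\la_0}M\subset M[t^{-1}]$. The identification manifestly preserves the $\Z$-grading and hence the $\G_m$-linearization.

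For (ii), the key step is a reindexing: multiplication by $t$ sends the weight-$\la$ piece $t^{-\la}F^\la V$ of $M$ onto $t^{-(\la-1)}F^\la V$, so
\[
tM=\bigoplus_{\mu\in\Z}t^{-\mu}F^{\mu+1}V,
\]
which sits inside the weight-$\la$ graded piece of $M$ as $t^{-\la}F^{\la+1}V\subset t^{-\la}F^\la V$. Taking the graded quotient then gives $M/tM\simeq\bigoplus_{\la}F^\la V/F^{\la+1}V=\gr^F_\bullet V$, with the right-hand side carrying its natural grading.

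There is no real obstacle here; the whole exercise is bookkeeping. The one spot demanding care is matching the sign of the $\G_m$-weight with the convention that $t$ has weight $-1$, which is precisely what forces the indexing by $t^{-\la}$ rather than $t^\la$ and makes the nesting $F^{\la+1}V\subset F^\la V$ compatible with the fact that multiplication by $t$ lowers weight by $1$.
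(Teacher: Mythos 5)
Your proof is correct, and for the first isomorphism it takes a slightly different (and arguably more direct) route than the paper. The paper establishes \eqref{equ:reesgen} by computing a single fiber: it exhibits the surjection $\pi\colon H^0(\A^1,\cV)\to V$, $\sum_\la t^{-\la}v_\la\mapsto\sum_\la v_\la$, identifies its kernel with $(t-1)H^0(\A^1,\cV)$ via a telescoping argument, concludes $\cV_1\simeq V$, and then spreads this out over $\A^1\setminus\{0\}$ using the $\G_m$-action. You instead compute the entire localized module $M[t^{-1}]$ and check that the inclusion $M\subset V\otimes_k k[t,t^{-1}]$ becomes a graded isomorphism after inverting $t$, by exhaustion of the filtration; this avoids the kernel computation and makes the equivariance automatic rather than something to be invoked, at the (small) cost of not explicitly exhibiting the canonical identification $\cV_1\simeq V$ that the paper reuses in setting up the equivalence of categories. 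For \eqref{equ:reessp} the paper only says ``similar,'' and your reindexing computation $tM=\bigoplus_\mu t^{-\mu}F^{\mu+1}V$, hence $M/tM\simeq\bigoplus_\la F^\la V/F^{\la+1}V$, is exactly the intended argument; note that $\cV_0=M/tM$ does use that $\cV$ is locally free, which is part of the setup (the Rees module is finitely generated and torsion free over $k[t]$).
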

Intuitively, this says that $\cV$ may be thought of as a way to degenerate the filtration to its graded object. 
 
\begin{proof} To see that (\ref{equ:reesgen}) holds, consider the $k$-linear map 
$\pi\colon H^0(\A^1,\cV)\to V$ sending $\sum_\la t^{-\la}v_\la$ to $\sum_\la v_\la$.
This map is surjective since $F^\la  V=V$ for $\la\ll0$. 
If $\sum_\la t^{-\la}v_\la$ lies in the kernel, then $v_\la=w_{\la+1}-w_\la$ for 
all $\la$, where $w_\la=-\sum_{\mu\ge\la}v_\mu\in F^\la  V$. 
Conversely, any element of the form $\sum_\la t^{-\la}(w_{\la+1}-w_\la)$,
where $w_\la\in F^\la V$, is in the kernel of $\pi$, and the set
of such elements is equal to $(t-1)H^0(\A^1,\cV)$.
Thus $\pi$ induces an isomorphism between 
$\cV_1=H^0(\A^1,\cV)/(t-1)H^0(\A^1,\cV)$ and $V$,
which induces~\eqref{equ:reesgen} using the $\G_m$-action. 
The proof of (\ref{equ:reessp}) is similar. 
\end{proof}

Using this, it is easy to verify that the two constructions above are inverse to each other, and actually define an equivalence of categories between $\Z$-filtered, finite dimensional vector spaces $F^\bullet V$ and $\G_m$-linearized vector bundles $\cV$ on $\A^1$, related by the $\G_m$-equivariant isomorphism
$$
H^0(\A^1,\cV)\simeq\bigoplus_{\la\in\Z} t^{-\la} F^\la  V.
$$

Every filtered vector space admits a basis compatible with the filtration, and is thus (non-canonically) isomorphic to its graded object. On the vector bundle side, this yields (compare~\cite[Lemma 2]{Don3}):

\begin{prop}\label{prop:reesfiltr} Every $\G_m$-linearized vector bundle $\cV$  on $\A^1$ is $\G_m$-equivariantly trivial, \ie $\G_m$-isomorphic to $\cV_0\times\A^1$ with $\cV_0$ the fiber at $0$.  
\end{prop}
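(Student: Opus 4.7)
The plan is to transport the statement across the equivalence of categories just described, where it becomes a purely linear-algebraic fact about $\Z$-filtered vector spaces. Under the Rees correspondence, $\cV$ is encoded by the $\Z$-filtration $F^\bullet V$ on $V=\cV_1$, while the product $\cV_0\times\A^1$ (with the diagonal $\G_m$-action, $\G_m$ acting on $\cV_0=\mathrm{Gr}^F_\bullet V$ by weight $\la$ on the summand $F^\la V/F^{\la+1}V$ by Lemma~\ref{lem:graded}) is encoded by the \emph{split} filtration
\[
G^\la\left(\mathrm{Gr}^F_\bullet V\right)=\bigoplus_{\mu\ge\la}F^\mu V/F^{\mu+1}V
\]
on the graded object. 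So it suffices to produce an isomorphism of $\Z$-filtered vector spaces $(V,F^\bullet)\simeq(\mathrm{Gr}^F_\bullet V,G^\bullet)$.

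To construct such an isomorphism, I would pick a basis of $V$ compatible with the flag $F^{\la_1}V\supset\dots\supset F^{\la_N}V$ determined by the successive minima, exactly as in the discussion preceding Definition~\ref{defi:fingen}. Equivalently, for each weight $\la$ I choose a linear section $V_\la\subset F^\la V$ of the surjection $F^\la V\twoheadrightarrow F^\la V/F^{\la+1}V$; these can be built by decreasing induction on $\la$ (starting from $\la=\la_{\max}$, where $F^\la V/F^{\la+1}V=F^\la V$) so as to give a direct sum decomposition $V=\bigoplus_\la V_\la$ satisfying $F^\la V=\bigoplus_{\mu\ge\la}V_\mu$ for every $\la\in\Z$. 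The composition $V=\bigoplus_\la V_\la\xrightarrow{\sim}\bigoplus_\la F^\la V/F^{\la+1}V=\mathrm{Gr}^F_\bullet V$ then carries $F^\la V$ onto $G^\la\mathrm{Gr}^F_\bullet V$, as required.

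Feeding this filtered isomorphism into the Rees construction, together with the functoriality of that construction, yields a $\G_m$-equivariant vector bundle isomorphism $\cV\simeq\cV_0\times\A^1$, which is the conclusion of the proposition.

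There is essentially no hard step here; the only point that requires a moment's care is the inductive construction of the splitting $V=\bigoplus_\la V_\la$, which has to be performed so that the resulting filtration on the right-hand side agrees with $F^\bullet$ on the nose rather than merely up to abstract isomorphism of graded pieces. Once that splitting is in hand, the equivalence of categories established before Proposition~\ref{prop:reesfiltr} does all of the remaining work.
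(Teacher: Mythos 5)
Your argument is correct and is exactly the paper's: the paper derives the proposition in one line from the remark that every filtered vector space admits a basis compatible with the filtration, hence is (non-canonically) isomorphic to its graded object, and then transports this across the Rees equivalence of categories. Your write-up simply fills in the routine details of that compatible splitting and of the identification of $\cV_0\times\A^1$ with the split filtration on $\mathrm{Gr}^F_\bullet V$.
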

For line bundles, the trivialization admits the following particularly simple description. 
\begin{cor}\label{cor:trivline} Let $\cL$ be a $\G_m$-linearized line bundle on $\A^1$, and let $\la\in\Z$ be the weight of the $\G_m$-action on $\cL_0$. For each non-zero $v\in\cL_1$, setting $s(t):=t^{-\la}(t\cdot v)$ defines a weight-$\la$ trivialization of $\cL$. 
\end{cor}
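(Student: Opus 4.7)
The plan is to reduce to a concrete equivariant model provided by Proposition~\ref{prop:reesfiltr}, and then read off the trivialization directly from the formula.

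First, I would invoke Proposition~\ref{prop:reesfiltr} to fix a $\G_m$-equivariant isomorphism $\cL\simeq\cL_0\times\A^1$. Since $\cL_0$ is one-dimensional and carries the $\G_m$-action of weight~$\la$, the action in this model reads $\tau\cdot(u,t)=(\tau^\la u,\tau t)$ for $\tau\in\G_m$, compatibly with the fact that $t$ has weight $-1$ on $\A^1$. Under this identification the fiber $\cL_1$ is $\cL_0\times\{1\}$, so a nonzero $v\in\cL_1$ corresponds to a pair $(u_0,1)$ with $u_0\in\cL_0\setminus\{0\}$.

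Next, I would compute the formula in the model: for $t\in\G_m$ we get $t\cdot v=(t^\la u_0,t)$, and rescaling in the fiber by the scalar $t^{-\la}$ yields
\begin{equation*}
s(t)=t^{-\la}(t\cdot v)=(u_0,t).
\end{equation*}
This constant-fiber section visibly extends across $t=0$ and is nowhere vanishing on $\A^1$, so $s$ is a genuine trivialization of $\cL$. The weight assertion can now be verified either in the model, where the computation $\tau\cdot s(\tau^{-1}t)=\tau\cdot(u_0,\tau^{-1}t)=(\tau^\la u_0,t)=\tau^\la s(t)$ is immediate, or, more intrinsically, by using the semigroup law $\tau\cdot(\sigma\cdot v)=(\tau\sigma)\cdot v$ together with $(\tau^{-1}t)^{-\la}=\tau^\la t^{-\la}$ to obtain $\tau\cdot s(\tau^{-1}t)=\tau^\la s(t)$ directly from the defining formula.

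There is no genuine obstacle in this argument: the only point requiring care is a consistent bookkeeping of sign conventions, namely that $\la$ really records the weight on $\cL_0$ (so that $\tau$ acts on $\cL_0$ by multiplication by $\tau^\la$) and that the scalar $t^{-\la}$ is understood as acting in the fiber of $\cL$, not on the base. Once the model of the first step is in place, both the extension across $t=0$ and the weight are essentially immediate, which is why the statement can be packaged as a corollary of Proposition~\ref{prop:reesfiltr}.
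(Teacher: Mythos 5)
Your proof is correct, but it takes a different route from the paper's. You deduce the corollary from Proposition~\ref{prop:reesfiltr} by passing to the equivariant model $\cL\simeq\cL_0\times\A^1$, where the action is $\tau\cdot(u,t)=(\tau^\la u,\tau t)$, and then the computation $s(t)=t^{-\la}(t^\la u_0,t)=(u_0,t)$ makes both the extension across $t=0$ and the weight transparent. The paper instead gives a self-contained direct check that bypasses Proposition~\ref{prop:reesfiltr}: it regards $s'(t):=t\cdot v$ as a rational section of $\cL$, sets $\mu:=\ord_0(s')$, and shows that the leading coefficient $v_0=\lim_{z\to 0}z^{-\mu}s'(z)\in\cL_0$ satisfies $t\cdot v_0=t^\mu v_0$, forcing $\mu=\la$; hence $t^{-\la}s'$ is regular and non-vanishing at $0$. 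Your argument is shorter once the equivariant trivialization is granted, but it uses that (nontrivial) proposition as a black box; the paper's version costs a small local computation and in exchange identifies the weight $\la$ intrinsically as the order of vanishing of $t\mapsto t\cdot v$ at the origin, which is the content one actually wants in practice. One small caution about your intrinsic weight verification via $\tau\cdot(\sigma\cdot v)=(\tau\sigma)\cdot v$: it only establishes the weight of $s$ as a section over $\A^1\setminus\{0\}$ and does not by itself show that $s$ extends to a trivialization over all of $\A^1$; you correctly rely on the model (or would need the order-of-vanishing argument) for that extension.
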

\begin{proof} While this is a special case of the above construction, it can be directly checked as follows. The section $s'\in H^0(\A^1\setminus\{0\},\cL)$ defined by $s'(t):=t\cdot v$ defines a rational section of $\cL$. If we set $\mu:=\ord_0(s')$, then $v_0:=\lim_{z\to 0}z^{-\mu} s'(z)$ is a non-zero element of $\cL_0$, which satisfies
$$
t\cdot v_0=\lim_{z\to 0}z^{-\mu}\left((t z)\cdot v\right)=t^{\mu}\lim_{z\to 0}(t z)^{-\mu}\left((t z)\cdot v\right)=t^{\mu}v_0.
$$
It follows that $\mu$ coincides with the weight $\la$ of the $\G_m$-action on $\cL_0$. 
\end{proof}

We introduce the following piece of terminology. 
\begin{defi}\label{defi:weight} Let $W=\bigoplus_{\la\in\Z} W_\la$ be the weight decomposition of a $\G_m$-module. The \emph{weight measure} of $W$ is defined as the probability measure 
$$
\mu_W:=\frac{1}{\dim W}\sum_{\la\in\Z}(\dim W_\la)\d_\la.
$$
\end{defi}

For later use, we record the following immediate consequence of (\ref{equ:reessp}). 

\begin{lem}\label{lem:tail} Let $\cV$ be a $\G_m$-linearized vector bundle over $\A^1$, and $F^\bullet V$ the corresponding $\Z$-filtration of the fiber $V=\cV_1$. The weight measure $\mu_{\cV_0}$ of the $\G_m$-module $\cV_0$ then satisfies
$$
\mu_{\cV_0}\{x\ge\la\}=\frac{\dim F^{\lceil\la\rceil} V}{\dim V}
$$
for all $\la\in\R$. 
\end{lem}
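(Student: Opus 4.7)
The plan is to directly compute the right-hand side using the identification of $\cV_0$ as a $\G_m$-module provided by \eqref{equ:reessp}. Since $\cV$ is a vector bundle on $\A^1$, we have $\dim \cV_0 = \dim V$, so the normalization in the definition of the weight measure is the same on both sides of the claimed identity.

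First, I would read off from \eqref{equ:reessp} that the weight-$\la$ subspace of $\cV_0$, for $\la \in \Z$, is exactly $F^\la V/F^{\la+1} V$, so that
\begin{equation*}
  \dim (\cV_0)_\la = \dim F^\la V - \dim F^{\la+1} V.
\end{equation*}
By Definition~\ref{defi:weight}, this gives
\begin{equation*}
  \mu_{\cV_0}\{x \ge \la\}
  = \frac{1}{\dim V}\sum_{\substack{\mu \in \Z \\ \mu \ge \la}} \bigl(\dim F^\mu V - \dim F^{\mu+1} V\bigr).
\end{equation*}

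The final step is to observe that for $\mu \in \Z$, the condition $\mu \ge \la$ is equivalent to $\mu \ge \lceil \la \rceil$, and that the sum above telescopes. Using the fact that $F^\mu V = 0$ for $\mu \gg 0$ (condition (iii) in the definition of a filtration), one obtains
\begin{equation*}
  \sum_{\mu \ge \lceil \la \rceil} \bigl(\dim F^\mu V - \dim F^{\mu+1} V\bigr) = \dim F^{\lceil \la \rceil} V,
\end{equation*}
which yields the claim. There is no real obstacle here; the only point to be a bit careful about is the passage from $\mu \ge \la$ to $\mu \ge \lceil \la \rceil$, which is precisely what forces the ceiling in the statement and matches the left-continuity convention (ii) built into the definition of a $\Z$-filtration via $F^\la V = F^{\lceil \la \rceil} V$.
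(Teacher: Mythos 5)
Your proof is correct and matches the paper's intent exactly: the paper states the lemma as an ``immediate consequence'' of~\eqref{equ:reessp}, and your argument simply spells out that consequence via the telescoping sum and the observation that integrality of the weights converts $\mu\ge\la$ into $\mu\ge\lceil\la\rceil$. Nothing is missing.
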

%
%
%
%
\subsection{Valuations}\label{sec:val}
Let $K$ be a finitely generated field extension of $k$, with $n:=\trdeg K/k$, so that $K$ may be realized as the function field of a (normal, projective) $n$-dimensional variety. 

Since we only consider real-valued valuations, we simply call \emph{valuation} $v$ on $K$ a group homomorphism $v:K^*\to(\R,+)$ such that $v(f+g)\ge\min\left\{v(f),v(g)\right\}$ and $v|_{k^*}\equiv 0$~\cite{ZS}. It is convenient to set $v(0)=+\infty$. The \emph{trivial valuation} $v_\triv$ is defined by $v_\triv(f)=0$ for all $f\in K^*$. To each valuation $v$ is attached the following list of invariants. The \emph{valuation ring} of $v$ is 
$\cO_v:=\left\{f\in K\mid v(f)\ge 0\right\}$.
This is a local ring with maximal ideal 
$\fm_v:=\left\{f\in K\mid v(f)>0\right\}$,
and the \emph{residue field} of $v$ is $k(v):=\cO_v/\fm_v$.
The \emph{transcendence degree} of $v$ (over $k$) is 
$\trdeg(v):=\trdeg k(v)/k$.
Finally, the \emph{value group} of $v$ is $\Ga_v:=v(K^*)\subset\R$,
and the \emph{rational rank} of $v$ is 
$\ratrk(v):=\dim_\Q\left(\Ga_v\otimes\Q\right)$.

If $k\subset K'\subset K$ is an intermediate field extension, $v$ is a valuation on $K$ and $v'$ is its restriction to $K'$,  the Abhyankar-Zariski inequality states that
\begin{equation}\label{equ:Abhy}
\trdeg(v)+\ratrk(v)\le\trdeg(v')+\ratrk(v')+\trdeg K/K'.
\end{equation}
Taking $K'=k$, we get $\trdeg(v)+\ratrk(v)\le n$, 
and we say that $v$ is an \emph{Abhyankar valuation} if equality holds;
such valuations can be geometrically characterized, see~\cite{ELS,KK05,JM}.
In particular, the trivial valuation is Abhyankar; it is the unique valuation 
with transcendence degree $n$. We say that $v$ is \emph{divisorial} if $\ratrk(v)=1$
and $\trdeg(v)=n-1$. 
By a theorem of Zariski, this is the case iff there exists a normal projective variety $Y$ with $k(Y)=K$ and a prime divisor $F$ of $Y$ such that $v=c\ord_F$ for some $c>0$. We then have $k(v)=k(F)$ and $\Ga_v=c\Z$. 

If $X$ is a variety with $k(X)=K$, a valuation $v$ is \emph{centered on $X$} if there exists a scheme point $\xi\in X$ such that $v\ge 0$ on the local ring $\cO_{X,\xi}$ and $v>0$ on its maximal ideal. We also say $v$ is a valuation on $X$ in this case.
By the valuative criterion of separatedness, the point $\xi$ is unique, and is called the \emph{center} of $v$ on $X$. If $X$ is proper, the valuative criterion of properness guarantees that any $v$ is centered on $X$. If a divisorial valuation $v$ is centered on $X$, then $v=c\ord_F$ where $F$ is a prime divisor on a normal variety $Y$ with a proper birational morphism $\mu\colon Y\to X$; the center of $v$ on $X$ is then the generic point of $\mu(F)$.

For any valuation $v$ centered on $X$, we can make sense of $v(s)\in\R_+$ for a (non-zero) section $s\in H^0(X,L)$ of a line bundle $L$ on $X$, by trivializing $L$ at the center $\xi$ of $v$ on $X$
and evaluating $v$ on the local function corresponding to $s$ in this
trivialization. Since any two such trivializations differ by a unit at
$\xi$, $v(s)$ is well-defined, and $v(s)>0$ iff $s(\xi)=0$. 

Similarly, given an ideal $\fa\subset\cO_X$ we set
$$
v(\fa)=\inf\{v(f)\mid f\in\fa_\xi\}. 
$$
It is in fact enough to take the min over any finite set of generators of $\fa_\xi$. 
We also set $v(Z):=v(\fa)$, where $Z$ is the closed subscheme defined by $\fa$.

\medskip
Finally, for later use we record the following simple variant of~\cite[Theorem 10.1.6]{HS}. 

\begin{lem}\label{lem:irredundant} Assume that $X=\Spec A$ is affine. Let $S$ be a finite set of valuations on $X$, which is irredundant in the sense that for each $v\in S$ there exists $f\in A$ with $v(f)<v'(f)$ for all $v'\in S\setminus\{v\}$. Then $S$ is uniquely determined by the function $h_S(f):=\min_{v\in S} v(f)$. 
\end{lem}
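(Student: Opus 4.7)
The plan is to show $S_1\subseteq S_2$ whenever $S_1,S_2$ are two irredundant finite sets of valuations on $X$ with $h_{S_1}=h_{S_2}$ on $A$; by symmetry this gives the desired uniqueness. Fix $v\in S_1$, and use the irredundancy of $S_1$ to pick $f\in A$ with $v(f)<v'(f)$ for all $v'\in S_1\setminus\{v\}$. Setting $\d:=\min_{v'\ne v}v'(f)-v(f)>0$, for any fixed $g\in A\setminus\{0\}$ one obtains $h_{S_1}(f^n g)=nv(f)+v(g)$ as soon as $n\d$ exceeds the (finite) spread of the values $\{v'(g)\}_{v'\in S_1}$.

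The key step is to combine this with the identity $h_{S_1}(f^n g)=h_{S_2}(f^n g)$. Let $m:=\min_{w\in S_2}w(f)$ and $T:=\{w\in S_2\mid w(f)=m\}$; for $n$ sufficiently large (again depending on $g$) we have $h_{S_2}(f^n g)=nm+\min_{w\in T}w(g)$. Equating the two affine-in-$n$ expressions and reading off the leading and constant coefficients yields $v(f)=m$ together with the identity
\begin{equation*}
v(g)=\min_{w\in T}w(g)\quad\text{for every }g\in A\setminus\{0\}.
\end{equation*}

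It then remains to show that a valuation $v$ which is the pointwise minimum on $A$ of a finite set $T$ of valuations necessarily belongs to $T$. If $v\notin T$, then for each $w\in T$ one picks $g_w\in A$ with $v(g_w)\ne w(g_w)$, using that valuations on the fraction field $k(X)$ of $A$ are determined by their restriction to $A$; the minimality of $v$ forces $v(g_w)<w(g_w)$. Taking $g:=\prod_{w\in T}g_w\in A$ then yields $w'(g)>v(g)$ for every $w'\in T$ (strict because the factor $g_{w'}$ contributes $w'(g_{w'})>v(g_{w'})$ while $w'(g_w)\ge v(g_w)$ for the remaining factors), contradicting $v(g)=\min_{w\in T}w(g)$. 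Hence $v\in T\subseteq S_2$, as desired.

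I do not anticipate a serious obstacle: this is essentially the standard irredundancy argument for Rees valuations as in~\cite[Theorem~10.1.6]{HS}, adapted to an arbitrary finite set of valuations on an affine variety. The only point requiring a little care is the $g$-dependence of the threshold on $n$, which is handled by performing the comparison one $g$ at a time rather than uniformly.
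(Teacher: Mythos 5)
Your proof is correct and rests on the same two ingredients as the paper's: multiplying an arbitrary $g$ by a high power of the irredundancy witness $f$ (called $g_v$ in the paper) to force $h_{S_1}$ to be computed by $v$, and taking a product of disagreement witnesses to reach a contradiction when $v$ would not lie in the other set. The paper simply deploys them in the opposite order — first using the product trick to find a single $w\in S_2$ with $\{h=v\}\subset\{h=w\}$, then using powers of $g_v$ to conclude $v=w$ — so this is essentially the same argument.
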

\begin{proof} Let $S$ and $T$ be two irredundant finite sets of valuations with $h_S=h_T=:h$. For each $v\in S$, $w\in T$ set $C_v:=\left\{f\in A\mid h(f)=v(f)\right\}$ and $D_w:=\left\{f\in A\mid h(f)=w(f)\right\}$, and observe that these sets are stable under finite products. For each $v\in S$, we claim that there exists $w\in T$ with $C_v\subset D_w$. Otherwise, for each $w$ there exists $f_w\in C_v\setminus D_w$, \ie $v(f_w)=h(f_w)<w(f_w)$. Setting $f=\prod_w f_w$, we get for each $w'\in T$
$$
w'(f)=\sum_{w\in T} w'(f_w)>\sum_{w\in S} h(f_w)=\sum_{w\in S} v(f_w)=v(f)\ge h(f), 
$$
and taking the min over $w'\in T$ yields a contradiction. 

We next claim that $C_v\subset D_w$ implies that $v=w$. This will prove that $S\subset T$, and hence $S=T$ by symmetry. Note first that $v(f)=h(f)=w(f)$ for each $f\in C_v$. Now choose $g_v\in A$ with $v(g_v)<v'(g_v)$ for all $v'\ne v$ in $S$, so that $g_v\in C_v\subset D_w$. For each $f\in A$,  we then have $v(g_v^m f)<v'(g_v^m f)$ for $m\gg 1$, and hence $g_v^m f\in C_v\subset D_w$. It follows that 
$$
m v(g_v)+v(f)=v(g_v^m f)=w(g_v^m f)=m w(g_v)+w(f)=m v(g_v)+w(f), 
$$
and hence $v(f)=w(f)$. 
\end{proof} 
%
%
%
%
\subsection{Integral closure and Rees valuations}\label{sec:rees}
We assume in this section that $X$ is a normal variety. Let $Z\subset X$ a closed subscheme with ideal $\fa\subset\cO_X$. On the one hand,  the \emph{normalized blow-up} $\pi\colon\tX\to X$ along $Z$ is the composition of the blow-up of $Z$ in $X$ with the normalization morphism. On the other hand, the \emph{integral closure} $\overline\fa$ of $\fa$ is the set of elements $f\in\cO_X$ satisfying a monic equation $f^d+a_1f^{d-1}+\dots+a_d=0$ with $a_j\in\fa^j$. 

The following well-known connection between normalized blow-ups and integral closures shows in particular that $\overline\fa$ is a coherent ideal sheaf. 

\begin{lem}\label{lem:normblow} Let $Z\subset X$ be a closed subscheme, with ideal $\fa\subset\cO_X$, and let $\pi\colon\tX\to X$ be the normalized blow-up along $Z$. Then $D:=\pi^{-1}(Z)$ is an effective Cartier divisor with $-D$ $\pi$-ample, and we have for each $m\in\N$:
\begin{itemize}
\item[(i)] $\cO_{\tX}(-mD)$ is $\pi$-globally generated; 
\item[(ii)] $\pi_*\cO_{\tX}(-mD)=\overline{\fa^m}$; 
\item[(iii)] $\cO_{\tX}(-mD)=\cO_{\tX}\cdot\overline{\fa^m}=\cO_{\tX}\cdot\fa^m$; 
\end{itemize}
In particular, $\pi$ coincides with the normalized blow-up of $\overline{\fa}$, and also with the (usual) blow-up of $\overline{\fa^m}$ for any $m\gg 1$. 
\end{lem}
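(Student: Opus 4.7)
My plan is to bootstrap everything from the factorization $\pi: \tX \xrightarrow{\nu} X' \xrightarrow{\pi'} X$, where $X' = \mathrm{Bl}_Z X$ and $\nu$ is the normalization. The universal property of blowing up gives $\fa \cdot \cO_{X'} = \cO_{X'}(-D')$ invertible with $-D'$ $\pi'$-ample; setting $D := \nu^* D'$, the ideal $\fa \cdot \cO_{\tX} = \cO_{\tX}(-D)$ is invertible, $-D$ remains $\pi$-ample since $\nu$ is finite, and taking $m$-th powers yields $\fa^m \cdot \cO_{\tX} = \cO_{\tX}(-mD)$. From this, (i) is immediate: local generators of $\fa^m$ on any affine open $U \subset X$ furnish sections of $\cO_{\tX}(-mD)$ on $\pi^{-1}(U)$ that generate it $\pi$-relatively.

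For the inclusion $\overline{\fa^m} \subset \pi_* \cO_{\tX}(-mD)$ in (ii), take $f \in \overline{\fa^m}$ satisfying a monic relation $f^d + \sum_{j=1}^d a_j f^{d-j} = 0$ with $a_j \in \fa^{jm}$. Work locally at a point of $\tX$ where $\cO_{\tX}(-D)$ is generated by some $\sigma$, so $\tau := \sigma^m$ generates $\cO_{\tX}(-mD)$ and one can write $\pi^* a_j = \tau^j b_j$ with $b_j \in \cO_{\tX}$. Dividing the pulled-back equation by $\tau^d$ shows $\pi^* f / \tau$ is integral over $\cO_{\tX}$, hence lies in $\cO_{\tX}$ by normality, giving $\pi^* f \in \cO_{\tX}(-mD)$ as required. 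Combining this with $\fa^m \subset \overline{\fa^m}$ and $\overline{\fa^m} \cdot \cO_{\tX} \subset \cO_{\tX}(-mD) = \fa^m \cdot \cO_{\tX}$ yields (iii).

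The reverse inclusion $\pi_* \cO_{\tX}(-mD) \subset \overline{\fa^m}$ is the main technical point; my plan is to use the valuative characterization of integral closure, which asserts that $f \in \overline{\fa^m}$ iff $v(f) \geq m v(\fa)$ for every valuation $v$ on $k(X)$ with center on $X$. Given such a $v$, the valuative criterion of properness applied to $\pi$ produces a unique center $\tilde z \in \tX$; near $\tilde z$, a local generator $\sigma$ of $\cO_{\tX}(-D) = \fa \cdot \cO_{\tX}$ satisfies $v(\sigma) = v(\fa \cdot \cO_{\tX,\tilde z}) = v(\fa)$ (the stalk is principal and generated by elements of $\fa$). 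For $f \in \pi_* \cO_{\tX}(-mD)$, the stalk relation $\pi^* f \in \sigma^m \cdot \cO_{\tX,\tilde z}$ then gives $v(f) \geq m v(\sigma) = m v(\fa)$, completing the proof.

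For the concluding assertions, $\pi$ coincides with the normalized blow-up of $\overline\fa$ because $\overline\fa \cdot \cO_{\tX} = \fa \cdot \cO_{\tX}$ is already invertible by (iii), so $\pi$ factors through it; the reverse factorization comes from a local computation on an affine chart of the normalized blow-up of $\overline\fa$ where $\overline\fa \cdot \cO$ is generated by some $g \in \overline\fa$: writing the integral relation $g^d + \sum a_j g^{d-j} = 0$ with $a_j \in \fa^j$ and using $a_j \in g^j \cdot \cO$ to extract $a_j/g^{j-1} \in \fa \cdot \cO$, one obtains $g \in \fa \cdot \cO$, so $\fa \cdot \cO$ is invertible there and the universal property yields a factorization through $\tX$. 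Finally, agreement with the ordinary blow-up of $\overline{\fa^m}$ for $m$ sufficiently divisible rests on the finite generation of $\bigoplus_n \overline{\fa^n}$ as an $\cO_X$-algebra (Rees): the Veronese subalgebra $\bigoplus_n \overline{\fa^{mn}}$ is then generated in degree one, so $(\overline{\fa^m})^n = \overline{\fa^{mn}}$, making $\mathrm{Bl}(\overline{\fa^m}) = \Proj \bigoplus_n \overline{\fa^{mn}}$ coincide with $\tX$ via its $m$-th Veronese description. The main obstacle throughout is the valuative passage used for the harder half of (ii); everything else is either a direct local computation using normality or an appeal to a standard finite-generation result.
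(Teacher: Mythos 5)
Your proof is correct, and for the one genuinely delicate point it takes a different route from the paper. The setup (factoring through the ordinary blow-up, transporting relative ampleness and global generation across the finite normalization, and the easy inclusion $\overline{\fa^m}\subset\pi_*\cO_{\tX}(-mD)$ via a monic equation plus normality of $\tX$) matches the paper's, which phrases that last point as the integral closedness of $\cO_{\tX}(-mD)$ and hence of $\fa_m:=\pi_*\cO_{\tX}(-mD)$. For the hard inclusion $\pi_*\cO_{\tX}(-mD)\subset\overline{\fa^m}$ the paper reproduces Lazarsfeld's geometric argument: pulling back a presentation of $\fa^m$ and applying Serre vanishing yields $\fa^m\cdot\fa_l=\fa_{m+l}$ for $l\gg1$, whence $\fa_m\subset\overline{\fa^m}$ by the determinant trick. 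You instead combine the valuative criterion of integral closure with the valuative criterion of properness, checking $v(f)\ge m\,v(\fa)$ at the center of each valuation on $\tX$; this is also correct, and importantly it does not rely on Theorem~\ref{thm:rees} (which is proved later \emph{from} this lemma) but on the standard valuative characterization of $\overline{I}$ in~\cite{HS}. Your route is more local and avoids the $l\gg1$ asymptotics, at the price of importing a nontrivial commutative-algebra theorem; the paper's stays self-contained modulo Serre vanishing. Your treatment of the concluding ``in particular'' assertions is more detailed than the paper's (which leaves them implicit); one small slip there: to get $a_j/g^{j-1}\in\fa\cdot\cO$ you should use $a_j\in\fa\cdot\fa^{j-1}$ together with $\fa^{j-1}\cdot\cO\subset(g^{j-1})$, rather than $a_j\in(g^j)$ alone, which would only return the circular statement $a_j/g^{j-1}\in(g)$.
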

We recall the brief argument for the convenience of the reader. 

\begin{proof} Let $\mu\colon X'\to X$ be the blow-up along $Z$, so that $\mu^{-1}(Z)=D'$ is a Cartier divisor on $X'$ with $-D'$ $\mu$-very ample, and hence $\cO_{X'}(-mD')$ $\mu$-globally generated for all $m\in\N$. Denoting by $\nu\colon\tX\to X'$ the normalization morphism, we have $\nu^*D'=D$. Since $\nu$ is finite, it follows that $-D$ is $\pi$-ample and satisfies (i), which reads $\cO_{\tX}(-mD)=\cO_{\tX}\cdot\fa_m$ with 
$$
\fa_m:=\pi_*\cO_{\tX}(-mD). 
$$
It therefore remains to establish (ii). By normality of $\tX$, $\cO_{\tX}(-mD)$ is integrally closed, hence so is $\fa_m$. As $\fa\subset\fa_1$, we have $\fa^m\subset\fa_1^m\subset\fa_m$, and hence $\overline{\fa^m}\subset\fa_m$. 

The reverse inclusion requires more work; we reproduce the elegant geometric argument of~\cite[II.11.1.7]{Laz}. Fix $m\ge 1$. As the statement is local over $X$, we may choose a system of generators $(f_1,\dots,f_p)$ for $\fa^m$. This defines a surjection $\cO_X^{\oplus p}\to\fa^m$, which induces, after pull-back and twisting by $-lD$, a surjection
$$
\cO_{\tX}(-lD)^{\oplus p}\to\cO_{\tX}\left(-(m+l)D\right)=\fa^m\cdot\cO_{\tX}(-lD)
$$ 
for any $l\ge 1$. Since $-D$ is $\pi$-ample, Serre vanishing implies that the induced map 
$$
\fa_{l}^{\oplus p}=\pi_*\cO_{\tX}\left(-lD\right)^{\oplus p}\to\fa_{(m+l)}=\pi_*\cO_{\tX}\left(-(m+l)mD\right)
$$ 
is also surjective for $l\gg 1$, \ie $\fa^m\cdot\fa_l=\fa_{m+l}$. But since $\fa_{m+l}\supset\fa_m\cdot\fa_l\supset\fa^m\cdot\fa_l$, $\fa_m$ acts on the finitely generated $\cO_X$-module $\fa_l$ by multiplication by $\fa^m$, and the usual determinant trick therefore yields $\fa_m\subset\overline{\fa^m}$.
\end{proof} 

\begin{defi}\label{defi:rees} Let $Z\subset X$ be a closed subscheme with ideal $\fa$, and let $\pi\colon\tX\to X$ be the normalized blow-up of $Z$, with $D:=\pi^{-1}(Z)$. The \emph{Rees valuations} of $Z$ (or $\fa$) are the divisorial valuations $v_E=\frac{\ord_E}{\ord_E(D)}$, where $E$ runs over the irreducible components of $D$.\end{defi}

Note that $v_E(Z)=v_E(\fa)=v_E(D)=1$ for all $E$.
We now show that the present definition of Rees valuations coincides with the standard one in valuation theory (see for instance~\cite[Chapter 5]{HS}). The next result is a slightly more precise version of~\cite[Theorem 2.2.2, (3)]{HS}. 

\begin{thm}\label{thm:rees} The set of Rees valuations of $\fa$ is the unique finite set $S$ of valuations such that:
\begin{itemize}
\item[(i)] $\overline{\fa^m}=\bigcap_{v\in S}\left\{f\in\cO_X\mid v(f)\ge m \right\}$ for all $m\in\N$; 
\item[(ii)] $S$ is minimal with respect to (i). 
\end{itemize}
\end{thm}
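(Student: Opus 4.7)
The plan is to verify that the set $S = \{v_E\}$ of Rees valuations satisfies (i), establish its minimality by producing a separating local section for each component $E_0$ of $D$, and finally deduce uniqueness via Lemma~\ref{lem:irredundant}. Condition (i) for $S$ is immediate from Lemma~\ref{lem:normblow}~(ii): one has $\overline{\fa^m} = \pi_*\cO_{\tX}(-mD)$, and a local section $f$ of $\cO_X$ lies in this ideal iff $\ord_E(\pi^*f) \geq m\,\ord_E(D)$ for each irreducible component $E$ of $D$, iff $v_E(f) \geq m$.

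For minimality, fix a component $E_0$ of $D$, and set $d_{E_0} := \ord_{E_0}(D)$. The idea is to exploit the $\pi$-ampleness of $-D$ to deduce that, for $m \gg 1$, the line bundle $\cO_{\tX}(-mD + E_0)$ is $\pi$-ample and $\pi$-globally generated (since $-D + \tfrac{1}{m}E_0$ remains $\pi$-ample for $m$ large, and Serre vanishing then yields global generation of its $m$-th multiple). The evaluation map $\pi^*\pi_* \cO_{\tX}(-mD + E_0) \twoheadrightarrow \cO_{\tX}(-mD + E_0)$ is then nonzero at the generic point of $E_0$, yielding a local section $f \in \pi_* \cO_{\tX}(-mD + E_0) \subset \cO_X$ whose pullback $\pi^*f$ vanishes to order exactly $m d_{E_0} - 1$ along $E_0$ and to order $\geq m d_E$ along every other component $E$. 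Such $f$ satisfies $v_E(f) \geq m$ for all $E \neq E_0$ while $v_{E_0}(f) = m - 1/d_{E_0} < m$, so $f \notin \overline{\fa^m}$; hence (i) fails for $S \setminus \{v_{E_0}\}$, and as a by-product the same $f$ witnesses the irredundancy of $S$ in the sense of Lemma~\ref{lem:irredundant}.

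For uniqueness, let $S'$ be any finite set of valuations satisfying (i) and minimal with respect to it; by minimality, the same argument as above shows that $S'$ is also irredundant. Condition (i), applied to both $S$ and $S'$, gives for every local $f \in \cO_X$ and $m \in \N$ the equivalence
\[
\min_{v \in S} v(f) \geq m \;\Longleftrightarrow\; f \in \overline{\fa^m} \;\Longleftrightarrow\; \min_{v \in S'} v(f) \geq m.
\]
Applying this to the powers $f^k$ for all $k \in \N$ and dividing by $k$, one recovers $h_S(f) = h_{S'}(f)$ for the function $h_T(f) := \min_{v \in T} v(f)$. Working on any affine open of $X$, Lemma~\ref{lem:irredundant} then forces $S = S'$.

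The principal technical obstacle is the minimality step, where one has to arrange the $\pi$-ampleness and global generation on $\tX$ to produce a section $f$ with $\pi^*f$ vanishing to precisely the order $m d_{E_0} - 1$ along $E_0$ and no more. The uniqueness part is then essentially a rescaling trick that promotes the $\N$-indexed data of the integral closures $\overline{\fa^m}$ to the $\Q$-valued function $h_S$, combined with a direct appeal to the general irredundancy lemma.
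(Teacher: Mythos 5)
Your proposal is correct and follows essentially the same route as the paper: condition (i) is read off from Lemma~\ref{lem:normblow}, minimality comes from the $\pi$-global generation of $\cO_{\tX}(-mD+E_0)$ for $m\gg1$ (the paper phrases this as a strict inclusion of pushforwards rather than exhibiting the separating section explicitly), and uniqueness uses the homogeneity $h_S(f^k)=kh_S(f)$ together with Lemma~\ref{lem:irredundant}. The only loose phrase is that for a general minimal $S'$ irredundancy follows not from "the same argument as above" but from the abstract observation that a redundant valuation can be dropped without changing $h_{S'}$, hence without changing the intersections in (i) — an implication the paper likewise asserts without proof.
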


\begin{proof}[Proof of Theorem~\ref{thm:rees}] For each finite set of valuations $S$, set $h_S(f):=\min_{v\in S}v(f)$. Using that $h_S(f^m)=m h_S(f)$, it is straightforward to check that any two sets $S$, $S'$ satisfying (i) have $h_S=h_{S'}$. If $S$ and $S'$ further satisfy (ii), then they are irredundant in the sense of Lemma~\ref{lem:irredundant}, which therefore proves that $S=S'$. 

It remains to check that the set $S$ of Rees valuations of $Z$ satisfies (i) and (ii). The first property is merely a reformulation of Lemma~\ref{lem:normblow}.  Now pick an irreducible  component $E$ of $D$. It defines a fractional ideal $\cO_{\tX}(E)$. Since $-D$ is $\pi$-ample, $\cO_{\tX}(-mD)$ and $\cO_{\tX}(-mD)\cdot\cO_{\tX}(E)$ both become $\pi$-globally generated for $m\gg 1$. Since $\cO_{\tX}(-mD)$ is strictly contained in $\cO_{\tX}(-mD)\cdot\cO_{\tX}(E)$, it follows that 
$\overline{\fa^m}=\pi_*\cO_{\tX}(-mD)$ is strictly contained in 
$$
\pi_*\left(\cO_{\tX}(-mD)\cdot\cO_{\tX}(E)\right)\subset\bigcap_{E'\ne E}\left\{f\in\cO_X\mid v_{E'}(f)\ge m\right\}, 
$$
which proves (ii). 
\end{proof}

\begin{exam}\label{ex:rees} The Rees valuations of an effective Weil divisor  
  $D=\sum_{i=1}^m a_i D_i$ on a normal variety $X$ are given by 
  $v_i:=\frac{1}{a_i}\ord_{D_i}$, $1\le i\le m$.
\end{exam}

We end this section on Rees valuations with the following result. 
\begin{prop}\label{prop:reesexc} Let $\pi:Y\to X$ be a projective birational morphism between normal varieties, and assume that $Y$ admits a Cartier divisor that is both $\pi$-exceptional and $\pi$-ample. Then $\pi$ is isomorphic to the blow-up of $X$ along a closed subscheme $Z$ of codimension at least $2$, and the divisorial valuations $\ord_F$ defined by the $\pi$-exceptional prime divisors $F$ on $Y$ coincide, up to scaling, with the Rees valuations of $Z$.
\end{prop}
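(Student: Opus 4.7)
The plan is to reduce $\pi$ to the (normalized) blow-up of an explicit ideal on $X$ built from $D$, and then to identify the Rees valuations with the $\pi$-exceptional primes via Lemma~\ref{lem:normblow}.

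First I would invoke the negativity lemma for projective birational morphisms between normal varieties: since the $\pi$-exceptional Cartier divisor $D$ is also $\pi$-nef (being $\pi$-ample), $-D$ is effective. Write $D=-E$, where $E$ is an effective $\pi$-exceptional Cartier divisor and $-E$ is $\pi$-ample. The key sub-claim is that $\mathrm{Supp}(E)$ coincides with the union $\mathrm{Exc}(\pi)$ of all $\pi$-exceptional prime divisors of $Y$. Indeed, for any $\pi$-exceptional prime $F$, the generic fiber of $F\to\pi(F)$ has positive dimension, so $F$ contains a proper irreducible curve $C$ contracted by $\pi$. The $\pi$-ampleness of $-E$ gives $(-E)\cdot C>0$, so $E\cdot C<0$; but if $F\not\subset\mathrm{Supp}(E)$, then $C\not\subset\mathrm{Supp}(E)$, so $E$ restricts to an effective Cartier divisor on $C$ and forces $E\cdot C\ge 0$, a contradiction.

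Next, for $m\gg 1$, relative Serre vanishing applied to the $\pi$-ample sheaf $\cO_Y(-E)$ ensures $\cO_Y(-mE)$ is $\pi$-globally generated. Set $\fa\=\pi_*\cO_Y(-mE)$; this is an ideal of $\pi_*\cO_Y=\cO_X$ (the last equality by normality of $X$), and one has $\cO_Y\cdot\fa=\cO_Y(-mE)$. The universal property of blow-ups gives a factorization $\pi=\pi'\circ\sigma$, where $\pi'\colon\mathrm{Bl}_\fa X\to X$ is the blow-up and $\sigma\colon Y\to\mathrm{Bl}_\fa X$ satisfies $\sigma^*\cO(1)=\cO_Y(-mE)$. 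Because $\cO_Y(-mE)$ has positive degree on every $\pi$-contracted curve, $\sigma$ contracts no curves, hence is quasi-finite and thus finite (being projective); finiteness plus birationality plus normality of $Y$ identify $\sigma$ with the normalization of $\mathrm{Bl}_\fa X$. Therefore $Y$ is the normalized blow-up of $\fa$, and Lemma~\ref{lem:normblow} then identifies $\pi$ with the usual blow-up of $X$ along the closed subscheme $Z$ defined by $\overline{\fa^k}$ for $k\gg 1$. The support of $Z$ coincides with that of the ideal $\fa$, which is contained in $\pi(\mathrm{Supp}(E))=\pi(\mathrm{Exc}(\pi))$; this has codimension at least $2$ in $X$ by $\pi$-exceptionality of $E$.

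Finally, by Definition~\ref{defi:rees} the Rees valuations of $Z$ are the normalized valuations $\ord_F/\ord_F(\pi^{-1}(Z))$, where $F$ ranges over the irreducible components of $\pi^{-1}(Z)$. Here $\pi^{-1}(Z)$ is supported on $\mathrm{Supp}(E)=\mathrm{Exc}(\pi)$, whose components are precisely the $\pi$-exceptional prime divisors of $Y$. Hence the Rees valuations of $Z$ are exactly the valuations $\ord_F$ for $F$ a $\pi$-exceptional prime, up to positive scaling. The main obstacle is the support identification $\mathrm{Supp}(E)=\mathrm{Exc}(\pi)$: one must exhibit, for each exceptional prime $F$, a curve on which the ampleness-vs-effectivity sign comparison can be carried out. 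Once this is in hand, the remaining steps are routine manipulations with relative ampleness, the universal property of blow-ups, and Lemma~\ref{lem:normblow}.
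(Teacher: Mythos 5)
Your proposal is correct and follows essentially the same route as the paper's Lemma~\ref{lem:ample}: effectivity of $-D$, identification of the support of $E$ with the exceptional locus by intersecting with $\pi$-contracted curves, and realization of $Y$ as the blow-up of the ideal $\pi_*\cO_Y(-mE)$ for $m\gg1$ (the paper gets effectivity directly from $\cO_Y(-mE)=\fa\cdot\cO_Y\subset\cO_Y$ rather than from the negativity lemma, and identifies $Y$ with $\Proj_X\bigoplus_l\fa^l$ via finite generation rather than via the universal property plus normalization, but these are cosmetic differences). One small imprecision: in the support argument you should take $C$ to be a \emph{general} contracted curve in $F$, since a particular contracted curve in $F$ could lie inside $F\cap\supp E$ even when $F\not\subset\supp E$.
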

This is indeed a direct consequence of the following well-known facts. 

\begin{lem}\label{lem:ample} Let $\pi:Y\to X$ be a projective birational morphism between varieties with $X$ normal. If $G$ is a $\pi$-exceptional, $\pi$-ample Cartier divisor, then:
\begin{itemize}
\item[(i)] $-G$ is effective; 
\item[(ii)] $\supp G$ coincides with the exceptional locus of $\pi$; 
\item[(iii)] for $m$ divisible enough, $\pi$ is isomorphic to the blow-up of the ideal $\fa_m:=\pi_*\cO_Y(mG)$, whose zero locus has codimension at least $2$.
\end{itemize}
Conversely, the blow-up of $X$ along a closed subscheme of codimension at least $2$ admits a $\pi$-exceptional, $\pi$-ample Cartier divisor. 
\end{lem}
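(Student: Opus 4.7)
The plan is to establish (i), (ii), (iii) in order and then the converse, exploiting the tension between $\pi$-ampleness of $G$ (positivity on $\pi$-contracted curves) and $\pi$-exceptionality (no trace on $X$).

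For (i), I would invoke the standard negativity lemma for projective birational morphisms: if $X$ is normal and $D$ is a $\pi$-exceptional Cartier divisor with $-D$ $\pi$-nef, then $D$ is effective. Applying this to $D=-G$ (so that $-D=G$ is $\pi$-nef by $\pi$-ampleness) yields $-G\ge 0$. The negativity lemma itself reduces, by cutting with general hyperplane sections on $X$, to a surface setup, where the Grauert--Mumford negative-definiteness of the intersection form on exceptional curves combined with the disjointness of $D^+$ and $D^-$ in the decomposition $D=D^+-D^-$ gives $D\cdot D^-\ge -(D^-)^2>0$ whenever $D^-\ne 0$, contradicting $D\cdot D^-\le 0$ that follows from $-D$ being $\pi$-nef.

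For (ii), the inclusion $\supp G\subseteq\Exc(\pi)$ is immediate from (i). Conversely, suppose $y\in\Exc(\pi)\setminus\supp G$ and set $x=\pi(y)$. The morphism $\pi$ cannot be finite at $y$ --- otherwise, a finite birational morphism onto a normal scheme would be an isomorphism at $y$ by Zariski's Main Theorem, contradicting $y\in\Exc(\pi)$ --- so the fiber $\pi^{-1}(x)$ is positive-dimensional near $y$. Pick an irreducible component $F$ of $\pi^{-1}(x)_{\red}$ containing $y$ with $\dim F\ge 1$; as $y\in F\setminus\supp G$, one can find an irreducible complete curve $C\subset F$ through $y$ with $C\not\subset\supp G$. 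Then $\pi$-ampleness gives $G\cdot C>0$, while writing $-G=\sum_i a_iD_i$ with $a_i>0$ and $C\not\subset D_i$ gives $G\cdot C=-\sum_i a_i(D_i\cdot C)\le 0$, a contradiction.

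For (iii), choose $m$ large and divisible so that $\cO_Y(mG)$ is $\pi$-very ample. Since $-mG\ge 0$, the line bundle $\cO_Y(mG)$ is canonically an ideal sheaf on $Y$, and $\pi_*\cO_Y=\cO_X$ (by normality of $X$) makes $\fa_m:=\pi_*\cO_Y(mG)$ a coherent ideal on $X$. The $\pi$-very-ampleness gives a closed embedding $Y\hookrightarrow\P_X(\fa_m)=\Proj_X\Sym(\fa_m)$; the induced graded algebra map $\Sym(\fa_m)\to\bigoplus_l\pi_*\cO_Y(lmG)$ has image equal to the Rees algebra $\bigoplus_l\fa_m^l$, since products of $l$ sections in $\fa_m\subseteq\cO_X$ land in $\fa_m^l\subseteq\fa_{lm}$. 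Hence the schematic image of $Y$ in $\P_X(\fa_m)$ is $\Proj_X\bigoplus_l\fa_m^l=\mathrm{Bl}_{\fa_m}(X)$, and the induced closed immersion $Y\hookrightarrow\mathrm{Bl}_{\fa_m}(X)$ between integral schemes sharing the same generic point is necessarily an isomorphism. Finally $V(\fa_m)\subseteq\pi(\supp G)=\pi(\Exc(\pi))$ has codimension $\ge 2$ in $X$ by $\pi$-exceptionality. For the converse, if $\pi\colon Y\to X$ is the blow-up of a closed subscheme $Z\subset X$ of codimension $\ge 2$ with ideal $\fa$, then $Y=\Proj_X\bigoplus_l\fa^l$ carries the tautological $\pi$-very ample line bundle $\cO_Y(1)=\cO_Y(-E)$ with $E=\pi^{-1}(Z)$ Cartier and $\pi$-exceptional, so $G:=-E$ does the job.

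The main obstacle lies in part~(iii), namely the identification of the schematic image of $Y$ with $\mathrm{Bl}_{\fa_m}(X)$. The naive approach of attempting to show $\pi_*\cO_Y(lmG)=\fa_m^l$ for all $l\ge 1$ would require Castelnuovo--Mumford-type vanishing estimates; the Proj strategy above sidesteps this by realizing both $Y$ and $\mathrm{Bl}_{\fa_m}(X)$ inside the common projective bundle $\P_X(\fa_m)$, reading off the image from the Rees-algebra factorization of the section map, and then invoking integrality and birationality to upgrade the resulting closed immersion to an equality.
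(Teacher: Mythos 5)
Your proof is correct, but it takes a genuinely different route from the paper's in parts (i) and (iii). For (i) the paper does \emph{not} invoke the Negativity Lemma: it first observes that $\fa_m=\pi_*\cO_Y(mG)$, a priori only a fractional ideal, is an honest ideal of $\cO_X$ (a rational function in $\pi_*\cO_Y(mG)$ is regular in codimension one because $G$ is $\pi$-exceptional, hence regular by normality of $X$), and then reads off $\cO_Y(mG)=\fa_m\cdot\cO_Y\subset\cO_Y$ from $\pi$-global generation for $m\gg1$. The authors do remark that (i) is a special case of the Negativity Lemma, so your route is legitimate in spirit; be aware, however, that $Y$ is only assumed to be a variety here, whereas the Negativity Lemma (and your hyperplane-cutting sketch of its proof) is stated for $Y$ normal, and effectivity of a Cartier divisor does not descend from the normalization in general (consider $\mathrm{div}(t)$ on $\Spec k[t^2,t^3]$); the paper's direct argument handles non-normal $Y$ for free. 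Your (ii) is essentially the paper's argument run pointwise — note only that $\supp G\subset\Exc(\pi)$ follows from the definition of $\pi$-exceptionality, not from (i). For (iii) the paper argues via finite generation of the $\cO_X$-algebra $\bigoplus_l\fa_l$, whose relative $\Proj$ is $Y$, together with the resulting identity $\bigoplus_l\fa_{ml}=\bigoplus_l\fa_m^l$ for $m$ divisible enough; you instead embed $Y$ into $\Proj_X\mathrm{Sym}(\fa_m)$ by relative very ampleness and compute the schematic image as the $\Proj$ of the Rees algebra. Both work: yours trades the Veronese reduction for the (standard, but worth justifying via Serre vanishing in high twists) identification of the schematic image of $Y$ with the $\Proj$ of the image of $\mathrm{Sym}(\fa_m)\to\bigoplus_l\pi_*\cO_Y(lmG)$. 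Your treatment of the converse, which the paper leaves implicit, is the expected one and is fine.
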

Assertion (i) is a special (=ample) case of the Negativity Lemma~\cite[Lemma 3.39]{KM}. The simple direct argument given here is taken from the alternative proof of the Negativity Lemma provided in~\cite[Proposition 2.12]{BdFF}. 

\begin{proof} Set $\fa_m:=\pi_*\cO_Y(mG)$, viewed as a fractional ideal on $X$. Since $G$ is $\pi$-exceptional, every rational function in $\pi_*\cO_Y(mG)$ is regular in codimension $1$, and $\fa_m$ is thus an ideal whose zero locus has codimension at least $2$, by the normality of $X$. 

If we choose $m\gg 1$ such that $\cO_Y(mG)$ is $\pi$-globally generated, then we have $\cO_Y(mG)=\cO_Y\cdot\fa_m\subset\cO_Y$, which proves (i). 

By assumption, $\supp G$ is contained in the exceptional locus $E$ of $\pi$. Since $X$ is normal, $\pi$ has connected fibers by Zariski's main theorem, so $E$ is the union of all projective curves $C\subset Y$ that are mapped to a point of $X$. Any such curve satisfies $G\cdot C>0$ by the relative ampleness of $G$, and hence $C\subset \supp G$ since $-G$ is effective. 
Thus $\supp G=E$, proving~(ii). 

Finally, the relative ampleness of $G$, implies that 
the $\cO_X$-algebra $\bigoplus_{m\in\N}\fa_m$ is finitely generated, and its relative $\Proj$ over $X$ is isomorphic to $Y$. 
The finite generation implies $\bigoplus_{l\in\N}\fa_{ml}=\bigoplus_{l\in\N}\fa_m^l$ 
for all $m$ divisible enough, 
and applying $\Proj_X$ shows that $X$ is isomorphic to the blow-up of $X$ along $\fa_m$. 
\end{proof}
%
%
\subsection{Boundaries and log discrepancies}\label{sec:bound}
Let $X$ be a normal variety. In the Minimal Model Program  (MMP)
terminology, a \emph{boundary} $B$ on $X$ is a $\Q$-Weil divisor (\ie
a codimension one cycle with rational coefficients) such that
$K_{(X,B)}:=K_X+B$ is $\Q$-Cartier. Alternatively, one says that
$(X,B)$ is a \emph{pair} to describe this condition, and $K_{(X,B)}$
is called the \emph{log canonical divisor} of this pair. In
particular, $0$ is a boundary iff $X$ is $\Q$-Gorenstein. 
If $(X',B')$ and $(X,B)$ are pairs and $f\colon X'\to X$ is a morphism,
then we set $K_{(X',B')/(X,B)}=K_{(X',B')}-f^*K_{(X,B)}$.

To any divisorial valuation $v$ on $X$ is associated its \emph{log
  discrepancy} with respect to the pair $(X,B)$, denoted by
$A_{(X,B)}(v)$ and defined as follows. For any proper birational
morphism $\mu\colon Y\to X$, with $Y$ normal, and any prime divisor $F$ of $Y$ such that $v=c\ord_F$, we set 
$$
A_{(X,B)}(v):=c\left(1+\ord_F\left(K_{Y/(X,B)}\right)\right)
$$
with $K_{Y/(X,B)}:=K_{Y}-\mu^*K_{(X,B)}$. This is well-defined (\ie independent of the choice of $\mu$), by  compatibility of canonical divisor classes under push-forward. By construction, $A_{(X,B)}$ is homogeneous with respect to the natural action of $\R_+^*$ on divisorial valuations by scaling, \ie $A_{(X,B)}(c\,v)=c A_{(X,B)}(v)$ for all $c>0$. 

As a real valued function on $k(X)^*$, $c\,v$ converges pointwise to
the trivial valuation $v_{\triv}$ as $c\to 0$. It is thus natural to
set $A_{(X,B)}(v_{\triv}):=0$.  

The pair $(X,B)$ is \emph{sublc} if $A_{(X,B)}(v)\ge0$
for all divisorial valuations $v$. It is \emph{subklt} if the
inequality is strict. If $B$ is furthermore effective, then 
$(X,B)$ is \emph{lc} (or log canonical) and \emph{klt} (Kawamata log terminal), 
respectively. 
If $\mu\colon X'\to X$ is a birational morphism and $B'$ is defined by
$K_{(X',B')}=\mu^*K_{(X,B)}$ and $\mu_*B'=B$, then $A_{(X',B')}=A_{(X,B)}$, so
$(X',B')$ is subklt (resp.\ sublc) iff $(X,B)$ is subklt
(resp.\ sublc), but the corresponding
equivalence may fail for klt or lc pairs, since $B'$ is not necessarily 
effective even when $B$ is. 

If $(X,B)$ is a pair and $D$ is an effective $\Q$-Cartier divisor
on $X$, then we define the \emph{log canonical threshold} 
of $D$ with respect to  $(X,B)$ as
\begin{equation*}
  \lct_{(X,B)}(D):=\sup\left\{t\in\Q\mid(X,B+tD)\ \text{is subklt}\right\}, 
\end{equation*}
with the convention $\lct_{(X,B)}(D)=-\infty$ if $(X,B+tD)$ 
is not subklt for any $t$.
Assume $\lct_{(X,B)}(D)>-\infty$. Since 
$A_{(X,B+tD)}(v)=A_{(X,B)}(v)-t v(D)$
for all divisorial valuations $v$ on $X$, we have
\begin{equation*}
  \lct_{(X,B)}(D)=\inf_v\frac{A_{(X,B)}(v)}{v(D)},
\end{equation*}
where the infimum is taken over $v$ with $v(D)>0$.

When $k$ has characteristic zero, we can compute $\lct_{(X,B)}(D)$ 
using resolution of singularities.
Pick a birational morphism 
$\mu\colon X'\to X$, with $X'$ a smooth projective variety, such that 
if $B'$ and $D'$ are defined by $K_{(X',B')}=\mu^*K_{(X,B)}$, $\mu_*B'=B$ and 
$D':=\mu^*D$, then the union of the
supports of $B'$ and $D'$ has simple normal crossings.
Then
$\lct_{(X,B)}(D)=\lct_{(X',B')}(D')=\min_iA_{(X',B')}(\ord_{E_i})/\ord_{E_i}(D')$, 
where $E_i$ runs over the irreducible components of $D'$. 
%
%
%
%
\section{Test configurations}\label{sec:test}
In what follows, $X$ is a projective scheme over $k$, and $L$ is a $\Q$-line bundle on $X$.  
Most often, $L$ will be ample, but it is sometimes useful to consider the general case. Similarly, it will be convenient to allow some flexibility in the definition of test configurations.
\begin{defi}\label{defi:test1} A test configuration $\cX$ for $X$ consists of the following data:
\begin{itemize}
\item[(i)] a flat and proper morphism of schemes $\pi\colon\cX\to\A^1$; 
\item[(ii)] a $\G_m$-action on $\cX$ lifting the canonical action on $\A^1$;  
\item[(iii)] an isomorphism $\cX_1\simeq X$. 
\end{itemize}
\end{defi}
By Proposition~\ref{prop:basic} below, $\cX$ is automatically
a variety (\ie reduced and irreducible) when $X$ is.  
The central fiber $\cX_0:=\pi^{-1}(0)$ is an effective 
Cartier divisor on $\cX$ by the flatness of~$\pi$. 

Given test configurations $\cX$, $\cX'$ for $X$, the isomorphism $\cX'_1\simeq X\simeq\cX_1$ induces a canonical $\G_m$-isomorphism $\cX'\setminus\cX'_0\simeq\cX\setminus\cX_0$.
We say that $\cX'$ \emph{dominates} $\cX$ if this isomorphism extends
to a morphism $\cX'\to\cX$. When it is an isomorphism, we 
abuse notation slightly and write $\cX'=\cX$ (which is reasonable
given that the isomorphism is canonical). 
Any two test configurations $\cX_1$, $\cX_2$ can be dominated by a
third, for example the graph of $\cX_1\dashrightarrow\cX_2$.

\begin{defi}\label{defi:test2}
A test configuration $(\cX,\cL)$ for $(X,L)$ consists of a test
configuration $\cX$ for $X$, together with the following data:
\begin{itemize}
\item[(iv)] a $\G_m$-linearized $\Q$-line bundle $\cL$ on $\cX$;
\item[(v)] an isomorphism $(\cX_1,\cL_1)\simeq (X,L)$ extending the one in~(iii).
\end{itemize}
\end{defi}
By a $\G_m$-linearized $\Q$-line bundle $\cL$ as in~(iv), we mean that $r\cL$ is
an actual $\G_m$-linearized line bundle for some $r\in\Z_{>0}$ that is
not part of the data. The isomorphism in (v) then means
$(\cX,r\cL_1)\simeq(X,rL)$. 

We say that $(\cX,\cL)$ is ample, semiample,\dots (resp.~normal,
$S_1$,\dots) when $\cL$ (resp.~$\cX$) has the corresponding property. 

A \emph{pull-back} of a test configuration $(\cX,\cL)$ for $(X,L)$  
is a test configuration $(\cX',\cL')$ where $\cX'$ dominates $\cX$  
and $\cL'$ is the pull-back of $\cL$. 
 
For each $c\in\Q$, the $\G_m$-linearization of the $\Q$-line bundle $\cL$ may be twisted by $ t^c$, in the sense that the $\G_m$-linearization of $r\cL$ is twisted by the character $ t^{rc}$ with $r$ divisible enough. The resulting test configuration can be identified with $(\cX,\cL+c\cX_0)$. 

If $(\cX,\cL_1)$ and $(\cX,\cL_2)$ are test configurations for
$(X,L_1)$, $(X,L_2)$, respectively, and $c_1,c_2\in\Q_{>0}$, then 
$(\cX,c_1\cL_1+c_2\cL_2)$ is a test configuration for $(X,c_1L_1+c_2L_2)$.

If $(\cX,\cL)$ is a test configuration of $(X,\cO_X)$, then there
exists $r\in\Z_{>0}$ and a Cartier divisor $D$ on $\cX$ supported on
$\cX_0$ such that $r\cL=\cO_{\cX}(D)$.
%
%
\begin{exam}\label{ex:product} Every $\G_m$-action on $X$ induces a diagonal $\G_m$-action on $X\times\A^1$, thereby defining a \emph{product test configuration} $\cX$ for $X$. Similarly, a $\G_m$-linearization of $rL$ for some $r\ge 1$ induces a product test configuration $(\cX,\cL)$ for $(X,L)$, which is simply $(X,L)\times\A^1$ with diagonal action of $\G_m$. 
\end{exam}
We denote by $X_{\A^1}$ (resp.~$(X_{\A^1},L_{\A^1})$ the product test configuration induced by the trivial $\G_m$-action on $X$ (resp.~$(X,L)$). 
\begin{exam}\label{ex:defnorm} The \emph{deformation to the normal cone} of a closed subscheme $Z\subset X$ is the blow-up $\rho\colon\cX\to X_{\A^1}$ along $Z\times\{0\}$.
Thus $\cX$ is a test configuration dominating $X_{\A^1}$. By~\cite[Chapter 5]{Ful}, its central fiber splits as $\cX_0=E+F$, where $E=\rho^{-1}(Z\times\{0\})$ is the exceptional divisor and $F$ is the strict transform of $X\times\{0\}$, which is isomorphic to the blow-up of $X$ along $Z$. 
\end{exam}
\begin{exam}\label{E201}
  More generally we can blow up any $\G_m$-invariant ideal 
  $\fa$ on $X\times\A^1$ supported on the central fiber. We discuss this further in \S\ref{FlagIdeals}. 
\end{exam}
%
%
\subsection{Scheme theoretic features}
Recall that a scheme $Z$ satisfies \emph{Serre's condition $S_k$} iff 
\begin{equation*}
  \depth\cO_{Z,\xi}\ge\min\{\codim \xi,k\}
  \quad\text{for every point $\xi\in Z$}.
\end{equation*}
In particular, $Z$ is $S_1$ iff it has no embedded points. While we will not use it, one can show that $Z$ is $S_2$ iff it has no embedded points and satisfies the Riemann extension property across closed subsets of codimension at least $2$. 

On the other hand, $Z$ is \emph{regular in codimension $k$} ($R_k$ for short) iff $\cO_{Z,\xi}$ is regular for every $\xi\in\cX$ of codimension at most $k$. Equivalently $Z$ is $R_k$ iff its singular locus has codimension greater than $k$. 
Note that $Z$ is $R_0$ iff it is generically reduced.

Serre's criterion states that $Z$ is normal iff it is $R_1$ and $S_2$. Similarly, $Z$ is reduced iff it is $R_0$ and $S_1$ (in other words, iff $Z$ is generically reduced and without embedded points).  

\begin{prop}\label{prop:basic} Let $\cX$ be a test configuration for $X$. 
\begin{itemize}
\item[(i)] $\cX$ is reduced iff so is $X$. 
\item[(ii)] $\cX$ is $S_2$ iff $X$ is $S_2$ and $\cX_0$ is $S_1$ (\ie without embedded points). 
\item[(iii)] If $X$ is $R_1$ and $\cX_0$ is generically reduced  (that is, `without multiple components'), then $\cX$ is $R_1$.
\item[(iv)] If $X$ is normal and $\cX_0$ is reduced, then $\cX$ is normal. 
\item[(v)] Every irreducible component $\cY$ (with its reduced
  structure) of $\cX$ is a test configuration for a unique irreducible
  component $Y$ of $X$. Further, the multiplicities of $\cX$ along
  $\cY$ and those of $X$ along $Y$ are equal. 
\item[(vi)] $\cX$ is a variety iff so is $X$.
\end{itemize}
\end{prop}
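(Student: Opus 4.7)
The proof rests on two structural facts. First, combining $\cX_1\simeq X$ with the $\G_m$-equivariance produces a canonical $\G_m$-isomorphism $\cX\setminus\cX_0\simeq X\times\G_m$, so in particular the projection $\cX\setminus\cX_0\to X$ is smooth; properties like reducedness, $S_k$, or $R_k$ for $\cX$ at points outside $\cX_0$ are then the same as for $X$. Second, since $\pi\colon\cX\to\A^1$ is flat and $\A^1$ has only the generic point as associated point, every associated point of $\cX$ lies over the generic point of $\A^1$. This at once gives that $\cX$ is $S_1$, that no irreducible component of $\cX$ lies in $\cX_0$, and that $\cX\setminus\cX_0$ is scheme-theoretically dense in $\cX$. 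Flatness also makes $t$ a non-zerodivisor cutting out the Cartier divisor $\cX_0$, yielding the standard identities
\begin{equation*}
\dim\cO_{\cX,y}=1+\dim\cO_{\cX_0,y},\qquad\depth\cO_{\cX,y}=1+\depth\cO_{\cX_0,y}
\end{equation*}
at every $y\in\cX_0$.

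With this in place, (i)--(iv) reduce to pointwise checks. Outside $\cX_0$, both $S_k$ and $R_k$ for $\cX$ agree with the corresponding properties for $X$ by smooth base change, and reducedness of $X\times\G_m$ descends to that of $X$ by faithful flatness of $k\hookrightarrow k[t,t^{-1}]$. At $y\in\cX_0$, the depth identity above translates $\cX$ being $S_2$ at $y$ into $\cX_0$ being $S_1$ at $y$, establishing (ii). For (iii), a codimension-one point $y$ of $\cX$ inside $\cX_0$ is necessarily a generic point of $\cX_0$; generic reducedness of $\cX_0$ together with flatness of $t$ makes $\cO_{\cX,y}$ a DVR, since $\cO_{\cX,y}/(t)$ is then a field while $t$ is a non-zerodivisor. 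Combining (ii) and (iii) through Serre's criterion yields (iv), and (i) follows from density and the $S_1$ property, since reducedness is then detected on $\cX\setminus\cX_0\simeq X\times\G_m$.

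For (v), connectedness of $\G_m$ forces each irreducible component $\cY$ of $\cX$ to be setwise $\G_m$-invariant, and $\cY\not\subset\cX_0$ by the second observation above. Thus $\pi|_\cY\colon\cY\to\A^1$ is a dominant morphism from a reduced scheme to a regular one-dimensional base, hence flat; it is proper and $\G_m$-equivariant by restriction, so $\cY$ is a test configuration for $\cY_1$. Under $\cX\setminus\cX_0\simeq X\times\G_m$, the dense open $\cY\setminus\cY_0$ becomes an irreducible component of $X\times\G_m$, necessarily of the form $Y\times\G_m$ for a unique irreducible component $Y$ of $X$, whence $\cY_1=Y$. For the multiplicity assertion, the same identification realises the Artinian local ring $\cO_{\cX,\eta_\cY}$ as $\cO_{X,\eta_Y}\otimes_k k(t)$; a composition series of $\cO_{X,\eta_Y}$ with residue-field quotients $k(Y)$ tensors up to a composition series of $\cO_{\cX,\eta_\cY}$ with residue-field quotients $k(Y)(t)$, so the two lengths, i.e.\ the two multiplicities, coincide. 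Statement (vi) is then formal: the proof of (v) shows that components of $\cX$ correspond bijectively to components of $X$ (the inverse sending $Y$ to the closure of $Y\times\G_m$), and this combines with (i) to give ``$\cX$ variety iff $X$ variety''.

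The main technical ingredient is the depth/dimension identity on $\cX_0$ used for (ii) and (iii); once it is in place, the rest is smooth base change, faithfully flat descent, and the two structural observations above.
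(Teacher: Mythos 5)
Your proof is correct and follows essentially the same route as the paper's: flatness forces all associated points of $\cX$ off the central fiber, the $S_k$/$R_k$ analysis splits between the open locus $\cX\setminus\cX_0\simeq X\times(\A^1\setminus\{0\})$ and the Cartier divisor $\cX_0$ via the depth-drop identity, and (v)--(vi) follow from the density of $\cX\setminus\cX_0$. You supply more detail than the paper does for the multiplicity claim in (v); there the relevant local ring is the localization of $\cO_{X,\eta_Y}[t]$ at the generic point of $Y\times\A^1$ rather than literally $\cO_{X,\eta_Y}\otimes_k k(t)$, but your composition-series argument goes through verbatim with that correction.
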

Recall that the multiplicity of $X$ along $Y$ is defined as the length of $\cO_X$ at the generic point of $Y$. 
\begin{proof} Flatness of $\pi\colon\cX\to\A^1$ implies that $\cX_0$ is Cartier divisor and that 
every associated (\ie generic or embedded) point of $\cX$ belongs to $\cX\setminus\cX_0$ (cf.~\cite[Proposition III.9.7]{Har}). 
The proposition is a simple consequence of this fact and of the isomorphism $\cX\setminus\cX_0\simeq X\times(\A^1\setminus\{0\})$. 

More specifically, since $\A^1\setminus\{0\}$ is smooth, $\cX\setminus\cX_0$ is $R_k$ (resp.~$S_k$) iff $X$ is. Since $\cX_0$ is a Cartier divisor, we also have
$$
\depth\cO_{\cX_0,\xi}=\depth\cO_{\cX,\xi}-1
$$
for each $\xi\in\cX_0$, so that $\cX$ is $S_k$ iff $X$ is $S_k$ and $\cX_0$ is $S_{k-1}$. 

It remains to show that $\cX_0$ being generically reduced and $X$
being $R_1$ imply that $\cX$ is $R_1$. But every codimension one point
$\xi\in\cX$ either lies the open subset $\cX\setminus\cX_0$, in which case $\cX$ is regular at $\xi$, or is a generic
point of the Cartier divisor $\cX_0$. In the latter case, the closed
point of $\Spec\cO_{\cX,\xi}$ is a reduced Cartier divisor; hence $\cO_{\cX,\xi}$ is regular.

Now, $\cX\setminus\cX_0$ is Zariski dense in $\cX$ since $\cX_0$
is a Cartier divisor. Hence $\cX$ is isomorphic to
$X\times\A^1$ at each generic point, and (v) easily follows.
Finally,~(vi) is a consequence of~(i) and~(v).
\end{proof}
%
%
\subsection{Compactification}\label{sec:compact}
For many purposes it is convenient to compactify test configurations. The following notion
provides a canonical way of doing so.
\begin{defi}\label{defi:comp} The \emph{compactification} $\bar\cX$ of a test configuration $\cX$ for $X$ is defined by gluing together $\cX$ and $X\times(\P^1\setminus\{0\})$ along their respective open subsets $\cX\setminus\cX_0$ and $X\times(\A^1\setminus\{0\})$, which are identified using the canonical $\G_m$-equivariant isomorphism $\cX\setminus\cX_0\simeq X\times(\A^1\setminus\{0\})$. 
\end{defi}
The compactification comes with a $\G_m$-equivariant flat morphism $\bar\cX\to\P^1$, still denoted by $\pi$. By construction, $\pi^{-1}(\P^1\setminus\{0\})$ is $\G_m$-equivariantly isomorphic to $X_{\P^1\setminus\{0\}}$ over $\P^1\setminus\{0\}$. 

Similarly, a test configuration $(\cX,\cL)$ for $(X,L)$ admits a compactification $(\bar\cX,\bar\cL)$, where $\bar\cL$ is a $\G_m$-linearized $\Q$-line bundle on $\bar\cX$. Note that $\bar\cL$ is relatively (semi)ample iff $\cL$ is. 

\begin{exam}\label{ex:comp} When $\cX$ is the product test configuration defined by a $\G_m$-action on $X$, the compactification $\bar\cX\to\P^1$ may be alternatively described as the locally trivial fiber bundle with typical fiber $X$ associated to the principal $\G_m$-bundle $\A^2\setminus\{0\}\to\P^1$, \ie 
$$
\bar\cX=\left((\A^2\setminus\{0\})\times X\right)/\G_m
$$ 
with $\G_m$ acting diagonally. Note in particular that $\bar\cX$ is not itself a product in general. For instance, the $\G_m$-action $ t\cdot[x:y]=[t^d x:y]$ on $X=\P^1$ gives rise to the Hirzebruch surface $\bar\cX\simeq\P\left(\cO_{\P^1}\oplus\cO_{\P^1}(d)\right)$. 
\end{exam} 
%
%
\subsection{Ample test configurations and one-parameter subgroups}
Let $(X,L)$ be a polarized projective scheme. Fix $r\ge 1$ such that $rL$ is very ample, and consider the corresponding closed embedding $X\hookrightarrow\P V^*$ with $V:=H^0(X,rL)$.  

Every one-parameter subgroup ($1$-PS for short) $\rho\colon\G_m\to\mathrm{PGL}(V)$ induces a test configuration $\cX$ for $X$, 
defined as the schematic closure in $\P V^*\times\A^1$ of the image of the closed embedding $X\times\G_m\hookrightarrow\P V^*\times\G_m$ mapping $(x, t)$ to $(\rho(t)x, t)$. In other words, $\cX_0$ is defined as the `flat limit' as $t\to 0$ of the image of $X$ under $\rho(t)$, cf.~\cite[Proposition 9.8]{Har}. By Proposition~\ref{prop:basic}, the schematic closure is simply given by the Zariski closure when $X$ is reduced. 

If we are now given $\rho\colon\G_m\to\GL(V)$, then $\cO_\cX(1)$ is $\G_m$-linearized, and we get an ample test configuration $(\cX,\cL)$ for $(X,L)$ by setting $\cL:=\tfrac 1r\cO_\cX(1)$. 

\smallskip

Conversely, every ample test configuration is obtained in this way, as was originally pointed out in~\cite[Proposition 3.7]{RT}. Indeed, let $(\cX,\cL)$ be an ample test configuration, and pick $r\ge 1$ such that $r\cL$ is (relatively) very ample. The direct image $\cV:=\pi_*\cO_\cX(r\cL)$ under $\pi\colon\cX\to\A^1$ is torsion-free by flatness of $\pi$, and hence a $\G_m$-linearized vector bundle on $\A^1$ with an equivariant embedding $\cX\hookrightarrow\P(\cV^*)$ such that $r\cL=\cO_\cX(1)$.  

By Proposition~\ref{prop:reesfiltr}, $\cV$ is $\G_m$-equivariantly isomorphic to $V\times\A^1$ for a certain $\G_m$-action $\rho\colon\G_m\to\GL(V)$, and it follows that $(\cX,\cL)$ is the ample test configuration attached to $\rho$. 
%
%
\subsection{Trivial and almost trivial test configurations}\label{sec:triv}
The normalization $\nu\colon\tX\to X$ of a (possibly non-reduced) scheme $X$ is defined as the normalization of the reduction $X_\red$ of $X$. Denoting by $X_\red=\bigcup_\a X^\a$ the irreducible decomposition, we have $\tX=\coprod_\a \tX^\a$, the disjoint union of the normalizations $\tX^\a\to X^\a$. 

If $L$ is a $\Q$-line bundle and $\tL:=\nu^*L$, we call $(\tX,\tL)$ the \emph{normalization} of $(X,L)$. If $L$ is ample, then so is $\tL$ (cf.~\cite[\S4]{Har2}). The normalization $(\tcX,\tcL)$ of a test configuration $(\cX,\cL)$ is similarly defined (the flatness of $\tcX\to\A^1$ being a consequence of~\cite[Proposition III.9.7]{Har}), and is a test configuration for $(\tX,\tL)$. By Proposition~\ref{prop:basic}, we have $\tcX=\coprod_\a\tcX^\a$ with $(\tcX^\a,\tcL^\a)$ a test configuration for $(\tX^\a,\tL^\a)$. 

\begin{defi}\label{defi:triv} A test configuration $(\cX,\cL)$ for
  $(X,L)$ is \emph{trivial} if $\cX=X_{\A^1}$. We say that $(\cX,\cL)$
  is \emph{almost trivial} if the normalization $\tcX^\a$ of each
  top-dimensional irreducible component $\cX^\a$ is trivial. 
\end{defi}
Note that (almost) triviality does not a priori bear on $\cL$. However, we have:
\begin{lem}\label{lem:triv} A test configuration $(\cX,\cL)$ is almost
  trivial iff for each top-dimensional irreducible component $X^\a$ of
  $X$, the corresponding irreducible component $\tcX^\a$ of the normalization of $\cX$ satisfies $(\tcX^\a,\tcL^\a+c_\a\tcX^\a_0)=(\tX^\a_{\A^1},\tL^\a_{\A^1})$ for some $c_\a\in\Q$.
\end{lem}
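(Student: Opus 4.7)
The ``if'' direction is immediate from the definitions: the twist $\tcL^\alpha\mapsto\tcL^\alpha+c_\alpha\tcX^\alpha_0$ only alters the $\G_m$-linearization, so the equality $(\tcX^\alpha,\tcL^\alpha+c_\alpha\tcX^\alpha_0)=(\tX^\alpha_{\A^1},\tL^\alpha_{\A^1})$ forces $\tcX^\alpha=\tX^\alpha_{\A^1}$ in particular, which is by definition what it means for $(\cX,\cL)$ to be almost trivial.

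For the ``only if'' direction, the strategy is to reduce everything to the fact recorded in \S\ref{sec:test} that any test configuration of $(X,\cO_X)$ is of the form $\cO(D)$ for a Cartier divisor $D$ supported on the central fiber. Fix a top-dimensional irreducible component $X^\alpha$ and set $Y:=\tX^\alpha$. By almost triviality, $\tcX^\alpha=Y_{\A^1}$ as $\G_m$-schemes over $\A^1$. Form the $\G_m$-linearized $\Q$-line bundle
\[
\cM:=\tcL^\alpha-\tL^\alpha_{\A^1}
\]
on $Y_{\A^1}$. Since $(\tcX^\alpha,\tcL^\alpha)$ is a test configuration for $(Y,\tL^\alpha)$ (as the normalization of a top-dimensional component of $(\cX,\cL)$) and $(Y_{\A^1},\tL^\alpha_{\A^1})$ is the trivial test configuration for the same polarized variety, their difference $(Y_{\A^1},\cM)$ is a test configuration for $(Y,\cO_Y)$.

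Now I would apply the cited fact: there exist $r\in\Z_{>0}$ and a Cartier divisor $D$ on $Y_{\A^1}$ supported on $Y\times\{0\}=\tcX^\alpha_0$ with $r\cM=\cO_{Y_{\A^1}}(D)$ as $\G_m$-linearized line bundles. Since $Y$ is a variety, $\tcX^\alpha_0$ is an integral Cartier divisor on $Y_{\A^1}$ (cut out by the coordinate $t$), so necessarily $D=n\,\tcX^\alpha_0$ for some $n\in\Z$. Hence $\cM=(n/r)\tcX^\alpha_0$ as $\G_m$-linearized $\Q$-line bundles, and setting $c_\alpha:=-n/r$ yields $\tcL^\alpha+c_\alpha\tcX^\alpha_0=\tL^\alpha_{\A^1}$.

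The only subtle point in this plan is to make sure that the equality $r\cM=\cO(D)$ holds $\G_m$-equivariantly, not merely as ungraded line bundles: this is built into the natural $\G_m$-linearization on $\cO(D)$ coming from $\G_m$-invariance of $D$, and any residual character ambiguity is absorbed by replacing $D$ by $D+m\tcX^\alpha_0$ for a suitable $m\in\Z$. Once this is in hand, the argument is essentially a one-liner.
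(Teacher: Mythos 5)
Your proof is correct and follows essentially the same route as the paper's: reduce to a normal irreducible component, subtract the trivial metric to get a test configuration for the trivial line bundle, write it as $\cO(D)$ with $D$ supported on the central fiber, and use irreducibility of $Y\times\{0\}$ to conclude $D$ is a multiple of $\tcX^\a_0$. The extra care you take with the $\G_m$-linearization is fine but not needed beyond what the paper's conventions already provide.
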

\begin{proof} We may assume that $\cX$ (and hence $X$) is normal and irreducible. Replacing $\cL$ with $\cL-L_{\A^1}$, we may also assume that $L=\cO_X$, and we then have $\cL=D$ for a unique $\Q$-Cartier divisor $D$ supported on $\cX_0$. If $(\cX,\cL)$ is almost trivial, then $\cX=X_{\A^1}$, and $\cX_0=X\times\{0\}$ is thus irreducible. It follows that $D$ is a multiple of $\cX_0$, hence the result. 
\end{proof}

The next result shows that the current notion of almost triviality is compatible with the one introduced in~\cite{Sto2,Oda4}. 

\begin{prop}\label{prop:almosttriv} Assume that $L$ is ample, and let $(\cX,\cL)$ be an ample test configuration for $(X,L)$. 
\begin{itemize}
\item[(i)] If $X$ is normal, then $(\cX,\cL)$ is almost trivial iff $X_{\A^1}$ dominates $\cX$.
\item[(ii)] If $X$ is reduced and equidimensional, then $(\cX,\cL)$ is almost trivial iff the canonical birational map $X_{\A^1}\dashrightarrow\cX$ is an isomorphism in codimension one. 
\end{itemize}
\end{prop}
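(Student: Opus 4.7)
The plan is to prove (i) first via an ampleness argument ruling out contracted fibers, then deduce (ii) by passing through normalizations. For (i)(``$\Rightarrow$''), normality of $X$ forces $\tilde{X}=X$, so almost triviality reads $\tcX = X_{\A^1}$, and composing this identification with the normalization $\tcX\to\cX$ provides the required dominating morphism. For (i)(``$\Leftarrow$''), I would factor the given morphism $g\colon X_{\A^1}\to\cX$ through the normalization as a proper, birational, $\G_m$-equivariant morphism $\tilde g\colon X_{\A^1}\to\tcX$ between normal schemes; to conclude $\tcX = X_{\A^1}$ (and hence almost triviality) it suffices by Zariski's main theorem to show that $\tilde g$ has no positive-dimensional fiber. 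Working one irreducible component $X^\alpha$ at a time, the central fiber $X^\alpha\times\{0\}$ of $X^\alpha_{\A^1}$ is irreducible, so $\tilde g^*\tcL^\alpha = L^\alpha_{\A^1} + c_\alpha(X^\alpha\times\{0\})$ for some $c_\alpha\in\Q$. Any positive-dimensional fiber $F$ of $\tilde g$ must lie inside some $X^\alpha\times\{0\}$; restricting gives $\tilde g^*\tcL^\alpha|_F = L^\alpha|_F$ (the conormal bundle of the fiber being trivial), which is ample and contradicts the necessary triviality of $\tilde g^*\tcL^\alpha$ on $F$.

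For (ii), I would deduce each direction from its analogue in (i) by passing through normalizations. In (``$\Rightarrow$''), almost triviality gives $\tcX^\alpha = \tilde{X}^\alpha_{\A^1}$ for every top-dimensional component; combined with Proposition~\ref{prop:basic}(v) and reducedness of $X$, this forces each component $\cX_0^\alpha$ of the central fiber to be irreducible of multiplicity one with function field $k(X^\alpha)$, so a short local computation using flatness identifies $\cO_{\cX,y_\alpha}$ with the DVR $k(X^\alpha)[t]_{(t)}$ at the generic point $y_\alpha$ of $\cX_0^\alpha$, matching $\cO_{X_{\A^1},x_\alpha}$ at the generic point $x_\alpha$ of $X^\alpha\times\{0\}$. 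Together with the canonical isomorphism off the central fiber, this yields isomorphism in codimension one. In (``$\Leftarrow$''), iso in codimension one lifts via the finite normalization maps to a common open $\tilde U$ embedded in both $\tilde{X}_{\A^1}$ and $\tcX$ whose complement has codimension at least two on both sides. On $\tilde U$ the ample $\Q$-line bundles $\tcL$ and $\tilde{L}_{\A^1}$ agree off the central fiber, so their difference extends by Hartogs to $\tilde L := \tilde L_{\A^1} + \sum_\alpha c_\alpha(\tilde{X}^\alpha\times\{0\})$ on $\tilde{X}_{\A^1}$ with $\tilde L|_{\tilde U} = \tcL|_{\tilde U}$; since $\tilde L$ is numerically equivalent to $\tilde L_{\A^1}$ on each fiber of $\pi$, it is $\pi$-ample. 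Hartogs on each normal scheme then equates the graded section rings $\bigoplus_r\pi_*\cO(r\tcL)$ and $\bigoplus_r\pi_*\cO(r\tilde L)$, and taking $\Proj_{\A^1}$ yields $\tcX = \tilde{X}_{\A^1}$, hence almost triviality via Lemma~\ref{lem:triv}.

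The hard part will be the technical bookkeeping in part (ii): tracking component and multiplicity correspondences through normalization, verifying that the Hartogs-extended $\Q$-line bundle $\tilde L$ remains $\pi$-ample on $\tilde{X}_{\A^1}$, and confirming that the resulting identification of graded $\cO_{\A^1}$-algebras descends to an isomorphism of schemes via $\Proj_{\A^1}$. Ampleness of $L$ plays two essential roles: in part (i) it precludes positive-dimensional contracted fibers, and in part (ii) it underpins the $\Proj$-based recovery of the test configuration from its graded section ring.
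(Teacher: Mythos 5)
Your part (i) is correct and takes a genuinely different route from the paper, which deduces (i) from (ii). Your argument --- factor $X_{\A^1}\to\cX$ through the normalization to get a proper birational $\tilde g\colon X_{\A^1}\to\tcX$ between normal schemes, then rule out positive-dimensional fibers because $\tilde g^*\tcL^\a$ would be simultaneously trivial and (by the computation $\tilde g^*\tcL^\a=L^\a_{\A^1}+c_\a(X^\a\times\{0\})$, whose correction term restricts trivially to the central fiber) ample on such a fiber --- is clean, and it isolates exactly where ampleness of $L$ enters. Your (ii)(``$\Leftarrow$'') is essentially the paper's proof: the paper likewise reduces to the normalization, writes the strict transform of $\cL$ as $L_{\A^1}+c\,(X\times\{0\})$ (hence $\Q$-Cartier and relatively ample), matches section rings across a codimension-two locus, and concludes by ampleness.

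The genuine gap is in (ii)(``$\Rightarrow$''). The claim that almost triviality forces each component of $\cX_0$ to have multiplicity one and residue field $k(X^\a)$, so that $\cO_{\cX,y_\a}$ is the DVR $k(X^\a)[t]_{(t)}$, is not a ``short local computation using flatness''; on the standard example of~\cite{LX} it is false. Take $X=\P^1$ embedded in $\P^2$ by $(x_0^2,x_1^2,x_0x_1)$ and act by $\mathrm{diag}(1,1,t)$ (this is Proposition~\ref{prop:triv1PS} with $s_1=x_0^2$, $s_2=x_1^2$, $s_3=x_0x_1$). Then $\cX=\{z_2^2=t^2z_0z_1\}\subset\P^2\times\A^1$ is an ample test configuration whose normalization is $\P^1\times\A^1$ with the trivial action, so $(\cX,\cL)$ is almost trivial; yet $\cX_0$ is the double line $\{z_2^2=0\}$, the generic point $y$ of $\cX_0$ has residue field $k((x_1/x_0)^2)\subsetneq k(x_1/x_0)$, and $\cO_{\cX,y}\cong\bigl(k[u,v,t]/(v^2-ut^2)\bigr)_{(v,t)}$ is a non-normal one-dimensional local ring in which $t$ does not generate the maximal ideal. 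What flatness actually gives is $[\cX_0]=\nu_*[\tX\times\{0\}]=\sum_\a[k(X^\a):k(y_\a)]\,[Z_\a]$, and nothing forces the residue degrees to be $1$ or the images $Z_\a$ to be distinct; so the normalization map $X_{\A^1}\to\cX$ can fail to be a local isomorphism at every point of $\cX_0$ even when $(\cX,\cL)$ is almost trivial.

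In fairness, the paper's own proof of this direction asserts with equally little justification that ``$t$ is a uniformizing parameter on $\cX$ at each generic point of $\cX_0$'', which fails in the same example; the real issue is that ``isomorphism in codimension one'' must be given a meaning adapted to the non-normal schemes involved (e.g.\ phrased through normalizations or divisorial valuations) before this implication can be proved. But as written, your reduction of (ii)(``$\Rightarrow$'') to the displayed identification of local rings cannot be completed, and this is where the proposal breaks down.
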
 

\begin{proof} Consider first the case where $\cX$ is normal and irreducible, and assume that $X_{\A^1}\dashrightarrow\cX$ is an isomorphism in codimension one. The strict transform $\cL'$ of $\cL$ (viewed as a $\Q$-Weil divisor class) on $X_{\A^1}$ coincides with $L_{\A^1}$ outside $X\times\{0\}$. The latter being irreducible, we thus have $\cL'=L_{\A^1}+c X\times\{0\}$. This shows that $\cL'$ is ($\Q$-Cartier and) relatively ample. Since the normal varieties $X_{\A^1}$ and $\cX$ are isomorphic outside a Zariski closed subset of codimension at least $2$, we further have $H^0(\cX,m\cL)\simeq H^0(X_{\A^1},m\cL')$ for all $m$ divisible enough, and we conclude by ampleness that $(\cX,\cL)\simeq(X_{\A^1},\cL')$ is trivial. 

We now treat the reduced case, as in (ii). Observe first that $X_{\A^1}$ is regular at each generic point of $X\times\{0\}$, because $X$ is regular in codimension zero, being reduced. As a result, $\tX_{\A^1}\to X_{\A^1}$ is an isomorphism in codimension one. 

Now assume that $X_{\A^1}\dashrightarrow\cX$ is an isomorphism in
codimension one. Then $\cX$ is isomorphic to $X_{\A^1}$ at each
generic point $\xi$ of $\cX_0$. By the previous observation, $\cX$ is
regular at $\xi$, so that $\tcX\to\cX$ is an isomorphism at each
generic point of $\tcX_0$. The same therefore holds for
$\tcX\dashrightarrow\tX_{\A^1}$, which means that $\tcX$ is almost
trivial. Applying the first part of the proof to each irreducible component of $\tcX$ shows that $\tcX=\tX_{\A^1}$. 

Assume conversely that $(\cX,\cL)$ is almost trivial, \ie $\tcX=\tX_{\A^1}$. Since $\tcX\to\A^1$ factors through $\cX\to\A^1$ and $\tX_{\A^1}\to X_{\A^1}$ is an isomorphism in codimension one, we see that the coordinate $t$ on $\A^1$ is a uniformizing parameter on $\cX$ at each generic point of $\cX_0$, and it follows easily that $\cX\dashrightarrow X_{\A^1}$ is an isomorphism in codimension one. 

Finally, (i) is a consequence of (ii). 
\end{proof}

At the level of one-parameter subgroups, almost triviality admits the following simple characterization, which completes~\cite[Proposition~3.5]{Oda4}. 
\begin{prop}\label{prop:triv1PS} 
  Assume that $(X,L)$ is a polarized normal variety, and pick $r\ge 1$ with $rL$ very ample. 
  Let $(s_i)$ be a basis of $H^0(X,rL)$, pick integers
  $$
  a_1=\dots=a_p<a_{p+1}\le\dots\le a_{N_r},
  $$ 
  and let $\rho\colon\G_m\to\GL(H^0(X,rL))$ be the $1$-parameter subgroup such that 
  $\rho(t)s_i=t^{a_i} s_i$. The test configuration $(\cX,\cL)$ defined by $\rho$ is then 
  almost trivial iff $\bigcap_{1\le i\le p}(s_i=0)=\emptyset$ in $X$. 
\end{prop}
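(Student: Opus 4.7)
The plan is to use the explicit description of $(\cX,\cL)$ from~\S 2.2: under the embedding $X\hookrightarrow\P V^*$ determined by the basis $(s_i)$ of $V=H^0(X,rL)$, the scheme $\cX$ is the schematic closure in $\P V^*\times\A^1$ of the image of $X\times\G_m\to\P V^*\times\G_m$, $(x,t)\mapsto(\rho(t)x,t)$. Since $X$ is normal, Proposition~\ref{prop:almosttriv}(i) reduces almost triviality of $(\cX,\cL)$ to the condition that $X_{\A^1}$ dominate $\cX$, i.e., that the canonical $\G_m$-equivariant isomorphism $X\times\G_m\simeq\cX\setminus\cX_0$ extend to a morphism $X_{\A^1}\to\cX$. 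Because $\cX$ is a closed subscheme of $\P V^*\times\A^1$ and $X\times\G_m$ is schematically dense in the integral scheme $X_{\A^1}$, the question reduces to whether the rational map
\[
\varphi\colon X_{\A^1}\dashrightarrow\P V^*\times\A^1,\quad (x,t)\mapsto\bigl([t^{a_1}s_1(x):\cdots:t^{a_{N_r}}s_{N_r}(x)],\,t\bigr),
\]
extends to a morphism on all of $X_{\A^1}$; any such extension then automatically factors through $\cX$.

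After multiplying the homogeneous coordinates by $t^{-a_1}$, which does not alter $\varphi$, we may assume $a_1=0$, and the $t^{a_i}s_i$ become regular sections of $p_1^*(rL)$ on $X_{\A^1}$ spanning the linear system defining $\varphi$. Their common zero locus is empty for $t\neq 0$, since $(s_i)$ is a basepoint-free basis of $|rL|$, and equals $\bigcap_{i\le p}(s_i=0)\times\{0\}=Z\times\{0\}$ at $t=0$, because $t^{a_i}|_{t=0}$ vanishes for $i>p$ and equals $1$ for $i\le p$. Finally, one checks that this linear system has trivial fixed part: a common prime divisor of the sections $t^{a_i}s_i$ would have to be either a horizontal divisor $D\times\A^1$ with $D\subset\bigcap_i(s_i=0)=\emptyset$, or the vertical divisor $X\times\{0\}$ appearing with multiplicity at most $\min_i a_i=0$, and neither option occurs. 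Hence the base locus coincides with the intrinsic indeterminacy locus of $\varphi$, so $\varphi$ extends to a morphism if and only if $Z=\emptyset$. The main subtlety is this last step—identifying the set-theoretic common zero locus with the true indeterminacy locus via the fixed-part analysis—while the reduction to the rational-map question is straightforward given Proposition~\ref{prop:almosttriv}(i) and the schematic density of $X\times\G_m$ in $X_{\A^1}$.
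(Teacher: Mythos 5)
Your proof is correct and follows essentially the same route as the paper: reduce, via Proposition~\ref{prop:almosttriv}~(i), to the question of whether the explicit rational map $X_{\A^1}\dashrightarrow\P^{N_r-1}\times\A^1$ given by $([t^{a_1}s_1:\dots:t^{a_{N_r}}s_{N_r}],t)$ extends to a morphism, and then read off the answer from the sections after normalizing $a_1=0$. The only difference is that you carefully justify the final step (identifying the indeterminacy locus with $\bigcap_{i\le p}(s_i=0)\times\{0\}$ via the fixed-part analysis on the normal scheme $X_{\A^1}$), which the paper dismisses as clear.
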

This recovers the key observation of~\cite[\S3.1]{LX} that almost trivial, nontrivial test configurations always exist, and gives a simple explicit way to construct them. 

\begin{proof} The canonical rational map
$$
\phi\colon X\times\A^1\dashrightarrow\cX\hookrightarrow\P^{N_r-1}\times\A^1
$$
is given by
\begin{multline*}
  \phi(x,t)=(\rho(t)[s_i(x)],t)=([t^{a_i}s_i(x)],t)\\
  =([s_1(x):\dots:s_p(x):t^{a_{p+1}-a_1}s_{p+1}:\dots:t^{a_{N_r}-a_1}s_{N_r}(x)],t), 
\end{multline*}
where $a_j-a_1\ge 1$ for $j>p$. By (i) of Proposition~\ref{prop:almosttriv}, $(\cX,\cL)$ is almost trivial iff $\phi$ extends to a morphism $X\times\A^1\to\P^{N_r-1}\times\A^1$, and this is clearly the case iff $\bigcap_{1\le i\le p}(s_i=0)=\emptyset$. 
\end{proof}
%
%
\subsection{Test configurations and filtrations}\label{sec:filtrtc}
By the reverse Rees construction of~\S\ref{sec:reesfiltr}, every test configuration $(\cX,\cL)$ for $(X,L)$ induces a $\Z$-filtration of the graded algebra 
$$
R(X,rL):=\bigoplus_{m\in\N} H^0(X,mrL)
$$ 
for $r$ divisible enough. More precisely, for each $r$ such that $r\cL$ is a line bundle, we define a $\Z$-filtration on $R(X,rL)$ by letting $F^\la  H^0(X,mrL)$ be the (injective) image of the weight-$\la$ part $H^0(\cX,rm\cL)_\la$ of $H^0(\cX,mr\cL)$ under the restriction map
$$
H^0(\cX,mr\cL)\to H^0(\cX,mr\cL)_{t=1}=H^0(X,mrL).
$$
Alternatively, we have
\begin{equation}\label{equ:filtr}
F^\la  H^0(X,mrL)=\left\{s\in H^0(X,mrL)\mid  t^{-\la}\bar s\in H^0(\cX,mr\cL)\right\}
\end{equation}
where $\bar s\in H^0(\cX\setminus\cX_0,mr\cL)$ denotes the $\G_m$-invariant section defined by $s\in H^0(X,mrL)$. 

As a direct consequence of the projection formula, we get the following invariance property. 
\begin{lem}\label{lem:inv} Let $(\cX,\cL)$ be a test configuration, and let $(\cX',\cL')$ be a pull-back of $(\cX,\cL)$ such that the corresponding morphism $\mu\colon\cX'\to\cX$ satisfies $\mu_*\cO_{\cX'}=\cO_\cX$. Then $(\cX,\cL)$ and $(\cX',\cL')$ define the same filtrations on $R(X,rL)$. 
\end{lem}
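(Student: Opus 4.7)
My plan is to derive the lemma from a direct application of the projection formula, which is why the hypothesis $\mu_*\cO_{\cX'}=\cO_\cX$ is singled out.

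First, I would set $r$ divisible enough that $r\cL$ is a line bundle, so that $r\cL'=\mu^*(r\cL)$ is a line bundle as well. The projection formula then gives, for every $m\ge 0$,
\begin{equation*}
\mu_*\cO_{\cX'}(mr\cL')=\mu_*\mu^*\cO_\cX(mr\cL)=\cO_\cX(mr\cL)\otimes\mu_*\cO_{\cX'}=\cO_\cX(mr\cL),
\end{equation*}
and pushing forward to $\A^1$ yields a canonical isomorphism
\begin{equation*}
H^0(\cX',mr\cL')\simeq H^0(\cX,mr\cL).
\end{equation*}
Since $\mu$ is $\G_m$-equivariant and the linearization on $\cL'$ is pulled back from that on $\cL$, this isomorphism is $\G_m$-equivariant, and it is also compatible with restriction to the fiber over $t=1$, under which both sides are identified with $H^0(X,mrL)$.

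Second, I would unwind the definition~\eqref{equ:filtr} of the filtration. Given $s\in H^0(X,mrL)$, let $\bar s$ (respectively $\bar s'$) denote the $\G_m$-invariant section of $mr\cL$ on $\cX\setminus\cX_0$ (respectively of $mr\cL'$ on $\cX'\setminus\cX'_0$) extending $s$. Under the canonical isomorphism $\cX'\setminus\cX'_0\simeq X\times(\A^1\setminus\{0\})\simeq\cX\setminus\cX_0$, one has $\bar s'=\mu^*\bar s$, and therefore $t^{-\la}\bar s'=\mu^*(t^{-\la}\bar s)$. The isomorphism of the previous step then shows that $t^{-\la}\bar s$ extends to a global section of $mr\cL$ on $\cX$ if and only if $t^{-\la}\bar s'$ extends to a global section of $mr\cL'$ on $\cX'$. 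This is exactly the statement that the two filtrations on $R(X,rL)$ coincide.

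The only subtlety to verify is that the identification of $\bar s$ with $\bar s'$ under the trivialization outside the central fiber is correct, which follows from the fact that the isomorphism $\cX'\setminus\cX'_0\simeq\cX\setminus\cX_0$ from Definition~\ref{defi:test1}(iii) is canonical and $\mu$ extends the identity on this open set. There is no real obstacle here; the lemma is essentially a bookkeeping consequence of the projection formula together with the birational nature of $\mu$ on test configurations.
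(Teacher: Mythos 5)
Your proof is correct and is exactly the argument the paper has in mind: the paper states the lemma as ``a direct consequence of the projection formula'' without writing out details, and your computation $\mu_*\cO_{\cX'}(mr\cL')=\cO_\cX(mr\cL)\otimes\mu_*\cO_{\cX'}=\cO_\cX(mr\cL)$, combined with the characterization~\eqref{equ:filtr} of the filtration and the canonical identification over $\A^1\setminus\{0\}$, is precisely the intended bookkeeping.
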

Note that $\mu_*\cO_{\cX'}=\cO_\cX$ holds automatically when $\cX$ (and hence $X$) is normal, by Zariski's main theorem. 

For later use, we also record the following direct consequence of the $\G_m$-equivariant isomorphism (\ref{equ:reessp}). 

\begin{lem}\label{L201} Let $(\cX,\cL)$ be a test configuration, with projection 
  $\pi\colon\cX\to\A^1$. For each $m$ with $m\cL$ a line bundle, the multiplicities of the 
  $\G_m$-module $\pi_*\cO_\cX(m\cL)_0$ satisfy
  \begin{equation*}
    \dim\left(\pi_*\cO_\cX(m\cL)_0\right)_\la=\dim F^\la  H^0(X,mL)/F^{\la+1} H^0(X,mL)
  \end{equation*}
  for all $\la\in\Z$. In particular, the weights of $\pi_*\cO_\cX(m\cL)_0$ coincide with the successive 
  minima of $F^\bullet H^0(X,mL)$. 
\end{lem}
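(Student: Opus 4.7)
The idea is to apply Lemma~\ref{lem:graded} (specifically the isomorphism~\eqref{equ:reessp}) to the $\G_m$-linearized vector bundle $\cV:=\pi_*\cO_\cX(m\cL)$ on $\A^1$, and check that the filtration on $V:=\cV_1$ coming from the Rees correspondence agrees with the filtration $F^\bullet H^0(X,mL)$ of~\eqref{equ:filtr}.

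First I would observe that $\cV$ is indeed a $\G_m$-linearized vector bundle on $\A^1$: it is coherent by properness of $\pi$ and torsion-free because $m\cL$ is a line bundle on the $\A^1$-flat scheme $\cX$ (so multiplication by $t$ is injective on sections), hence locally free over the principal ideal domain $k[t]$. The $\G_m$-linearization of $\cL$ induces one on $\cV$, and via base change the fiber $\cV_1$ is canonically identified with $H^0(\cX_1,m\cL_1)=H^0(X,mL)=V$.

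Next, I would carry out the key verification: under the Rees correspondence of~\S\ref{sec:reesfiltr}, the filtration $\widehat F^\bullet V$ on $V=\cV_1$ is by definition the image of the weight decomposition of $H^0(\A^1,\cV)=H^0(\cX,m\cL)$ under the restriction map to $\cV_1$. A section $\sigma\in H^0(\cX,m\cL)$ has weight $\la$ iff $t\cdot\sigma=t^\la\sigma$, which by Corollary~\ref{cor:trivline} applied on $\cX\setminus\cX_0\simeq X\times(\A^1\setminus\{0\})$ means exactly that $\sigma=t^{-\la}\bar s$ for the $\G_m$-invariant extension $\bar s$ of $s:=\sigma|_{t=1}\in V$. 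Hence $\widehat F^\la V$ coincides with the set $\{s\in V\mid t^{-\la}\bar s\in H^0(\cX,m\cL)\}$, which is precisely $F^\la H^0(X,mL)$ as defined in~\eqref{equ:filtr}.

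Now applying~\eqref{equ:reessp} of Lemma~\ref{lem:graded} to $\cV$ yields a $\G_m$-equivariant isomorphism
\[
\pi_*\cO_\cX(m\cL)_0=\cV_0\simeq\bigoplus_{\la\in\Z}F^\la H^0(X,mL)/F^{\la+1}H^0(X,mL),
\]
where each graded piece on the right carries the weight $\la$ representation. Taking weight-$\la$ components gives the announced dimension formula. The statement about successive minima is then a tautology: the multiset of weights of the $\G_m$-module $\cV_0$ (each counted with multiplicity $\dim(\cV_0)_\la$) equals the multiset of $\la\in\Z$ counted with multiplicity $\dim F^\la V/F^{\la+1}V$, which by the definition $\la_j=\max\{\la\mid\dim F^\la V\ge j\}$ is exactly the list $\la_1\ge\dots\ge\la_N$ of successive minima. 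There is no real obstacle here; the only substantive point is the identification of the two filtrations in the second paragraph, and even that is essentially unwinding the definitions.
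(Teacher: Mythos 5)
Your proof is correct and follows exactly the route the paper intends: the paper presents this lemma as a "direct consequence of the $\G_m$-equivariant isomorphism (\ref{equ:reessp})" applied to $\cV=\pi_*\cO_\cX(m\cL)$, and your argument simply fills in the details (that $\cV$ is a $\G_m$-linearized vector bundle by flatness, and that the Rees filtration on $\cV_1$ is the filtration of~\eqref{equ:filtr}, which is essentially its definition in~\S\ref{sec:filtrtc}). No gaps.
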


\smallskip
\begin{prop}\label{prop:filtrtest} Assume $L$ is ample. Then the above construction sets up a one-to-one correspondence between ample test configurations for $(X,L)$ and finitely generated $\Z$-filtrations of $R(X,rL)$ for $r$ divisible enough. 
\end{prop}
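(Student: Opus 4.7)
The plan is to set up the two constructions explicitly and check they are mutually inverse, using the Rees construction of \S\ref{sec:reesfiltr} fiberwise in the grading.

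\textbf{From test configurations to filtrations.}
Given an ample $(\cX,\cL)$ and any $r$ such that $r\cL$ is a relatively very ample line bundle, the $\G_m$-equivariant coherent sheaf $\cV_m:=\pi_*\cO_\cX(mr\cL)$ on $\A^1$ is torsion-free by flatness of $\pi$, hence a $\G_m$-linearized vector bundle. Applying the Rees dictionary of \S\ref{sec:reesfiltr} to each $\cV_m$ recovers the filtration $F^\bullet H^0(X,mrL)$ from~\eqref{equ:filtr}, and gives a $\G_m$-equivariant isomorphism
\begin{equation*}
  H^0\bigl(\A^1,\cV_m\bigr)\simeq\bigoplus_{\la\in\Z} t^{-\la}F^\la H^0(X,mrL).
\end{equation*}
Summing over $m$, the bigraded $k$-algebra $\bigoplus_{m,\la}t^{-\la}F^\la H^0(X,mrL)$ is identified with the section ring $\bigoplus_m H^0(\cX,mr\cL)$ of the $\pi$-ample line bundle $r\cL$, viewed as a $\G_m$-equivariant $k[t]$-algebra. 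Since $\cX$ is projective over $\A^1$ and $r\cL$ is $\pi$-ample, this $k[t]$-algebra is finitely generated (by relative Serre vanishing), which is exactly the finite generation of the filtration in the sense of Definition~\ref{defi:fingen}.

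\textbf{From filtrations to test configurations.}
Conversely, given a finitely generated $\Z$-filtration $F^\bullet R(X,rL)$, form the bigraded Rees algebra
\begin{equation*}
  \cR:=\bigoplus_{m\in\N}\bigoplus_{\la\in\Z}t^{-\la}F^\la H^0(X,mrL)\subset R(X,rL)[t,t^{-1}],
\end{equation*}
viewed as a graded $k[t]$-algebra with grading coming from $m$. Finite generation and the fact that $F^\bullet$ is exhaustive and separating make $\cR$ a finitely generated, torsion-free $k[t]$-algebra, and we set $\cX:=\Proj_{\A^1}\cR$ with $\cO_\cX(1)=:r\cL$ relatively ample. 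The outer $m$-grading provides $\pi\colon\cX\to\A^1$; torsion-freeness over $k[t]$ gives flatness of $\pi$; the inner $\la$-grading defines the $\G_m$-linearization of $r\cL$ lifting the standard action on $\A^1$. Applying Lemma~\ref{lem:graded} in each degree $m$ produces a $\G_m$-equivariant isomorphism of the restriction of $\cR$ to $\A^1\setminus\{0\}$ with $R(X,rL)\otimes_k k[t,t^{-1}]$, so taking $\Proj$ identifies $\cX|_{\A^1\setminus\{0\}}$ with $X\times(\A^1\setminus\{0\})$; in particular $\cX_1\simeq X$ with $r\cL_1\simeq rL$.

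\textbf{Mutual inverses.}
The two constructions are inverse to each other: starting from $(\cX,\cL)$, the Rees algebra built from the associated filtration is canonically isomorphic, as a bigraded $\G_m$-equivariant $k[t]$-algebra, to the section ring $\bigoplus_m H^0(\cX,mr\cL)$ identified above, and taking $\Proj_{\A^1}$ recovers $(\cX,r\cL)$ by relative ampleness; conversely, starting from a finitely generated filtration, the filtration attached to the test configuration $\Proj_{\A^1}\cR$ coincides with the original one after possibly enlarging $r$, by the identification $H^0(\cX,mr\cL)_\la\simeq t^{-\la}F^\la H^0(X,mrL)$. There is a mild indeterminacy coming from the choice of $r$; this is absorbed in the phrase ``for $r$ divisible enough'', and the inverse-equivalence is checked on the common refinement after replacing $r$ by any sufficiently divisible multiple.

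\textbf{Expected difficulty.}
The only nontrivial points are the verification of finite generation in the forward direction (relative Serre vanishing plus the fiberwise Rees dictionary) and the verification in the reverse direction that $\Proj_{\A^1}\cR$ is actually a test configuration, namely flat over $\A^1$ with generic fiber canonically $X$ and central fiber governed by $\gr_F R(X,rL)$. Both reduce to the graded-piece statements~\eqref{equ:reesgen} and~\eqref{equ:reessp} of Lemma~\ref{lem:graded}, applied to each $\cV_m$; the main subtlety is simply keeping track of the fact that finite generation is required precisely to ensure that the two gradings (the $m$-grading giving the $\Proj$ and the $\la$-grading giving the $\G_m$-action) are both compatible with finite type over $k[t]$.
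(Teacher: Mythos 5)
Your proof is correct and follows essentially the same route as the paper: identify the bigraded Rees algebra with the relative section ring $R(\cX,r\cL)$ to get finite generation in one direction, take $\Proj_{\A^1}$ of the Rees algebra in the other (the paper makes explicit that one replaces $r$ by a multiple so the algebra is generated in degree $m=1$, which is the precise form of the indeterminacy you flag), and invoke the Rees dictionary of \S\ref{sec:reesfiltr} degree by degree to see the constructions are mutually inverse. Your write-up simply fills in details the paper leaves to the reader.
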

\begin{proof} When $(\cX,\cL)$ is an ample test configuration, the $\Z$-filtration it defines on $R(X,rL)$ is finitely generated in the sense of Definition~\ref{defi:fingen}, since 
$$
\bigoplus_{m\in\N}\left(\bigoplus_{\la\in\Z}t^{-\la} F^\la  H^0(X,mrL)\right)=R(\cX,r\cL)
$$
is finitely generated over $k[t]$. Conversely, let $F^\bullet$ be a finitely generated $\Z$-filtration of $R(X,rL)$ for some $r$. Replacing $r$ with a multiple, we may assume that the graded $k[t]$-algebra
$$
\bigoplus_{m\in\N}\left(\bigoplus_{\la\in\Z}t^{-\la} F^\la  H^0(X,mrL)\right)
$$
is generated in degree $m=1$, and taking the Proj over $\A^1$ defines an ample test configuration for $(X,rL)$, hence also one for $(X,L)$. Using \S\ref{sec:reesfiltr}, it is straightforward to see that the two constructions are inverse to each other. 
\end{proof}

Still assuming $L$ is ample, let $(\cX,\cL)$ be merely semiample. The $\Z$-filtration it defines on $R(X,rL)$ is still finitely generated, as
$$
\bigoplus_{m\in\N}\left(\bigoplus_{\la\in\Z}t^{-\la} F^\la  H^0(X,mrL)\right)=R(\cX,r\cL)
$$
is finitely generated over $k[t]$. 

\begin{defi}\label{defi:amplemodel} The \emph{ample model} of a semiample test configuration $(\cX,\cL)$ is defined as the unique ample test configuration $(\cX_\amp,\cL_\amp)$ corresponding to the finitely generated $\Z$-filtration defined by $(\cX,\cL)$ on $R(X,rL)$ for $r$ divisible enough. 
\end{defi}
Ample models admit the following alternative characterization. 

\begin{prop}\label{prop:amplemodel} 
  The ample model $(\cX_\amp,\cL_\amp)$ of a semiample test configuration $(\cX,\cL)$ 
  is the unique ample test configuration such that:
  \begin{itemize}
  \item[(i)] 
    $(\cX,\cL)$ is a pull-back of $(\cX_\amp,\cL_\amp)$;
  \item[(ii)] 
    the canonical morphism $\mu\colon\cX\to\cX_\amp$ satisfies $\mu_*\cO_{\cX}=\cO_{\cX_\amp}$.
  \end{itemize}
\end{prop}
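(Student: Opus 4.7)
The plan is to verify existence of an ample test configuration satisfying~(i) and~(ii), identify it with $(\cX_\amp,\cL_\amp)$ from Definition~\ref{defi:amplemodel}, and then deduce uniqueness by combining Lemma~\ref{lem:inv} with the bijection in Proposition~\ref{prop:filtrtest}. For existence, I would make the ample model concrete: choose $r\in\Z_{>0}$ divisible enough so that $r\cL$ is a $\pi$-globally generated line bundle and the graded $\cO_{\A^1}$-algebra $R:=\bigoplus_{m\ge 0}\pi_*\cO_\cX(mr\cL)$ is generated in degree one; then set $\cX_\amp:=\Proj_{\A^1}R$, and take $\cL_\amp$ to be the $\Q$-line bundle whose $r$-fold tensor power is the tautological $\cO_{\cX_\amp}(1)$. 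Under the Rees correspondence of~\S\ref{sec:reesfiltr}, $R$ identifies with the Rees algebra of the $\Z$-filtration induced by $(\cX,\cL)$ on $R(X,rL)$, so $(\cX_\amp,\cL_\amp)$ is indeed the ample model in the sense of Definition~\ref{defi:amplemodel}. The universal property of relative $\Proj$, applied to the surjection $\pi^*R\to\bigoplus_m\cO_\cX(mr\cL)$ coming from global generation, produces a canonical $\G_m$-equivariant morphism $\mu\colon\cX\to\cX_\amp$ with $r\cL=\mu^*(r\cL_\amp)$, which gives~(i).

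Next I would establish~(ii) via the projection formula and relative Serre theorems. The projection formula gives
$$
\mu_*\cO_\cX(mr\cL)=\cO_{\cX_\amp}(m)\otimes_{\cO_{\cX_\amp}}\mu_*\cO_\cX
$$
for $m\ge 0$, and pushing forward to $\A^1$ recovers $R_m$ on both sides for $m\gg 0$: the left side by definition of $R$, the right side by relative Serre vanishing for the $\pi_{\cX_\amp}$-ample bundle $\cO_{\cX_\amp}(1)$ together with the identity $(\pi_{\cX_\amp})_*\cO_{\cX_\amp}(m)=R_m$. Let $K$ and $C$ denote the kernel and cokernel of the unit $\phi\colon\cO_{\cX_\amp}\to\mu_*\cO_\cX$. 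Twisting the resulting short exact sequences by $\cO_{\cX_\amp}(m)$ and applying $(\pi_{\cX_\amp})_*$, Serre vanishing of the relevant $R^1$ plus the identification above forces $(\pi_{\cX_\amp})_*K(m)=(\pi_{\cX_\amp})_*C(m)=0$ for $m\gg 0$. Since $\cL_\amp$ is $\pi_{\cX_\amp}$-ample, a coherent sheaf whose twists have vanishing pushforward for all large $m$ must itself vanish, so $K=C=0$ and $\mu_*\cO_\cX=\cO_{\cX_\amp}$.

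For uniqueness, suppose $(\cY,\cM)$ is any ample test configuration satisfying~(i) and~(ii), with the induced morphism $\mu'\colon\cX\to\cY$. By~(ii), $\mu'_*\cO_\cX=\cO_\cY$, and Lemma~\ref{lem:inv} therefore shows that $(\cX,\cL)$ and $(\cY,\cM)$ define the same $\Z$-filtration on $R(X,rL)$ for $r$ divisible enough. Since $(\cX,\cL)$ and $(\cX_\amp,\cL_\amp)$ induce the same filtration by construction, the bijection in Proposition~\ref{prop:filtrtest} between ample test configurations and finitely generated $\Z$-filtrations forces $(\cY,\cM)\simeq(\cX_\amp,\cL_\amp)$.

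The main obstacle is condition~(ii) in the existence step: one must bootstrap from equalities of $\A^1$-pushforwards (which only record relative global sections) to a sheaf-theoretic identity $\mu_*\cO_\cX=\cO_{\cX_\amp}$ on $\cX_\amp$. The projection formula reduces the comparison to twisting a single morphism $\phi\colon\cO_{\cX_\amp}\to\mu_*\cO_\cX$ by $\cO_{\cX_\amp}(m)$, and relative ampleness of $\cL_\amp$ is precisely what is needed to pass from vanishing of pushforwards to vanishing of the sheaves themselves.
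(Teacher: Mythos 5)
Your proof is correct, and the uniqueness step is identical to the paper's (Lemma~\ref{lem:inv} plus the bijection of Proposition~\ref{prop:filtrtest}); the difference lies in how existence is established. The paper takes the $\G_m$-equivariantly trivial bundle $\pi_*\cO_\cX(r\cL)$ from Proposition~\ref{prop:reesfiltr}, obtains a morphism $f\colon\cX\to\P^N_{\A^1}$ with $f^*\cO(1)=r\cL$, and defines $(\cX_\amp,\cL_\amp)$ as the Stein factorization of $f$; condition~(ii) then comes for free from the definition of Stein factorization, and the identification with the filtration-theoretic ample model of Definition~\ref{defi:amplemodel} is deduced only afterwards, again via Lemma~\ref{lem:inv} and Proposition~\ref{prop:filtrtest}. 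You instead build $\cX_\amp$ directly as $\Proj_{\A^1}$ of the relative section algebra, which makes the match with Definition~\ref{defi:amplemodel} immediate under the Rees correspondence, but forces you to prove~(ii) by hand; your argument for that---projection formula, agreement of twisted pushforwards for $m\gg0$, relative Serre vanishing applied to the kernel and cokernel of $\cO_{\cX_\amp}\to\mu_*\cO_\cX$, and the fact that a coherent sheaf with vanishing large twisted pushforwards is zero---is sound. The trade-off is transparency versus economy: your route exhibits the ample model explicitly and explains \emph{why} $\mu_*\cO_\cX=\cO_{\cX_\amp}$, while the paper's Stein factorization packages that identity into a black box. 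One small point you may wish to make explicit is that $\Proj_{\A^1}R$ is flat over $\A^1$ (e.g.\ because it is the scheme-theoretic image of the flat scheme $\cX$, so all its associated points dominate $\A^1$), so that it is genuinely a test configuration; the paper's version of this verification is equally implicit.
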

Note that (ii) implies that $\cX_\amp$ is normal whenever $\cX$ (and hence $X$) is.
\begin{proof} 
  Choose $r\ge 1$ such that $r\cL$ is a globally generated line bundle. 
  By Proposition~\ref{prop:reesfiltr}, the vector bundle $\pi_*\cO_\cX(r\cL)$ is 
  $\G_m$-equivariantly trivial over $\A^1$, and we thus get an induced $\G_m$-equivariant 
  morphism $f\colon\cX\to\P^N_{\A^1}$ over $\A^1$  for some $N$ with the property that 
  $f^*\cO(1)=r\cL$. The Stein factorization of $f$ thus yields an ample test configuration 
  $(\cX',\cL')$ satisfying (i) and (ii). By Lemma~\ref{lem:inv}, these properties guarantee that 
  $(\cX,\cL)$ and $(\cX',\cL')$ induce the same $\Z$-filtration on $R(X,rL)$, 
  and hence $(\cX',\cL')=(\cX_\amp,\cL_\amp)$ by Proposition~\ref{prop:filtrtest}. 
\end{proof} 
%
%
\subsection{Flag ideals}\label{FlagIdeals}
In this final section, we discuss (a small variant of) the \emph{flag ideal} point of
view of~\cite{Oda1,Oda3}. We assume that $X$ is normal, and use the following terminology. 

\begin{defi}\label{defi:pull} 
  A \emph{determination} of a test configuration $\cX$ for $X$ is a normal test configuration $\cX'$ dominating both $\cX$ and $X_{\A^1}$.  
\end{defi}
Note that a determination always exists: just pick $\cX'$ to be the normalization of the graph of the canonical birational map $\cX\dashrightarrow X_{\A^1}$.

Similarly, a determination of a test configuration $(\cX,\cL)$ for $(X,L)$ is a normal test configuration $(\cX',\cL')$ such that $\cX'$ 
is a determination of $\cX$ and $\cL'$ is the pull-back of $\cL$ under the morphism
$\cX'\to\cX$ (\ie $(\cX',\cL')$ is a pull-back of $(\cX,\cL)$).
In this case, denoting by $\rho\colon\cX'\to X_{\A^1}$ the canonical morphism, we have $\cL'=\rho^*L_{\A^1}+D$ for a unique $\Q$-Cartier divisor $D$ supported on $\cX_0'$, by the normality of $\cX'$. 

\begin{defi}\label{defi:flag} Let $(\cX,\cL)$ be test configuration for $(X,L)$. For each $m$ such that $m\cL$ is a line bundle, we define the \emph{flag ideal} of $(\cX,m\cL)$ 
 as 
$$
\fa^{(m)}:=\rho_*\cO_{\cX'}(mD), 
$$ 
viewed as a $\G_m$-invariant, integrally closed fractional ideal of the normal variety $X_{\A^1}$. 
\end{defi} 
By Lemma~\ref{lem:flag} below, $\fa^{(m)}$ is indeed independent of the choice of a determination. In particular, $\fa^{(m)}$ is also the flag ideal of $(\cX',m\cL')$ for every normal pull-back $(\cX',\cL')$ of $(\cX,\cL)$. 

Since $\fa^{(m)}$ is a $\G_m$-invariant fractional ideal on $X_{\A^1}$ that is trivial 
outside the central fiber, it is of the form 
\begin{equation}\label{equ:flag}
\fa^{(m)}=\sum_{\la\in\Z} t^{-\la}\fa^{(m)}_\la
\end{equation}
where $\fa^{(m)}_\la\subset\cO_X$ is a non-increasing sequence of integrally closed ideals on $X$ with $\fa^{(m)}_\la=0$ for $\la\gg 0$ and $\fa^{(m)}_\la=\cO_X$ for $\la\ll 0$ (see Proposition~\ref{prop:filtrflag} below for the choice of sign). 

\begin{lem}\label{lem:flag} The flag ideal
  $\fa^{(m)}$ is independent of the choice of a determination $(\cX',\cL')$. 
\end{lem}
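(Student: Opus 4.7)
The plan is to exploit the fact that any two determinations of $(\cX,\cL)$ can be dominated by a third. Given determinations $(\cX'_1,\cL'_1)$ and $(\cX'_2,\cL'_2)$, let $\cX'_3$ be the normalization of the graph of the birational map $\cX'_1\dashrightarrow\cX'_2$; it maps naturally to both $\cX'_i$, to $\cX$, and to $X_{\A^1}$, and with $\cL'_3$ defined as the pullback of $\cL$ it is again a determination. It therefore suffices to show that whenever $\mu\colon\cX''\to\cX'$ is a morphism between two determinations (necessarily proper and birational), the associated flag ideals coincide.

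For this, I first verify that the divisors match up under pullback. By the definition of determination, $\cL''=\mu^*\cL'$ and $\rho''=\rho'\circ\mu$, so
\begin{equation*}
D''=\cL''-\rho''{}^*L_{\A^1}=\mu^*\cL'-\mu^*\rho'{}^*L_{\A^1}=\mu^*D'
\end{equation*}
as $\Q$-Cartier divisors. Choose $m$ such that both $m\cL'$ and $m\cL''$ are line bundles, so that $\cO_{\cX'}(mD')$ is locally free on $\cX'$. The projection formula yields
\begin{equation*}
\mu_*\cO_{\cX''}(mD'')=\mu_*\mu^*\cO_{\cX'}(mD')=\cO_{\cX'}(mD')\otimes\mu_*\cO_{\cX''}.
\end{equation*}
Since $\mu$ is a proper birational morphism between normal varieties, Zariski's main theorem (invoked in the paper right after Lemma~\ref{lem:inv}) gives $\mu_*\cO_{\cX''}=\cO_{\cX'}$. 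Applying $\rho'_*$ and using $\rho'\circ\mu=\rho''$ then produces
\begin{equation*}
\rho''_*\cO_{\cX''}(mD'')=\rho'_*\mu_*\cO_{\cX''}(mD'')=\rho'_*\cO_{\cX'}(mD'),
\end{equation*}
which is exactly the desired equality of flag ideals on $X_{\A^1}$.

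The only subtle point is the compatibility $D''=\mu^*D'$, which relies on $\cL''$ being pulled back from $\cL'$; this is built into the definition of a determination. Once one has this and the normality of both $\cX'$ and $\cX''$, the projection formula and Zariski's main theorem close the argument with no further input.
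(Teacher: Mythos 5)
Your proof is correct and follows essentially the same route as the paper's: reduce to the case where one determination dominates the other, identify $D''=\mu^*D'$, and conclude via the projection formula together with $\mu_*\cO_{\cX''}=\cO_{\cX'}$, which holds by normality of the target. No gaps.
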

\begin{proof} Let $(\cX'',\cL'')$ be another determination of $(\cX,\cL)$ (and recall that $\cX'$ and $\cX''$ are normal, by definition). Since any two determinations of $(\cX,\cL)$ are dominated by a third one, we may assume that $\cX''$ dominates $\cX'$. Denoting by $\mu'\colon\cX''\to\cX'$ the corresponding morphism, the fractional ideal attached to $(\cX'',\cL'')$ is then given by 
$$
(\rho\circ\mu')_*\cO_{\cX''}(m\mu'^*D)
$$
By the projection formula we have 
$$
\mu'_*\cO_{\cX''}(m\mu'^*D)=\cO_{\cX'}(mD)\otimes\mu'_*\cO_{\cX''}, 
$$
and we get the desired result since $\mu'_*\cO_{\cX''}=\cO_{\cX'}$ by normality of $\cX'$. 
\end{proof}
\begin{prop}\label{prop:filtrflag}
  Let $(\cX,\cL)$ be a normal, semiample test configuration for $(X,L)$.
  For each $m$ with $m\cL$ a line bundle, let $F^\bullet H^0(X,mL)$ be the corresponding 
  $\Z$-filtration and $\fa^{(m)}$ the flag ideal of $(\cX,m\cL)$. 
  Then, for $m$ sufficiently divisible and $\la\in\Z$, 
  the $\cO_X$-module $\cO_X(mL)\otimes\fa^{(m)}_\la$ is globally generated 
  and
  \begin{equation*}
    F^\la  H^0(X,mL)=H^0\left(X,\cO_X(mL)\otimes\fa^{(m)}_\la\right)
  \end{equation*}
  In particular, the successive minima of $F^\bullet H^0(X,mL)$ 
  (see~\S\ref{sec:prelim}) are exactly the 
  $\la\in\Z$ with $\fa^{(m)}_\la\ne\fa^{(m)}_{\la+1}$, with the
  largest one being 
  $\la^{(m)}_{\max}=\max\left\{\la\in\Z\mid\fa^{(m)}_\la\ne 0\right\}$.
\end{prop}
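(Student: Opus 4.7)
The plan is to first reduce to the case where $\cX$ dominates $X_{\A^1}$, then compute the filtration from the flag ideal via the projection formula and a $\G_m$-weight decomposition, next establish global generation of $mL\otimes\fa^{(m)}_\la$ from semiampleness of $\cL$, and finally read off the statement about successive minima.

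For the reduction, Lemma~\ref{lem:inv} and Lemma~\ref{lem:flag} together guarantee that replacing $(\cX,\cL)$ by a normal pull-back preserves both the $\Z$-filtration $F^\bullet H^0(X,mL)$ and the flag ideal $\fa^{(m)}$. Choosing that pull-back to be a determination in the sense of Definition~\ref{defi:pull}, I obtain a $\G_m$-equivariant morphism $\rho\colon\cX\to X_{\A^1}$ and, by the normality of $\cX$, a decomposition $m\cL=\rho^* mL_{\A^1}+mD$ with $D$ a $\Q$-Cartier divisor supported on $\cX_0$. By~\eqref{equ:filtr}, $s\in F^\la H^0(X,mL)$ means that $t^{-\la}\bar s$ extends to a section of $m\cL$ on $\cX$; the projection formula rewrites this as $t^{-\la}\bar s\in H^0(X_{\A^1},mL_{\A^1}\otimes\fa^{(m)})$, and decomposing along $\fa^{(m)}=\bigoplus_\mu t^{-\mu}\fa^{(m)}_\mu$ identifies the weight-$\la$ component of the right-hand side with $t^{-\la}H^0(X,mL\otimes\fa^{(m)}_\la)$. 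Since $t^{-\la}\bar s$ has pure $\G_m$-weight $\la$, this gives the identity $F^\la H^0(X,mL)=H^0(X,mL\otimes\fa^{(m)}_\la)$.

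The main obstacle is the global generation of $mL\otimes\fa^{(m)}_\la$ for $m$ divisible enough. For such $m$, semiampleness of $\cL$ yields a $\G_m$-equivariant surjection $H^0(\cX,m\cL)\otimes\cO_\cX\twoheadrightarrow m\cL$ on $\cX$; applying $\rho_*$ and using $\rho_*\cO_\cX=\cO_{X_{\A^1}}$ (Zariski's main theorem, by the normality of $\cX$) produces a $\G_m$-equivariant map
\begin{equation*}
H^0(\cX,m\cL)\otimes\cO_{X_{\A^1}}\longrightarrow mL_{\A^1}\otimes\fa^{(m)},
\end{equation*}
whose surjectivity is the delicate point, since $\rho_*$ is not right exact in general. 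The natural route is to pass through the ample model $(\cX_{\amp},\cL_{\amp})$ of Proposition~\ref{prop:amplemodel}: ampleness of $\cL_{\amp}$ provides Serre vanishing for $m$ divisible enough to control the relevant $R^1$ terms, and the identity $\mu_*\cO_\cX=\cO_{\cX_\amp}$ transfers the resulting surjectivity back to $\cX$. Decomposing by $\G_m$-weight via $(p_1)_*\colon X_{\A^1}\to X$ and extracting the weight-$\la$ part then gives the surjection $H^0(X,mL\otimes\fa^{(m)}_\la)\otimes\cO_X\twoheadrightarrow mL\otimes\fa^{(m)}_\la$, which is the asserted global generation.

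Once global generation is established, the description of the successive minima is formal: for $m$ divisible enough, $F^\la H^0(X,mL)=F^{\la+1}H^0(X,mL)$ iff $\fa^{(m)}_\la=\fa^{(m)}_{\la+1}$, and $F^\la H^0(X,mL)\ne 0$ iff $\fa^{(m)}_\la\ne 0$, from which the characterization of the $\la$ realized as successive minima and of $\la^{(m)}_{\max}$ follows immediately.
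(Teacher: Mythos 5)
Your overall route coincides with the paper's: reduce to a determination dominating $X_{\A^1}$ using Lemma~\ref{lem:inv} and Lemma~\ref{lem:flag}, compute $\rho_*\cO_\cX(m\cL)=\cO_{X_{\A^1}}(mL_{\A^1})\otimes\fa^{(m)}$ by the projection formula, identify the weight-$\la$ part of $H^0(\cX,m\cL)$ with $H^0(X,\cO_X(mL)\otimes\fa^{(m)}_\la)$ via~\eqref{equ:filtr}, and read off the successive minima once global generation is known. All of this is exactly the paper's argument and is correct.

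The gap is in your treatment of global generation. You rightly observe that applying $\rho_*$ to the evaluation surjection $H^0(\cX,m\cL)\otimes\cO_\cX\twoheadrightarrow\cO_\cX(m\cL)$ does not obviously yield a surjection, but the proposed repair through the ample model does not address this. Serre vanishing for the ample $\cL_\amp$ controls higher direct images under $\pi\colon\cX_\amp\to\A^1$ (equivalently under $\pi\colon\cX\to\A^1$, via $\mu_*\cO_\cX=\cO_{\cX_\amp}$), whereas the surjectivity you need concerns $\rho_*$ for $\rho\colon\cX\to X_{\A^1}$. The models $\cX_\amp$ and $X_{\A^1}$ are in general incomparable (neither dominates the other), so there is no morphism along which the "resulting surjectivity" could be transferred; moreover $\cL$ is not $\rho$-ample, since its restriction to a positive-dimensional fiber of $\rho$ is $\cO(mD)$ restricted there, so no relative vanishing for $\rho$ is available from ampleness of $\cL_\amp$. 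As written, the key surjection $H^0(X,\cO_X(mL)\otimes\fa^{(m)}_\la)\otimes\cO_X\twoheadrightarrow\cO_X(mL)\otimes\fa^{(m)}_\la$ is therefore not established. The paper instead deduces global generation of $\rho_*\cO_\cX(m\cL)$ on $X_{\A^1}$ directly from global generation of $m\cL$ on $\cX$ for $m$ sufficiently divisible and then decomposes into weight spaces; any honest justification of that descent must be an argument about $\rho$ itself (for instance, exploiting that $\rho$ factors through the normalized blow-up of the fixed ideal $\fa^{(m_0)}$ once $\cO_\cX(m_0D)$ is $\rho$-globally generated, so that Lemma~\ref{lem:normblow} identifies $\fa^{(km_0)}$ with $\overline{(\fa^{(m_0)})^k}$ and reduces the problem to a single ideal), not an appeal to $\cX_\amp$.
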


\begin{proof} 
  Let $(\cX',\cL')$ be a determination of $(\cX,\cL)$, \ie a pull-back such that $\cX'$ is normal 
  and dominates $X_{\A^1}$. By normality of $\cX$, the morphism $\mu\colon\cX'\to\cX$ satisfies 
  $\mu_*\cO_{\cX'}=\cO_\cX$, and the projection formula therefore shows that $(\cX',\cL')$ and 
  $(\cX,\cL)$ define the same $\Z$-filtration of $R(X,rL)$ for $r$ divisible enough. 
  Since $\fa^{(m)}$ is also the flag ideal of $(\cX',m\cL')$, we may assume to begin with that 
  $\cX$ dominates $X_{\A^1}$. Denoting by $\rho\colon\cX\to X_{\A^1}$ the canonical morphism, 
  we then have $\cL=\rho^*L_{\A^1}+D$ and 
  \begin{equation*}
    \fa^{(m)}=\rho_*\cO_{\cX}(mD), 
  \end{equation*}
  and hence 
  \begin{equation*}
    \rho_*\cO_X(m\cL)=\cO_X(mL_{\A^1})\otimes\fa^{(m)}
  \end{equation*}
  by the projection formula. 
  As a consequence, $H^0(X_{\A^1},\cO_{X_{\A^1}}(mL_{\A^1})\otimes t^{-\la}\fa^{(m)}_\la)$ 
  is isomorphic to the weight-$\la$ part of $H^0(\cX,m\cL)$, and the first point follows. 
  
  For $m$ divisible enough, $m\cL$ is globally generated on $\cX$, and hence so is 
  $\rho_*\cO_\cX(m\cL)$ on $X_{\A^1}$. Decomposing into weight spaces thus shows that 
  $\cO_X(mL)\otimes\fa^{(m)}_\la$ is globally generated on $X$ for all $\la\in\Z$.  
  We therefore have $\fa^{(m)}_\la\ne \fa^{(m)}_{\la+1}$ iff $F^\la  H^0(X,mL)\ne F^{\la+1} H^0(X,mL)$, 
  hence the second point. 
\end{proof}
%
\section{Duistermaat-Heckman measures and Donaldson-Futaki invariants}\label{sec:DHDF}
%
In this section, $(X,L)$ is a polarized\footnote{As before we allow $L$ to be an (ample) $\Q$-line bundle on $X$.} 
scheme over $k$. Our goal is to provide
an elementary, self-contained treatment of Duistermaat-Heckman measures and Donaldson-Futaki
invariants. Most arguments are inspired by those in~\cite{Don3,RT,Wan,Oda2,LX}. 

%
\subsection{The case of a $\G_m$-action}\label{sec:DHDFaction}
First assume that $L$ is an ample line bundle (as opposed to a
$\Q$-line bundle) and that $(X,L)$ is given a $\G_m$-action. For each $d\in\N$, the principal $\G_m$-bundle $\A^{d+1}\setminus\{0\}\to\P^d$ induces a projective morphism $\pi_d:X_d\to\P^d$, locally trivial in the Zariski topology and with typical fiber $X$, as well as a relatively ample line bundle $L_d$ on $X_d$. For $d=1$, we recover the compactified product test configuration, cf.~Example~\ref{ex:comp}.  

Following~\cite[p.470]{Don3}, we use this construction to prove the following key result, which is often claimed to follow from `general theory' in the K-stability literature. Another proof relying on the equivariant Rieman-Roch theorem 
is provided in Appendix B. 

\begin{thm}\label{thm:equivRR} Let $(X,L)$ be a polarized scheme with a $\G_m$-action, and set $n=\dim X$. For each $d,m\in\N$, the finite sum
$$
\sum_{\la\in\Z}\frac{\la^d}{d!}\dim H^0(X,mL)_\la
$$
is a polynomial function of $m\gg 1$, of degree at most $n+d$. The coefficient of $m^{n+d}$ is further equal to $(L_d^{n+d})/(n+d)!$. 
\end{thm}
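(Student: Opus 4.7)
The plan is to follow Donaldson's idea and interpret the weighted sum as the Euler characteristic of $mL_d$ on $X_d$. Fix $d\in\N$ and consider $\pi_d\colon X_d\to\P^d$. Since $\pi_d$ is Zariski-locally trivial with fiber $X$ and $L_d|_{X_y}\simeq L$ on each fiber, Serre vanishing applied fiberwise together with cohomology-and-base-change yields $R^i\pi_{d,*}(mL_d)=0$ for all $i\ge 1$ and all $m\gg 1$. The Leray spectral sequence then gives
\[
\chi(X_d,mL_d)=\chi\bigl(\P^d,\pi_{d,*}(mL_d)\bigr)\qquad(m\gg 1).
\]

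The next step is to identify $\pi_{d,*}(mL_d)$ on $\P^d$. Since $X_d$ is, by construction, the $X$-bundle associated to the principal $\G_m$-bundle $\A^{d+1}\setminus\{0\}\to\P^d$ and the given $\G_m$-action on $(X,L)$, descent along this $\G_m$-torsor identifies $\G_m$-linearized coherent sheaves on $\A^{d+1}\setminus\{0\}$ with coherent sheaves on $\P^d$, the weight-$\la$ eigensheaf descending to a twist by $\cO_{\P^d}(\la)$ (this is a higher-dimensional analogue of the Rees construction from Lemma~\ref{lem:graded}). Applying this to the trivial bundle on $\A^{d+1}\setminus\{0\}$ with fiber $H^0(X,mL)$ endowed with its weight decomposition, I get
\[
\pi_{d,*}(mL_d)\simeq\bigoplus_{\la\in\Z}H^0(X,mL)_\la\otimes_k\cO_{\P^d}(\la).
\]
Combined with the polynomial identity $\chi(\P^d,\cO(\la))=\binom{\la+d}{d}$, valid for every $\la\in\Z$, this gives
\[
\chi(X_d,mL_d)=\sum_{\la\in\Z}\binom{\la+d}{d}\dim H^0(X,mL)_\la\qquad(m\gg 1).
\]

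To conclude, I would invoke Snapper's theorem: the Euler characteristic of $mL_d$ is polynomial in $m$ of degree at most $\dim X_d=n+d$, with leading coefficient $(L_d^{n+d})/(n+d)!$. Expanding
\[
\binom{\la+d}{d}=\frac{\la^d}{d!}+\sum_{j=0}^{d-1}c_{d,j}\la^j
\]
in powers of $\la$, the theorem then follows by induction on $d$: the base case $d=0$ is the standard Hilbert polynomial for $(X,L)$, and in the inductive step the contributions from $j<d$ on the right-hand side are polynomials in $m$ of degree at most $n+j<n+d$ by the induction hypothesis, so the power sum $\sum_\la\tfrac{\la^d}{d!}\dim H^0(X,mL)_\la$ inherits from $\chi(X_d,mL_d)$ both the bound $n+d$ on its degree and the leading coefficient $(L_d^{n+d})/(n+d)!$.

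The main point to get right is the sheaf-theoretic identification of $\pi_{d,*}(mL_d)$ as a direct sum of twists indexed by $\G_m$-weights, ensuring that the linearization conventions for $L_d$ match the weight grading on $H^0(X,mL)$ so that a weight-$\la$ summand really descends to $\cO_{\P^d}(\la)$ and not $\cO_{\P^d}(-\la)$; once that is pinned down, the rest is a mechanical combination of Serre vanishing, Snapper and induction.
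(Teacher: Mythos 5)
Your proposal is correct and follows essentially the same route as the paper: the identification $\pi_{d,*}(mL_d)\simeq\bigoplus_\la H^0(X,mL)_\la\otimes\cO_{\P^d}(\la)$, vanishing of higher direct images by relative ampleness, the Leray spectral sequence together with asymptotic Riemann--Roch on $X_d$, and induction on $d$ via the expansion of $\chi(\P^d,\cO_{\P^d}(\la))$ in powers of $\la$. The extra care you take with the descent/linearization convention is a welcome elaboration of what the paper states as its ``key observation,'' but it is not a different argument.
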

Here we write as usual $(L_d^{n+d})=c_1(L_d)^{n+d}\cdot[X_d]$, with $[X_d]\in\CH_{n+d}(X_d)$ the fundamental class. 

Granting this result for the moment, we get as a first consequence:
\begin{cor}\label{cor:total} Let $w_m\in\Z$ be the weight of the $\G_m$-action on the determinant line $\det H^0(X,mL)$, and $N_m:=h^0(X,mL)$. Then we have an asymptotic expansion 
\begin{equation}\label{equ:Futaki}
\frac{w_m}{mN_m}=F_0+m^{-1}F_1+m^{-2}F_2+\dots 
\end{equation}
with $F_i\in\Q$.  
\end{cor}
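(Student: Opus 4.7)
The plan is to deduce the expansion directly from Theorem~\ref{thm:equivRR} together with elementary long division of polynomials. Since $w_m$ is by definition the sum of the weights (with multiplicity) of the $\G_m$-action on $H^0(X,mL)$, we have $w_m=\sum_{\la\in\Z}\la\,\dim H^0(X,mL)_\la$. Applying Theorem~\ref{thm:equivRR} with $d=1$ shows that $w_m$ agrees, for $m\gg 1$, with a polynomial $P(m)$ of degree at most $n+1$. Applying it with $d=0$ shows that $N_m$ agrees with a polynomial $Q(m)$ of degree at most $n$; moreover, since $L$ is ample, $Q$ has degree exactly $n$ with leading coefficient $(L^n)/n!>0$.

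Both $P$ and $Q$ take integer values on integers (they are dimensions and weighted sums of dimensions), so they have rational coefficients. Consequently, $mQ(m)$ is a rational polynomial of degree $n+1$ in $m$ with positive leading coefficient $(L^n)/n!$, and $P(m)$ is a rational polynomial of the same degree. For $m\gg 1$ we may therefore factor out the top power to write
\begin{equation*}
\frac{w_m}{mN_m}=\frac{P(m)}{mQ(m)}=\frac{a_0+a_1m^{-1}+\dots+a_{n+1}m^{-(n+1)}}{b_0+b_1m^{-1}+\dots+b_nm^{-n}},
\end{equation*}
with $a_i,b_j\in\Q$ and $b_0=(L^n)/n!>0$.

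The desired expansion now follows from the standard geometric-series expansion
\begin{equation*}
\frac{1}{b_0+b_1m^{-1}+\dots}=\frac{1}{b_0}\sum_{k\ge 0}(-1)^k\left(\frac{b_1}{b_0}m^{-1}+\frac{b_2}{b_0}m^{-2}+\dots\right)^k,
\end{equation*}
which is a formal power series in $m^{-1}$ with rational coefficients. Multiplying by the numerator and grouping by powers of $m^{-1}$ yields the asymptotic expansion
\begin{equation*}
\frac{w_m}{mN_m}=F_0+F_1m^{-1}+F_2m^{-2}+\dots
\end{equation*}
with $F_i\in\Q$, where the coefficients $F_i$ are explicit polynomial expressions in the $a_i$ and $b_j$. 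There is no real obstacle: the only substantive point is ensuring non-vanishing of the leading coefficient of $mQ(m)$, which is provided by ampleness of $L$, and the rationality of the $F_i$, which comes from the fact that integer-valued polynomials on $\Z$ have rational coefficients.
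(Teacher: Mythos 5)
Your proof is correct and follows essentially the same route as the paper: the paper also deduces the expansion from Theorem~\ref{thm:equivRR} (with $d=1$ for $w_m$ and asymptotic Riemann--Roch for $N_m$), treating the final division of polynomials as immediate, whereas you simply spell out that step via the geometric series and note the rationality of the coefficients. No issues.
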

Indeed, $w_m=\sum_{\la\in\Z}\la\dim H^0(X,mL)_\la$ is a polynomial of degree at most $n+1$ by Theorem~\ref{thm:equivRR}, while $N_m$ is a polynomial of degree $n$ by Riemann-Roch. 

\begin{defi}\label{defi:DFaction} The \emph{Donaldson-Futaki invariant} $\DF(X,L)$ of the polarized $\G_m$-scheme $(X,L)$ is defined as 
$$
\DF(X,L)=-2F_1.
$$
\end{defi}
The factor $2$ in the definition is here just for convenience, while the sign is chosen so that K-semistability will later correspond to $\DF\ge 0$, cf.~Definition~\ref{defi:Kstab}.

As a second consequence of Theorem~\ref{thm:equivRR}, we will prove:
\begin{cor}\label{cor:DH} The rescaled weight measures (cf.~Definition~\ref{defi:weight}) 
$$
\mu_m:=(1/m)_*\mu_{H^0(X,mL)}
$$
have uniformly bounded support, and converge weakly to a probability measure $\DH_{(X,L)}$ on $\R$ as $m\to\infty$. 
Its moments are further given by
\begin{equation}\label{equ:dmoment}
\int_{\R} \la^d\,\DH_{(X,L)}(d\la)=\binom{n+d}{n}^{-1}\frac{(L_d^{n+d})}{(L^n)}
\end{equation}
for each $d\in\N$. 
\end{cor}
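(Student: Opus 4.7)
The plan is to use Theorem~\ref{thm:equivRR} to compute the moments of $\mu_m$, establish uniform boundedness of support via finite generation of the section ring, and combine these by a standard tightness-plus-moments argument.

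For the moments, note that by the definition of $\mu_m$ as the push-forward of $\mu_{H^0(X,mL)}$ under $\la\mapsto\la/m$,
$$
\int_\R \la^d\,\mu_m(d\la)=\frac{1}{m^d N_m}\sum_{\la\in\Z}\la^d\dim H^0(X,mL)_\la.
$$
By Theorem~\ref{thm:equivRR}, the sum on the right is a polynomial in $m$ of degree at most $n+d$, with leading coefficient $\tfrac{d!}{(n+d)!}(L_d^{n+d})$, while Riemann-Roch gives $N_m\sim\tfrac{1}{n!}(L^n)m^n$. Hence
$$
\lim_{m\to\infty}\int_\R\la^d\,\mu_m(d\la)=\frac{d!\,n!}{(n+d)!}\,\frac{(L_d^{n+d})}{(L^n)}=\binom{n+d}{n}^{-1}\frac{(L_d^{n+d})}{(L^n)},
$$
which is the formula~(\ref{equ:dmoment}).

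For the uniform boundedness of support, I would use finite generation. Since $L$ is ample, $R(X,L)=\bigoplus_m H^0(X,mL)$ is a finitely generated graded $k$-algebra, and the $\G_m$-linearization refines this to a $\Z$-bigrading by degree and weight. Reductivity of $\G_m$ lets us choose finitely many bihomogeneous generators $s_1,\dots,s_N$, of degrees $d_i\ge 1$ and weights $\la_i\in\Z$. Setting $C:=\max_i|\la_i|/d_i$, any weight $\la$ occurring in $H^0(X,mL)$ is of the form $\sum n_i\la_i$ with $\sum n_id_i=m$, hence $|\la|\le Cm$. Therefore $\supp\mu_m\subset[-C,C]$ uniformly in $m$.

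Finally, the uniform support bound places $(\mu_m)$ in the weak-$*$ compact set of probability measures on $[-C,C]$. Any weak-$*$ subsequential limit $\mu$ is a probability measure on $[-C,C]$ whose moments, by the first paragraph, equal $\binom{n+d}{n}^{-1}(L_d^{n+d})/(L^n)$ for every $d$. Since a compactly supported measure on $\R$ is uniquely determined by its moments, all subsequential limits coincide with a single measure $\DH_{(X,L)}$, and hence $\mu_m\to\DH_{(X,L)}$ weakly. The main technical input is Theorem~\ref{thm:equivRR}; the remaining steps are formal, with the only mild subtlety being the finite-generation argument needed to upgrade tightness to a uniform support bound.
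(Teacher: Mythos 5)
Your proof is correct and follows essentially the same route as the paper: uniform boundedness of the supports via finite generation of $R(X,L)$, computation of the moments of $\mu_m$ from Theorem~\ref{thm:equivRR} together with asymptotic Riemann--Roch for $N_m$, and then the method of moments for compactly supported measures. The paper's argument is just a terser version of yours, leaving the bihomogeneous-generators bound and the moment-determinacy step implicit.
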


\begin{defi}\label{defi:DHaction} We call $\DH_{(X,L)}$ the \emph{Duistermaat-Heckman measure} of the polarized $\G_m$-scheme $(X,L)$. 
\end{defi}
For any $r\in\Z_{>0}$, the $\G_m$-action on $(X,L)$ induces an action on 
$(X,rL)$. It follows immediately from the definition that
$\DH(X,rL)=r_*\DH(X,L)$ and $\DF(X,rL)=\DF(X,L)$. 
This allows us to define the Duistermaat-Heckman measure and 
Donaldson-Futaki invariant for $\G_m$-actions on polarized
schemes $(X,L)$, where $L$ is an (ample) $\Q$-line bundle.
\begin{defi}\label{D101}
  For any polarized scheme $(X,L)$ with a $\G_m$-action,
  we define
  \begin{equation*}
    \DH(X,L):=(1/r)_*\DH(X,rL)
    \quad\text{and}\quad
    \DF(X,L):=\DF(X,rL)
  \end{equation*}
  for any sufficiently divisible $r\in\Z_{>0}$.
\end{defi}
\begin{proof}[Proof of Theorem~\ref{thm:equivRR}] Let $\pi_d:X_d\to\P^d$ be the fiber bundle defined above. The key observation is that the $\G_m$-decomposition $H^0(X,mL)=\bigoplus_{\la\in\Z} H^0(X,mL)_\la$ implies that
$$
(\pi_d)_*\cO_{X_d}(mL_d)=\bigoplus_{\la\in\Z} H^0(X,mL)_\la\otimes\cO_{\P^d}(\la).
$$
By relative ampleness of $L_d$, the higher direct images of $mL_d$ vanish for $m\gg 1$;  the Leray spectral sequence and the asymptotic Riemann-Roch theorem (cf.~\cite[\S1]{Kle}) therefore yield
$$
\sum_{\la\in\Z}\chi(\P^d,\cO_{\P^d}(\la))\dim H^0(X,mL)_\la=\chi(\P^d,(\pi_d)_*\cO_{X_d}(mL_d))
$$
$$
=\chi(X_d,mL_d)=\frac{(L_d^{n+d})}{(n+d)!}m^{n+d}+O(m^{n+d-1}). 
$$
Now $\chi(\P^d,\cO_{\P^d}(\la))=\frac{\la(\la-1)\cdots(\la-d+1)}{d!}=\frac{\la^d}{d!}+O(\la^{d-1})$, and we get the result by induction on~$d$. 
\end{proof}

\begin{proof}[Proof of Corollary~\ref{cor:DH}] Since $L$ is ample, $R(X,L)$ is finitely generated. It follows that the weights of $H^0(X,mL)$ grow at most linearly with $m$, which proves that $\mu_m$ has uniformly bounded support. Since $\mu_m$ is a probability measure, it therefore converges to a probability measure iff the moments $\int_\R\la^d\mu_m(d\la)$ converge for each $d\in\N$. We have, by definition,
$$
\int_\R\frac{\la^d}{d!}\mu_m(d\la)=\frac{1}{m^d N_m}\sum_{\la\in\Z}\frac{\la^d}{d!}\dim H^0(X,mL)_\la
$$
with $N_m=h^0(X,mL)$. Theorem~\ref{thm:equivRR} shows that
$$
\sum_{\la\in\Z}\frac{\la^d}{d!}\dim H^0(X,mL)_\la=\frac{(L_d^{n+d})}{(n+d)!}m^{n+d}+O(m^{n+d-1}),
$$
while 
$$
N_m=\frac{(L^n)}{n!}m^n+O(m^{n-1}),
$$
hence the result. 
\end{proof}

\begin{rmk}\label{rmk:DH} In order to explain the terminology,
  consider the case where $X$ is a smooth complex variety with an
  $S^1$-invariant hermitian metric on $L$ with positive curvature form
  $\om$. We then get a Hamiltonian function $H:X\to\R$ for the
  $S^1$-action on the symplectic manifold $(X,\om)$. The
  Duistermaat-Heckman measure as originally defined in~\cite{DH} is
  $H_*(\om^n)$, but this is known to coincide (up to normalization of
  the mass) with $\DH_{(X,L)}$ as defined above (see for
  instance~\cite[Theorem 9.1]{WN12} and~\cite[Proposition 4.1]{BWN}). 
  See also~\cite{Bern09,WN12,His12} for an analytic approach to 
  Duistermaat-Heckman measures via geodesic rays.
\end{rmk}

\begin{rmk}\label{rmk:Ok} When $X$ is a variety, the existence part of Corollary~\ref{cor:DH} is a rather special case of~\cite{Ok}, which also shows that $\DH_{(X,L)}$ can be written as a linear projection of the Lebesgue measure of some convex body. This implies in particular that $\DH_{(X,L)}$ is either absolutely continuous or a point mass. Its density is claimed to be piecewise polynomial on~\cite[p.1]{Ok}, but while this is a classical result of Duistermaat and Heckman when $X$ is a smooth complex variety as in Remark~\ref{rmk:DH}, we were not able to locate a proof in the literature when $X$ is singular. In particular, the proof of~\cite[Proposition 3.4]{BP} is incomplete. Piecewise polynomiality will be established in Theorem~\ref{thm:PP} below. 
\end{rmk}

We gather here the first few properties of Duistermaat-Heckman measures. 
\begin{prop}\label{prop:DHaction} Let $(X,L)$ be a polarized $\G_m$-scheme of dimension $n$, and set $V=(L^n)$. 
\begin{itemize}
\item[(i)] 
  Denote by $(X,L(\la))$ the result of twisting the action on $L$ by the character $t^\la$. 
  Then $\DH_{(X,L(\la))}=\la+\DH_{(X,L)}$. 
\item[(ii)] 
  If $X^\a$ are the irreducible components of $X$ (with their reduced scheme 
  structure), then
  \begin{equation*}
    \DH_{(X,L)}=\sum_\a c_\a\DH_{(X^\a,L|_{X^\a})},
  \end{equation*}
  where $c_\a=m_\a\frac{c_1(L)^n\cdot[X^\a]}{c_1(L)^n\cdot[X]}$, 
  with $m_\a$ the multiplicity of $X$ along $X^\a$. 
\end{itemize}
\end{prop}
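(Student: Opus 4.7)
For (i), I would compute directly at the level of weights. Twisting the $\G_m$-linearization of a line bundle $L$ by the character $t^\la$ induces a twist of the linearization on $mL$ by $t^{m\la}$, thereby translating every weight of the $\G_m$-module $H^0(X,mL)$ by $m\la$. Consequently the rescaled weight measure $(1/m)_*\mu_{H^0(X,mL(\la))}$ is the pushforward of $(1/m)_*\mu_{H^0(X,mL)}$ by translation by $\la$, and the claim follows upon passing to the weak limit given by Corollary~\ref{cor:DH}. The case of a $\Q$-line bundle $L$ reduces to this via Definition~\ref{D101}: take $r$ divisible enough that $rL$ is a line bundle and $r\la\in\Z$, apply the integer case to obtain $\DH(X,rL(r\la))=r\la+\DH(X,rL)$, then push forward by $1/r$.

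For (ii), my plan is to compare all moments using the identity~\eqref{equ:dmoment}. First, since $\G_m$ is connected it preserves each irreducible component $X^\a$, and the restricted linearization makes each $(X^\a,L|_{X^\a})$ into a polarized $\G_m$-scheme (restriction of ample is ample). Next, form the fiber bundles $\pi_d\colon X_d\to\P^d$ and $\pi_d^\a\colon X_d^\a\to\P^d$ as in Theorem~\ref{thm:equivRR}. Because $X_d$ is Zariski-locally isomorphic to $X\times U$ over opens $U\subset\P^d$, the fundamental cycle decomposition $[X]=\sum_\a m_\a[X^\a]$ lifts to
\begin{equation*}
[X_d]=\sum_\a m_\a[X_d^\a]\quad\text{in}\quad Z_*(X_d).
\end{equation*}
Capping with $c_1(L_d)^{n+d}$ gives $(L_d^{n+d})=\sum_\a m_\a\bigl(c_1(L_d)^{n+d}\cdot[X_d^\a]\bigr)$, in which only top-dimensional components (those with $\dim X^\a=n$, so $\dim X_d^\a=n+d$) contribute; the analogous identity $(L^n)=\sum_\a m_\a((L|_{X^\a})^n)$ holds for the same reason. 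Plugging these into~\eqref{equ:dmoment} and recognizing the moment formula for each top-dimensional $(X^\a,L|_{X^\a})$ yields
\begin{equation*}
\int_\R\la^d\,\DH_{(X,L)}(d\la)=\sum_\a c_\a\int_\R\la^d\,\DH_{(X^\a,L|_{X^\a})}(d\la)
\end{equation*}
for every $d\in\N$; the contributions from components with $\dim X^\a<n$ drop out consistently since $c_\a=0$ in that case. Specializing to $d=0$ gives $\sum_\a c_\a=1$, so both sides are compactly supported probability measures on $\R$; equality of all moments then forces equality of the measures. The $\Q$-line bundle case follows by applying the above to $rL$ for $r$ divisible enough and pushing forward by $1/r$, after observing that the weights $c_\a$ are unchanged by $L\mapsto rL$.

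The main point requiring care is the fundamental cycle identity $[X_d]=\sum_\a m_\a[X_d^\a]$ with the correct multiplicities, which hinges on the Zariski-local triviality of the associated bundle $X_d\to\P^d$ and the compatibility of lengths at generic points under flat base change. Once this is granted, the argument reduces to bookkeeping with the intersection-theoretic moment formula and the standard fact that a compactly supported probability measure on $\R$ is determined by its moments.
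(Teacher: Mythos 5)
Your proof is correct and follows essentially the same route as the paper: part (i) by direct translation of the weights, and part (ii) by using the Zariski-local triviality of $X_d\to\P^d$ to get the cycle decomposition $[X_d]=\sum_\a m_\a[X_d^\a]$ and then comparing all moments via~\eqref{equ:dmoment}. The extra details you supply (connectedness of $\G_m$ preserving components, vanishing of contributions from lower-dimensional components, determination of compactly supported measures by their moments, and the reduction from $\Q$-line bundles) are exactly the points the paper leaves implicit.
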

Note that $c_\a>0$ iff $X^\a$ has dimension $n$, and that $\sum_\a c_\a=1$ since $[X]=\sum_\a m_\a [X^\a]$.

\begin{proof} Property (i) is straightforward. Since $X_d\to\P^d$ is locally trivial, its irreducible components are of the form $X^\a_d$, with multiplicity $m_\a$. It follows that $[X_d]\in\CH_n(X_d)=\bigoplus\Z[X^\a_d]$ decomposes as $[X_d]=\sum_\a m_\a[X^\a_d]$. Assertion (ii) is now a direct consequence of (\ref{equ:dmoment}). 
\end{proof}
%
%
\subsection{The case of a test configuration}\label{sec:DHDFtest}
We still denote by $(X,L)$ a polarized scheme (where $L$ is allowed to
be a $\Q$-line bundle), but now without any \emph{a priori} given $\G_m$-action. 
\begin{defi}\label{defi:DHsemi} Let $(\cX,\cL)$ be an ample test configuration for $(X,L)$. We define the \emph{Duistermaat-Heckman measure $\DH_{(\cX,\cL)}$ and the \emph{Donaldson-Futaki invariant} $\DF(\cX,\cL)$ of $(\cX,\cL)$} as those of the polarized $\G_m$-scheme $(\cX_0,\cL_0)$. 
\end{defi} 

\begin{defi}\label{defi:Kstab} A polarized scheme $(X,L)$ is \emph{K-semistable} if $\DF(\cX,\cL)\ge 0$ for all ample test configurations $(\cX,\cL)$. It is \emph{K-stable} if we further have $\DF(\cX,\cL)=0$ only when $(\cX,\cL)$ is almost trivial in the sense of Definition~\ref{defi:triv}.
\end{defi}
 \begin{prop}\label{prop:DHDFsemi} 
  Let $(\cX,\cL)$ be an ample test configuration for $(X,L)$, with $\G_m$-equivariant 
  projection $\pi\colon\cX\to\A^1$ and compactification $(\bar\cX,\bar\cL)$, 
  and set $V:=(L^n)$. 
  \begin{itemize}
  \item[(i)] 
    For each $c\in\Q$, we have $\DH_{(\cX,\cL+c\cX_0)}=\DH_{(\cX,\cL)}+c$. 
  \item[(ii)] 
    Let $X^\a$ be the top-dimensional irreducible components of $X$ 
    (with their reduced scheme structure), and $m_\a$ be the multiplicity of $X$ along $X^\a$. 
    Then 
    \begin{equation*}
      \DH_{(\cX,\cL)}=\sum_\a c_\a\DH_{(\cX^\a,\cL|_{\cX^\a})},
    \end{equation*}
    where $\cX^\a$ is the irreducible component of $\cX$ corresponding to
    $X^\a$, and where $c_\a=V^{-1} m_\a c_1(L)^n\cdot[X^\a]$. 
  \item[(iii)] 
    The barycenter of the Duistermaat-Heckman measure satisfies
    \begin{equation}\label{equ:DHbar}
      \int_{\R} \la\,\DH_{(\cX,\cL)}(d\la)=\frac{(\bar\cL^{n+1})}{(n+1)V}
    \end{equation}
  \item[(iv)] 
    If $\cX$ (and hence $X$) is normal, then 
    \begin{equation}\label{equ:DF}
      \DF(\cX,\cL)=\frac{(K_{\bar\cX/\P^1}\cdot\bar\cL^n)}{V}+\bar S\frac{(\bar\cL^{n+1})}{(n+1)V}
    \end{equation}
    with $\bar S:=nV^{-1}(-K_X\cdot L^{n-1})$.  
  \end{itemize}
\end{prop}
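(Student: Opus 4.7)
My plan is to reduce (iii) and (iv) to asymptotic Riemann--Roch expansions for $\chi(\bar\cX, m\bar\cL)$ and $N_m = \chi(X, mL)$, connected via the Leray spectral sequence for $\pi\colon\bar\cX\to\P^1$; the remaining assertions (i) and (ii) then follow formally from Proposition~\ref{prop:DHaction}.

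For (i), twisting $\cL$ by $c\cX_0$ is, on the central fiber, the same as twisting the $\G_m$-linearization of $\cL_0$ by the character $t^c$ (as discussed before Example~\ref{ex:product}), so Proposition~\ref{prop:DHaction}(i) yields the desired translation. For (ii), I apply Proposition~\ref{prop:DHaction}(ii) to the polarized $\G_m$-scheme $(\cX_0, \cL_0)$, decomposing $\DH_{(\cX,\cL)}$ as a weighted sum over top-dimensional irreducible components $\cX_0^\beta$ of $\cX_0$. By Proposition~\ref{prop:basic}(v), each $\cX_0^\beta$ lies in a unique top-dimensional component $\cX^\a$ of $\cX$ and is a top-dimensional component of the central fiber of $\cX^\a$; regrouping by $\a$ and applying Proposition~\ref{prop:DHaction}(ii) in reverse to each $(\cX^\a_0, \cL_0|_{\cX^\a_0})$ recombines the inner sum into $\DH_{(\cX^\a, \cL|_{\cX^\a})}$. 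Coefficient matching follows from the cycle identity $[\cX_0]=\sum_\a m_\a [\cX^\a_0]$ together with the flat-base-change equalities $(\cL_0^n\cdot[\cX_0])=V$ and $((\cL|_{\cX^\a})_0^n\cdot[\cX^\a_0])=(L^n\cdot[X^\a])$.

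For (iii) and (iv), I pick $r\in\Z_{>0}$ with $r\cL$ a line bundle and set $V_m := \pi_*\cO_{\bar\cX}(mr\bar\cL)$. Since $r\bar\cL$ is $\pi$-ample, Serre vanishing yields $R^i\pi_*\cO(mr\bar\cL)=0$ for $i>0$ and $m\gg 1$, so $V_m$ is a $\G_m$-equivariant vector bundle on $\P^1$ of rank $N_{mr}$ and $\chi(\bar\cX, mr\bar\cL)=\chi(\P^1, V_m)$. Grothendieck's theorem yields a $\G_m$-equivariant decomposition $V_m=\bigoplus_i\cO_{\P^1}(a_i^{(m)})$. Since $\pi^{-1}(\P^1\setminus\{0\})\simeq X\times(\P^1\setminus\{0\})$ equivariantly, the fiber $V_m|_\infty$ carries the trivial $\G_m$-action; combined with the general identity $\deg\cO(a)=w_0-w_\infty$ for any $\G_m$-linearization of $\cO(a)$ (arising from the paper's convention that $t$ has weight $-1$), this forces the weight at $0$ of each summand to equal $a_i^{(m)}$. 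Hence $\deg V_m=\sum_i a_i^{(m)}= w_{mr}^\cX$, the total $\G_m$-weight on $\det H^0(\cX_0, mr\cL_0)$, and Riemann--Roch on $\P^1$ gives
\begin{equation*}
\chi(\bar\cX, mr\bar\cL)=w_{mr}^\cX+N_{mr}.
\end{equation*}
For (iii), the leading term together with asymptotic Riemann--Roch on $\bar\cX$ gives $w_{mr}^\cX/(mrN_{mr})\to (\bar\cL^{n+1})/((n+1)V)$. For (iv), I push both expansions to one further order via the two-term Riemann--Roch of Appendix~A (the sole reason normality of $\cX$ is invoked), use $K_{\bar\cX/\P^1}=K_{\bar\cX}+2[\cX_0]$ (since $K_{\P^1}=-2[\mathrm{pt}]$) together with $(\bar\cL^n\cdot[\cX_0])=V$ to absorb the subleading constant term coming from $N_{mr}$, expand $w_{mr}^\cX/(mrN_{mr})=F_0+F_1/(mr)+O(m^{-2})$, and read off $\DF=-2F_1$.

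The main delicate point is the weight calibration $\deg V_m = w_{mr}^\cX$: one must track signs carefully under the convention that $t$ has weight $-1$, and verify that triviality of the $\G_m$-action at $\infty\in\P^1$ forces the degree of each Grothendieck summand to coincide with its $\G_m$-weight at $0$. Once this is settled, both (iii) and (iv) reduce to direct bookkeeping on asymptotic expansions.
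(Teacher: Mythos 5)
Your proof is correct and follows essentially the same route as the paper's: (i)--(ii) via Proposition~\ref{prop:DHaction} together with the cycle identity $[\cX_0]=\sum_\a m_\a[\cX^\a_0]$ coming from flatness, and (iii)--(iv) via the identity $w_m=\deg\pi_*\cO_{\bar\cX}(m\bar\cL)=\chi(\bar\cX,m\bar\cL)-N_m$ followed by asymptotic (resp.\ two-term) Riemann--Roch. The ``weight calibration'' you single out as delicate is precisely the paper's observation that $\pi_*\cO_{\bar\cX}(m\bar\cL)$ is $\G_m$-equivariantly trivial away from $0$ (cf.\ Corollary~\ref{cor:trivline}), so your Grothendieck-splitting argument merely makes the same step explicit.
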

In (iv), $K_X$ and $K_{\bar\cX/\P^1}=K_{\bar\cX}-\pi^*K_{\P^1}$ are understood as Weil divisor classes on the normal schemes $X$ and $\bar\cX$, respectively. This intersection theoretic expression is originally due to~\cite{Wan,Oda2}, see also~\cite{LX}. 

\begin{rmk} When $X$ is smooth, $k=\C$ and $L$ is a line bundle, $\bar S$ is the mean value of the scalar curvature $S(\om)$ of any K\"ahler form $\om\in c_1(L)$ (hence the chosen notation). 
\end{rmk}

\begin{proof}[Proof of Proposition~\ref{prop:DHDFsemi}] 
  After passing to a multiple, we may assume that $L$ and $\cL$ are line bundles.
  By flatness of $\cX\to\A^1$, the decomposition $[X]=\sum_\a m_\a[X^\a]$ in 
  $\CH_n(\cX)$ implies $[\cX_0]=\sum_\a m_\a[\cX^\a_0]$, where $\cX^\a_0$ 
  denotes the (possibly reducible) central fiber of $\cX^\a$. We now get (ii) as a 
  consequence of Proposition~\ref{prop:DHaction}, which also implies (i). 

  We now turn to the proof of the last two points.  
  By relative ampleness, $\pi_*\cO_{\bar\cX}(m\bar\cL)$ is a vector bundle on $\P^1$ of rank 
  $N_m=h^0(mL)$ for $m\gg 1$, with fiber at $0$ isomorphic to $H^0(\cX_0,m\cL_0)$. 
  As a result, $w_m$ is the weight of $\det\pi_*\cO_{\bar\cX}(m\cL)_0$, and hence
  \begin{equation*}
    w_m
    =\deg\det\pi_*\cO_{\bar\cX}(m\bar\cL)
    =\deg\pi_*\cO_{\bar\cX}(m\bar\cL),
  \end{equation*}
  since $\pi_*\cO_{\bar\cX}(m\bar\cL)$ is $\G_m$-equivariantly trivial away from $0$ by construction 
  of the compactification. By the usual Riemann-Roch theorem on $\P^1$, we infer
  \begin{equation*}
    w_m= \chi(\P^1,\pi_*\cO_{\bar\cX}(m\bar\cL))-N_m. 
  \end{equation*}
  By relative ampleness again, the higher direct images of $m\cL$ vanish for $m$ divisible enough, 
  and the Leray spectral sequence and the asymptotic Riemann-Roch theorem give as in the 
  proof of Theorem~\ref{thm:equivRR} 
  \begin{equation*}
    w_m=\chi(\bar\cX,m\bar\cL)-N_m=\frac{m^{n+1}}{(n+1)!}(\bar\cL^{n+1})+O(m^n), 
  \end{equation*}
  which yields (iii) since $N_m=\frac{m^n}{n!}V+O(m^{n-1})$. 
  
  When $\cX$ (and hence $\bar\cX$) is normal, the two-term asymptotic 
  Riemann-Roch theorem on a normal variety (cf.~Theorem~\ref{thm:RR} in the appendix) yields
  \begin{equation*}
    N_m=V\frac{m^n}{n!}\left[1+\frac{\bar S}{2} m^{-1}+O(m^{-2})\right], 
  \end{equation*}
  and
  \begin{align*}
    w_m&=-N_m+\frac{(\bar\cL^{n+1})}{(n+1)!}m^{n+1}
    -\frac{(K_{\bar\cX}\cdot\bar\cL^n)}{2n!}m^{n}+O(m^{n-1})\\
    &=\frac{(\bar\cL^{n+1})}{(n+1)!}m^{n+1}
      -\frac{(K_{\bar\cX/\P^1}\cdot\bar\cL^n)}{2n!}m^{n}+O(m^{n-1}),
  \end{align*}
  using that $(\pi^*K_{\P^1}\cdot\bar\cL^n)=-2V$ since $\deg K_{\P^1}=-2$. 
  The formula for $\DF(\cX,\cL)$ in (iv) now follows from a straightforward computation.
\end{proof}
%
%
\subsection{Behavior under normalization}\label{sec:DHDFnormal}
We now study the behavior of Duistermaat-Heckman measures and Donaldson-Futaki invariants under normalization. 

Recall that the normalization of the polarized scheme $(X,L)$ is the normal polarized scheme $(\tX,\tL)$ obtained by setting $\tL=\nu^*L$ with $\nu\colon\tX\to X$ the normalization morphism. Similarly, the normalization of an ample test configuration $(\cX,\cL)$ for $(X,L)$ is the ample test configuration $(\tcX,\tcL)$ for $(\tX,\tL)$ obtained by $\tcL=\nu^*\cL$ with $\nu\colon\tcX\to\cX$ the normalization morphism. 

We first prove that Duistermaat-Heckman measures are invariant under normalization, 
in the reduced case. 
\begin{thm}\label{thm:DHinv} 
  If $X$ is reduced, then $\DH_{(\cX,\cL)}=\DH_{(\tcX,\tcL)}$ for every ample test configuration
  $(\cX,\cL)$ for $(X,L)$.
\end{thm}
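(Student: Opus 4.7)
The plan is a moment-method argument: I will show that $\DH_{(\cX,\cL)}$ and $\DH_{(\tcX,\tcL)}$ have the same moments of every order, which suffices since both are probability measures with compact support.

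First I reduce to the case where $X$ is a variety. Since $X$ is reduced, all multiplicities $m_\alpha$ in Proposition~\ref{prop:DHDFsemi}(ii) equal $1$, decomposing $\DH_{(\cX,\cL)}$ as a convex combination over the top-dimensional irreducible components $X^\alpha$. Normalization distributes as $\tcX=\coprod_\alpha\tcX^\alpha$, so the analogous decomposition applies to $\DH_{(\tcX,\tcL)}$; the weights coincide because $(\tL|_{\tX^\alpha})^n=(L|_{X^\alpha})^n$ by the projection formula for the finite birational morphism $\tX^\alpha\to X^\alpha$. It thus suffices to treat $X$ a variety, in which case $\cX$ is a variety (Proposition~\ref{prop:basic}) and $\nu\colon\tcX\to\cX$ is finite and birational.

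In this case, the central fibers are Cartier divisors pulled back from $\{0\}\in\A^1$, so $\tcX_0=\nu^*\cX_0$, and the projection formula yields the $n$-cycle identity
\begin{equation*}
  \nu_{0,*}[\tcX_0]=\nu_*(\nu^*\cX_0\cdot[\tcX])=\cX_0\cdot\nu_*[\tcX]=\cX_0\cdot[\cX]=[\cX_0]
\end{equation*}
on $\cX_0$, where $\nu_0\colon\tcX_0\to\cX_0$ is the induced finite morphism and $\nu_*[\tcX]=[\cX]$ because $\nu$ is birational between integral schemes. Via the $\P^d$-bundle construction $X_d\to\P^d$ from the proof of Theorem~\ref{thm:equivRR}, the $\G_m$-equivariant morphism $\nu_0$ functorially produces finite morphisms $\nu_{0,d}\colon(\tcX_0)_d\to(\cX_0)_d$ with $\tcL_{0,d}=\nu_{0,d}^*\cL_{0,d}$. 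Since these bundles are Zariski-locally products $\cX_0\times U\to U$, the preceding cycle identity propagates to $\nu_{0,d,*}[(\tcX_0)_d]=[(\cX_0)_d]$, and the projection formula on $(\cX_0)_d$ then delivers $(\tcL_{0,d}^{n+d})=(\cL_{0,d}^{n+d})$ for every $d\ge0$.

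Combined with the identity $(\cL_0^n)=V=(\tcL_0^n)$, which follows from flatness of both test configurations (so the Hilbert polynomial of $\cL$ is constant in the fiber), Corollary~\ref{cor:DH} applied to the polarized $\G_m$-schemes $(\cX_0,\cL_0)$ and $(\tcX_0,\tcL_0)$ now yields
\begin{equation*}
  \int_\R\la^d\,\DH_{(\cX,\cL)}(d\la)=\binom{n+d}{n}^{-1}\frac{(\cL_{0,d}^{n+d})}{V}=\binom{n+d}{n}^{-1}\frac{(\tcL_{0,d}^{n+d})}{V}=\int_\R\la^d\,\DH_{(\tcX,\tcL)}(d\la)
\end{equation*}
for every $d\ge0$, so the two measures coincide. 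The main subtle point is the cycle equality $\nu_{0,*}[\tcX_0]=[\cX_0]$: the restriction $\nu_0$ may well fail to be birational on individual irreducible components of the central fibers (for instance when $\cX$ is not $R_1$ along $\cX_0$), but the projection formula on the ambient varieties circumvents any component-by-component analysis.
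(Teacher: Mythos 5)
Your proof is correct, and it takes a genuinely different route from the one in the paper. The paper compares the two measures via their tail distributions: reducedness of $\cX$ gives an injection $H^0(\cX,m\cL)\hookrightarrow H^0(\tcX,m\tcL)$, hence $F^\la H^0(X,mL)\subset F^\la H^0(\tX,m\tL)$, which in the limit yields $\DH_{(\tcX,\tcL)}\{x\ge\la\}\ge\DH_{(\cX,\cL)}\{x\ge\la\}$ a.e.; equality of the barycenters, i.e.\ of the \emph{first} moments only, computed as $(\bar\cL^{n+1})/((n+1)V)$ via the projection formula, then forces the two tail distributions to agree. You instead equate \emph{all} moments at once, using the formula of Corollary~\ref{cor:DH} on the associated $\P^d$-bundles together with the cycle identity $\nu_{0,*}[\tcX_0]=[\cX_0]$, which you correctly derive from $\tcX_0=\nu^*\cX_0$ and the projection formula (and which, as you note, is the right way to sidestep the fact that $\nu_0$ need not be birational component by component). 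Each step of your argument checks out: the reduction to the variety case via Proposition~\ref{prop:DHDFsemi}~(ii) is legitimate because reducedness makes all multiplicities $1$ and the total volumes match, the propagation of the cycle identity to the Zariski-locally trivial bundles $(\cX_0)_d\to\P^d$ is exactly the mechanism already used in the proof of Proposition~\ref{prop:DHaction}~(ii), and compactly supported probability measures are determined by their moments. The trade-off is that the paper's argument only needs the easy first-moment computation plus a soft monotonicity coming from reducedness, whereas yours requires the full strength of the equivariant moment formula and a slightly more delicate intersection-theoretic bookkeeping on the (possibly very singular, non-reduced) central fibers; in exchange, your argument is more symmetric in the two test configurations, avoids the passage through weak limits of tail distributions, and isolates cleanly the single geometric input $\nu_{0,*}[\tcX_0]=[\cX_0]$.
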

\begin{proof} 
  By Proposition~\ref{prop:DHDFsemi}~(ii), after twisting the $\G_m$-action on $\cL$ 
  by $t^\la$ with $\la\gg 1$, we may assume $\mu:=\DH_{(\cX,\cL)}$ and 
  $\tmu:=\DH_{(\tcX,\tcL)}$ are supported in $\R_+$. For $m$ divisible enough, let 
  \begin{equation*}
    \mu_m:=(1/m)_*\mu_{\pi_*\cO_\cX(m\cL)_0}
  \end{equation*} 
  be the scaled weight measure of the $\G_m$-module
  $\pi_*\cO_\cX(m\cL)_0$. Thus $\mu_m$ converges weakly to $\mu$ by 
  Proposition~\ref{prop:DHDFsemi}~(i). By Lemma~\ref{lem:tail}, the tail distribution of 
  $\mu_m$ is given by
  \begin{equation*}
    \mu_m\{x\ge\la\}=\frac{1}{N_m}\dim F^{\lceil m\la\rceil} H^0(X,mL), 
  \end{equation*}
  where $F^\bullet H^0(X,mL)$ is the Rees filtration induced by $(\cX,\cL)$, 
  and $N_m=H^0(X,mL)=\dim H^0(\cX_0,m\cL_0)$ for $m\gg 1$, by flatness and Serre vanishing. 

  Denoting by $\tmu_m$ and $F^\bullet H^0(\tX,m\tL)$ the scaled weight measure and filtration 
  defined by $(\tcX,\tcL)$, we similarly have 
  \begin{equation*}
    \tmu_m\{x\ge\la\}=\frac{1}{\tN_m}\dim F^{\lceil m\la\rceil} H^0(\tX,m\tL).
  \end{equation*}
  Since $\cX$ is reduced by Proposition~\ref{prop:basic}, the canonical morphism 
  $\cO_\cX\to\nu_*\cO_{\tcX}$ is injective, and the projection formula yields a $\G_m$-equivariant 
  inclusion
  $H^0(\cX,m\cL)\hookrightarrow H^0(\tcX,m\tcL)$. For each $\la\in\Z$, we thus have 
  $H^0(\cX,m\cL)_\la\hookrightarrow H^0(\tcX,m\tcL)_\la$, 
  and hence $F^\la  H^0(X,mL)\hookrightarrow F^\la  H^0(\tX,m\tL)$, which implies
  \begin{equation*}
    \tmu_m\{x\ge\la\}\ge\frac{\tN_m}{N_m}\mu\{x\ge\la\}.
  \end{equation*}
  Since $X$ is reduced, $\nu_*\cO_{\tX}/\cO_X$ is supported on a nowhere dense Zariski 
  closed subset, and hence 
  \begin{equation*}
    \tN_m=h^0(\tX,m\tL)=h^0(X,\cO_X(mL)\otimes\nu_*\cO_{\tX})=N_m+O(m^{n-1}).
  \end{equation*}
  Since the weak convergence of probability measures $\mu_m\to\mu$ implies 
  (in fact, is equivalent to) the a.e.\ convergence of the tail distributions, we conclude
  \begin{equation}\label{e203}
    \tmu\{x\ge\la\}\ge\mu\{x\ge\la\}
  \end{equation}
  for a.e.\ $\la\in\R$. 
  
  By (iii) of Proposition~\ref{prop:DHDFsemi} and the projection formula, $\mu$ and $\tmu$ 
  have the same barycenter $\bar\lambda$, and hence 
  \begin{equation}\label{e204}
    \int_{\R_+}\mu\{x\ge\la\}d\la
    =\int_{\R_+}\la\,d\mu
    =\bar\lambda
    =\int_{\R_+}\la\,d\tmu
    =\int_{\R_+}\tmu\{x\ge\la\}d\la
  \end{equation}
  since $\mu$ and $\tmu$ are supported in $\R_+$. By (\ref{e203}), we thus have 
  $\tmu\{x\ge\la\}=\mu\{x\ge\la\}$ for a.e.\ $\la\in\R$, and hence $\tmu=\mu$ 
  (by taking for instance the distributional derivatives), which concludes the proof.
\end{proof}

Regarding Donaldson-Futaki invariants, we prove the following explicit version of~\cite[Proposition 5.1]{RT} and~\cite[Corollary 3.9]{ADVLN}.

\begin{prop}\label{prop:DFnorm} 
  Let $(\cX,\cL)$ be an ample test configuration
  for a polarized scheme $(X,L)$. Let $\cX'$ be another test configuration for $X$ dominating $\cX$, such that $\mu\colon\cX'\to\cX$ is finite, and set $\cL':=\mu^*\cL$. Then 
  \begin{equation*}
    \DF(\cX,\cL)=\DF(\cX',\cL')+2 V^{-1}\sum_E m_E\left(E\cdot\cL^n\right),
  \end{equation*}
  where $E$ ranges over the irreducible components of $\cX_0$ 
  contained in the singular locus of $\cX$ and $m_E\in\N^*$ is the length of the sheaf 
  $\cF:=\left(\mu_*\cO_{\cX'}\right)/\cO_{\cX}$ at the generic point of $E$. 
\end{prop}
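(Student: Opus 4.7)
The plan is to compare total weights $w_m$ on each side using the finiteness of $\mu$, then read off the $m^{-1}$-coefficient of the expansion of $w_m/(mN_m)$. Pick $r$ so that $r\cL$ is a line bundle and replace $\cL$ by $r\cL$ throughout; this merely rescales both sides of the claimed identity by $r$, so no loss of generality.

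Extend $\mu$ to a finite morphism $\bar\mu\colon\bar{\cX}'\to\bar\cX$ between the compactifications by gluing with the identity on $X\times(\P^1\setminus\{0\})$; then $\bar\cL'=\bar\mu^*\bar\cL$. Since $\bar\mu$ is affine, the higher direct images of $m\bar\cL'$ vanish, and by the projection formula
\begin{equation*}
\bar\mu_*\cO_{\bar{\cX}'}(m\bar\cL')=\cO_{\bar\cX}(m\bar\cL)\otimes\bar\mu_*\cO_{\bar{\cX}'}.
\end{equation*}
The short exact sequence
\begin{equation*}
0\to\cO_{\bar\cX}\to\bar\mu_*\cO_{\bar{\cX}'}\to\bar\cF\to 0
\end{equation*}
(where $\bar\cF$ extends $\cF$ by zero across $X\times(\P^1\setminus\{0\})$, as $\bar\mu$ is an isomorphism there) then yields
\begin{equation*}
\chi(\bar{\cX}',m\bar\cL')-\chi(\bar\cX,m\bar\cL)=\chi(\bar\cX,\cO_{\bar\cX}(m\bar\cL)\otimes\bar\cF).
\end{equation*}
Using $w_m=\chi(\bar\cX,m\bar\cL)-N_m$ (and the analogous identity for $\cX'$, with the same $N_m=h^0(X,mL)$), one obtains
\begin{equation*}
w_m'-w_m=\chi(\bar\cX,\cO_{\bar\cX}(m\bar\cL)\otimes\bar\cF).
\end{equation*}

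Next I would apply asymptotic Riemann-Roch for coherent sheaves: since $\bar\cF$ is supported on $\cX_0$, which has pure dimension $n$, we get
\begin{equation*}
\chi(\bar\cX,\cO_{\bar\cX}(m\bar\cL)\otimes\bar\cF)=\frac{m^n}{n!}\sum_E m_E(E\cdot\bar\cL^n)+O(m^{n-1}),
\end{equation*}
where $E$ runs over the $n$-dimensional components of $\supp\bar\cF$ and $m_E$ is the generic length of $\bar\cF$ along $E$. Since $E\subset\cX_0\subset\cX$ and $\bar\cL|_{\cX}=\cL$, we can replace $\bar\cL$ by $\cL$ in the intersection numbers. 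Dividing by $mN_m=\tfrac{V}{n!}m^{n+1}+O(m^n)$ gives
\begin{equation*}
\frac{w_m'-w_m}{mN_m}=\frac{1}{Vm}\sum_E m_E(E\cdot\cL^n)+O(m^{-2}),
\end{equation*}
so the subleading coefficients in the expansion~\eqref{equ:Futaki} satisfy $F_1'-F_1=V^{-1}\sum_E m_E(E\cdot\cL^n)$, and multiplying by $-2$ gives the stated formula.

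The only point requiring extra care is the claim that each $E$ with $m_E>0$ is contained in the singular locus of $\cX$. This will be the main (but still routine) step: at any point $x$ where $\cO_{\cX,x}$ is normal, the finite birational inclusion $\cO_{\cX,x}\hookrightarrow(\mu_*\cO_{\cX'})_x$ is integral over a normal ring with the same total ring of fractions, hence an equality by the standard consequence of Zariski's Main Theorem, so $\cF_x=0$. (In the irreducible case this is immediate; for the general polarized-scheme setup one treats each minimal prime of $\cX$ at $x$ separately.) Therefore $\supp\cF$ lies in the non-normal, and a fortiori singular, locus of $\cX$, which justifies restricting the sum to components $E\subset\cX^{\sing}$.
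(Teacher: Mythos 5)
Your proof is correct and follows essentially the same route as the paper's: compare $w'_m-w_m$ via $\chi(\bar\cX',m\bar\cL')-\chi(\bar\cX,m\bar\cL)$, use finiteness of $\mu$ (vanishing of higher direct images plus the projection formula) to reduce to $\chi(\bar\cX,\cO_{\bar\cX}(m\bar\cL)\otimes\cF)$, and apply asymptotic Riemann--Roch for coherent sheaves to extract the $m^n$-coefficient. Your additional observation that $\supp\cF$ lies in the non-normal (hence singular) locus of $\cX$ is a correct justification of a point the paper leaves implicit.
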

When $X$ is normal, the result applies to the normalization of a test
configuration; hence

\begin{cor}\label{cor:Kstabnorm} If $X$ is normal, then $(X,L)$ is K-semistable iff $\DF(\cX,\cL)\ge 0$ for all \emph{normal} ample test configurations.
\end{cor}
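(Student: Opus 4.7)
\begin{proof}[Proof sketch] The forward implication is immediate: if $(X,L)$ is K-semistable, then by definition $\DF(\cX,\cL)\ge 0$ for \emph{every} ample test configuration, and in particular for every normal one.

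For the converse, let $(\cX,\cL)$ be an arbitrary ample test configuration for $(X,L)$. Since $X$ is normal and hence reduced, Proposition~\ref{prop:basic}~(i) gives that $\cX$ is reduced, so the normalization morphism $\nu\colon\tcX\to\cX$ is finite. Set $\tcL:=\nu^*\cL$. Then $(\tcX,\tcL)$ is a test configuration for the normalization $(\tX,\tL)=(X,L)$, and it is normal by construction. Moreover, since $\nu$ is a finite morphism and $\cL$ is ample on $\cX$, the pull-back $\tcL$ is ample on $\tcX$, so $(\tcX,\tcL)$ is a \emph{normal ample} test configuration for $(X,L)$.

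Now apply Proposition~\ref{prop:DFnorm} to the finite morphism $\nu\colon\tcX\to\cX$. With $\cX':=\tcX$ and $\cL':=\tcL$, it yields
\begin{equation*}
  \DF(\cX,\cL)=\DF(\tcX,\tcL)+2V^{-1}\sum_{E}m_E\bigl(E\cdot\cL^n\bigr),
\end{equation*}
where $E$ runs over the irreducible components of $\cX_0$ contained in the singular locus of $\cX$ and $m_E\in\N^*$ is the length of $(\nu_*\cO_{\tcX})/\cO_\cX$ at the generic point of $E$. Each such $E$ is an $n$-dimensional subvariety of $\cX$, and since $\cL$ is ample, the restriction $\cL|_E$ is ample, giving $(E\cdot\cL^n)=(\cL|_E)^n>0$. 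Hence the correction term is non-negative. By hypothesis, $\DF(\tcX,\tcL)\ge 0$ since $(\tcX,\tcL)$ is a normal ample test configuration, so
\begin{equation*}
  \DF(\cX,\cL)\ge\DF(\tcX,\tcL)\ge 0,
\end{equation*}
which proves K-semistability.
\end{proof}

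The argument is essentially a one-step reduction, so there is no serious obstacle; the only points worth verifying are that the normalization of a reduced finite-type $k$-scheme is finite, that $\nu^*\cL$ remains ample under a finite morphism, and that the correction terms in Proposition~\ref{prop:DFnorm} have the correct sign — all of which are standard.
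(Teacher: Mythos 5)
Your proof is correct and is essentially the paper's own argument: the corollary is stated there as an immediate consequence of Proposition~\ref{prop:DFnorm} applied to the normalization $\nu\colon\tcX\to\cX$, with the correction term nonnegative because each $E$ is an $n$-dimensional component of $\cX_0$ on which $\cL$ is ample. The auxiliary points you flag (finiteness of normalization, ampleness of $\nu^*\cL$, positivity of $(E\cdot\cL^n)$) are all handled correctly.
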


\begin{proof}[Proof of Proposition~\ref{prop:DFnorm}] 
  Let $m$ be sufficiently divisible. 
  Denoting by $w_m$ and $w'_m$ the $\G_m$-weights of 
  $\det H^0(\cX_0,m\cL_0)$ and $\det H^0(\cX'_0,m\cL'_0)$,
  the proof of Proposition~\ref{prop:DHDFsemi} yields
  $$
  w'_m-w_m
  =\chi(\bar\cX',m\bar\cL')-\chi(\bar\cX,m\bar\cL). 
  $$
  Since $\mu$ is finite, we have $R^q\mu_*\cO_{\bar\cX'}=0$ for all $q\ge 1$, 
  and the Leray spectral sequence gives
  \begin{equation*}
    \chi(\bar\cX',m\bar\cL')
    =\chi\left(\bar\cX,\cO_\cX(m\bar\cL)\otimes\mu_*\cO_{\bar\cX'}\right).
  \end{equation*}
  By additivity of the Euler characteristic in exact sequences and~\cite[\S2]{Kle}, we infer
  \begin{align*}
    w'_m-w_m
    &=\chi\left(\bar\cX,\cO_{\bar\cX}(m\bar\cL)\otimes\cF\right)\\
    &=\frac{m^n}{n!}\sum_E m_E\left(E\cdot\cL^n\right)+O(m^{n-1}), 
  \end{align*}
  which yields the desired result in view of Definition~\ref{defi:DFaction}. 
\end{proof}
%
%
\subsection{The logarithmic case}\label{sec:DFlog}
Assume that $X$ is normal, let $B$ be a boundary on $X$ and write $K_{(X,B)}:=K_X+B$
(see~\S\ref{sec:bound}). 
Let $L$ be an ample $\Q$-line bundle on $X$.
We then introduce a log version of the `mean scalar curvature' $\bar S$ by setting 
\begin{equation*}
  \bar S_B:=nV^{-1}\left(-K_{(X,B)}\cdot L^{n-1}\right). 
\end{equation*}
If $\cX$ is a normal test configuration for $X$, denote by $\cB$ (resp.\ $\bar\cB$)
the $\Q$-Weil divisor on $\cX$ (resp.\ $\bar\cX$) obtained as the (component-wise) 
Zariski closure in $\cX$ (resp.\ $\bar\cX$) of the $\Q$-Weil divisor 
$B\times(\A^1\setminus\{0\})$ with respect to the open embedding 
of $X\times(\A^1\setminus\{0\})$ into $\cX$ (resp. $\bar\cX$).
We then set 
\begin{equation*}
  K_{(\cX,\cB)}:=K_\cX+\cB,
  \quad
  K_{(\bar\cX,\bar\cB)}:=K_{\bar\cX}+\bar\cB,
\end{equation*}
and
\begin{equation*}
  K_{(\cX,\cB)/\A^1}:=K_{(\cX,\cB)}-\pi^*K_{\A^1},
  \quad
  K_{(\bar\cX,\bar\cB)/\P^1}:=K_{(\bar\cX,\bar\cB)}-\pi^*K_{\P^1}.
\end{equation*}
Note that these $\Q$-Weil divisor (classes) may not be $\Q$-Cartier in general. 

The intersection theoretic formula for $\DF$ in Proposition~\ref{prop:DHDFsemi} suggests the following generalization for pairs (compare~\cite[Theorem 3.7]{OSu}, see also~\cite{Don4,LS}). 

\begin{defi}\label{defi:DFpairs} 
  Let $B$ be a boundary on $X$. For each normal test configuration $(\cX,\cL)$ for $(X,L)$, 
  we define the \emph{log Donaldson-Futaki invariant} of $(\cX,\cL)$ as 
  \begin{equation*}
    \DF_B(\cX,\cL):=V^{-1}(K^B_{\bar\cX/\P^1}\cdot\bar\cL^n)
    +\bar S_BV^{-1}\frac{(\bar\cL^{n+1})}{n+1}, 
  \end{equation*}
\end{defi}

In view of Corollary~\ref{cor:Kstabnorm}, we may then introduce the following notion:
\begin{defi}\label{defi:logKstab} 
  A polarized pair $((X,B);L)$ is \emph{K-semistable} if $\DF_B(\cX,\cL)\ge 0$ for all 
  normal ample test configurations. 
  It is \emph{K-stable} if we further have $\DF_B(\cX,\cL)=0$ only when $(\cX,\cL)$ is trivial. 
\end{defi}
Note that $((X,B);L)$ is K-semistable (resp.\ K-stable) iff 
$((X,B);rL)$ is K-semistable (resp.\ K-stable) for some (or,
equivalently, any) $r\in\Z_{>0}$.

\begin{rmk}\label{rmk:slc} Let $X$ be a \emph{deminormal} scheme, \ie
  reduced, of pure dimension $n$, $S_2$ and with at most normal
  crossing singularities in codimension one, and let $\nu\colon\tX\to X$ be
  the normalization. If $K_X$ is $\Q$-Cartier, then
  $\nu^*K_X=K_{\tX}+\tB$, where $\tB$ denotes the inverse image of the
  conductor, and is a reduced Weil divisor on $\tX$ by the
  deminormality assumption. By definition, $X$ has \emph{semi-log
    canonical singularities} (slc for short) if $(\tX,\tB)$ is
  lc. (See~\cite[\S5]{KollarBook} for details.)

Now let $L$ be an ample $\Q$-line bundle on $X$, and let $(\cX,\cL)$ be an ample test configuration for $(X,L)$, with normalization $(\tcX,\tcL)$. In~\cite[Proposition 3.8]{Oda2} and~\cite[\S5]{Oda3}, Odaka introduces the \emph{partial normalization} $\tcX\to\cX'\to\cX$ by requiring that 
$$
\cO_{\cX'}=\cO_{\tcX}\cap \cO_{X\times\left(\A^1\setminus\{0\}\right)}.
$$
We get this way an ample test configuration $(\cX',\cL')$ for $(X,L)$, with the extra property that $\tcX\to\cX'$ is an isomorphism over the generic points of $\cX'_0$, cf.~\cite[Lemma 3.9]{Oda2}. Arguing as in the proof of Proposition~\ref{prop:DFnorm}, we may then check that 
\begin{equation}\label{equ:DFslc}
\DF_{\tB}(\tcX,\tcL)=\DF(\cX',\cL')\le\DF(\cX,\cL).
\end{equation}
This shows that $(X,L)$ is K-semistable iff $\DF_{\tB}(\tcX,\tcL)\ge 0$ for the normalization $(\tcX,\tcL)$ of every ample test configuration $(\cX,\cL)$ for $(X,L)$. 
\end{rmk}
%
%
\section{Valuations and test configurations}\label{sec:valtest}
In what follows, $X$ denotes a normal variety of dimension $n$,
with function field $K=k(X)$. 
The function field of any test configuration for $X$ is then isomorphic to $K(t)$.
We shall relate valuations on $K$ and $K(t)$ from both an algebraic
and geometric point of view.
%
%
\subsection{Restriction and Gauss extension}
First consider a valuation $w$ on $K(t)$. 
We denote by $r(w)$  its restriction to $K$.
\begin{lem}\label{lem:div} 
  If $w$ is an Abhyankar valuation, then so is $r(w)$.
  If $w$ is divisorial, then $r(w)$ is either divisorial
  or trivial.
\end{lem}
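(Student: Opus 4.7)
The plan is to apply the Abhyankar--Zariski inequality~(\ref{equ:Abhy}) with the intermediate field $K'=K\subset K(t)$, using that $\trdeg K(t)/K=1$. This gives
\begin{equation*}
  \trdeg(w)+\ratrk(w)\le\trdeg(r(w))+\ratrk(r(w))+1.
\end{equation*}
Applying Abhyankar--Zariski once more to $r(w)$ on $K$ (with $K'=k$) yields $\trdeg(r(w))+\ratrk(r(w))\le n$. If $w$ is Abhyankar, then the left-hand side of the first inequality equals $n+1$, so both inequalities must be equalities; in particular $r(w)$ is Abhyankar. This settles the first assertion.

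For the second assertion, assume $w$ is divisorial, so $\trdeg(w)=n$ and $\ratrk(w)=1$, and in particular $w$ is Abhyankar. By the first part, $r(w)$ is also Abhyankar, so $\trdeg(r(w))+\ratrk(r(w))=n$. The key additional observation is that the value group $\Gamma_{r(w)}=w(K^*)$ is a subgroup of $\Gamma_w=w(K(t)^*)$, whence
\begin{equation*}
  \ratrk(r(w))\le\ratrk(w)=1.
\end{equation*}
Thus either $\ratrk(r(w))=0$, in which case $\Gamma_{r(w)}=0$ and $r(w)$ is trivial, or $\ratrk(r(w))=1$ and then $\trdeg(r(w))=n-1$, so $r(w)$ is divisorial by definition.

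I do not foresee any real obstacle: the whole lemma is essentially a bookkeeping exercise with the invariants $(\trdeg,\ratrk)$ and their behavior under restriction, and the only input beyond the Abhyankar--Zariski inequality is the trivial monotonicity $\ratrk(r(w))\le\ratrk(w)$. The slight point to be careful about is that this last inequality uses the fact that a valuation and its restriction to a subfield have value groups related by inclusion, which is immediate from the definition of $r(w)$ as $w|_{K^*}$.
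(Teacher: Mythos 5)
Your proof is correct and follows essentially the same route as the paper: both parts rest on the Abhyankar--Zariski inequality applied to the extension $K\subset K(t)$ together with the Abhyankar property of $r(w)$. The only cosmetic difference is that for the dichotomy in the second assertion you invoke $\ratrk(r(w))\le\ratrk(w)$ (via the inclusion of value groups), whereas the paper uses the companion bound $\trdeg(r(w))\le\trdeg(w)$; combined with $\trdeg(r(w))+\ratrk(r(w))=n$ these yield the same conclusion, and your version spells out the needed monotonicity explicitly.
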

\begin{proof}
  The first assertion follows from Abhyankar's inequality (\ref{equ:Abhy}).
  Indeed, if $w$ is Abhyankar, then $\trdeg(w)+\ratrk(w)=n+1$,
  so~\eqref{equ:Abhy} gives $\trdeg(r(w))+\ratrk(r(w))\ge n$.
  As the opposite inequality always holds, we must have
  $\trdeg(r(w))+\ratrk(r(w))=n$, \ie $r(w)$ is Abhyankar.
  
  We also have $\trdeg(r(w))\le\trdeg(w)$, so if $w$ is divisorial, then
  $\trdeg(r(w))=n$ or $\trdeg(r(w))=n-1$, corresponding to $r(w)$
  being trivial or divisorial, respectively.
\end{proof}

\medskip
The restriction map $r$ is far from injective, but we can 
construct a natural one-sided inverse by exploiting the 
$k^*$-action (or $\G_m$-action) on $K(t)=k(X_{\A^1})$
defined by $(a\cdot f)(t)=f(a^{-1}t)$ for $a\in k^*$ and $f\in K(t)$. 
In terms of the Laurent polynomial expansion
\begin{equation}\label{equ:Laurent}
f=\sum_{\la\in\Z}f_\la t^\la
\end{equation}
with $f_\la\in K$, the $k^*$-action on $K(t)$ reads
\begin{equation}\label{equ:action}
a\cdot f=\sum_{\la\in\Z}a^{-\la}f_\la t^\la.
\end{equation}

\begin{lem}\label{lem:gauss} A valuation $w$ on $K(t)$ is $k^*$-invariant iff 
\begin{equation}\label{equ:gauss}
w(f)=\min_{\la\in\Z}\left(r(w)(f_\la)+\la w(t)\right). 
\end{equation}
for all $f\in K(t)$ with Laurent polynomial expansion (\ref{equ:Laurent}).  In particular, $r(w)$ is trivial iff $w$ is the multiple of the $t$-adic valuation. 
\end{lem}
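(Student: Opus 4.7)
The plan is to verify the equivalence in both directions and then deduce the ``in particular'' from the Gauss-type formula~(\ref{equ:gauss}). For the easy direction, if $w$ satisfies~(\ref{equ:gauss}), then since $r(w)|_{k^*} = 0$ we have $r(w)(a^{-\la} f_\la) = r(w)(f_\la)$ for every $a \in k^*$. Applied to $a \cdot f = \sum_\la a^{-\la} f_\la t^\la$, the formula immediately yields $w(a \cdot f) = w(f)$, so $w$ is $k^*$-invariant.

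For the converse, assume $w$ is $k^*$-invariant and fix a Laurent polynomial $f = \sum_{\la \in S} f_\la t^\la$ with finite support $S \subset \Z$. Set $m := \min_{\la \in S}(r(w)(f_\la) + \la w(t))$; the ultrametric inequality gives $w(f) \geq m$ automatically. Suppose for contradiction that $w(f) > m$, and let $T \subseteq S$ be the finite subset realizing the minimum. The strategy is to use a linear combination of $k^*$-translates of $f$ to isolate a single monomial of weight $m$. Since the characters $x \mapsto x^{-\la}$ of $k^*$ are linearly independent over $k$ for distinct $\la \in \Z$, we may choose distinct $a_1, \dots, a_p \in k^*$ (with $p = |T|$) such that the matrix $(a_i^{-\la})_{i \in [p],\,\la \in T}$ is invertible. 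Fix $\la_0 \in T$ and pick $c_1, \dots, c_p \in k$ with $\sum_i c_i a_i^{-\la} = \d_{\la, \la_0}$ for $\la \in T$. Setting $g := \sum_i c_i (a_i \cdot f)$, one gets
\begin{equation*}
  g = f_{\la_0} t^{\la_0} + \sum_{\la \in S \setminus T}\Bigl(\sum_i c_i a_i^{-\la}\Bigr) f_\la t^\la.
\end{equation*}
The scalars $c_i$ and $a_i^{-\la}$ lie in $k$ (in fact $k^*$ when nonzero), so carry $w$-value $0$; since each surviving $\la \in S \setminus T$ satisfies $r(w)(f_\la) + \la w(t) > m$, the strict ultrametric inequality forces $w(g) = m$. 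On the other hand, $k^*$-invariance combined with $w(c_i) \geq 0$ gives $w(g) \geq \min_i w(a_i \cdot f) = w(f) > m$, a contradiction. Therefore $w(f) = m$, establishing~(\ref{equ:gauss}).

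The main obstacle above is precisely the Vandermonde/character-independence step used to isolate a single minimum-achieving monomial; the rest amounts to bookkeeping with the ultrametric inequality. For the final clause, any multiple of the $t$-adic valuation $v_t$ restricts trivially to $K$ since $v_t|_{K^*} = 0$. Conversely, if $w$ is $k^*$-invariant with $r(w)$ trivial, then~(\ref{equ:gauss}) reduces on Laurent polynomials to $w(f) = \min\{\la\,w(t) : f_\la \neq 0\}$; when $w(t) \geq 0$ (the case of interest, corresponding to a valuation centered on $\A^1$) this identifies $w$ with $w(t) \cdot v_t$, and the identification extends to all of $K(t)$ by multiplicativity.
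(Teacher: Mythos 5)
Your proof is correct and follows essentially the same route as the paper's: both directions hinge on expressing monomials of $f$ as $k$-linear combinations of the translates $a\cdot f$ via invertibility of a (generalized) Vandermonde matrix, the only difference being that you isolate a single minimum-achieving monomial and argue by contradiction, where the paper recovers every monomial $f_\la t^\la$ as such a combination and concludes $w(f_\la t^\la)\ge\min_\mu w(a_\mu\cdot f)=w(f)$ directly. Your explicit treatment of the final clause --- including the observation that one should have $w(t)\ge 0$, since $w(t)<0$ would instead yield a multiple of the valuation at $t=\infty$ --- is a welcome addition, as is your care in \emph{choosing} the points $a_i$ so that the generalized Vandermonde matrix is invertible (for non-consecutive exponents this can fail for particular choices of distinct $a_i$, so the genericity argument, available since $k$ is infinite, is genuinely needed).
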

\begin{proof} 
  In view of (\ref{equ:action}), it is clear that (\ref{equ:gauss}) implies $k^*$-invariance. 
  Conversely let $w$ be a $k^*$-invariant valuation on $K(t)$. 
  The valuation property of $w$ shows that
  $$
  w(f)\ge\min_{\la\in\Z}\left(r(w)(f_\la)+\la w(t)\right)
  $$
Set $\Lambda:=\left\{\la\in\Z\mid f_\la\ne 0\right\}$ and pick distinct elements $a_\mu\in k^*$, $\mu\in\Lambda$ (recall that $k$ is algebraically closed, and hence infinite). The Vandermonde  matrix $(a_\mu^\la)_{\la,\mu\in\Lambda}$ is then invertible, and each term $f_\la t^\la$ with $\la\in\Lambda$ may thus be expressed as $k$-linear combination of $(a_\mu\cdot f)_{\mu\in\Lambda}$. Using the valuation property of $w$ again, we get for each $\la\in\La$ 
$$
r(w)(f_\la)+\la w(t)=w\left(f_\la t^\la\right)\ge\min_{\mu\in\Lambda} w(a_\mu\cdot f)=w(f), 
$$
where the right-hand equality holds by $k^*$-invariance of $w$. The result follows. 
\end{proof}

\begin{defi}\label{defi:Gauss} The \emph{Gauss extension} of a valuation $v$ on $K$ is the valuation $G(v)$ on $K(t)$ defined by
$$
G(v)(f)=\min_{\la\in\Z}\left(v(f_\la)+\la\right)
$$
for all $f$ with Laurent polynomial expansion (\ref{equ:Laurent}). 
\end{defi}
Note that $r(G(v))=v$ for all valuations $v$ on $K$, while a valuation $w$ on $K(t)$ satisfies $w=G(r(w))$ iff it is $k^*$-invariant and $w(t)=1$, by Lemma~\ref{lem:gauss}. 
Further, the Gauss extension of $v$ is the smallest extension $w$ with $w(t)=1$.
%
%
\subsection{Geometric interpretation}
We now relate the previous algebraic considerations to test configurations. For each test configuration $\cX$ for $X$, the canonical birational map $\cX\dashrightarrow X_{\A^1}$ yields an isomorphism $k(\cX)\simeq K(t)$. When $\cX$ is normal, every irreducible component $E$ of $\cX_0$ therefore defines a divisorial valuation $\ord_E$ on $K(t)$. 

\begin{defi}\label{defi:nontriv} Let $\cX$ be a normal test
  configuration for $X$. For each irreducible component $E$ of $\cX_0$, we set $v_E:=b_E^{-1}r(\ord_E)$ with $b_E=\ord_E(\cX_0)=\ord_E(t)$. We say that $E$ is \emph{nontrivial} if it is not the strict transform of $X\times\{0\}$.  
\end{defi}
Since $E$ is preserved under the $\G_m$-action on $\cX$, $\ord_E$ is $k^*$-invariant, and we infer from Lemma~\ref{lem:div} and Lemma~\ref{lem:gauss}:
\begin{lem}\label{lem:div2} For each irreducible component $E$ of $\cX_0$, we have $b_E^{-1}\ord_E=G(v_E)$, \ie
$$
b_E^{-1}\ord_E(f)=\min_\la\left(v_E(f_\la)+\la\right).
$$
in terms of the Laurent polynomial expansion (\ref{equ:Laurent}). Further, $E$ is nontrivial iff $v_E$ is nontrivial, and hence a divisorial valuation on $X$. 
\end{lem}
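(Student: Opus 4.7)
The lemma consists of three assertions: (a) the Gauss extension identity $b_E^{-1}\ord_E=G(v_E)$, (b) the equivalence between nontriviality of $E$ and of $v_E$, and (c) divisoriality of $v_E$ when it is nontrivial. I would treat them in sequence, as each is essentially a one-step deduction from the material already in \S\ref{sec:valtest}.

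For (a), the key observation is that the $\G_m$-action on $\cX$ preserves $\cX_0$ and, being connected, preserves each irreducible component $E$ setwise. Hence the divisorial valuation $\ord_E$ on $k(\cX)\simeq K(t)$ is $k^*$-invariant, and Lemma~\ref{lem:gauss} applies to yield
\begin{equation*}
\ord_E(f)=\min_{\la\in\Z}\bigl(r(\ord_E)(f_\la)+\la\,\ord_E(t)\bigr).
\end{equation*}
Substituting $\ord_E(t)=\ord_E(\cX_0)=b_E$ and $r(\ord_E)=b_E v_E$ (by definition of $v_E$) and dividing by $b_E$ gives exactly the stated formula $b_E^{-1}\ord_E=G(v_E)$.

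For (b), I would invoke the final sentence of Lemma~\ref{lem:gauss}: $v_E=r(b_E^{-1}\ord_E)$ is trivial iff $b_E^{-1}\ord_E$ is a scalar multiple of the $t$-adic valuation on $K(t)$. Geometrically, the $t$-adic valuation is $\ord_{X\times\{0\}}$ on $X_{\A^1}$, so this amounts to saying that $\ord_E$ and $\ord_{X\times\{0\}}$ coincide as valuations on the common function field $K(t)$. By the birational uniqueness of divisorial valuations, this is in turn equivalent to $E$ being the strict transform of $X\times\{0\}$ in $\cX$ (and shows moreover that at most one such component can exist). Thus $E$ is nontrivial precisely when $v_E$ is nontrivial.

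For (c), assuming $v_E\ne 0$, Lemma~\ref{lem:div} applied to the divisorial valuation $\ord_E$ forces its restriction $r(\ord_E)=b_E v_E$ to be either trivial or divisorial; by nontriviality it must be divisorial, and then $v_E=b_E^{-1}r(\ord_E)$ is divisorial as well, since positive rational rescaling preserves the divisorial class. No step presents a real obstacle; the argument is essentially bookkeeping combining Lemmas~\ref{lem:gauss} and~\ref{lem:div} with the definition of $v_E$. The only mild subtlety is pinning down what ``strict transform of $X\times\{0\}$'' means when $\cX$ does not dominate $X_{\A^1}$, but the valuation-theoretic criterion in (b) bypasses this issue entirely.
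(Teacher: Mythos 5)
Your proof is correct and follows exactly the route the paper intends: the paper derives this lemma directly from Lemmas~\ref{lem:div} and~\ref{lem:gauss} via the $k^*$-invariance of $\ord_E$, which is precisely your argument. Your fleshing out of the triviality criterion in (b) via uniqueness of divisorial valuations is the standard way to complete the deduction that the paper leaves implicit.
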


By construction, divisorial valuations on $X$ of the form $v_E$ have a value group $\Ga_v=v(K^*)$ contained in $\Q$. Thus they are of the form $v_E=c\,\ord_F$ with $c\in\Q_{>0}$ and $F$ a prime divisor on a normal variety $Y$ mapping birationally to $X$. Conversely, we prove:
\begin{thm}\label{thm:restrdiv} A divisorial valuation $v$ on $X$ is
  of the form $v=v_E$ for a non-trivial irreducible component $E$ of a normal test configuration iff $\Ga_v$ is contained in $\Q$. In this case, we may recover $b_E$ as the denominator of the generator of $\Ga_v$. 
\end{thm}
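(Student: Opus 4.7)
The forward implication is straightforward: if $v=v_E$ for a non-trivial component $E$ of a normal test configuration, then Lemma~\ref{lem:div2} gives $\ord_E=b_E\,G(v)$, so $\ord_E$ is integer-valued on $K(t)^*$, and consequently $\Ga_v=b_E^{-1}r(\ord_E)(K^*)\subseteq b_E^{-1}\Z\subseteq\Q$.

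For the converse, write $\Ga_v=b^{-1}\Z$ with $b\in\Z_{>0}$. By Zariski's theorem there is a proper birational morphism $\mu\colon Y\to X$ from a normal projective variety and a prime divisor $F\subset Y$ with $v=b^{-1}\ord_F$; after further blow-ups I may assume $F$ is Cartier on~$Y$. On $Y_{\A^1}=Y\times\A^1$ with the standard $\G_m$-action on the second factor, consider the $\G_m$-invariant coherent ideal $\cI=\cI_{F_{\A^1}}^b+t\,\cO_{Y_{\A^1}}$, supported on $F\times\{0\}$. At the generic point of $F\times\{0\}$, the local ring of $Y_{\A^1}$ is regular of dimension two with parameters $(\pi_F,t)$, and $\cI$ localizes to $(\pi_F^b,t)$; analyzing the two affine charts of the blow-up (one smooth, the other a hypersurface of the form $\pi_F^b-tv=0$, which is normal since its singular locus has codimension at least two) yields a single exceptional prime divisor $\tilde E$ on the normalization $\cZ$ of $\mathrm{Bl}_\cI Y_{\A^1}$, with $\ord_{\tilde E}(\pi_F)=1$ and $\ord_{\tilde E}(t)=b$. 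Lemma~\ref{lem:div2} then identifies $\ord_{\tilde E}=b\,G(v)$.

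Although $\cZ$ is only a test configuration for $Y$, I descend to $X_{\A^1}$ as follows. Set $\fa:=\mu_*\cI_F^b=\{f\in\cO_X:v(f)\ge 1\}$ and define the $\G_m$-invariant ideal $\cJ:=\fa\cdot\cO_{X_{\A^1}}+t\,\cO_{X_{\A^1}}$, which is supported on $X\times\{0\}$. Let $\cX$ be the normalization of $\mathrm{Bl}_\cJ X_{\A^1}$; since $\cJ$ is supported over the central fiber, $\cX$ is automatically a normal test configuration for~$X$, canonically isomorphic to $X$ over $\A^1\setminus\{0\}$. The composition $\cZ\to Y_{\A^1}\to X_{\A^1}$ pulls $\cJ$ back to an ideal that is locally principal at the generic point of $\tilde E$, generated by $t$ of order $b=\ord_{\tilde E}(\cJ\cdot\cO_\cZ)$. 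By the universal property of blow-ups, the resulting rational map $\cZ\dashrightarrow\cX$ is defined at the generic point of $\tilde E$ and maps $\tilde E$ birationally onto a prime divisor $E\subset\cX_0$; since $\cZ$ and $\cX$ share the function field $K(t)$, I obtain $\ord_E=\ord_{\tilde E}=b\,G(v)$, whence $v_E=v$ and $b_E=\ord_E(t)=b$, matching the denominator of the generator of $\Ga_v$.

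The delicate step is confirming that the image of $\tilde E$ in $\cX$ is genuinely a \emph{divisor} rather than a higher-codimension subvariety---equivalently, that $b\,G(v)$ occurs as a Rees valuation of $\cJ$ on $X_{\A^1}$. When the center of $v$ on $X$ has codimension one this is immediate from a local computation on $X_{\A^1}$ mirroring the one carried out on $Y_{\A^1}$; in general, one verifies it via Theorem~\ref{thm:rees} and Proposition~\ref{prop:reesexc}, by exhibiting $t$ together with any $f\in\fa$ satisfying $v(f)=1$ as elements distinguishing $b\,G(v)$ from the other Rees valuations of $\cJ$.
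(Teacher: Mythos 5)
Your forward direction does show $\Ga_v\subseteq\Q$, but it does not yet yield the last assertion of the theorem: to recover $b_E$ as the denominator of the generator you need $\Ga_{v_E}=\tfrac{c}{b_E}\Z$ with $c$ and $b_E$ \emph{coprime}, which the paper extracts from $\Z=\Ga_{\ord_E}=c\Z+b_E\Z$ via Lemma~\ref{lem:div2}. The same oversight restricts your converse: a divisorial valuation with $\Ga_v\subseteq\Q$ has $\Ga_v=\tfrac{c}{b}\Z$ with $c,b$ coprime, not $\Ga_v=b^{-1}\Z$. Your construction never sees valuations such as $\tfrac{3}{2}\ord_F$, for which $w=bG(v)$ takes the value $c=3$ on a local equation of $F$ and the ideal $\cI_{F_{\A^1}}^b+(t)$ is the wrong one to blow up.

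The more serious gap is the descent to $X_{\A^1}$, which you flag as ``delicate'' but do not carry out, and which in fact fails. Take $X=\A^2$ and $v$ the monomial valuation with $v(x)=2$, $v(y)=3$, so that $\Ga_v=\Z$ and $b=1$. Then $\fa=\{f\mid v(f)\ge 1\}=(x,y)$ --- there is no $f$ with $v(f)=1$, since the value \emph{semigroup} on $\cO_X$ omits $1$ even though the value group is $\Z$ --- so $\cJ=(x,y,t)$ and $\cX$ is the deformation to the normal cone of the origin. Its only nontrivial component computes the monomial valuation with weights $(1,1)$, not $(2,3)$; hence $bG(v)$ is not a Rees valuation of $\cJ$, the divisor $\tilde E$ is contracted to a locus of codimension at least $2$ in $\cX$, and no component $E$ with $\ord_E=bG(v)$ is produced. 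The underlying obstruction is that a divisorial valuation need not be a Rees valuation of its own first valuation ideal $\{v\ge 1\}$. The correct way to finish is not to blow up an ideal on $X_{\A^1}$ but to argue as in Lemma~\ref{lem:divtest}: $w=bG(v)$ is a $k^*$-invariant divisorial valuation on $K(t)$ with $w(t)>0$, so one repeatedly blows up the closure of its $\G_m$-invariant center on test configurations for $X$ until, by Zariski's theorem, that center becomes a divisor. This is exactly the paper's proof, and once you invoke it your explicit model $\cZ$ over $Y_{\A^1}$ becomes unnecessary.
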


\begin{lem}\label{lem:divtest} A divisorial valuation $w$ on $K(t)$ satisfying $w(t)>0$ is $k^*$-invariant iff $w=c\,\ord_E$ with $c>0$ and $E$ an irreducible component of the central fiber $\cX_0$ of a normal test configuration $\cX$ of $X$. 
\end{lem}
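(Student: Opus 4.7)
For the direction $(\Leftarrow)$, let $\cX$ be a normal test configuration for $X$ and $E$ an irreducible component of $\cX_0$. Then $\ord_E$ is a divisorial valuation on $k(\cX)\simeq K(t)$. Since $\cX_0=\operatorname{div}_\cX(t)$ as a Cartier divisor and $E$ appears with positive multiplicity $b_E$, one has $\ord_E(t)=b_E>0$. Moreover, $\G_m$ being connected, it must fix each of the finitely many irreducible components of $\cX_0$ setwise, so $\ord_E$ is $k^*$-invariant; these properties persist after multiplication by any $c>0$.

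For the direction $(\Rightarrow)$, assume $w$ is divisorial, $k^*$-invariant, with $w(t)=b>0$. Set $v:=r(w)$; Lemma~\ref{lem:gauss} gives $w(f)=\min_\la(v(f_\la)+\la b)$ for all $f=\sum f_\la t^\la\in K(t)$. If $v$ is trivial, then $w=b\,\ord_t$, and one takes the trivial test configuration $\cX=X_{\A^1}$ with $E=X\times\{0\}$, for which $\ord_E=\ord_t$ and hence $w=b\,\ord_E$. Otherwise, by Lemma~\ref{lem:div}, $v$ is divisorial: write $v=c'\ord_F$ with $c'>0$ and $F$ a prime divisor on a normal projective variety $Y$ equipped with a proper birational morphism $\mu\colon Y\to X$. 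Since $w$ is divisorial, both $b$ and $c'$ are rational, and one writes $c'/b=p/q$ with $p,q\in\Z_{>0}$ coprime.

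To realize $w$ as a positive multiple of $\ord_E$ for some component of the central fiber of a normal test configuration, pick any nonzero ideal $\mathfrak{b}\subset\cO_X$ with $m:=\ord_F(\mathfrak{b})\in\Z_{>0}$ (for instance, using Proposition~\ref{prop:reesexc} applied to a model dominating $Y$ and admitting a $\mu$-exceptional $\mu$-ample Cartier divisor whose support contains $F$; if $F$ is not $\mu$-exceptional, simply take $\mathfrak{b}=I_{\mu(F)}$). Form the $\G_m$-invariant ideal $\fa:=\mathfrak{b}^q\cdot\cO_{X_{\A^1}}+(t^{pm})\subset\cO_{X_{\A^1}}$, trivial away from the central fiber, and let $\cX$ denote the normalized blow-up of $X_{\A^1}$ along $\fa$ (cf.\ Example~\ref{E201}); then $\cX$ is a normal test configuration for $X$. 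Pulling $\fa$ back to $Y_{\A^1}$, near the generic point $\xi$ of $F\times\{0\}$ we have $\fa\cdot\cO_{Y_{\A^1},\xi}=(\pi^{qm},t^{pm})$ in the two-dimensional regular local ring with coordinates $(\pi,t)$, $\pi$ a uniformizer of $F$. A Newton-polytope computation (\cf\ Theorem~\ref{thm:rees}) shows that this monomial ideal has a unique Rees valuation: the monomial valuation with weights $(p,q)$ on $(\pi,t)$, which corresponds to a single prime divisor $E$ in the normalized blow-up. Since $\fa$ is $\G_m$-invariant and supported on the central fiber, $E$ is an irreducible component of $\cX_0$, and $\ord_E(\pi)=p$, $\ord_E(t)=q$. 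Combining with the Gauss-extension formula (Lemma~\ref{lem:gauss}) then yields $\ord_E(f)=\min_\la(p\,\ord_F(f_\la)+q\la)$, which equals $w/\la$ with $\la:=c'/p=b/q>0$; hence $w=\la\,\ord_E$.

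The main technical hurdle is matching the local Rees valuation of $\fa$ at $\xi$ with a genuine global irreducible component of $\cX_0$, and more subtly, choosing $\mathfrak{b}$ so that its pullback to $Y_{\A^1}$ cuts out $(\pi^m)$ cleanly at the generic point of $F$ (so that no extra local components of $V(\fa)$ appear near $\xi$). The first point is a consequence of the $\G_m$-equivariance of $\fa$ together with standard properties of normalized blow-ups of ideals supported in a given fiber; the second reduces, via Proposition~\ref{prop:reesexc}, to the existence of an ideal on $X$ whose Rees valuations include $\ord_F$ up to a positive rational scalar.
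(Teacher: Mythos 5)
Your forward direction is fine and agrees with the paper's. The converse, however, has a genuine gap at exactly the point you flag as the ``main technical hurdle'': you never establish that the monomial valuation $u$ with weights $(p,q)$ on $(\pi,t)$ actually arises from a component of the exceptional divisor of the normalized blow-up of $X_{\A^1}$ along $\fa=\mathfrak{b}^q\cdot\cO_{X_{\A^1}}+(t^{pm})$, and the inference you use to justify it is invalid. The local ring $\cO_{Y_{\A^1},\xi}$ at the generic point of $F\times\{0\}$ is not a localization of $\cO_{X_{\A^1}}$ when $F$ is $\mu$-exceptional, so computing the Rees valuations of $\fa\cdot\cO_{Y_{\A^1},\xi}$ says nothing directly about the Rees valuations of $\fa$ on $X_{\A^1}$, which are what index the components of $\cX_0$. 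Concretely: take $X=\A^2$, let $\ord_F$ be the divisorial valuation with $\ord_F(x)=2$, $\ord_F(y)=3$ (resolve the cusp $y^2=x^3$), and try $\mathfrak{b}=\fm_0$, $p=q=1$, so $m=2$ and $\fa=(x,y,t^2)$. At $\xi$ the pullback is indeed $(\pi^2,t^2)$, whose unique local Rees valuation is $G(\ord_F)$; but on $\A^3$ the monomial ideal $(x,y,t^2)$ has a single compact facet in its Newton polyhedron, with normal $(2,2,1)$, so its unique Rees valuation restricts on $k(x,y)$ to $2\,\mathrm{mult}_0\neq\ord_F$, and the component $E$ you want is simply absent from $\cX_0$. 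Your hedge---choose $\mathfrak{b}$ via Proposition~\ref{prop:reesexc} so that $\ord_F$ is proportional to a Rees valuation of $\mathfrak{b}$---is a reasonable first step (and such $\mathfrak{b}$ does exist), but even granting it, the step from ``$\ord_F$ is a Rees valuation of $\mathfrak{b}$'' to ``$u$ is a Rees valuation of $\mathfrak{b}^q+(t^{pm})$'' is a weighted generalization of Theorem~\ref{thm:reescone}, which the paper only proves for ideals of the form $\fa+(t)$ (i.e.\ the case realizing $v_E=\ord_F/\ord_F(\mathfrak{b})$, not an arbitrary prescribed ratio $p/q$); that generalization is precisely what needs proof, say by verifying conditions (i) and (ii) of Theorem~\ref{thm:rees} directly. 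A minor additional imprecision: $b$ and $c'$ need not be rational individually; they both lie in $a\Z$ for the generator $a$ of $\Ga_w$, so only the ratio $c'/b$ is rational---fortunately that is all you use.

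For comparison, the paper's proof is much softer and avoids Rees valuations entirely: the center of $w$ on $X_{\A^1}$ is $\G_m$-invariant and contained in $X\times\{0\}$, so blowing up its closure yields a test configuration on which the new center of $w$ is again $\G_m$-invariant; iterating, Zariski's theorem (\cite[Lemma 2.45]{KM}) guarantees that after finitely many such blow-ups the center becomes a divisor, and normalizing produces the required $\cX$ and $E$. If you want to keep your constructive route---which, if completed, would give more (an explicit flag ideal realizing a prescribed divisorial valuation with prescribed $b_E$, strengthening Theorem~\ref{thm:restrdiv})---you must supply the missing argument identifying $u$ among the Rees valuations of $\fa$ on $X_{\A^1}$.
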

\begin{proof} If $E$ is an irreducible component of $\cX_0$, then $\ord_E(t)>0$, and the $\G_m$-invariance of $E$ easily implies that $\ord_E$ is $k^*$-invariant. Conversely, let $w$ be a $k^*$-invariant divisorial valuation on $K(t)$ satisfying $w(t)>0$. The center $\xi$ on $X\times\A^1$ is then $\G_m$-invariant and contained in $X\times\{0\}$. If we let $\cY_1$ be the test configuration obtained by blowing-up the closure of $\xi$ in $X\times\A^1$, then the center $\xi_1$ of $w$ on $\cY_1$ is again $\G_m$-invariant by $k^*$-invariance of $w$, and the blow-up $\cY_2$ of the closure of $\xi_1$ is thus a test configuration. Continuing this way, we get a tower of test configurations 
$$
X\times\A^1\leftarrow\cY_1\leftarrow\cY_2\leftarrow\dots\leftarrow\cY_i\leftarrow\dots
$$
Since $w$ is divisorial, a result of Zariski (cf.~\cite[Lemma 2.45]{KM}) guarantees that the closure of the center $\xi_i$ of $w$ on $\cY_i$ has codimension $1$ for $i\gg 1$. We then have $w=c\,\ord_E$ with $E$ the closure of the center of $w$ on the normalization $\cX$ of $\cY_i$. 
\end{proof}

\begin{proof}[Proof of Theorem~\ref{thm:restrdiv}] Let $E$ be a non-trivial irreducible component of $\cX_0$ for a normal test configuration $\cX$ of $X$. Since the value group of $\ord_E$ on $k(\cX)=K(t)$ is $\Z$, the value group of $v_E$ on $k(X)=K$ is of the form $\frac{c}{b_E}\Z$ for some positive integer $c$. Lemma~\ref{lem:div2} yields $\Z=c\Z+b_E\Z$, so that $c$ and $b_E$ are coprime. 

Conversely, let $v$ be a divisorial valuation on $X$ with
$\Ga_v=\frac{c}{b}\Z$ for some coprime positive integers $b,c$. Then
$w:=b G(v)$ is a $k^*$-invariant divisorial valuation on $K(t)$ with
value group $c\Z+b\Z=\Z$. By Lemma~\ref{lem:divtest}, we may thus find
a normal test configuration $\cX$ for $X$ and a non-trivial
irreducible component $E$ of $\cX_0$ such that $\ord_E=w$. We then have $b_E=w(t)=b$, and hence $v=v_E$.  
\end{proof}

%
%
%
%
\subsection{Rees valuations and deformation to the normal cone}\label{sec:reesdef}
Our goal in this section is to relate the Rees valuations of a closed subscheme $Z\subset X$ to the valuations associated to the normalization of the deformation to the normal cone of $Z$,
see Example~\ref{ex:defnorm}. 

\begin{thm}\label{thm:reescone} Let $Z\subset X$ be a closed
  subscheme, $\cX$ the deformation to the normal cone of $Z$, and
  $\tcX$ its normalization, so that $\mu\colon\tcX\to X_{\A^1}$ is the
  normalized blow-up of $Z\times\{0\}$. Then the Rees valuations of
  $Z$ coincide with the valuations $v_E$, where $E$ runs over the
  non-trivial irreducible irreducible components of $\tcX_0$. 
\end{thm}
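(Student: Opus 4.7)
My plan is to identify the non-trivial irreducible components of $\tcX_0$ with the irreducible components of the exceptional divisor $D := \rho^{-1}(Z \times \{0\})$ on $\tcX$, and then to verify that the resulting valuations $v_E$ on $X$ satisfy the characterizing property of Rees valuations in Theorem~\ref{thm:rees}, by transferring that theorem from the ideal $(\fa, t) \cO_{X_{\A^1}}$ (whose Rees valuations are $w_E := \ord_E/\ord_E(D)$ by definition) to $\fa := \cI_Z$ using Gauss extensions.

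First, since $\rho \colon \tcX \to X_{\A^1}$ is an isomorphism away from $D$, any irreducible component of $\tcX_0$ not contained in $D$ must be the strict transform $\tF$ of $X \times \{0\}$; $\tF$ is irreducible since $X$ is. Hence the non-trivial components of $\tcX_0$ are precisely the irreducible components of $D$. Next, since $t \in (\fa, t)$ and $(\fa, t) \cO_{\tcX} = \cO_{\tcX}(-D)$ by Lemma~\ref{lem:normblow}, the function $t$ is a section of $\cO_{\tcX}(-D)$, so its divisor satisfies $\mathrm{div}(t) \geq D$ on $\tcX$. A check at a generic point of $\tF$ (where $\rho$ is an isomorphism and $t$ has order one in $X_{\A^1}$) then yields the global identity $\mathrm{div}(t) = D + \tF$, whence for each component $E$ of $D$ I obtain the crucial equality $b_E = \ord_E(t) = \ord_E(D) =: m_E$.

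By Definition~\ref{defi:rees} applied to the normalized blow-up $\rho$ of $(\fa, t) \cO_{X_{\A^1}}$, the Rees valuations of $(\fa, t)$ on the normal variety $X_{\A^1}$ are $w_E := \ord_E/m_E$. Using $b_E = m_E$, one has $w_E(t) = 1$ and $r(w_E) = v_E$, so Lemma~\ref{lem:div2} identifies $w_E$ with the Gauss extension $G(v_E)$; in particular $w_E(f) = v_E(f)$ for any $f \in A = \cO_X$. Applying Theorem~\ref{thm:rees} to $(\fa, t)$ gives $\overline{(\fa, t)^m} = \bigcap_E \{g \in \cO_{X_{\A^1}} : w_E(g) \geq m\}$, and extracting the $t$-constant coefficient from a monic integral-dependence equation in $\cO_{X_{\A^1}} = A[t]$ yields the elementary identity $\overline{(\fa, t)^m} \cap A = \overline{\fa^m}$. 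Intersecting both sides of the previous equality with $A$ then produces $\overline{\fa^m} = \bigcap_E \{f \in A : v_E(f) \geq m\}$, which is property~(i) of Theorem~\ref{thm:rees} for $\{v_E\}$.

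To finish, I will verify the minimality of $\{v_E\}$: for each $E$, by the irredundancy of $\{w_E\}$ given in Theorem~\ref{thm:rees}(ii), there is $g_E \in \cO_{X_{\A^1}}$ with $w_E(g_E) < w_{E'}(g_E)$ for all $E' \neq E$; expanding $g_E = \sum_\la g_{E, \la} t^\la$ and using the Gauss formula $w_{E'}(g_E) = \min_\la (v_{E'}(g_{E, \la}) + \la)$, the coefficient $g_{E, \la^*} \in A$ at the index $\la^*$ realizing $w_E(g_E) = v_E(g_{E, \la^*}) + \la^*$ satisfies $v_E(g_{E, \la^*}) < v_{E'}(g_{E, \la^*})$ for every $E' \neq E$. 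Hence the $v_E$ are pairwise distinct and $\{v_E\}$ is irredundant, so by the uniqueness part of Theorem~\ref{thm:rees} it coincides with the set of Rees valuations of $\fa$. The step requiring the most care is the divisor identity $\mathrm{div}(t) = D + \tF$ that yields $b_E = m_E$; once it is in hand, the rest is a largely formal transfer through Gauss extensions and the minimality characterization of Theorem~\ref{thm:rees}.
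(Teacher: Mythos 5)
Your strategy is essentially the paper's: identify the non-trivial components of $\tcX_0$ with the components of $D=\mu^{-1}(Z\times\{0\})$, recognize $w_E=\ord_E/\ord_E(D)$ as the Rees valuations of $\fa+(t)$ on $X_{\A^1}$, and transfer properties (i) and (ii) of Theorem~\ref{thm:rees} down to $X$ through the Gauss extension by extracting Laurent coefficients. Those transfer steps are sound: your identity $\overline{(\fa+(t))^m}\cap\cO_X=\overline{\fa^m}$ via coefficient extraction is correct because $(\fa+(t))^{mj}$ involves only non-negative powers of $t$ and its weight-zero part is exactly $\fa^{mj}$ (this is the $\la=0$ case of Lemma~\ref{lem:intinv}), and your irredundancy argument for minimality, pushing witnesses from $X_{\A^1}$ down to $X$ and invoking Lemma~\ref{lem:irredundant}, is a valid variant of the paper's argument (which instead lifts a hypothetical smaller set back up to $X_{\A^1}$ and contradicts minimality there).

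The genuine gap is at the step you yourself flag as crucial: the identity $\mathrm{div}(t)=D+\tF$, equivalently $b_E=\ord_E(D)$ for every component $E$ of $D$. What you actually establish is (a) $\mathrm{div}(t)\ge D$, since $t\in(\fa+(t))\cdot\cO_{\tcX}=\cO_{\tcX}(-D)$, i.e.\ $b_E\ge\ord_E(D)$, and (b) $\ord_{\tF}(t)=1$. Neither yields the reverse inequality along the components of $D$: a computation at the generic point of $\tF$ controls only the coefficient of $\tF$ in $\mathrm{div}(t)$, not the coefficients $b_E$ for $E\subset D$. By Lemma~\ref{lem:gauss}, $\ord_E(D)=\ord_E(\fa+(t))=\min\{\ord_E(\fa),b_E\}$, so what is missing is precisely $\ord_E(\fa)\ge b_E$, i.e.\ $v_E(\fa)\ge1$ --- which is (a normalization of) part of what the theorem asserts, so it cannot be waved through. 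The needed input is the decomposition $\cX_0=E_{\mathrm{exc}}+F$ of the central fiber of the non-normalized deformation to the normal cone, with $E_{\mathrm{exc}}=\rho^{-1}(Z\times\{0\})$ the exceptional divisor and $F$ the strict transform of $X\times\{0\}$ (Example~\ref{ex:defnorm}, quoting \cite[Chapter~5]{Ful}); pulling back under the finite normalization gives $\tcX_0=D+\nu^*F$, and since no component of $D$ lies over $F$ one reads off $b_E=\ord_E(\tcX_0)=\ord_E(D)$. Alternatively one can use the extended Rees algebra chart $\cX\setminus F\simeq\Spec_X\cO_X[t^{-1}\fa,t]$, on which $\fa\subset t\cdot\cO$, so that $\ord_E(\fa)\ge\ord_E(t)$ for every component $E$ of $D$. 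Either way, this fact must be imported; it does not follow from $\mathrm{div}(t)\ge D$ together with the generic check at $\tF$.
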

In other words, the Rees valuations of $Z$ are obtained by restricting to $k(X)\subset k(X)(t)$ those of $Z\times\{0\}$. 

If we denote by $E_0$ the strict transform of $X\times\{0\}$ in $\cX$, one can show that $\cX\setminus E_0$ is isomorphic to the Spec over $X$ of the \emph{extended Rees algebra} $\cO_X[ t^{-1}\fa, t]$, where $\fa$ is the ideal of $Z$, cf.~\cite[pp.87--88]{Ful}. We thus see that Theorem~\ref{thm:reescone} is equivalent to the well-known fact that the Rees valuations of $\fa$ coincide with the restrictions to $X$ of the Rees valuations of the principal ideal $(t)$ of the extended Rees algebra (see for instance~\cite[Exercise 10.5]{HS}). We nevertheless provide a proof for the benefit of the reader. 

\begin{lem}\label{lem:intinv} Let $\fb=\sum_{\la\in\N}\fb_\la t^\la$ be a $\G_m$-invariant ideal of $X\times\A^1$, and let 
$$
\overline{\fb}=\sum_{\la\in\N}(\overline{\fb})_\la t^\la
$$
be its integral closure. For each $\la$ we then have $\overline{\fb_\la}\subset(\overline{\fb})_\la$, with equality for $\la=0$. 
\end{lem}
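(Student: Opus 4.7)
\medskip
\noindent\textbf{Proof plan.}
The first observation to record is that $\overline{\fb}$ is itself $\G_m$-invariant, so that the decomposition $\overline{\fb}=\sum_\la(\overline{\fb})_\la t^\la$ makes sense: indeed, if $g\in\G_m$ and $f\in\overline\fb$ satisfies an integral equation $f^d+a_1f^{d-1}+\dots+a_d=0$ with $a_j\in\fb^j$, then applying $g$ to the equation gives an integral equation for $g\cdot f$ with coefficients $g\cdot a_j\in\fb^j$ (using $\G_m$-invariance of $\fb$ and hence of $\fb^j$). In particular, $(\overline{\fb})_\la=\{f\in\cO_X\mid ft^\la\in\overline\fb\}$.

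For the inclusion $\overline{\fb_\la}\subset(\overline{\fb})_\la$, the idea is to multiply an integral equation by an appropriate power of $t$. Given $f\in\overline{\fb_\la}$ witnessed by a monic equation
\begin{equation*}
  f^d+a_1f^{d-1}+\dots+a_d=0, \quad a_j\in\fb_\la^j,
\end{equation*}
I would multiply through by $t^{d\la}$ to obtain
\begin{equation*}
  (ft^\la)^d+(a_1t^\la)(ft^\la)^{d-1}+\dots+(a_dt^{d\la})=0.
\end{equation*}
Since $\fb_\la t^\la\subset\fb$, one has $a_jt^{j\la}\in(\fb_\la t^\la)^j\subset\fb^j$, so this is an integral equation for $ft^\la$ over $\fb$, giving $ft^\la\in\overline{\fb}$, hence $f\in(\overline{\fb})_\la$.

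For the equality $(\overline{\fb})_0=\overline{\fb_0}$, the reverse inclusion follows by specializing at $t=0$. Given $f\in(\overline{\fb})_0\subset\cO_X$, pick an integral equation $f^d+c_1f^{d-1}+\dots+c_d=0$ with $c_j\in\fb^j\subset\cO_X[t]$. The evaluation homomorphism $\cO_X[t]\to\cO_X$ at $t=0$ sends $\fb$ into $\fb_0$ (since $\fb=\fb_0+t\cdot\cO_X[t]\cap\fb$ by the weight decomposition), hence $\fb^j$ into $\fb_0^j$. Since $f$ is fixed by this evaluation, substituting $t=0$ yields an integral equation for $f$ with coefficients in $\fb_0^j$, showing $f\in\overline{\fb_0}$.

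The argument is essentially formal; the only mildly subtle point is the $\G_m$-invariance of $\overline\fb$, which is needed to make sense of the weight components $(\overline{\fb})_\la$, and the realization that multiplication by $t^\la$ (resp.\ specialization at $t=0$) converts the integral equation for $f$ into the desired one in $\cO_X[t]$ (resp.\ $\cO_X$). I do not anticipate a genuine obstacle.
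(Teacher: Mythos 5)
Your proof is correct. The first inclusion $\overline{\fb_\la}\subset(\overline{\fb})_\la$ is established exactly as in the paper, by multiplying the integral dependence equation through by $t^{d\la}$. For the equality at $\la=0$, however, you take a genuinely different and more elementary route. The paper invokes Lemma~\ref{lem:normblow} to find $l\gg 1$ with $\overline{\fb}\cdot\fc=\fb\cdot\fc$ for $\fc:=\overline{\fb^l}$, isolates the lowest-weight piece $\fc_{\la_0}$, deduces $(\overline{\fb})_0\cdot\fc_{\la_0}\subset\fb_0\cdot\fc_{\la_0}$, and concludes by the determinant trick. You instead apply the evaluation homomorphism $\mathrm{ev}_0\colon\cO_{X\times\A^1}\to\cO_X$ at $t=0$ to the integral dependence equation of $f\in(\overline{\fb})_0$: since $\mathrm{ev}_0$ is a ring homomorphism sending $\fb$ onto $\fb_0$ (hence $\fb^j$ into $\fb_0^j$) and fixing the weight-zero element $f$, the specialized equation exhibits $f\in\overline{\fb_0}$. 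This is the standard persistence of integral dependence under ring homomorphisms, and it buys you independence from the normalized blow-up machinery; the paper's argument stays within its integral-closure toolkit but is heavier. Your preliminary remark justifying the $\G_m$-invariance of $\overline{\fb}$ (so that the weight decomposition of $\overline{\fb}$ makes sense) is a welcome addition that the paper leaves implicit.
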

\begin{proof} 
Each $f\in\overline{\fb_\la}$ satisfies a monic equation $f^d+\sum_{j=1}^d b_j f^{d-j}=0$ with $b_j\in\fb_\la^j$. Then 
$$
(t^\la f)^d+\sum_{j=1}^d(t^{\la j}b_j)(t^\la f)^{d-j}=0
$$ 
with $t^{\la j}b_j\in (t^\la\fb_\la)^j\subset\fb^j$. It follows that $t^\la f\in\overline{\fb}$, which proves the first assertion. 

Conversely, we may choose $l\gg 1$ such that the $\G_m$-invariant ideal $\fc:=\overline{\fb^l}$ satisfies $\overline{\fb}\cdot\fc=\fb\cdot\fc$ (\cf proof of Lemma~\ref{lem:normblow}). Write $\fc=\sum_{\la\ge\la_0}\fc_\la t^\la$ with $\fc_{\la_0}\ne 0$. Then $(\overline{\fb})_0\cdot\fc_{\la_0} t^{\la_0}$ is contained in the weight $\la_0$ part of $\fb\cdot\fc$, which is equal to $(\fb_0\cdot\fc_{\la_0}) t^{\la_0}$. We thus have $(\overline{\fb})_0\cdot\fc_{\la_0}\subset\fb_0\cdot\fc_{\la_0}$, and hence $(\overline{\fb})_0\subset\overline{\fb_0}$ by the determinant trick. 
\end{proof}

\begin{proof}[Proof of Theorem~\ref{thm:reescone}]
Let $\fa$ be the ideal defining $Z$.
By Theorem~\ref{thm:rees}, we are to check that:
\begin{itemize}
\item[(i)] $\overline{\fa^m}=\bigcap_E\left\{f\in\cO_X\mid v_E(f)\ge m\right\}$ for all $m\in\N$; 
\item[(ii)] no $E$ can be omitted in (i). 
\end{itemize}
Set $D:=\mu^{-1}(Z\times\{0\})$. Since $\ord_E$ is $k^*$-invariant, Lemma~\ref{lem:gauss} yields
$$
\ord_E(D)=\ord_E(\fa+(t))=\min\{r(\ord_E)(\fa),b_E\}.
$$
We claim that we have in fact $\ord_E(D)=b_E$. As recalled in Example~\ref{ex:defnorm}, the blow-up $\rho\colon\cX\to X\times\A^1$ along $Z\times\{0\}$ satisfies $\cX_0=\mu^{-1}(Z\times\{0\})+F$, with $F$ the strict transform of $X\times\{0\}$. Denoting by $\nu\colon\tcX\to\cX$ the normalization morphism, we infer $\tcX_0=D+\nu^*F$, and hence $b_E=\ord_E(\tcX_0)=\ord_E(D)$.

This shows in particular that the valuations $b_E^{-1}\ord_E$ are the Rees valuations of $\fa+(t)$. We also get that $v_E(\fa)=b_E^{-1}r(\ord_E)(\fa)\ge 1$, and hence $\overline{\fa^m}\subset\bigcap_E\left\{f\in\cO_X\mid v_E(f)\ge m\right\}$. Conversely, assume $f\in\cO_X$ satisfies $v_E(f)\ge m$ for all $E$. Since the $b_E^{-1}\ord_E$ are the Rees valuations of $\fa+(t)$, applying Theorem~\ref{thm:rees} on $X\times\A^1$ yields $f\in\overline{(\fa+(t))^m}$. Since $\fa^m$ is the weight $0$ part of $(\fa+(t))^m$, Lemma~\ref{lem:intinv} yields $f\in\overline{\fa^m}$, and we have thus established (i). 

Finally, let $S$ be any finite set of $k^*$-invariant valuations $w$ on $K(t)$ such that 
$$
\overline{\fa^m}=\bigcap_{w\in S}\left\{f\in\cO_X\mid r(w)(f)\ge m\right\}
$$
for all $m\in\N$. We claim that we then have 
$$
\overline{(\fa+(t))^m}=\bigcap_{w\in S}\left\{f\in\cO_\cX\mid w(f)\ge m\right\}
$$
for all $m\ge\N$. This will prove (ii), by the minimality of the set of Rees valuations of $\fa+(t)$. So assume that $f\in\cO_\cX$ satisfies $w(f)\ge m$ for all $w\in S$. In terms of the Laurent expansion (\ref{equ:Laurent}), we get $r(w)(f_\la)+\la\ge m$ for all $\la$, $w$, and hence $f_\la\in\overline{\fa^{m-\la}}$ by assumption. By Lemma~\ref{lem:intinv}, we conclude as desired that $f\in\overline{(\fa+(t))^m}$. 
\end{proof}

\begin{cor}\label{cor:rees} Let $(X,L)$ be a normal polarized variety and $Z\subset X$ a 
closed subscheme. Then there exists a normal, ample test configuration $(\cX,\cL)$ such that the Rees valuations of $Z$ are exactly the divisorial valuations $v_E$ on $X$ associated to the non-trivial irreducible components of $\cX_0$. 
\end{cor}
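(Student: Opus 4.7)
The plan is to take $\cX:=\tcX$, the normalized deformation to the normal cone of $Z$, together with an appropriate $\pi$-ample polarization, where $\pi\colon\tcX\to\A^1$ denotes the structure morphism. By Theorem~\ref{thm:reescone}, the non-trivial irreducible components $E$ of the central fiber $\tcX_0$ already yield, via $E\mapsto v_E$, exactly the Rees valuations of $Z$, so the only remaining task is to construct a $\pi$-ample $\Q$-line bundle $\cL$ on $\tcX$ compatible with a $\G_m$-linearization.

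Let $\mu\colon\tcX\to X_{\A^1}$ be the normalized blow-up of $Z\times\{0\}$, and set $D:=\mu^{-1}(Z\times\{0\})$. By Lemma~\ref{lem:normblow}, $D$ is an effective Cartier divisor on $\tcX$ with $-D$ being $\mu$-ample. I define
\[
\cL_\epsilon:=\mu^*L_{\A^1}-\epsilon D
\]
as a $\Q$-Cartier divisor class on $\tcX$ for $\epsilon\in\Q_{>0}$. Since $L_{\A^1}$ is canonically $\G_m$-equivariant and $D$ is $\G_m$-invariant, the class $\cL_\epsilon$ inherits a canonical $\G_m$-linearization, so $(\tcX,\cL_\epsilon)$ carries all the data of a test configuration for $(X,L)$. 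The main claim is then that $\cL_\epsilon$ is $\pi$-ample for all sufficiently small positive rational $\epsilon$.

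To verify this, note that $L_{\A^1}$ is the pull-back of $L$ to $X\times\A^1$, hence is ample relative to the projection $p\colon X_{\A^1}\to\A^1$ since $L$ is ample on $X$. Combining this with the $\mu$-ampleness of $-D$, the standard relative ampleness criterion---namely, that a $\Q$-line bundle of the form $\mu^*B+\epsilon A$, with $B$ being $p$-ample and $A$ being $\mu$-ample, is $(p\circ\mu)$-ample for all sufficiently small $\epsilon\in\Q_{>0}$---yields $\pi$-ampleness of $\cL_\epsilon$ for such $\epsilon$.

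Fixing such an $\epsilon$, the pair $(\tcX,\cL_\epsilon)$ is the desired normal, ample test configuration, and the non-trivial irreducible components of its central fiber give exactly the Rees valuations of $Z$ via Theorem~\ref{thm:reescone}. The main---and rather minor---obstacle is the relative ampleness statement above, which is a routine consequence of the standard characterization of relative ampleness over the affine base $\A^1$; every other ingredient is an immediate application of results already established earlier in the paper.
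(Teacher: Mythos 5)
Your proposal is correct and follows essentially the same route as the paper: take the normalized blow-up $\mu\colon\tcX\to X_{\A^1}$ of $Z\times\{0\}$, use Lemma~\ref{lem:normblow} to get the $\mu$-ample Cartier divisor $-D$, set $\cL=\mu^*L_{\A^1}-\e D$ for $0<\e\ll1$ rational, and conclude by Theorem~\ref{thm:reescone}. The only difference is that you spell out the standard relative-ampleness perturbation argument that the paper leaves implicit.
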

\begin{proof} Let $\mu\colon\cX\to X\times\A^1$ be the normalized blow-up of $Z\times\{0\}$, so that $\cX$ is the normalization of the deformation to the normal cone of $Z$. As recalled in Lemma~\ref{lem:normblow}, $D:=\mu^{-1}(Z)$ is a Cartier divisor with $-D$ ample. We may thus choose $0<c\ll 1$ such that $\cL:=\mu^*L_{\A^1}-c D$ is ample, and $(\cX,\cL)$ is then a normal, ample test configuration. The rest follows from Theorem~\ref{thm:reescone}. 
\end{proof}
%
%
%
%
\subsection{Log discrepancies and log canonical divisors}\label{sec:logdisc}
In this section we assume that $k$ has characteristic $0$. 
Let $B$ be a boundary on $X$. Recall the definition of $A_{(X,B)}$ from~\S\ref{sec:bound}.
\begin{prop}\label{prop:discr}
  For every irreducible component $E$ of $\cX_0$, the log discrepancies of $v_E$ 
  and $\ord_E$ (with respect to the pairs $(X,B)$ and $(X_{\A^1},B_{\A^1})$, respectively) 
  are related by
  \begin{align*}
    A_{(X,B)}(v_E)
    &=A_{\left(X_{\A^1},B_{\A^1}\right)}\left(b_E^{-1}\ord_E\right)-1\\
    &=A_{\left(X_{\A^1},B_{\A^1}+X\times\{0\}\right)}(b_E^{-1}\ord_E). 
  \end{align*}
\end{prop}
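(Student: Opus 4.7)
To tackle this, I would first dispose of the easy second equality: since $X\times\{0\}$ is the Cartier divisor $\{t=0\}$ on $X_{\A^1}$ and $b_E^{-1}\ord_E(X\times\{0\})=b_E^{-1}\ord_E(t)=1$, the standard identity $A_{(Z,\Delta+D)}(w)=A_{(Z,\Delta)}(w)-w(D)$, valid for any $\Q$-Cartier divisor $D$, applied to $(Z,\Delta,D)=(X_{\A^1},B_{\A^1},X\times\{0\})$ and $w=b_E^{-1}\ord_E$ yields the second equality. It thus remains to prove the first one.

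Next I would handle the degenerate case where $E$ is the strict transform of $X\times\{0\}$: by Lemma~\ref{lem:div2}, $v_E=v_{\triv}$, so $A_{(X,B)}(v_E)=0$ by convention. Here $b_E=1$ and $\ord_E=\ord_{X\times\{0\}}$, which has log discrepancy $1$ with respect to $(X_{\A^1},B_{\A^1})$ since $X_{\A^1}$ is regular along $X\times\{0\}$ and $X\times\{0\}$ is not a component of $\supp B_{\A^1}$ (the latter being pulled back from $X$). The identity $0=1-1$ then holds.

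Assume now $E$ is nontrivial, so that $v_E$ is divisorial on $X$ by Lemma~\ref{lem:div2}. I would pick a proper birational morphism $\mu\colon Y\to X$ from a normal variety together with a prime divisor $F\subset Y$ such that $v_E=c\,\ord_F$ with $c\in\Q_{>0}$, and let $b\in\Q$ be the coefficient of $F$ in the $\Q$-Weil divisor $B_Y$ determined by $K_{(Y,B_Y)}:=\mu^*K_{(X,B)}$. By homogeneity of $A$ and the fact that $A_{(Y,B_Y)}=A_{(X,B)}$ on divisorial valuations, $A_{(X,B)}(v_E)=c\,A_{(Y,B_Y)}(\ord_F)=c(1-b)$. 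Because $\ord_E$ is $k^*$-invariant with $\ord_E(t)=b_E$ and $r(\ord_E)=b_E v_E=b_E c\,\ord_F$, Lemma~\ref{lem:gauss} yields
\[
\ord_E(f)=\min_\la\left(b_E c\,\ord_F(f_\la)+b_E\la\right)
\]
for every $f=\sum_\la f_\la t^\la\in k(Y_{\A^1})=k(Y)(t)$. Since $\cO_{Y_{\A^1},F\times\{0\}}$ is a two-dimensional regular local ring with regular parameters $(u,t)$, where $u$ is any uniformizer of the DVR $\cO_{Y,F}$, this formula identifies $\ord_E$ with the monomial valuation of weights $(b_E c,b_E)$ in $(u,t)$.

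To finish I would invoke the standard log discrepancy formula for monomial valuations on a regular surface germ. Compatibility of canonical divisors under products gives $K_{(Y_{\A^1},(B_Y)_{\A^1})}=\mu_{\A^1}^*K_{(X_{\A^1},B_{\A^1})}$, hence $A_{(X_{\A^1},B_{\A^1})}(\ord_E)=A_{(Y_{\A^1},(B_Y)_{\A^1})}(\ord_E)$. At the generic point of $F\times\{0\}$, the pull-back $(B_Y)_{\A^1}$ is locally $b\cdot\{u=0\}$. The weighted blow-up formula gives the log discrepancy of the monomial valuation of weights $(p,q)\in\Q_{>0}^2$ on such a germ with boundary $\alpha\{u=0\}$ as $p+q-\alpha p$ (reduce to coprime integer weights by homogeneity, where it follows from the standard discrepancy $p+q-1$ of the weighted blow-up); specializing to $(p,q)=(b_E c,b_E)$ and $\alpha=b$ yields
\[
A_{(X_{\A^1},B_{\A^1})}(\ord_E)=b_E c(1-b)+b_E=b_E A_{(X,B)}(v_E)+b_E,
\]
and dividing by $b_E$ produces the first equality. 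The main subtlety is the clean identification of $\ord_E$ with this specific monomial valuation, handled by Lemma~\ref{lem:gauss} together with the regularity of $\cO_{Y_{\A^1},F\times\{0\}}$; everything else is a standard monomial computation.
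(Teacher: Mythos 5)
Your proof is correct and follows essentially the same route as the paper's: both handle the strict transform of $X\times\{0\}$ separately, then on a model where $v_E=c\,\ord_F$ identify $\ord_E$ via the Gauss-extension formula (Lemmas~\ref{lem:gauss} and~\ref{lem:div2}) with the monomial valuation of weights $(b_Ec,b_E)$ along $F$ and $X\times\{0\}$, and conclude by the log discrepancy formula for monomial valuations. The only cosmetic difference is that the paper passes to a smooth model $X'$ with $F$ smooth and cites~\cite[Prop.~5.1]{JM} directly, whereas you localize at the generic point of $F\times\{0\}$ on a merely normal $Y$ and carry out the two-parameter monomial computation by hand; your explicit verification of the second equality, which the paper leaves implicit, is also fine.
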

Recall that $A_{(X,B)}(v_\triv)$ is defined to be $0$, and that $b_E=\ord_E(\cX_0)=\ord_E(t)$. 
\begin{proof} 
  If $E$ is the strict transform of $X\times\{0\}$, then $A_{(X,B)}(v_E)=A_{(X,B)}(v_\triv)=0$, 
  while $A_{(X_{\A^1},B_{\A^1})}(\ord_E)=b_E=1$. 

  Assume now that $E$ is non-trivial. Since $v_E$ is a divisorial valuation on $X$, 
  we may find a proper birational morphism $\mu\colon X'\to X$ with $X'$ smooth 
  and a smooth irreducible divisor $F\subset X'$ such that $v_E=c\ord_F$ for some 
  rational $c>0$. By Lemma~\ref{lem:div2}, the divisorial valuation $\ord_E$ is monomial 
  on $X'_{\A^1}$ with respect to the snc divisor $X'\times\{0\}+F_{\A^1}$, with weights 
  $\ord_E(X'\times\{0\})=b_E$ and $\ord_E(F_{\A^1})=b_Ev_E(F)=b_E c$. It follows 
  (see~\eg~\cite[Prop.~5.1]{JM}) that 
  \begin{multline*}
    A_{(X_{\A^1},B_{\A^1})}(\ord_E)
    =b_E A_{(X_{\A^1},B_{\A^1})}\left(\ord_{X\times\{0\}}\right)
    +b_E c A_{(X_{\A^1},B_{\A^1})}\left(\ord_{F_{\A^1}}\right)\\
    =b_E+b_E c A_{(X,B)}\left(\ord_F\right)
    =b_E\left(1+A_{(X,B)}(v_E)\right),
  \end{multline*}
  which completes the proof.
\end{proof}

Now consider a normal test configuration $\cX$ for $X$, with compactification $\bar\cX$.
As in~\S\ref{sec:DFlog}, let $\cB$ (resp.\ $\bar\cB$) be the closure of 
$B\times(\A^1\setminus\{0\})$ in $\cX$ (resp. $\bar\cX$).
The log canonical divisors on $\A^1$  and $\P^1$ are defined as 
\begin{equation*}
  K^\lo_{\A^1}:=K_{\A^1}+[0]
  \quad\text{and}\quad
  K^\lo_{\P^1}:=K_{\P^1}+[0]+[\infty],
\end{equation*}
respectively. We now set 
\begin{align*}
  K^{\lo}_{(\cX,\cB)}
  :&=K_\cX+\cB+\cX_{0,\red},\\
  K^\lo_{(\bar\cX,\bar\cB)}
  :&=K_{\bar\cX}+\bar\cB+\bar\cX_{0,\red}+\bar\cX_{\infty,\red}\\
  &=K_{\bar\cX}+\bar\cB+\cX_{0,\red}+\bar\cX_\infty,
\end{align*}
and call these the \emph{log canonical divisors} of $(\cX,\cB)$ and
$(\bar\cX,\bar\cB)$, respectively. Similarly,
\begin{align*}
  K^\lo_{(\cX,\cB)/\A^1}
  :&=K^\lo_{(\cX,\cB)}-\pi^*K^\lo_{\A^1}\\
  &=K_{(\cX,\cB)/\A^1}-(\cX_0-\cX_{0,\red})
\end{align*}
and
\begin{align*}
  K^\lo_{(\bar\cX,\bar\cB)/\P^1}
  :&=K^\lo_{(\bar\cX,\bar\cB)}-\pi^*K^\lo_{\P^1}\\
  &=K_{(\bar\cX,\bar\cB)/\P^1}-(\cX_0-\cX_{0,\red})
\end{align*}
are the \emph{relative log canonical divisors}.
Again we emphasize that these $\Q$-Weil divisor classes
may not be $\Q$-Cartier in general. 

\smallskip
There are two main reasons for introducing the relative log canonical divisors. 
First, they connect well with the log discrepancy function on divisorial valuations on $X$.
Namely, consider normal test configurations $\cX$ and $\cX'$ for $X$, with $\cX'$
dominating $\cX$ via $\mu\colon\cX'\to\cX$. Suppose that $K^\lo_{(\cX,\cB)}$ is $\Q$-Cartier.
Then
\begin{align}\label{equ:Klog}
  K^\lo_{(\bar\cX',\bar\cB')/\P^1}-\mu^*K^\lo_{(\bar\cX,\bar\cB)/\P^1}
  =K^\lo_{(\cX',\cB')/\A^1}-\mu^*K^\lo_{(\cX,\cB)/\A^1}
  =\sum_{E'} A_{(\cX,\cB+\cX_{0,\red})}(\ord_{E'}){E'},
\end{align}
where $E'$ ranges over the irreducible components of $\cX'_0$.
Combining this with Proposition~\ref{prop:discr}, we infer:
\begin{cor}\label{cor:discr} 
  For any normal test configuration $\cX$ dominating $X_{\A^1}$ 
  via $\rho\colon\cX\to X_{\A^1}$, we have
  \begin{equation}\label{equ:Klogbis}
    K^\lo_{(\bar\cX,\bar\cB)/\P^1}-\rho^*K^\lo_{(X_{\P^1},B_{\P^1})/\P^1}
    =K^\lo_{(\cX,\cB)/\A^1}-\rho^*K^\lo_{(X_{\A^1},B_{\A^1})/\A^1}
    =\sum_E b_E A_{(X,B)}(v_E) E,
  \end{equation}
  with $E$ ranging over the irreducible components of $\cX_0$. 
\end{cor}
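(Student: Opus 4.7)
\medskip
\noindent\textbf{Proof proposal.} The plan is to prove the second equality by a direct divisor-theoretic computation, and then deduce the first from the fact that $\bar\cX\to X_{\P^1}$ is an isomorphism over a neighborhood of $\P^1\setminus\{0\}$.

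First I would reduce to the $\A^1$ version. The compactification is constructed by gluing $\cX$ to $X\times(\P^1\setminus\{0\})$, so $\rho\colon\bar\cX\to X_{\P^1}$ is an isomorphism on a neighborhood of $\bar\cX_\infty=X\times\{\infty\}$. On that neighborhood, $\bar\cX_{\infty,\red}=\bar\cX_\infty$ is reduced and $\pi^*[\infty]=\bar\cX_\infty$, so the two `log correction' terms at $\infty$ match under $\rho^*$. Hence the difference $K^\lo_{(\bar\cX,\bar\cB)/\P^1}-\rho^*K^\lo_{(X_{\P^1},B_{\P^1})/\P^1}$ is a $\Q$-Weil divisor supported on $\cX_0$, and restriction to $\cX\subset\bar\cX$ turns the $\P^1$-relative log canonical divisors into their $\A^1$-relative counterparts, yielding the first equality.

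Next, for the $\A^1$ version I would unwind definitions. Since $\pi^*[0]=\cX_0=\sum_Eb_EE$ and $\cX_{0,\red}=\sum_EE$,
\begin{equation*}
K^\lo_{(\cX,\cB)/\A^1}=K_\cX+\cB-\pi^*K_{\A^1}+\sum_E(1-b_E)E,
\end{equation*}
while the central fiber of $X_{\A^1}$ is already reduced, so
\begin{equation*}
K^\lo_{(X_{\A^1},B_{\A^1})/\A^1}=K_{X_{\A^1}}+B_{\A^1}-\pi^*K_{\A^1}.
\end{equation*}
Subtracting and using $\rho^*\pi^*K_{\A^1}=\pi^*K_{\A^1}$,
\begin{equation*}
K^\lo_{(\cX,\cB)/\A^1}-\rho^*K^\lo_{(X_{\A^1},B_{\A^1})/\A^1}=\bigl(K_\cX+\cB-\rho^*(K_{X_{\A^1}}+B_{\A^1})\bigr)+\sum_E(1-b_E)E.
\end{equation*}
Both members are supported on $\cX_0$, so it suffices to compute the coefficient $\mathrm{coeff}_E$ at each component $E$.

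Finally, I would identify these coefficients via Proposition~\ref{prop:discr}. Because $\cB$ is the closure of $B\times(\A^1\setminus\{0\})$, no component of $\cX_0$ lies in $\supp\cB$, so $\ord_E(\cB)=0$; and since $K_X+B$ is $\Q$-Cartier, $K_{X_{\A^1}}+B_{\A^1}=p_1^*(K_X+B)$ is $\Q$-Cartier, making the pullback $\rho^*(K_{X_{\A^1}}+B_{\A^1})$ well-defined. The Cartier divisor $X\times\{0\}$ pulls back to $\cX_0$, hence
\begin{equation*}
\ord_E\bigl(K_\cX-\rho^*(K_{X_{\A^1}}+B_{\A^1})\bigr)=\ord_E\bigl(K_\cX-\rho^*(K_{X_{\A^1}}+B_{\A^1}+X\times\{0\})\bigr)+b_E.
\end{equation*}
By definition of log discrepancy applied to the $\Q$-Cartier pair $(X_{\A^1},B_{\A^1}+X\times\{0\})$, the first term on the right equals $A_{(X_{\A^1},B_{\A^1}+X\times\{0\})}(\ord_E)-1$, which by Proposition~\ref{prop:discr} and homogeneity equals $b_EA_{(X,B)}(v_E)-1$. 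Adding $b_E$ and the $(1-b_E)$ correction from the log piece yields $\mathrm{coeff}_E=b_EA_{(X,B)}(v_E)$, which is exactly the desired formula.

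The main care required is the $\Q$-Cartier/$\Q$-Weil bookkeeping: one must avoid writing $\rho^*K_{X_{\A^1}}$ or $\rho^*B_{\A^1}$ separately, working always with the $\Q$-Cartier combinations $K_{X_{\A^1}}+B_{\A^1}$ and $K_{X_{\A^1}}+B_{\A^1}+X\times\{0\}$, and tracking the strict-transform versus total-transform convention for $\cB$. Everything else is essentially bookkeeping around Proposition~\ref{prop:discr}.
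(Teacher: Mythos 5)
Your argument is correct and is essentially the paper's: the paper deduces the corollary by specializing the general discrepancy formula~\eqref{equ:Klog} to the morphism $\rho\colon\cX\to X_{\A^1}$ and then applying Proposition~\ref{prop:discr}, and your explicit coefficient computation (the $(1-b_E)$ log correction, $\ord_E(\cB)=0$, $\rho^*(X\times\{0\})=\cX_0$, and the definition of $A_{(X_{\A^1},B_{\A^1}+X\times\{0\})}$) is precisely the unwinding of~\eqref{equ:Klog} in this special case. Your care about pulling back only the $\Q$-Cartier combinations $K_{X_{\A^1}}+B_{\A^1}$ and $K_{X_{\A^1}}+B_{\A^1}+X\times\{0\}$ is exactly the right bookkeeping.
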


Second, the relative log canonical divisors behave well under base change.
Namely, let $(\cX_d,\cL_d)$ be the normalized base change of $(\cX,\cL)$, and
denote by $f_d\colon\P^1\to\P^1$ and $g_d\colon\bar\cX_d\to\bar\cX$ the
induced finite morphisms, both of which have degree $d$. The pull-back
formula for log canonical divisors (see~\eg~\cite[\S2.42]{KollarBook}) then yields
\begin{equation}\label{e404}
  K^\lo_{(\bar\cX_d,\bar\cB_d)}=g_d^*K^\lo_{(\bar\cX,\bar\cB)}
  \quad\text{and}\quad
  K^\lo_{\P^1}=f_d^*K^\lo_{\P^1},
\end{equation}
so that $K^\lo_{(\bar\cX_d,\bar\cB_d)/\P^1}=g_d^*K^\lo_{(\bar\cX,\bar\cB)/\P^1}$.
Note that while the (relative) log canonical divisors above may not be $\Q$-Cartier, 
we can pull them back under the finite morphism $g_d$, 
see~\cite[\S2.40]{KollarBook}.
%
%
%
%
\section{Duistermaat-Heckman measures and filtrations}\label{sec:DHfiltr}
In this section, we analyze in detail the limit measure of a
filtration, a concept closely related to Duistermaat-Heckman
measures. This allows us to establish Theorem A and Corollary~B.
%
%
\subsection{The limit measure of a filtration}\label{sec:limit}
Let $X$ be a variety of dimension $n$, $L$ an ample line bundle on
$X$, and set $R=R(X,L)$. 
Let us we review and complement the study in~\cite{BC} of a natural measure
on $\R$  associated to a general $\R$-filtration $F^\bullet R$ on $R$.

Recall that the \emph{volume} of a graded subalgebra $S\subset R$ is defined as 
\begin{equation}\label{equ:vol}
\vol(S):=\limsup_{m\to\infty}\frac{n!}{m^n}\dim S_m\in\R_{\ge0}.
\end{equation}
The following result is proved using Okounkov bodies~\cite{LM,KK} (see also the first author's appendix in~\cite{Sze2}). 
\begin{lem}\label{lem:vol} Let $S\subset R$ be a graded subalgebra containing an ample series, \ie
\begin{itemize}
\item[(i)] $S_m\ne 0$ for all $m\gg 1$; 
\item[(ii)] there exist $\Q$-divisors $A$ and $E$, ample and effective respectively, such that $L=A+E$ and $H^0(X,mA)\subset S_m\subset H^0(X,mL)$ for all $m$ divisible enough. 
\end{itemize}
Then $\vol(S)>0$, and the limsup in (\ref{equ:vol}) is a limit. For
each $m\gg 1$, let $\fa_m\subset\cO_X$ be the base ideal of $S_m$, \ie
the image of the evaluation map $S_m\otimes\cO_X(-mL)\to\cO_X$, and
let $\mu_m\colon X_m\to X$ be the normalized blow-up of $X$ along $\fa_m$, so that $\cO_{X_m}\cdot\fa_m=\cO_{X_m}(-F_m)$ with $F_m$ an effective Cartier divisor. Then we also have
$$
\vol(S)=\lim_{m\to\infty}\left(\mu_m^*L-\tfrac{1}{m}F_m\right)^n.
$$
\end{lem}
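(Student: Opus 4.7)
My plan is to prove the statement using the Okounkov body machinery of Lazarsfeld--Musta\c{t}\u{a} and Kaveh--Khovanskii, together with a Fujita-type approximation argument to produce the intersection-theoretic expression.

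First I would nail down existence, positivity, and the limit claim via Okounkov bodies. Fix an admissible flag on $X$ giving a rank-$n$ valuation $\nu\colon k(X)^{*}\twoheadrightarrow\Z^n$, and associate to $S$ the graded semigroup $\Gamma(S)=\{(m,\nu(s))\mid s\in S_m\setminus\{0\}\}\subset\N\times\N^n$ with Okounkov body $\Delta(S)\subset\R^n$. The hypothesis that $S$ contains the ample series $R(X,mA)$ for some ample $\Q$-divisor $A\le L$ gives the inclusion $\Delta(A)\subset\Delta(S)\subset\Delta(L)$; since $A$ is ample, $\Delta(A)$ has nonempty interior, so $\Delta(S)$ is a convex body of positive Lebesgue measure. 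The standard Okounkov body theorem then yields
$$
\lim_{m\to\infty}\frac{n!}{m^{n}}\dim S_m=n!\vol_{\R^{n}}(\Delta(S))=\vol(S)>0,
$$
so in particular the limsup in~\eqref{equ:vol} is an honest limit.

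Second, I would compute $\vol(S^{(m)})$ for the subalgebra $S^{(m)}\subset R$ generated by $S_m$. By definition of $\mu_m$ and $F_m$, the evaluation map pulls back to a surjection $\mu_m^{*}S_m\otimes\cO_{X_m}\twoheadrightarrow\cO_{X_m}(M_m)$ with $M_m:=\mu_m^{*}(mL)-F_m$. Hence $M_m$ is a globally generated, in particular nef, line bundle on the normal projective variety $X_m$, and taking products of sections yields $S^{(m)}_{km}\hookrightarrow H^{0}(X_m,kM_m)$ with cokernel of bounded dimension (for fixed $m$) as $k\to\infty$, via the Stein factorization of the morphism defined by $|M_m|$. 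Asymptotic Riemann--Roch for nef divisors then gives
$$
\dim S^{(m)}_{km}=\frac{k^{n}}{n!}(M_m)^{n}+O(k^{n-1}),
$$
whence $\vol(S^{(m)})=(M_m/m)^{n}=(\mu_m^{*}L-\tfrac{1}{m}F_m)^{n}$.

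Third, I would compare the two sequences. The inclusion $S^{(m)}\subset S$ gives the easy direction $(\mu_m^{*}L-\tfrac{1}{m}F_m)^{n}\le\vol(S)$ for all $m\gg 1$. For the reverse asymptotic inequality, write $\Delta_m\subset\R^{n}$ for the convex hull of $\tfrac{1}{m}\nu(S_m\setminus\{0\})$; this is precisely the Okounkov body of $S^{(m)}$, so $\vol(S^{(m)})=n!\vol_{\R^{n}}(\Delta_m)$ by step two. The defining formula $\Delta(S)=\overline{\bigcup_{m}\tfrac{1}{m}\nu(S_m\setminus\{0\})}$ and the convexity/approximation lemmas underlying Okounkov body theory give $\vol_{\R^{n}}(\Delta_m)\to\vol_{\R^{n}}(\Delta(S))$, completing the argument.

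The principal obstacle is the last step: upgrading the trivial inequality $\vol(S^{(m)})\le\vol(S)$ to a limit. This is a Fujita-type approximation, and its proof rests on the semigroup/convex-geometry content of Okounkov body theory, crucially using the ample-series hypothesis to ensure that $S_m$ already captures almost all of $\Delta(S)$ at finite level~$m$; without condition~(ii) one cannot expect $\Delta_m$ to exhaust $\Delta(S)$ in volume.
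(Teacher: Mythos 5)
Your proposal is correct and follows essentially the same route as the paper, which does not prove Lemma~\ref{lem:vol} in the text but explicitly defers to the Okounkov-body machinery of~\cite{LM,KK} and the appendix of~\cite{Sze2} — that is, precisely your combination of the semigroup limit theorem (positivity and existence of the limit), the identification $\vol(S^{(m)})=\left(\mu_m^*L-\tfrac{1}{m}F_m\right)^n$ for the subalgebra generated in degree $m$, and Fujita-type semigroup approximation. One minor imprecision: the cokernel of $S^{(m)}_{km}\hookrightarrow H^0(X_m,kM_m)$ is only $O(k^{n-1})$ rather than bounded (this uses that $X_m\to\P(S_m^*)$ is birational onto its image for $m\gg1$, which the ample-series hypothesis guarantees), but that estimate is all the asymptotic Riemann--Roch count requires.
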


Now let $F^\bullet R$ be an $\R$-filtration of the graded ring $R$, as defined in \S\ref{sec:filtr}.  We denote by 
$$
\la^{(m)}_{\max}=\la^{(m)}_{1}\ge\dots\ge\la^{(m)}_{N_m}=\la^{(m)}_{\min}
$$ 
the successive minima of $F^\bullet H^0(X,mL)$. As $R$ is an integral domain, the sequence $(\la^{(m)}_{\max})_{m\in\N}$ is superadditive in the sense
that $\lambda^{(m+m')}_{\max}\ge\la^{(m)}_{\max}+\lambda^{(m')}_{\max}$, and
this implies that 
$$
\la_{\max}=\la_{\max}(F^{\bullet} R):=\lim_{m\to\infty}\frac{\la^{(m)}_{\max}}{m}=\sup_{m\ge 1}\frac{\la^{(m)}_{\max}}{m}\in(-\infty,+\infty].
$$
By definition, we have $\lambda_{\max}<+\infty$ iff there exists $C>0$ such that $F^\la  H^0(X,mL)=0$ for any $\lambda,m$ such that $\la\ge Cm$, and we then say that $F^\bullet R$ has \emph{linear growth}. 

For example, it follows from~\cite[Lemma~3.1]{PS07} that 
the filtration associated to a test configuration (see~\S\ref{sec:filtrtc}) 
has linear growth.
\begin{rmk}\label{R403}
  In contrast, there always exists $C>0$ such that $F^\la H^0(X,mL)=H^0(X,mL)$ 
  for any $\lambda,m$ such that $\la\le-Cm$. This is a simple consequence of the 
  finite generation of $R$, cf.~\cite[Lemma 1.5]{BC}. 
\end{rmk}

For each $\la\in\R$, we define a graded subalgebra of $R$ by setting
\begin{equation}\label{equ:Rla}
R^{(\la)}:=\bigoplus_{m\in\N}F^{m\la} H^0(X,mL). 
\end{equation}
The main result of~\cite{BC} may be summarized as follows. 
\begin{thm}\label{thm:BC} Let $F^\bullet R$ be a filtration with linear growth. 
\begin{itemize}
\item[(i)] For each $\la<\la_{\max}$, $R^{(\la)}$ contains an ample series. 
\item[(ii)] The function $\la\mapsto \vol(R^{(\la)})^{1/n}$ is concave on $(-\infty,\la_{\max})$, and vanishes on $(\la_{\max},+\infty)$. 
\item[(iii)] If we introduce, for each $m$, the probability measure
\begin{equation}\label{equ:num}
\nu_m:=\frac{1}{N_m}\sum_j\delta_{m^{-1}\la^{(m)}_{j}}=-\frac{d}{d\la}\frac{\dim F^{m\la} H^0(X,mL)}{N_m}
\end{equation}
 on $\R$, then $\nu_m$ has uniformly bounded support and 
converges weakly as $m\to\infty$ to the probability measure 
\begin{equation}\label{equ:limmeas}
\nu:=-\frac{d}{d\la}V^{-1}\vol(R^{(\la)}).
\end{equation}
\end{itemize}
\end{thm}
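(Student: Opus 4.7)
The strategy is to encode the filtration into a concave function on the Okounkov body of $R$. Fix a rank-$n$ valuation on $k(X)$ (arising from an admissible flag on a smooth birational model), and for each graded subalgebra $S\subset R$ containing an ample series let $\Delta(S)\subset\R^n$ denote the associated Okounkov body. By the Okounkov body machinery of Lazarsfeld--Musta\c{t}\u{a} and Kaveh--Khovanskii, $\Delta(S)$ is a convex body with $\vol(S)=n!\,\vol_{\R^n}(\Delta(S))$, as already used in Lemma~\ref{lem:vol}.

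For (i), the goal is to produce, for each $\la<\la_{\max}$, an ample $\Q$-divisor $A$ and an effective $\Q$-divisor $E$ on $X$ with $L=A+E$ and $H^0(X,mA)\subset R^{(\la)}_m$ for $m$ sufficiently divisible. By definition of $\la_{\max}$, one can pick $\la_0>\la$, an integer $m_0\ge 1$, and a nonzero $s_0\in F^{m_0\la_0}H^0(X,m_0L)$. Using $s_0$ as a multiplier, submultiplicativity of the filtration gives $s_0^j\cdot H^0(X,kL)\subset F^{jm_0\la_0}H^0(X,(k+jm_0)L)$; combining this with ampleness of $L$ (to write $(k+jm_0)L=A'+jm_0L$ with $A'$ ample, then rescaling in order to match the slope $\la$) yields the required $A,E$ provided $\la_0$ is taken close enough to $\la$. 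This is the technical heart of~\cite{BC}.

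Granting (i), $\Delta(R^{(\la)})$ is a convex body for every $\la<\la_{\max}$, and these bodies form a nested decreasing family inside $\Delta(R)$. Define $G\colon\Delta(R)\to[-\infty,\la_{\max}]$ by $G(x):=\sup\{\la:x\in\Delta(R^{(\la)})\}$; this is upper semicontinuous and concave, with $\{G\ge\la\}$ differing from $\Delta(R^{(\la)})$ at most in a Lebesgue-negligible set. Assertion (ii) then follows from the Brunn--Minkowski inequality applied to the superlevel sets, noting that $\vol(R^{(\la)})=0$ for $\la>\la_{\max}$ is immediate from the definition of $\la_{\max}$ together with linear growth.

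For (iii), the measures $\nu_m$ have uniformly bounded support (above by linear growth, below by Remark~\ref{R403}). Their tail distributions are $\nu_m((\la,+\infty))=N_m^{-1}\dim F^{\lceil m\la\rceil}H^0(X,mL)$, and applying Lemma~\ref{lem:vol} to each $R^{(\la)}$ separately yields $\nu_m((\la,+\infty))\to V^{-1}\vol(R^{(\la)})$ for every $\la<\la_{\max}$. The concavity of $\la\mapsto\vol(R^{(\la)})^{1/n}$ established in (ii) forces continuity of the limit function away from $\la_{\max}$, so pointwise convergence of tail distributions at every continuity point implies weak convergence of $\nu_m$ to $\nu=-\frac{d}{d\la}V^{-1}\vol(R^{(\la)})$ after taking distributional derivatives. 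The main obstacle is therefore (i), which requires a delicate interplay between the ampleness of $L$ and the submultiplicativity of $F^\bullet R$; assertions (ii) and (iii) are then fairly formal consequences of Okounkov body theory combined with Brunn--Minkowski.
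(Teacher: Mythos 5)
Your argument is correct and is essentially the proof of Boucksom--Chen, which is exactly what the paper relies on: Theorem~\ref{thm:BC} is presented there as a summary of the main result of~\cite{BC} (concave transform on the Okounkov body, Brunn--Minkowski for the superlevel sets, convergence of tail distributions at continuity points) and is not reproved. The only slight imprecision is in (i): once $\la_0>\la$ and the multiplier $s_0\in F^{m_0\la_0}H^0(X,m_0L)$ are fixed, the parameter that must be taken small is the ample fraction $\e$ in the decomposition $L=\e L+(1-\e)L$, so that $(1-\e)\la_0-C\e\ge\la$ with $C$ as in Remark~\ref{R403}, rather than the gap $\la_0-\la$.
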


We call $\nu$ the \emph{limit measure} of the filtration $F^\bullet R$. The log concavity property of $\vol(R^{(\la)})$ immediately yields:
\begin{cor}\label{cor:supp} The support of the limit measure $\nu$ is given by
$\supp\nu=[\la_{\min},\la_{\max}]$ with 
$$
\la_{\min}:=\inf\left\{\la\in\R\mid\vol(R^{(\la)})<V\right\}.
$$
Further, $\nu$ is absolutely continuous with respect to the Lebesgue measure, except perhaps for a point mass at $\la_{\max}$. 
\end{cor}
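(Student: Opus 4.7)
The plan is to work throughout with the non-increasing function $f(\la) := V^{-1}\vol(R^{(\la)})$, whose negative distributional derivative is $\nu$ by Theorem~\ref{thm:BC}(iii). Basic structural properties come for free: $f \equiv 1$ for $\la$ sufficiently negative by Remark~\ref{R403}, $f \equiv 0$ on $(\la_{\max}, +\infty)$ by the very definition of $\la_{\max}$, and $f > 0$ on $(-\infty, \la_{\max})$ by Theorem~\ref{thm:BC}(i). The definition of $\la_{\min}$ together with monotonicity of $f$ then yields $f \equiv 1$ on $(-\infty, \la_{\min})$ and $f < 1$ on $(\la_{\min}, +\infty)$.

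The main step, and where I expect the only real difficulty of the proof to lie, is to show that $f$ is \emph{strictly} decreasing on $(\la_{\min}, \la_{\max})$; once this is established, $\supp \nu = [\la_{\min}, \la_{\max}]$ is immediate, since $\nu$ then puts positive mass on every open subinterval of this interval while vanishing outside. The only available tool is concavity of $g := f^{1/n}$ on $(-\infty, \la_{\max})$ from Theorem~\ref{thm:BC}(ii), which in particular forces continuity of $g$ (and hence $f$) on this open set. Suppose for contradiction $g \equiv c$ on some open subinterval $(a,b) \subset (\la_{\min}, \la_{\max})$; continuity gives $g(a) = c$, and the condition $a > \la_{\min}$ forces $c < 1$. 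I would then pick any $\la_0 < \la_{\min}$ (so that $g(\la_0) = 1$) and any $\la_2 \in (a,b)$, write $a = (1-t)\la_0 + t\la_2$ for the appropriate $t \in (0,1)$, and apply concavity at $a$ to obtain
\[
c = g(a) \ge (1-t)\cdot 1 + t\cdot c,
\]
which rearranges to $c \ge 1$, a contradiction.

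For the regularity claim the argument is considerably softer: concavity of $g$ on the open interval $(-\infty, \la_{\max})$ implies that $g$, and hence $f = g^n$, is locally Lipschitz there, so locally absolutely continuous. Therefore the restriction of $\nu$ to $(-\infty, \la_{\max})$ is given by a locally integrable density, and $\nu$ vanishes on $(\la_{\max}, +\infty)$ since $f \equiv 0$ there. The only location where a singular contribution can appear is $\la_{\max}$, arising from the potential jump of $f$ across $\la_{\max}$, which contributes a Dirac mass of weight $\lim_{\la \to \la_{\max}^-} f(\la) \ge 0$. This yields the claimed decomposition.
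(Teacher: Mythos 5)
Your proof is correct and follows the same route the paper has in mind: the paper derives the corollary directly from the concavity of $\la\mapsto\vol(R^{(\la)})^{1/n}$ established in Theorem~\ref{thm:BC}, and your argument simply makes explicit the two consequences of that concavity that are needed (no interval of constancy strictly between $\la_{\min}$ and $\la_{\max}$, and local Lipschitz continuity on $(-\infty,\la_{\max})$). The chord computation ruling out a constancy interval and the identification of the only possible atom with the jump of $V^{-1}\vol(R^{(\la)})$ at $\la_{\max}$ both check out.
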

More precisely, the mass of $\nu$ on $\{\la_{\max}\}$ is equal to $\lim_{\la\to(\la_{\max})_-}\vol(R^{(\la)})$. 
\begin{rmk} 
  While we trivially have
  $\la_{\min}\ge\limsup_{m\to\infty}m^{-1}\la^{(m)}_{\min}$, the
  inequality can be strict in general. It will, however, be an equality for the filtrations 
  considered in~\S\ref{sec:PP} and~\S\ref{sec:filtval}.
\end{rmk}
\begin{rmk}\label{R101}
  Let $F^\bullet R(X,L)$ be a filtration of linear growth and limit
  measure $\nu$. 
  For any $r\in\Z_{>0}$, we obtain a filtration $F^\bullet R(X,rL)$ by
  restriction. This filtration also has linear growth and its limit
  measure is given by $r_*\nu$.
\end{rmk}
%
%
\subsection{Limit measures and Duistermaat-Heckman measures}\label{S102}
Now suppose $L$ is an ample $\Q$-line bundle on $X$.
To simplify the terminology, we introduce
\begin{defi}
  We define the \emph{Duistermaat-Heckman} measure of 
  any semiample test configuration of $(X,L)$ as
  $\DH_{(\cX,\cL)}:=\DH_{(\cX_\amp,\cL_\amp)}$, where 
  $(\cX_\amp,\cL_\amp)$ is the ample model of $(\cX,\cL)$
  as in Proposition~\ref{prop:amplemodel}. 
\end{defi}
With this definition, Duistermaat-Heckman measures are invariant under normalization:
\begin{cor}\label{C301}
  If $(\cX,\cL)$ is a semiample test configuration for $(X,L)$, and
  $(\tcX,\tcL)$ is its normalization, then $\DH_{(\cX,\cL)}=\DH_{(\tcX,\tcL)}$.
\end{cor}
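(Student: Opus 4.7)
\emph{Plan.} The strategy is to reduce to the ample case via the definition of the Duistermaat--Heckman measure for semiample test configurations, apply Theorem~\ref{thm:DHinv} to the ample model, and then verify that the normalization commutes with the passage to the ample model.

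Using the definition just given, $\DH_{(\cX,\cL)}=\DH_{(\cX_\amp,\cL_\amp)}$ and $\DH_{(\tcX,\tcL)}=\DH_{(\tcX_\amp,\tcL_\amp)}$. Since $X$ is a variety and hence reduced, I would apply Theorem~\ref{thm:DHinv} to the ample test configuration $(\cX_\amp,\cL_\amp)$ to obtain
\begin{equation*}
\DH_{(\cX_\amp,\cL_\amp)}=\DH_{(\widetilde{\cX_\amp},\widetilde{\cL_\amp})},
\end{equation*}
where $(\widetilde{\cX_\amp},\widetilde{\cL_\amp})$ denotes the normalization of the ample model. It will then suffice to identify $(\widetilde{\cX_\amp},\widetilde{\cL_\amp})$ with the ample model $(\tcX_\amp,\tcL_\amp)$ of the semiample test configuration $(\tcX,\tcL)$.

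For this identification, the plan is to invoke the characterization of ample models from Proposition~\ref{prop:amplemodel}. Since ampleness is preserved by normalization (see \S\ref{sec:triv}), $(\widetilde{\cX_\amp},\widetilde{\cL_\amp})$ is a normal ample test configuration for $(\tX,\tL)$. The universal property of normalization applied to $\tcX\to\cX\to\cX_\amp$ produces a canonical morphism $\mu'\colon\tcX\to\widetilde{\cX_\amp}$ factoring through the normalization $\nu'\colon\widetilde{\cX_\amp}\to\cX_\amp$, and since $(\cX,\cL)$ is a pull-back of $(\cX_\amp,\cL_\amp)$, the pull-back of $\widetilde{\cL_\amp}$ to $\tcX$ is $\tcL$, which verifies condition~(i) of the Proposition. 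For condition~(ii), writing $\nu\colon\tcX\to\cX$ and $\mu\colon\cX\to\cX_\amp$, the sheaf $\nu_*\cO_{\tcX}$ is the integral closure of $\cO_\cX$ inside the common function field $K(t)$, so $(\mu\nu)_*\cO_{\tcX}=\mu_*\nu_*\cO_{\tcX}$ is a coherent, integrally closed $\cO_{\cX_\amp}$-subalgebra of $K(t)$ containing $\mu_*\cO_\cX=\cO_{\cX_\amp}$ (this last equality coming from the same Proposition applied to $(\cX_\amp,\cL_\amp)$), and therefore coincides with $\nu'_*\cO_{\widetilde{\cX_\amp}}$; the affineness of $\nu'$ then yields the desired $\mu'_*\cO_{\tcX}=\cO_{\widetilde{\cX_\amp}}$.

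The main technical hurdle is precisely this commutation of normalization with the ample model, handled by the Stein-factorization style argument above, which delicately combines the defining property of the ample model with the universal property of normalization. The remainder is essentially formal, as the key invariance under normalization in the ample setting is already provided by Theorem~\ref{thm:DHinv}.
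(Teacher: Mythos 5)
Your proof is correct and follows essentially the same route as the paper: reduce to the ample models, apply Theorem~\ref{thm:DHinv} there, and identify the normalization of the ample model with the ample model of the normalization via the uniqueness statement of Proposition~\ref{prop:amplemodel}. The only difference is that your integral-closure argument for condition~(ii) is longer than necessary: since $\tcX$ and $\widetilde{\cX_\amp}$ are normal and $\mu'\colon\tcX\to\widetilde{\cX_\amp}$ is proper and birational, Zariski's main theorem gives $\mu'_*\cO_{\tcX}=\cO_{\widetilde{\cX_\amp}}$ directly, as the paper notes after Lemma~\ref{lem:inv}.
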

\begin{proof}
  Let $(\cX_{\amp},\cL_{\amp})$ be the ample model of $(\cX,\cL)$ and 
  $(\tcX_{\amp},\tcL_{\amp})$ its normalization. The composition 
  $\tcX\to\cX_\amp$ lifts to a map $\tcX\to\tcX_\amp$, under which
  $\tcL$ is the pullback of $\cL_\amp$.
  By the uniqueness statement of Proposition~\ref{prop:amplemodel},
  $(\tcX_\amp,\tcL_\amp)$ is the ample model of $(\tcX,\tcL)$.
  By definition, we thus have
  $\DH_{(\cX,\cL)}=\DH_{(\cX_{\amp},\cL_{\amp})}$
  and $\DH_{(\tcX,\tcL)}=\DH_{(\tcX_{\amp},\tcL_{\amp})}$,
  whereas Theorem~\ref{thm:DHinv} yields
  $\DH_{(\tcX_{\amp},\tcL_{\amp})}=\DH_{(\cX_{\amp},\cL_{\amp})}$,
  concluding the proof.
\end{proof}
We now relate Duistermaat-Heckman measures and limit measures.
Recall from~\S\ref{sec:filtrtc} that any test configuration for $(X,L)$
induces a filtration of $R(X,rL)$ for 
$r$ sufficiently divisible.
\begin{prop}\label{prop:DHlimit} 
  If $(\cX,\cL)$ is semiample, then, 
  for $r$ sufficiently divisible, the limit measure of the
  filtration on $R(X,rL)$ induced by $(\cX,\cL)$ is equal to $r_*\DH(\cX,\cL)$.
\end{prop}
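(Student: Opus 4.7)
The plan is to identify, on the nose for each sufficiently divisible $m$, the measure $\nu_m$ from~\eqref{equ:num} that defines the limit measure of the filtration on $R(X,rL)$, with the rescaled weight measure whose weak limit defines $r_*\DH_{(\cX,\cL)}$. Passing to the limit then gives the proposition.

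First I would reduce to the ample case. By the definition of $\DH_{(\cX,\cL)}$ in the semiample case, together with Proposition~\ref{prop:amplemodel} and Lemma~\ref{lem:inv}, both $\DH_{(\cX,\cL)}$ and the filtration induced on $R(X,rL)$ (for $r$ divisible enough) depend only on the ample model $(\cX_{\amp},\cL_{\amp})$. Hence we may and do assume $(\cX,\cL)$ is ample. Choose $r$ sufficiently divisible that $r\cL$ is a (relatively very ample) line bundle.

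Next, for $m\gg 1$, the flatness of $\pi\colon\cX\to\A^1$ and Serre vanishing yield a $\G_m$-equivariant identification
$$
\pi_*\cO_\cX(mr\cL)_0\simeq H^0(\cX_0,mr\cL_0),
$$
and Lemma~\ref{L201} then identifies the weights (with multiplicities) of the $\G_m$-module $H^0(\cX_0,mr\cL_0)$ with the successive minima $\la^{(m)}_1\ge\dots\ge\la^{(m)}_{N_m}$ of the filtration $F^\bullet H^0(X,mrL)$ induced by $(\cX,\cL)$. Writing $\mu_m^{\DH}:=(1/m)_*\mu_{H^0(\cX_0,mr\cL_0)}$ for the rescaled weight measure, we thus obtain the \emph{exact} equality
$$
\mu_m^{\DH}=\frac{1}{N_m}\sum_{j=1}^{N_m}\delta_{m^{-1}\la^{(m)}_j}=\nu_m,
$$
where $\nu_m$ is the measure appearing in Theorem~\ref{thm:BC}(iii) for the filtration on $R(X,rL)$.

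Finally I would pass to the limit. On the Duistermaat-Heckman side, Corollary~\ref{cor:DH} applied to the polarized $\G_m$-scheme $(\cX_0,r\cL_0)$ gives $\mu_m^{\DH}\to\DH_{(\cX,r\cL)}=r_*\DH_{(\cX,\cL)}$ weakly, using the scaling property $\DH_{(\cX,r\cL)}=r_*\DH_{(\cX,\cL)}$ from Definition~\ref{D101}. On the filtration side, the filtration on $R(X,rL)$ has linear growth (since $(\cX,\cL)$ is ample the bigraded Rees algebra is finitely generated, so both $\la^{(m)}_{\max}/m$ and $\la^{(m)}_{\min}/m$ are bounded), hence Theorem~\ref{thm:BC}(iii) applies and $\nu_m$ converges weakly to the limit measure $\nu$ of the filtration. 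Since $\mu_m^{\DH}=\nu_m$ for all $m\gg 1$, the two limits coincide, proving $\nu=r_*\DH_{(\cX,\cL)}$. The only step requiring any real care is the identification in the middle paragraph: the potential obstacle is making sure the flat base change/Serre vanishing identification $\pi_*\cO_\cX(mr\cL)_0\simeq H^0(\cX_0,mr\cL_0)$ is $\G_m$-equivariant, which is immediate from equivariance of $\pi$ and the linearization of $\cL$.
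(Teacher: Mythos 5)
Your proof is correct and follows essentially the same route as the paper's: reduce to the ample model, identify the fiber $\pi_*\cO_\cX(mr\cL)_0$ with $H^0(\cX_0,mr\cL_0)$, use Lemma~\ref{L201} to match the weight measure with the measure $\nu_m$ built from the successive minima, and conclude by Theorem~\ref{thm:BC}. The only cosmetic difference is that the paper first normalizes to $r=1$ by homogeneity, whereas you carry $r$ throughout; both are fine.
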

\begin{proof}[Proof of Proposition~\ref{prop:DHlimit}]
  Using homogeneity (see Definition~\ref{D101} and Remark~\ref{R101}), 
  we may assume $r=1$.
  Further, $(\cX,\cL)$ and $(\cX_\amp,\cL_\amp)$ induce the same filtration
  on $(X,L)$, so we may assume $(\cX,\cL)$ is ample.

  By the projection formula and the ampleness of $\cL$, it then follows
  that the fiber at $0$ of the vector bundle $\pi_*\cO_\cX(m\cL)$ can be identified with 
  $H^0((\cX)_0,m(\cL)_0)$, and Lemma~\ref{L201} therefore shows that
  the weight measure of the latter $\G_m$-module is given by
  \begin{equation*}
    \mu_{H^0((\cX_\amp)_0,m(\cL_\amp)_0)}
    =\frac{1}{N_m}\sum_{\la\in\Z}\dim\left(F^\la H^0(X,mL)/F^{\la+1} H^0(X,mL)\right)\d_\la.
  \end{equation*}
  As a result, $\mu_m:=(1/m)_*\mu_{H^0((\cX)_0,m(\cL)_0)}$ satisfies
  \begin{equation*}
    \mu_m=-\frac{d}{d\la}\frac{\dim F^{m\la} H^0(X,mL)}{N_m},
  \end{equation*}
  and hence converges to the limit measure measure of $F^\bullet R$ by Theorem~\ref{thm:BC}.
\end{proof}
%
%
%
%
%
\subsection{Piecewise polynomiality in the normal case}\label{sec:PP}
\begin{thm}\label{thm:PP} 
  Let $X$ be an $n$-dimensional normal variety, $L$ an ample
  line bundle on $X$, and $F^\bullet R$ a finitely generated
  $\Z$-filtration of $R=R(X,L)$. 
  Then  $F^\bullet R$ has linear growth, and the density of its limit measure $\nu$  is a piecewise polynomial function, of degree at most $n-1$.
\end{thm}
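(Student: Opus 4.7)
The strategy is to translate the filtration hypothesis into a statement about an ample test configuration, and then to compute the Duistermaat--Heckman measure using the Okounkov-body picture underlying Theorem~\ref{thm:BC}. By Proposition~\ref{prop:filtrtest}, after restricting to $R(X,rL)$ for $r$ sufficiently divisible, the filtration $F^\bullet R(X,rL)$ corresponds to an ample test configuration $(\cX,\cL)$ for $(X,L)$; after passing to the normalization, which preserves the Duistermaat--Heckman measure by Corollary~\ref{C301}, we may assume $\cX$ is normal. Linear growth of $F^\bullet R$ follows directly from the finite generation assumption: if $s_1,\dots,s_N$ are bigraded generators of $\bigoplus_{\la,m}F^\la R_m$ of bidegrees $(\la_i,m_i)$ with $m_i>0$, then $F^\la R_m=0$ for $\la>Cm$ with $C=\max_i\la_i/m_i$. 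Combining Proposition~\ref{prop:DHlimit} with Remark~\ref{R101}, the limit measure $\nu$ of the original filtration coincides with $\DH_{(\cX,\cL)}$, and it suffices to show this Duistermaat--Heckman measure has piecewise polynomial absolutely continuous density of degree at most $n-1$.

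The next step is to express $\nu$ as the push-forward of Lebesgue measure on an Okounkov body. Fix an admissible flag on $X$ and let $\Delta=\Delta(L)\subset\R^n$ be the Okounkov body of $L$, satisfying $\leb(\Delta)=V/n!$. The filtration induces a concave ``transform'' $G:\Delta\to\R$ whose sup-level sets are the Okounkov bodies of the graded linear series $R^{(\la)}$ defined in~\eqref{equ:Rla}: $\{G\ge\la\}=\Delta(R^{(\la)})$. By Theorem~\ref{thm:BC}~(iii) combined with Lemma~\ref{lem:vol} applied to each $R^{(\la)}$,
\[
\nu\{x\ge\la\}=V^{-1}\vol(R^{(\la)})=\frac{\leb(\{G\ge\la\})}{\leb(\Delta)},
\]
so $\nu$ is the push-forward under $G$ of the normalized Lebesgue probability measure $\widehat\leb:=\leb/\leb(\Delta)$ on $\Delta$.

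The main obstacle is showing that $G$ is piecewise affine with rational slopes, which is where finite generation enters. The bigraded algebra $S=\bigoplus_{\la,m}F^\la R_m$ is finitely generated over $k$, so extending the flag on $X$ to one adapted to the $\Z$-grading by $\la$ produces a ``bi-Okounkov body'' $\widetilde\Delta\subset\R^{n+1}$ which is a \emph{rational polytope} projecting onto $\Delta$ under dropping the weight coordinate; $G$ is then the upper envelope of $\widetilde\Delta$ viewed as a function on $\Delta$, and hence is piecewise affine. Once this is established, decompose $\Delta=\bigcup_\alpha P_\alpha$ into the finitely many maximal subpolytopes on which $G|_{P_\alpha}$ is linear. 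The density at $\la$ of $(G|_{P_\alpha})_*\leb_{P_\alpha}$ equals the $(n-1)$-dimensional Lebesgue volume of the slice $P_\alpha\cap\{G=\la\}$, a function of $\la$ that a standard polytope slice-volume calculation shows to be piecewise polynomial of degree at most $n-1$. Summing over the finite family of chambers yields a density of the required piecewise polynomial form, completing the argument; the bulk of the technical work sits in the construction of $\widetilde\Delta$ and the identification of $G$ as its upper envelope, while the final step is a routine polytope calculation.
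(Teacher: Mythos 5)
Your reduction to a normal ample test configuration, your argument for linear growth, and the identification of $\nu$ with $-\frac{d}{d\la}V^{-1}\vol(R^{(\la)})$ (equivalently, with the push-forward of normalized Lebesgue measure on the Okounkov body under the concave transform $G$) are all fine. The gap sits exactly where you place "the main obstacle": you assert that finite generation of the bigraded algebra $S=\bigoplus_{\la,m}F^\la R_m$ forces the bi-Okounkov body $\widetilde\Delta\subset\R^{n+1}$ to be a rational polytope, hence $G$ to be piecewise affine. This does not follow from finite generation, and it is false in general. Polyhedrality of an Okounkov body requires (essentially) finite generation of the associated \emph{value semigroup} of the flag valuation, which is not implied by finite generation of the algebra itself; already for $S=R(X,L)$ with $L$ ample --- certainly finitely generated --- the Okounkov body $\Delta(L)$ can fail to be polyhedral when $n\ge 3$ (examples of K\"uronya--Lozovanu--Maclean). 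So neither $\Delta$ nor $\widetilde\Delta$ need be a polytope, $G$ need not be piecewise affine, and the concluding ``polytope slice-volume calculation'' has nothing to slice. The Okounkov-body picture, as in \cite{WN12} and \cite{BC}, does give absolute continuity of $\nu$ away from a possible atom at $\la_{\max}$, but it does not by itself give piecewise polynomiality.

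The paper's proof avoids this by never leaving $X$: for $\tau=(m,\la)$ it considers the base ideal $\fa_\tau$ of $F^\la H^0(X,mL)$ and computes $\vol(R^{(\la)})=\lim_m V_{(m,\lceil m\la\rceil)}$ as a limit of intersection numbers on normalized blow-ups (Lemma~\ref{lem:vol}). Finite generation of $F^\bullet R$ makes the Rees-type algebra $\bigoplus_\tau\fa_\tau$ finitely generated, and \cite[Proposition 4.7]{ELMNP} then yields a fan decomposition of $\N\times\Z$ into finitely many cones spanned by consecutive vectors $e_i,e_{i+1}$ on which $\overline{\fa_{d\tau}}=\overline{\fa_{de_i}^{p_i}\cdot\fa_{de_{i+1}}^{p_{i+1}}}$. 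Pulling everything back to a single normal model $\mu\colon X'\to X$ dominating the blow-ups of all the $\fa_{de_i}$ gives $\vol(R^{(\la)})=\left(\mu^*L-f_i(\la)E_i-f_{i+1}(\la)E_{i+1}\right)^n$ for affine $f_i,f_{i+1}$ on each slope interval $[a_i,a_{i+1})$, which is visibly polynomial of degree at most $n$ in $\la$; differentiating gives the density of degree at most $n-1$. If you want to salvage your route, you need a substitute for the polyhedrality claim, and the ELMNP fan decomposition is precisely that substitute.
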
 
By the density of $\nu$ we mean the density of the absolutely
continuous part, see Corollary~\ref{cor:supp}.

Since a semiample test configuration of a polarized variety $(X,L)$
induces a finitely generated filtration of $R(X,rL)$ for $r$
sufficiently divisible, we get:
\begin{cor}\label{cor:PP} 
  Let $(X,L)$ be a polarized normal variety. Then the Duistermaat measure $\DH_{(\cX,\cL)}$ of any semiample test configuration $(\cX,\cL)$ for $(X,L)$ is the sum of a point mass and an absolutely continuous measure with piecewise polynomial density. 
\end{cor}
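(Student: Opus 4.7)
The plan is to deduce this corollary directly from Theorem~\ref{thm:PP} by using the bridge supplied by Proposition~\ref{prop:DHlimit}. Given a semiample test configuration $(\cX,\cL)$ for $(X,L)$, first I would choose $r\in\Z_{>0}$ sufficiently divisible so that $rL$ is a genuine line bundle and the test configuration induces a $\Z$-filtration $F^\bullet R(X,rL)$ on the section ring $R=R(X,rL)$ as in~\S\ref{sec:filtrtc}. As observed after Proposition~\ref{prop:filtrtest}, this filtration is finitely generated, because $\bigoplus_{m,\la}t^{-\la}F^\la H^0(X,mrL)=R(\cX,r\cL)$ is finitely generated over $k[t]$ by semiampleness of $\cL$. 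Since $X$ is normal and $rL$ is ample, Theorem~\ref{thm:PP} applies, so the limit measure $\nu$ of $F^\bullet R$ has linear growth and is the sum of a point mass (at $\la_{\max}$, by Corollary~\ref{cor:supp}) and an absolutely continuous measure whose density is piecewise polynomial of degree at most $n-1$.

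Next I would invoke Proposition~\ref{prop:DHlimit}, which identifies $\nu$ with $r_*\DH_{(\cX,\cL)}$. Rewriting, this gives
\begin{equation*}
  \DH_{(\cX,\cL)}=(1/r)_*\nu.
\end{equation*}
The pushforward under the affine map $\la\mapsto\la/r$ preserves the decomposition into a singular and an absolutely continuous part: a Dirac mass at $\la_{\max}$ pushes forward to a Dirac mass at $\la_{\max}/r$, while an absolutely continuous measure with piecewise polynomial density $\varphi(\la)$ pushes forward to one with density $r\,\varphi(r\la)$, which is again piecewise polynomial (of the same degree), with breakpoints rescaled by $1/r$. Hence $\DH_{(\cX,\cL)}$ itself has the asserted structure.

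There is essentially no obstacle here: everything nontrivial is already packaged into Theorem~\ref{thm:PP} (piecewise polynomiality for finitely generated filtrations on a normal variety) and Proposition~\ref{prop:DHlimit} (the identification of the limit measure with the Duistermaat-Heckman measure). The only thing to check is the mild bookkeeping that (i)~the filtration induced by a \emph{semiample} test configuration is indeed finitely generated, which is automatic from the finite generation of $R(\cX,r\cL)$ over $k[t]$, and (ii)~that affine rescaling preserves the class of measures of the form ``point mass plus piecewise polynomial density''. Both are immediate.
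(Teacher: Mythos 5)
Your proposal is correct and follows essentially the same route the paper takes: both derive the corollary immediately from Theorem~\ref{thm:PP} applied to the finitely generated $\Z$-filtration induced on $R(X,rL)$ by the semiample test configuration, using Proposition~\ref{prop:DHlimit} to identify the limit measure with $r_*\DH_{(\cX,\cL)}$. The only additional detail you supply is the (straightforward) observation that the pushforward by $\la\mapsto\la/r$ preserves the decomposition into a point mass plus a piecewise-polynomial density, which the paper leaves implicit.
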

The general case where $(X,L)$ is an arbitrary polarized scheme will
be treated in Theorem~\ref{thm:PPscheme}, by reducing to the normal
case studied here. 

\begin{proof}[Proof of Theorem~\ref{thm:PP}] The following argument is inspired by the proof of~\cite[Proposition
  4.13]{ELMNP}.\footnote{While the base field in~\textit{loc.cit.}\ is
    $\C$, the results we use are valid over an arbitrary algebraically
    closed field.} 
  For $\tau=(m,\la)\in\N\times\Z$, let 
$\fa_\tau$ be the base ideal of $F^\la  H^0(X,mL)$, \ie the image of
the evaluation map $F^\la  H^0(X,mL)\otimes\cO_X(-mL)\to\cO_X$. Let
$\mu_\tau\colon X_\tau\to X$ be the normalized blow-up of $\fa_\tau$, which is also the normalized blow-up of its integral closure $\overline{\fa_\tau}$. Then
$$
\cO_{X_\tau}\cdot\fa_\tau=\cO_{X_\tau}\cdot\overline{\fa_\tau}=\cO_{X_\tau}(-F_\tau),
$$ 
with $F_\tau$ a Cartier divisor, and we set 
$$
V_\tau:=\left(\mu_\tau^*L-\tfrac{1}{m}F_\tau\right)^n. 
$$
Since $R^{(\la)}$ contains an ample series for $\la\in(-\infty,\la_{\max})$, Lemma~\ref{lem:vol} yields
$$
\vol(R^{(\la)})=\lim_{m\to\infty}V_{(m,\lceil m\la\rceil)}. 
$$
Now, we use the finite generation of $F^\bullet R$, which implies that the $\N\times\Z$-graded $\cO_X$-algebra 
$\bigoplus_{\tau\in\N\times\Z}\fa_\tau$ is finitely generated. 
By~\cite[Proposition 4.7]{ELMNP}, we may thus find a positive integer $d$ and finitely many vectors $e_i=(m_i,\la_i)\in\N\times\Z$, $1\le i\le r$, with the following properties:
\begin{itemize}
\item[(i)] $e_1=(0,-1)$, $e_r=(0,1)$, and the slopes $a_i:=\la_i/m_i$ are strictly increasing with $i$; 
\item[(ii)] Every $\tau\in\N\times\Z$ may be written as $\tau=p_i e_i+p_{i+1}e_{i+1}$ with $i$, $p_i,p_{i+1}\in\N$ uniquely determined, and the integral closures of $\fa_{d\tau}$ and $\fa_{de_i}^{p_i}\cdot\fa_{de_{i+1}}^{p_{i+1}}$ coincide. 
\end{itemize}
Choose a projective birational morphism $\mu\colon X'\to X$ with $X'$ normal and dominating the blow-up of each $\fa_{d e_i}$, so that there is a Cartier divisor $E_i$ with $\cO_{X'}\cdot\fa_{de_i}=\cO_{X'}(-E_i)$. For all $\tau=(m,\la)\in\N\times\Z$ written as in (ii) as $\tau=p_ie_i+p_{i+1}e_{i+1}$, we get 
$$
\cO_{X'}\cdot\fa_{d e_i}^{p_i}\cdot\fa_{d e_{i+1}}^{p_{i+1}}=\cO_{X'}(-(p_i E_i+p_{i+1}E_{i+1})),
$$
and the universal property of normalized blow-ups therefore shows that $\mu$ factors through the normalized blow-up of $\fa_{d e_i}^{p_i}\cdot\fa_{d e_{i+1}}^{p_{i+1}}$. By Lemma~\ref{lem:normblow}, the latter is also the normalized blow-up of 
$$
\overline{\fa_{d e_i}^{p_i}\cdot\fa_{d e_{i+1}}^{p_{i+1}}}=\overline{\fa_{d\tau}},
$$
so we infer that 
$$
\fa_{d\tau}\cdot\cO_{X'}=\cO_{X'}\left(-(p_i E_i+p_{i+1}E_{i+1})\right), 
$$
with $p_i E_i+p_{i+1}E_{i+1}$ the pull-back of $F_{d\tau}$. As a result, we get
$$
V_{d\tau}=\left(\mu^*L-\tfrac{1}{dm}\left(p_i E_i+p_{i+1}E_{i+1}\right)\right)^n. 
$$
Pick $\la\in(0,\la_{\max})$, so that $\la\in[a_i,a_{i+1})$ for some $i$. We infer from the previous discussion that
$$
\vol(R^{(\la)})=\lim_{m\to\infty} V_{(m,\lceil m\la\rceil)}=\left(\mu^*L-(f_i(\la) E_i+f_{i+1}(\la)E_{i+1})\right)^n
$$
for some affine functions $f_i,f_{i+1}$, and we conclude that $\vol(R^{(\la)})$ is a piecewise polynomial function of $\la\in(-\infty,\la_{\max})$, of degree at most $n$. The result follows by (\ref{equ:limmeas}).
\end{proof}

\begin{rmk}\label{rmk:finitetype} For a finitely generated $\Z$-filtration $F^\bullet R$, the graded subalgebra $R^{(\la)}$ is finitely generated for each $\la\in\Q$~\cite[Lemma 4.8]{ELMNP}. In particular, $\vol(R^{(\la)})\in\Q$ for all $\la\in\Q\cap(-\infty,\la_{\max})$. 
\end{rmk} 
%
%
\subsection{The filtration defined by a divisorial valuation}\label{sec:filtval}
Let $X$ be a normal projective variety and $L$ an ample line bundle on $X$.

Any valuation $v$ on $X$ defines a filtration $F_v^\bullet R$ by setting
$$
F_v^\la H^0(X,mL):=\left\{s\in H^0(X,mL)\mid v(s)\ge\la\right\}.
$$
As a special case of~\cite[Proposition 2.12]{BKMS}, $F_v^\bullet R$ has linear growth for any divisorial valuation $v$. The following result will be needed later on.

\begin{lem}\label{lem:suppdiv} Let $v$ be a divisorial valuation on $X$, and let $\nu$ be the limit measure of the corresponding filtration $F_v^\bullet R$. Then $\supp\nu=[0,\la_{\max}]$. In other words, we have
$$
\lim_{m\to\infty}\frac{\dim\left\{s\in H^0(X,mL)\mid v(s)\ge\la m\right\}}{N_m}<1
$$
for any $\la>0$. 
\end{lem}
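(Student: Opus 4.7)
Both formulations amount to the single claim $\la_{\min}=0$ in the notation of Corollary~\ref{cor:supp}. First, $v(s)\ge 0$ for every section $s$ of every positive power of $L$, so $F_v^\la H^0(X,mL)=H^0(X,mL)$ and $\vol(R^{(\la)})=V$ for $\la\le 0$, giving $\la_{\min}\ge 0$. By the concavity of $\la\mapsto\vol(R^{(\la)})^{1/n}$ from Theorem~\ref{thm:BC}(ii), the converse inequality $\la_{\min}\le 0$ reduces to producing a single $\la_0>0$ with $\vol(R^{(\la_0)})<V$.

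Let $Z\subsetneq X$ be the center of $v$ on $X$, and split on $\codim_X Z$. When $\codim_X Z=1$, $Z$ is a prime divisor $D$ on $X$ and $v=c'\ord_D$ for some $c'>0$, so $R^{(\la)}_m=H^0(X,mL-\lceil m\la/c'\rceil D)$. For $\la>0$ sufficiently small the class $L-(\la/c')D$ remains ample (openness of the ample cone), hence
\begin{equation*}
\vol(R^{(\la)})=(L-(\la/c')D)^n=V-n(\la/c')L^{n-1}\cdot D+O(\la^2)<V,
\end{equation*}
the strict inequality coming from $L^{n-1}\cdot D>0$ by the ampleness of $L$.

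When $\codim_X Z\ge 2$, realize $v=c\,\ord_F$ on a normal projective variety $Y$ via a proper birational morphism $\mu\colon Y\to X$, with $F$ a $\mu$-exceptional prime divisor. Using Proposition~\ref{prop:reesexc}, we may pass to a further modification to ensure that $-F$ is $\mu$-ample. Then $D_1:=\mu^*L-\e F$ is ample on $Y$ for $\e>0$ small, so $\vol(R^{(\la)})=D_1^n$ with $\la=c\e$. Since $F$ is $\mu$-exceptional, the projection formula gives $(\mu^*L)^{n-1}\cdot F=L^{n-1}\cdot\mu_*F=0$, hence $D_1\cdot(\mu^*L)^{n-1}=V$. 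The Khovanskii--Teissier inequality for the big and nef classes $D_1,\mu^*L$ then yields
\begin{equation*}
V^n=(D_1\cdot(\mu^*L)^{n-1})^n\ge D_1^n\cdot V^{n-1},
\end{equation*}
so $D_1^n\le V$; equality would force $D_1\equiv t\,\mu^*L$ for some $t>0$, whence $\e F\equiv(1-t)\mu^*L$, and intersecting with $(\mu^*L)^{n-1}$ would give $0=(1-t)V$, so $t=1$ and $F\equiv 0$, contradicting the nontriviality of the prime divisor $F$. Hence $D_1^n<V$.

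The main obstacle is the case $\codim_X Z\ge 2$: the first-order correction $-n\e(\mu^*L)^{n-1}\cdot F$ to the volume vanishes because $F$ is $\mu$-exceptional, so a direct derivative-of-volume argument fails; one must pass to a model where $-F$ is $\mu$-ample and combine the Khovanskii--Teissier inequality with the vanishing $(\mu^*L)^{n-1}\cdot F=0$ to extract strict decrease.
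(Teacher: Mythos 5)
Your overall strategy --- reduce to showing $\vol(R^{(\la)})<V$ for arbitrarily small $\la>0$ and exploit the geometry of a model on which $v$ becomes a divisorial vanishing order --- is the right one, and your codimension-one case is essentially correct (modulo the point that a prime Weil divisor $D$ on the normal variety $X$ need not be $\Q$-Cartier, so the expression $(L-(\la/c')D)^n$ requires justification; this case is anyway subsumed by the general argument below). The serious problem is the step ``Using Proposition~\ref{prop:reesexc}, we may pass to a further modification to ensure that $-F$ is $\mu$-ample.'' Proposition~\ref{prop:reesexc} is a statement in the opposite direction: \emph{given} a $\mu$-exceptional $\mu$-ample divisor, it identifies $\mu$ as a blow-up and its exceptional primes as Rees valuations. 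It provides no mechanism for producing, from an arbitrary divisorial valuation $v=c\,\ord_F$ with center of codimension $\ge 2$, a normal model on which $-F$ alone is relatively ample. By Lemma~\ref{lem:ample}, such a model exists precisely when $\ord_F$ is the \emph{unique} Rees valuation of some ideal on $X$ (equivalently, when some valuation ideal $\fa_p=\{f\mid v(f)\ge p\}$ has normalized blow-up with irreducible exceptional divisor); this is a delicate property of $v$, tied to finite generation of its valuation ideals, and cannot be assumed in dimension $\ge 3$. Without it, the best one can say is $\vol(R^{(c\e)})=\vol(\mu^*L-\e F)$ for a non-nef big class, and there the first-order term of the volume vanishes (since $(\mu^*L)^{n-1}\cdot F=0$), so strict decrease does not follow.

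The paper circumvents exactly this point by \emph{not} insisting on a model adapted to $v$ itself. It takes $Z$ to be the closure of the center of $v$ and invokes the general Izumi theorem of [HS01] to get $v\le C\,w$ for each Rees valuation $w$ of $Z$; consequently $\{v\ge\la m\}\subset\overline{\fa_Z^{\,m\d}}=\mu_*\cO_{X'}(-m\d E)$ for $0<\d\ll\la$, where $\mu\colon X'\to X$ is the normalized blow-up of $Z$ and $E=\mu^{-1}(Z)$ is the \emph{whole} exceptional divisor. For this $E$ (reducible in general), $-E$ \emph{is} automatically $\mu$-ample by Lemma~\ref{lem:normblow}, so $\mu^*L-\d E$ is ample for small $\d$ and $\tfrac{d}{d\d}(\mu^*L-\d E)^n=-n(E\cdot(\mu^*L-\d E)^{n-1})<0$ gives $\vol(R^{(\la)})\le(\mu^*L-\d E)^n<V$ directly, with no need for the Khovanskii--Teissier equality case. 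Two further remarks: your Khovanskii--Teissier argument is fine where it applies but relies on the (nontrivial, external) characterization of equality for big nef classes, whereas the simple derivative computation above suffices; and your claimed reduction ``by concavity it suffices to produce a single $\la_0>0$'' is not valid --- a concave non-increasing function can equal $V^{1/n}$ on $(0,\la_1]$ and still drop at $\la_0>\la_1$ --- though this is harmless here because both of your constructions (and the paper's) produce arbitrarily small such $\la_0$, and $\vol(R^{(\la)})$ is non-increasing in $\la$.
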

The equivalence between the two statements follows from Corollary~\ref{cor:supp} above. 

\begin{proof} Let $Z\subset X$ be the closure of the center $c_X(v)$ of $v$ on $X$, and $w$ a Rees valuation of $Z$. Since the center of $w$ on $X$ belongs to $Z=\overline{c_X(v)}$, the general version of Izumi's theorem in~\cite{HS01} yields a constant $C>0$ such that $v(f)\le C w(f)$ for all $f\in\cO_{X,c_X(w)}$. 

Let $\mu\colon X'\to X$ be the normalized blow-up of $Z$ and set $E:=\mu^{-1}(Z)$. By definition, the Rees valuations of $Z$ are given up to scaling by vanishing order along the irreducible components of $E$. Given $\la>0$, we infer that 
$$
\left\{v\ge\la m\right\}\subset\mu_*\cO_{X'}(-m\d E)
$$
for all $0<\d\ll 1$ and all $m\ge 1$. It follows that
$$
\left\{s\in H^0(X,mL)\mid v(s)\ge\e\la m\right\}\hookrightarrow H^0\left(X',m\left(\mu^*L-\d E\right)\right), 
$$
so that $\vol(R^{(\la)})\le(\mu^*L-\d E)^n$. But since $-E$ is $\mu$-ample, $\mu^*L-\d E$ is ample on $X'$ for $0<\d\ll 1$, so that
$$
\frac{d}{d\la}(\mu^*L-\d E)^n=-n\left(E\cdot(\mu^*L-\d E)^{n-1}\right)<0. 
$$
It follows that 
$$
\vol(R^{(\la)})\le (\mu^*L-\d E)^n<(\mu^*L)^n=V
$$ 
for $0<\d\ll 1$, hence the result. 
\end{proof}

\begin{rmk}\label{rmk:div} At least in characteristic zero, the continuity of the volume function shows that $\vol(R^{(\la)})\to 0$ as $\la\to\la_{\max}$ from below, so that $\nu$ has no atom at $\la_{\max}$, and is thus absolutely continuous on $\R$ (cf.~\cite[Proposition 2.25]{BKMS}). 

On the other hand, $F^\bullet R$ is not finitely generated in general. Indeed, well-known examples of irrational volume show that $\vol(R^{(1)})$ can sometimes be irrational (compare Remark~\ref{rmk:finitetype}). 
\end{rmk}
\begin{rmk}
  The filtration defined by a valuation and its relation to $K$-stability has been recently studied 
  by Fujita~\cite{Fuj15a,Fuj15b,Fuj16}, Li~\cite{Li15,Li16} and Liu~\cite{Liu16}.
\end{rmk}
%
%
\subsection{The support of a Duistermaat-Heckman measure}\label{sec:support}
The following precise description of the support of a Duistermaat-Heckman measure is the key to the characterization of almost trivial ample test configurations to be given below. 

\begin{thm}\label{thm:supp} Let $(\cX,\cL)$ be a normal, semiample test configuration dominating $X_{\A^1}$, and write $\cL=\rho^*L_{\A^1}+D$ with $\rho\colon\cX\to X_{\A^1}$ the canonical morphism. Then the support $[\la_{\min},\la_{\max}]$ of its Duistermaat-Heckman measure satisfies
\begin{equation*}
  \la_{\min}=\min_Eb_E^{-1}\ord_E(D)
\quad\text{and}\quad
\la_{\max}=\max_Eb_E^{-1}\ord_E(D)=\ord_{E_0}(D),  
\end{equation*}
where $E$ runs over the irreducible components of $\cX_0$, $b_E:=\ord_E(\cX_0)=\ord_E(t)$, and $E_0$ is the strict transform of $X\times\{0\}$ (which has $b_{E_0}=1$). 
\end{thm}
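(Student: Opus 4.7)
The strategy is to identify $\supp\DH_{(\cX,\cL)}$ with the support of the limit measure of the induced $\Z$-filtration, and then read off both endpoints from the flag ideal description. By Proposition~\ref{prop:DHlimit} and Corollary~\ref{cor:supp}, the endpoints $\la_{\min},\la_{\max}$ are given respectively by $\la_{\min}=\inf\{\la:\vol(R^{(\la)})<V\}$ and $\la_{\max}=\lim m^{-1}\la_{\max}^{(m)}$, and Proposition~\ref{prop:filtrflag} identifies $\la_{\max}^{(m)}=\max\{\la\in\Z:\fa^{(m)}_\la\ne 0\}$ for $m$ sufficiently divisible, with $\fa^{(m)}=\rho_*\cO_\cX(mD)=\sum_\la t^{-\la}\fa^{(m)}_\la$. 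Writing $D=\sum_E d_E E$ and using Lemma~\ref{lem:div2} to compute $\ord_E(\rho^*g)=b_Ev_E(g)$ for $g\in k(X)^*$, I would unfold the definition to
\begin{equation*}
  \fa^{(m)}_\la=\{g\in\cO_X : v_E(g)\ge\la-md_E/b_E\text{ for every irreducible component $E$ of $\cX_0$}\},
\end{equation*}
remembering the crucial convention $v_{E_0}=v_\triv$.

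The central geometric step is the identity $\max_E d_E/b_E=d_{E_0}$. Since $\cL$ is semiample it is $\rho$-nef, and because $\rho^*L_{\A^1}$ is $\rho$-trivial, $D$ inherits $\rho$-nefness. Moreover $X_{\A^1}$ is regular at the generic point of $X\times\{0\}$, so $\rho$ is an isomorphism there and $E_0$ is the only component of $\cX_0$ with image of dimension $n$; thus $\rho_*E=0$ for every $E\ne E_0$, while $\rho_*E_0=X\times\{0\}=\rho_*\cX_0$. Consequently $\rho_*D=d_{E_0}\rho_*\cX_0$, so the $\Q$-Cartier divisor $G:=d_{E_0}\cX_0-D$ satisfies $\rho_*G=0$ and $-G$ is $\rho$-nef, and the Negativity Lemma~\cite[Lemma 3.39]{KM} then forces $G\ge 0$, i.e.\ $d_E\le d_{E_0}b_E$ for every $E$.

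Granted this, $\la_{\max}=d_{E_0}$ falls out of the flag ideal description: since $v_{E_0}=v_\triv$ vanishes on every nonzero $g$, the $E_0$-condition reads $0\ge\la-md_{E_0}$, forcing $\la_{\max}^{(m)}\le md_{E_0}$. Conversely, at $\la=md_{E_0}$ the remaining constraints $v_E(g)\ge m(d_{E_0}-d_E/b_E)\ge 0$ can be satisfied by some nonzero $g\in k(X)^*$ by the independence theorem for pairwise inequivalent divisorial valuations, or concretely by taking $g$ to vanish to sufficiently high order along the finite union of (proper) centers $c_X(v_E)$; this gives $\fa^{(m)}_{md_{E_0}}\ne 0$ and $\la_{\max}\ge d_{E_0}$.

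Finally, $\la_{\min}=\min_E d_E/b_E$ is established by a symmetric argument using the constant function $1$ on one side and Lemma~\ref{lem:suppdiv} on the other: since $v_E(1)=0$ for every $E$, whenever $\la\le m\min_E d_E/b_E$ one has $\fa^{(m)}_\la=\cO_X$, hence $F^{m\la}H^0(X,mL)=H^0(X,mL)$ and $\vol(R^{(\la)})=V$, yielding $\la_{\min}\ge\min_E d_E/b_E$. Conversely, letting $E_*$ attain $\min_E d_E/b_E$ and setting $\mu:=\la-d_{E_*}/b_{E_*}$, every $s\in F^{m\la}H^0(X,mL)$ satisfies $v_{E_*}(s)\ge m\mu$, so $R^{(\la)}$ is contained in the graded subalgebra associated to the valuation $v_{E_*}$ at level $\mu$; for $\la>d_{E_*}/b_{E_*}$ one has $\mu>0$, and Lemma~\ref{lem:suppdiv} then gives $\vol(R^{(\la)})<V$, whence $\la_{\min}\le\min_E d_E/b_E$. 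I expect the Negativity Lemma step to be the main obstacle, as it is precisely what converts the birational geometry and semiampleness of $(\cX,\cL)$ into the clean algebraic identity $\max_E d_E/b_E=d_{E_0}$ that anchors the rest of the computation.
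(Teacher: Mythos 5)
Your proof is correct and its overall architecture coincides with the paper's: translate $\DH_{(\cX,\cL)}$ into the limit measure of the induced filtration, describe that filtration through the flag ideal (this is exactly Lemma~\ref{lem:filtr} and Proposition~\ref{prop:filtrflag}), read off the successive minima, and then use Lemma~\ref{lem:suppdiv} to upgrade the stationary value $\la^{(m)}_{\min}/m$ to the actual infimum of the support of the weak limit --- which you rightly flag as the delicate point. The one place where you genuinely diverge is the key inequality $\ord_E(D)\le b_E\,\ord_{E_0}(D)$. The paper obtains it in Lemma~\ref{lem:minmax} by observing that $\cO_\cX(mD)$ is $\rho$-globally generated, so $\cO_\cX(mD)=\rho_*\cO_\cX(mD)\cdot\cO_\cX\subset\cO_\cX\cdot\cO_{X_{\A^1}}(m\rho_*D)$, whence $D\le\rho^*\rho_*D=\ord_{E_0}(D)\cX_0$. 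You instead apply the Negativity Lemma~\cite[Lemma 3.39]{KM} to $G=\ord_{E_0}(D)\cX_0-D$, using that $D$ is $\rho$-nef (as $\cL$ is nef and $\rho^*L_{\A^1}$, $\cX_0=\rho^*(X\times\{0\})$ are $\rho$-numerically trivial) and that $\rho_*G=0$. Both arguments are valid under the stated hypotheses; your route only uses nefness of $\cL$ rather than semiampleness for this step, at the cost of invoking the Hodge-theoretic negativity lemma where the paper's argument is purely sheaf-theoretic. Your existence argument for a nonzero $g\in\fa^{(m)}_{m\ord_{E_0}(D)}$ (take $g$ in a high power of the product of the ideals of the centers $Z_E$, which are proper subvarieties for $E\ne E_0$) is also a correct, slightly more hands-on substitute for the paper's bookkeeping with $t^{-\la}$.
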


\begin{lem}\label{lem:filtr} In the notation of Theorem~\ref{thm:supp}, the induced filtration of $R$ satisfies, for all $m$ divisible enough and all $\la\in\Z$,
$$
F^\la  H^0(X,mL)=\bigcap_E\left\{s\in H^0(X,mL)\mid v_E(s)+m\,b_E^{-1}\ord_E(D)\ge\la\right\},
$$
where $E$ runs over the irreducible components of $\cX_0$. 
\end{lem}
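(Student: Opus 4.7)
My strategy is to recast the defining condition~\eqref{equ:filtr} for $F^\la H^0(X,mL)$ as a divisorial non-negativity condition on $\cX$, and then compute the relevant vanishing orders one prime divisor at a time.

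Recall that, by~\eqref{equ:filtr}, $s\in F^\la H^0(X,mL)$ iff the rational section $t^{-\la}\bar s$ of $m\cL$ (where $\bar s$ is the $\G_m$-invariant extension of $s$ to $\cX\setminus\cX_0$) extends to a global section on $\cX$. For $m$ sufficiently divisible, $m\cL$ is an honest line bundle and $mD$ is Cartier; since $\cL=\rho^*L_{\A^1}+D$, the tautological section $\sigma_{mD}$ of $\cO_\cX(mD)$ (with divisor $mD$, supported on $\cX_0$) induces an identification $m\cL\simeq m\rho^*L_{\A^1}\otimes\cO_\cX(mD)$, under which $\bar s$ corresponds to the regular global section $\rho^*(s\boxtimes 1)\otimes\sigma_{mD}$ of $m\cL$ over the open set $\cX\setminus\cX_0$.

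Since $\cX$ is normal and $m\cL$ is a line bundle (in particular reflexive), a rational section is regular iff its Weil divisor is effective, iff its order along every prime divisor is non-negative. The divisor of $t^{-\la}\bar s$ as a rational section of $m\cL$ is
\begin{equation*}
\mathrm{div}_{m\rho^*L_{\A^1}}(\bar s)+mD-\la\sum_E b_E E,
\end{equation*}
where the first two terms are effective off of $\cX_0$ (respectively globally effective and supported on $\cX_0$). Consequently the non-negativity condition is automatic for prime divisors not contained in $\cX_0$, and the relevant requirements reduce to
\begin{equation*}
\ord_E(\bar s)+m\,\ord_E(D)-\la b_E\ge 0
\end{equation*}
as $E$ ranges over the irreducible components of $\cX_0$.

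It remains to identify $\ord_E(\bar s)$ with $b_E v_E(s)$. Trivializing $mL$ near the center of $v_E$ on $X$ expresses $s$ as a local function $f_s$ with $v_E(s)=v_E(f_s)$; pulling this trivialization back through $\rho\circ p_1$ expresses $\bar s$ as the same function $f_s\in k(X)\subset k(X)(t)=k(\cX)$, so $\ord_E(\bar s)=\ord_E(f_s)=b_E v_E(f_s)=b_E v_E(s)$ by the restriction formula for $\ord_E$ (Lemma~\ref{lem:div2}). Dividing the resulting inequality by $b_E$ gives exactly the condition $v_E(s)+m b_E^{-1}\ord_E(D)\ge\la$ for each component $E$, yielding the claimed intersection. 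The main point to handle carefully is the bookkeeping of which line bundle $\bar s$ is a section of (namely $m\rho^*L_{\A^1}$) versus the line bundle $m\cL$ in which we measure orders, so that the contribution $mD$ enters correctly; once this is set up, normality of $\cX$ does the rest.
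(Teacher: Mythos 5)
Your proof is correct and follows essentially the same route as the paper's: both reduce membership in $F^\la H^0(X,mL)$ via~\eqref{equ:filtr} to the non-negativity of $\ord_E(t^{-\la}\bar s)$ for the components $E$ of $\cX_0$ (using normality of $\cX$), and both compute $\ord_E(\bar s)=b_Ev_E(s)+m\,\ord_E(D)$ from the decomposition $m\cL=\rho^*(mL_{\A^1})+mD$ together with $r(\ord_E)=b_Ev_E$. Your bookkeeping via the tautological section of $\cO_\cX(mD)$ is just a slightly more explicit packaging of the same computation.
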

According to Lemma~\ref{lem:div2}, $v_E$ is a divisorial valuation on $X$ for $E\ne E_0$, while $v_{E_0}$ is the trivial valuation (so that $v_{E_0}(s)$ is either $0$ for $s\ne 0$, or $+\infty$ for $s=0$). 

\begin{proof} Pick any $m$ such that $m\cL$ is a line bundle. By (\ref{equ:filtr}), a section $s\in H^0(X,mL)$ is in $F_{(\cX,\cL)}^\la H^0(X,mL)$ iff $\bar s t^{-\la}\in H^0(\cX,m\cL)$, with $\bar s$ the $\G_m$-invariant rational section of $m\cL$ induced by $s$. By normality of $\cX$, this amounts in turn to 
$\ord_E\left(\bar s t^{-\la}\right)\ge 0$ for all $E$, \ie $\ord_E(\bar s)\ge\la b_E$ for all $E$. The result follows since $m\cL=\rho^*(mL_{\A^1})+m D$ implies that 
$$
\ord_E(\bar s)=r(\ord_E)(s)+m \ord_E(D)=b_Ev_E(s)+m\ord_E(D). 
$$
 \end{proof}

\begin{lem}\label{lem:minmax} In the notation of Theorem~\ref{thm:supp}, the filtration $F^\bullet H^0(X,mL)$ satisfies
\begin{equation*}
  \frac{\la^{(m)}_{\min}}{m}=\min_Eb_E^{-1}\ord_E(D)
  \quad\text{and}\quad
  \frac{\la^{(m)}_{\max}}{m}=\ord_{E_0}(D)=\max_Eb_E^{-1}\ord_E(D)
\end{equation*}
for all $m$ divisible enough. 
\end{lem}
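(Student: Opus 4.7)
The starting point is Lemma~\ref{lem:filtr}, which represents $F^\la H^0(X,mL)$ as the intersection over the components $E$ of $\cX_0$ of the subspaces $\{v_E(s)\ge\la-mb_E^{-1}\ord_E(D)\}$. Two features make the extremal minima tractable. First, $v_{E_0}$ is the trivial valuation (by Lemma~\ref{lem:div2}), so for any nonzero section the constraint at $E=E_0$ reduces to $\la\le m\ord_{E_0}(D)$, which gives the upper bound $\la^{(m)}_{\max}\le m\ord_{E_0}(D)$. Second, for any $E$ the inequality is automatic as soon as $\la\le mb_E^{-1}\ord_E(D)$, since then $\la-mb_E^{-1}\ord_E(D)\le 0\le v_E(s)$; this immediately yields $\la^{(m)}_{\min}\ge m\min_E b_E^{-1}\ord_E(D)$. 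The matching upper bound $\la^{(m)}_{\min}\le mb_E^{-1}\ord_E(D)$ is produced, for each $E$, by using the ampleness of $L$ to find, for $m$ sufficiently divisible, a section of $mL$ not vanishing at the center of $v_E$ (so that $v_E(s)=0$). Minimizing over $E$ delivers the first equality of the lemma.

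The identity $\ord_{E_0}(D)=\max_E b_E^{-1}\ord_E(D)$ is deduced from the Negativity Lemma. Set $d_0:=\ord_{E_0}(D)$ and consider the $\Q$-Cartier divisor $\Delta:=d_0\cX_0-D$. Since $\cX_0=\rho^*(X\times\{0\})$ and $b_{E_0}=1$, the $E_0$-coefficient of $\Delta$ vanishes, so $\Delta$ is $\rho$-exceptional. For any $\rho$-contracted curve $C\subset\cX_0$ one has $\rho^*L_{\A^1}\cdot C=0$ and $\cX_0\cdot C=\pi^*[0]\cdot C=0$, hence $-\Delta\cdot C=(D-d_0\cX_0)\cdot C=\cL\cdot C\ge 0$ by the $\rho$-nefness inherited from the semiampleness of $\cL$. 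The Negativity Lemma therefore forces $\Delta\ge 0$, which is exactly $b_E^{-1}\ord_E(D)\le d_0$ for every $E$.

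It remains to produce a nonzero element of $F^{md_0}H^0(X,mL)$ for $m$ divisible enough. I localize the flag ideal $\fa^{(m)}=\rho_*\cO_\cX(mD)$ at $\eta:=\eta_X\times\{0\}$. Since $E_0$ maps birationally onto $X\times\{0\}$, the morphism $\rho$ is an isomorphism near $\eta_{E_0}$ with $E_0=\{t=0\}$ locally, and no other component of $\cX_0$ meets $\eta_{E_0}$. Thus $\fa^{(m)}_\eta=t^{-md_0}\cO_{X_{\A^1},\eta}$, whose weight-$md_0$ component is nonzero, so the ideal sheaf $\fa^{(m)}_{md_0}\subset\cO_X$ is nonzero at the generic point of $X$. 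By Proposition~\ref{prop:filtrflag}, applicable since $(\cX,\cL)$ is normal and semiample, the globally generated sheaf $\cO_X(mL)\otimes\fa^{(m)}_{md_0}$ is then nonzero and has nonzero global sections, which are exactly the elements of $F^{md_0}H^0(X,mL)$. The main obstacle lies in the second equality: without the Negativity-Lemma step, the combinatorial invariant $\max_E b_E^{-1}\ord_E(D)$ would have no visible reason to coincide with the single coefficient $\ord_{E_0}(D)$, and the lower bound for $\la^{(m)}_{\max}$ could not be matched to the upper bound coming from $v_{E_0}$.
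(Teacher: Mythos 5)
Your proof is correct and follows the same skeleton as the paper's: Lemma~\ref{lem:filtr} handles $\la^{(m)}_{\min}$ exactly as you do (the automatic inclusion for $\la\le m\min_Eb_E^{-1}\ord_E(D)$, and sections of $mL$ not vanishing at the center of $v_E$ for the reverse inequality), while the flag ideal and Proposition~\ref{prop:filtrflag} handle $\la^{(m)}_{\max}$. You diverge in two places, both legitimate. First, to get $D\le\ord_{E_0}(D)\,\cX_0$ you apply the Negativity Lemma to $\Delta=\ord_{E_0}(D)\cX_0-D$, using only that $\cL$ is $\rho$-nef and that $\rho_*\Delta=0$; the paper instead uses the relative global generation of $\cO_\cX(mD)$ furnished by the semiampleness of $m\cL$, which yields $\cO_\cX(mD)=\fa^{(m)}\cdot\cO_\cX\subset\cO_{X_{\A^1}}(m\rho_*D)\cdot\cO_\cX$ and hence $D\le\rho^*\rho_*D$. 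Your route is softer (nefness suffices), but be aware that the paper's elementary substitute for the Negativity Lemma (Lemma~\ref{lem:ample}) only covers the relatively ample case, so you are invoking the full statement; the paper's argument also produces the identity $\cO_\cX(mD)=\fa^{(m)}\cdot\cO_\cX$ that it reuses in the very next line. Second, you split $\la^{(m)}_{\max}$ into an upper bound, read off from the constraint at $E_0$ in Lemma~\ref{lem:filtr} (where $v_{E_0}$ is trivial and $b_{E_0}=1$), and a lower bound obtained by computing the stalk of $\fa^{(m)}$ at the generic point $\eta$ of $X\times\{0\}$; the paper instead identifies $\max\{\la\mid\fa^{(m)}_\la\ne0\}$ with $m\max_Eb_E^{-1}\ord_E(D)$ in one step and quotes Proposition~\ref{prop:filtrflag} once. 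Your stalk computation is essentially the localization the paper performs implicitly, so this is a difference of presentation rather than substance; the one assertion there worth spelling out is that $\rho$ is an isomorphism at $\eta_{E_0}$, which holds because $E_0$ is the unique component of $\cX_0$ dominating $X\times\{0\}$, so $\rho^{-1}(\eta)=\{\eta_{E_0}\}$ and Zariski's main theorem applies.
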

\begin{proof} 
  Set $c:=\min_Eb_E^{-1}\ord_E(D)$, and pick $m$ divisible enough (so that $mc$ is in particular an integer). The condition $v_E(s)+m\ord_E(D)\ge m c b_E$ automatically holds for all $s\in H^0(X,mL)$, since $v_E(s)\ge 0$. By Lemma~\ref{lem:filtr}, we thus have $F^{mc} H^0(X,mL)=H^0(X,mL)$, and hence $m c\le\la^{(m)}_{\min}$. 

We may assume $mL$ is globally generated, so for every $E$ we may find a section 
$$
s\in H^0(X,mL)=F^{\la^{(m)}_{\min}} H^0(X,mL)
$$ 
that does not vanish at the center of $v_E$ on $X$, \ie $v_E(s)=0$. By Lemma~\ref{lem:filtr}, it follows that $m\ord_E(D)\ge\la^{(m)}_{\min}b_E$. Since this holds for every $E$, we conclude that $mc\ge\la^{(m)}_{\min}$. 

We next use that $m\cL=\rho^*(mL_{\A^1})+mD$ is globally generated. This implies in particular that $\cO_\cX(mD)$ is $\rho$-globally generated, which reads
$$
\cO_\cX(mD)=\rho_*\cO_\cX(mD)\cdot\cO_{\cX}
$$
as fractional ideals. But we trivially have $\rho_*\cO_\cX(mD)\subset\cO_{X_{\A^1}}(m\rho_*D)$, and we infer
$$
D\le\rho^*\rho_*D. 
$$
Now $\rho_*D=\ord_{E_0}(D) X\times\{0\}$, hence $\rho^*\rho_*D=\ord_{E_0}\cX_0'$, which yields $\ord_E(D)\le\ord_{E_0}(D)b_E$, hence $\ord_{E_0}(D)=\max_Eb_E^{-1}\ord_E(D)$.

Since $\rho_*\cO_\cX(mD)$ is the flag ideal $\fa^{(m)}$ of Definition~\ref{defi:flag}, 
we also see that
\begin{multline*}
  m\max_Eb_E^{-1}\ord_E(D)=\min\left\{\la\in\Z\mid mD\le\la\cX_0\right\}\\
  =\min\left\{\la\in\Z\mid t^{-\la}\in\fa^{(m)}\right\}
  =\max\left\{\la\in\Z\mid\fa^{(m)}_\la\ne 0\right\},
\end{multline*}
and we conclude thanks to Proposition~\ref{prop:filtrflag}. 
\end{proof}

\begin{proof}[Proof of Theorem~\ref{thm:supp}] In view of Corollary~\ref{cor:supp}, the description of the supremum of the support of $\nu=\DH_{(\cX,\cL)}$ follows directly from Lemma~\ref{lem:minmax}. 

We now turn to the infimum. The subtle point of the argument is that it is not a priori obvious that the stationary value 
$$
\frac{\la^{(m)}_{\min}}{m}=\min_Eb_E^{-1}\ord_E(D)
$$
given by Lemma~\ref{lem:minmax}, which is of course the infimum of the support of $\nu_m$ as in (\ref{equ:num}), should also be the infimum of the support of their weak limit $\nu=\lim_m\nu_m$. What is trivially true is the inequality
$$
\min_Eb_E^{-1}\ord_E(D)=\inf\supp\nu_m\le\inf\supp\nu. 
$$
Now pick $\la>\min_Eb_E^{-1}\ord_E(D)$. According to Corollary~\ref{cor:supp}, it remains to show that 
\begin{equation}\label{equ:volnegl}
\lim_{m\to\infty}\frac{\dim F^{m\la}H^0(X,mL)}{N_m}<1. 
\end{equation}
Note that $\e:=\la b_E-\ord_E(D)>0$ for at least one irreducible component $E$. By Lemma~\ref{lem:filtr}, it follows that
\begin{equation}\label{equ:finc}
F^{m\la}H^0(X,mL)\subset\left\{s\in H^0(X,mL)\mid v_E(s)\ge m\e\right\}. 
\end{equation}
By Lemma~\ref{lem:div2}, $v_E$ is either the trivial valuation or a divisorial valuation. In the former case, the right-hand side of (\ref{equ:finc}) consists of the zero section only, while in the latter case we get (\ref{equ:volnegl}) thanks to Lemma~\ref{lem:suppdiv}. 
\end{proof}
%
%
\subsection{Proof of Theorem A}\label{sec:thmA}
Now let $(X,L)$ be an arbitrary polarized scheme, and $(\cX,\cL)$ an
ample test configuration for $(X,L)$. Theorem~A and Corollary~B in the
introduction are consequences of the following two results. 
\begin{thm}\label{thm:PPscheme} The density of the absolutely
  continuous part of the Duister\-maat-Heckman measure
  $\DH_{(\cX,\cL)}$ is a piecewise polynomial function, while the
  singular part is a finite sum of point masses. 
\end{thm}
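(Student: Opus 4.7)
\textbf{Proof plan for Theorem~\ref{thm:PPscheme}.} The strategy is a two-step reduction to the normal polarized variety case already established in Corollary~\ref{cor:PP}. The first reduction removes the nonreducedness and eliminates multiple irreducible components, and the second passes to the normalization.

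First I would invoke Proposition~\ref{prop:DHDFsemi}(ii) to write
\[
\DH_{(\cX,\cL)}=\sum_\a c_\a\,\DH_{(\cX^\a,\,\cL|_{\cX^\a})},
\]
where $X^\a$ runs over the top-dimensional irreducible components of $X$ equipped with their reduced scheme structure, and $\cX^\a$ is the corresponding component of $\cX$. By Proposition~\ref{prop:basic}(v), each $(\cX^\a,\cL|_{\cX^\a})$ is an ample test configuration for the polarized variety $(X^\a, L|_{X^\a})$ (ampleness of the restriction being automatic from the ampleness of $\cL$). Since the claim of the theorem---being the sum of a finite set of point masses and an absolutely continuous measure with piecewise polynomial density---is stable under finite convex combinations, it suffices to treat each summand separately. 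This reduces the problem to the case where $X$ is a (reduced and irreducible) projective variety.

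Next, since $X$ is now reduced, Theorem~\ref{thm:DHinv} gives the invariance
\[
\DH_{(\cX,\cL)}=\DH_{(\tcX,\tcL)}
\]
under passage to the normalization $\nu\colon\tcX\to\cX$, where $(\tcX,\tcL)$ is an ample test configuration for the normal polarized variety $(\tX,\tL)$. This reduces the theorem to the case where $X$ (and hence $\cX$) is a normal variety.

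Finally, I would apply Corollary~\ref{cor:PP}, which directly asserts that in the normal case the Duistermaat--Heckman measure of a semiample test configuration decomposes as a single point mass plus an absolutely continuous measure with piecewise polynomial density. Tracing back through the reductions, the finitely many point masses arising from each top-dimensional component $X^\a$ combine into a finite sum of point masses, and the absolutely continuous pieces combine (with positive weights) into another absolutely continuous measure whose density remains piecewise polynomial. No step here is really an obstacle; the content of the theorem is essentially contained in the earlier piecewise polynomiality result Theorem~\ref{thm:PP} (via Corollary~\ref{cor:PP}) together with the normalization-invariance Theorem~\ref{thm:DHinv}. The only mildly delicate point is verifying that the reduced-scheme restrictions $\cX^\a\to\A^1$ qualify as ample test configurations, which is furnished by Proposition~\ref{prop:basic}(v) and the fact that restriction preserves ampleness.
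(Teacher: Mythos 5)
Your proof is correct and follows essentially the same route as the paper: decompose into (top-dimensional) irreducible components, invoke the normalization invariance of Theorem~\ref{thm:DHinv}, and conclude with Corollary~\ref{cor:PP}. The only cosmetic difference is that you apply the component decomposition directly to the test configuration via Proposition~\ref{prop:DHDFsemi}(ii), whereas the paper first passes to the central fiber (a polarized $\G_m$-scheme, per Definition~\ref{defi:DHsemi}) and then uses Proposition~\ref{prop:DHaction}(ii); since the former is deduced from the latter, the two arguments are substantively identical.
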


\begin{thm}\label{thm:normzero} The measure $\DH_{(\cX,\cL)}$ is a
  finite sum of point masses iff $(\cX,\cL)$ is almost trivial. In
  this case, $\DH_{(\cX,\cL)}$ is a Dirac mass when $X$ is irreducible. 
\end{thm}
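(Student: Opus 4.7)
The strategy is to decompose along top-dimensional irreducible components via Proposition~\ref{prop:DHDFsemi}(ii), reduce to the normal irreducible case using Theorem~\ref{thm:DHinv}, force each summand to be a Dirac mass by piecewise polynomiality (Corollary~\ref{cor:PP}), and finally convert a Dirac Duistermaat-Heckman measure into triviality on a determination, via Theorem~\ref{thm:supp}.

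For the easy direction, I would begin from the almost triviality hypothesis: by Lemma~\ref{lem:triv}, each normalized top-dimensional component $(\tcX^\a,\tcL^\a)$ is a product test configuration with a character twist on $\tL^\a$, whose Duistermaat-Heckman measure is tautologically a single Dirac. Combined with Theorem~\ref{thm:DHinv} and Proposition~\ref{prop:DHDFsemi}(ii), this expresses $\DH_{(\cX,\cL)}$ as a positive combination of finitely many Diracs.

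For the converse, I would again invoke Proposition~\ref{prop:DHDFsemi}(ii) to write $\DH_{(\cX,\cL)}=\sum_\a c_\a\,\DH_{(\tcX^\a,\tcL^\a)}$ with $c_\a>0$. By Corollary~\ref{cor:PP}, each summand has the form (single point mass) $+$ (non-negative absolutely continuous density); the hypothesis kills the combined absolutely continuous part, so each $\DH_{(\tcX^\a,\tcL^\a)}$ must already be a Dirac $\d_{\lambda_\a}$. The main obstacle is then to promote this Dirac property to triviality of the normal ample $(\tcX^\a,\tcL^\a)$. I plan to pass to a determination $(\cY,\cM)$ of $(\tcX^\a,\tcL^\a)$, with morphisms $\mu\colon\cY\to\tcX^\a$ and $\rho\colon\cY\to\tX^\a_{\A^1}$ and $\cM=\mu^*\tcL^\a$ semiample. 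By normality, $\mu_*\cO_\cY=\cO_{\tcX^\a}$, so Proposition~\ref{prop:amplemodel} identifies $(\tcX^\a,\tcL^\a)$ as the ample model of $(\cY,\cM)$, and in particular $\DH_{(\cY,\cM)}=\d_{\lambda_\a}$. Writing $\cM=\rho^*\tL^\a_{\A^1}+D$, Theorem~\ref{thm:supp} gives $\min_E b_E^{-1}\ord_E(D)=\lambda_\a=\max_E b_E^{-1}\ord_E(D)$ (over irreducible components $E$ of $\cY_0$), so $\ord_E(D)=\lambda_\a b_E$ for every $E$, forcing $D=\lambda_\a\cY_0=\rho^*(\lambda_\a\tX^\a\times\{0\})$. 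Thus $(\cY,\cM)$ is the pullback of the ample test configuration $(\tX^\a_{\A^1},\tL^\a_{\A^1}+\lambda_\a\tX^\a\times\{0\})$, which, by uniqueness of the ample model (again Proposition~\ref{prop:amplemodel}, using $\rho_*\cO_\cY=\cO_{\tX^\a_{\A^1}}$), must coincide with $(\tcX^\a,\tcL^\a)$. Lemma~\ref{lem:triv} then delivers almost triviality of $(\cX,\cL)$.

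Finally, when $X$ is irreducible, Proposition~\ref{prop:basic}(v) tells us $\cX$ has a unique top-dimensional irreducible component with $c_1=1$ in the decomposition, so $\DH_{(\cX,\cL)}$ equals the single Dirac $\DH_{(\tcX^1,\tcL^1)}$ produced by the argument above.
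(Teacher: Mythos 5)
Your proof is correct and follows essentially the same route as the paper: reduce to normal irreducible components via Proposition~\ref{prop:DHDFsemi}~(ii) and Theorem~\ref{thm:DHinv}, then pass to a determination and apply Theorem~\ref{thm:supp} together with uniqueness of the ample model. The only (welcome) difference is that you make explicit, via Corollary~\ref{cor:PP}, the step from ``finite sum of point masses'' to ``each component's measure is a Dirac mass,'' which the paper leaves implicit.
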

Recall  that almost trivial but nontrivial test configurations always exist when $X$ is, say, a normal variety (cf.~\cite[\S3.1]{LX} and Proposition~\ref{prop:triv1PS}). 

\begin{proof}[Proof of Theorem~\ref{thm:PPscheme}] Since $\DH_{(\cX,\cL)}$ is defined as the Duistermaat-Heckman measure of some polarized $\G_m$-scheme by Definition~\ref{defi:DHsemi}, it is enough to show the result for the Duister\-maat-Heckman measure $\DH_{(X,L)}$ of a polarized $\G_m$-scheme $(X,L)$ (\ie the special case of Theorem~\ref{thm:PPscheme} where $(\cX,\cL)$ is a product test configuration). By (ii) in Proposition~\ref{prop:DHaction}, we may further assume that $X$ is a variety, \ie reduced and irreducible. By the invariance property of Theorem~\ref{thm:DHinv}, we are even reduced to the case where $X$ is a normal variety, which is treated in Corollary~\ref{cor:PP}. 
\end{proof}

\begin{proof}[Proof of Theorem~\ref{thm:normzero}] 
Using again (ii) in Proposition~\ref{prop:DHaction} and Theorem~\ref{thm:DHinv}, we may assume that $\cX$ (and hence $X$) is a normal variety. By Lemma~\ref{lem:triv}, our goal is then to show that the support of $\DH_{(\cX,\cL)}$ is reduced to a point iff $(\cX,\cL+c\cX_0)=(X_{\A^1},L_{\A^1})$ for some $c\in\Q$. 

In order to deduce this from Theorem~\ref{thm:supp}, let $(\cX',\cL')$ be a pull-back of $(\cX,\cL)$ with $\cX'$ normal and dominating $X_{\A^1}$. Since $(\cX,\cL)$ is the ample model of $(\cX',\cL')$, we have $\DH_{(\cX,\cL)}=\DH_{(\cX',\cL')}$. 
In the notation of Theorem~\ref{thm:supp}, this measure is a Dirac mass iff $b_E^{-1}\ord_E(D)=c$ is independent of $E$, \ie $D=c\cX'_0$. But this means that $(X_{\A^1},L_{\A^1})$ is the ample model of $(\cX',\cL'-c\cX'_0)$, \ie $(\cX,\cL-c\cX_0)=(X_{\A^1},L_{\A^1})$ by uniqueness of the ample model. 
\end{proof}
%
%
%
%
\section{Non-Archimedean metrics}\label{sec:NAmetrics}
From now on, $X$ will always denote a normal projective variety,
unless otherwise specified, and $L$ will be a $\Q$-line bundle on $X$.
%
%
\subsection{Test configurations as non-Archimedean metrics}
Motivated by Berkovich space considerations (see~\S\ref{S202} below)
we introduce the following notion.
\begin{defi}\label{defi:equiv} 
Two test configurations $(\cX_1,\cL_1)$, $(\cX_2,\cL_2)$ for $(X,L)$
are \emph{equivalent} if there exists a test configuration
$(\cX_3,\cL_3)$ that is a pull-back of both $(\cX_1,\cL_1)$ and
$(\cX_2,\cL_2)$. 
An equivalence class is called a \emph{non-Archimedean metric} on $L$, and is denoted by $\phi$. We denote by $\phi_\triv$ the equivalence class of $(X_{\A^1},L_{\A^1})$. 
\end{defi}
A non-Archimedean metric $\phi$ on the trivial line bundle $\cO_X$ can be viewed as a function $\phi:X^{\mathrm{div}}\to\Q$ on the set of divisorial valuations on $X$. Indeed, by Theorem~\ref{thm:restrdiv}, every divisorial valuation on $X$ is of
the form $v_E=b_E^{-1}r(\ord_E)$,
where $E$ is an irreducible component of $\cX_0$ for some normal
test configuration $\cX$ of $X$. We may assume $\phi$ is
represented by $\cO_X(D)$ for some $\Q$-Cartier divisor $D$ supported on
$\cX_0$, and then $\phi(v_E)=b_E^{-1}\ord_E(D)$. 
When $D=\cX_0$, we get the constant function $\phi\equiv1$.
In general, there exists $C>0$ such that $D\pm C\cX_0$ is effective;
hence $|\phi|\le C$, so $\phi$ is a  bounded function.
To the trivial metric on $\cO_X$ corresponds the zero function. 
%
%
\subsection{Operations on non-Archimedean metrics}\label{S101}
If $\phi_i$ is a non-Archimedean metric on a $\Q$-line bundle
$L_i$ on $X$ and $r_i\in\Q$ for $i=1,2$, then we get a naturally defined non-Archimedean metric 
$r_1\phi_1+r_2\phi_2$ on $L:=r_1L_1+r_2L_2$ as follows:
if $\phi_i$ is represented by a test configuration  
$(\cX,\cL_i)$ with the same $\cX$, then $\phi$
is represented by $(\cX,r_1\cL_1+r_2\cL_2)$. 

In particular, if $\phi,\phi'$ are non-Archimedean metrics on the same line bundle $L$, then $\phi-\phi'$ is a non-Archimedean metric on $\cO_X$, which will thus be viewed as a function on $X^{\mathrm{div}}$. If we choose a normal representative $(\cX,\cL)$ of $\phi$ that dominates $X_{\A^1}$ and write as before $\cL=\rho^*L_{\A^1}+D$ with $D$ a $\Q$-Cartier divisor supported on $\cX_0$, then 
\begin{equation}\label{equ:divfunc}
(\phi-\phi_\triv)(v_E)=b_E^{-1}\ord_E(D)
\end{equation}
for each component $E$ of $\cX_0$. 

If $f\colon Y\to X$ is a surjective
morphism, with $Y$ normal and projective, then any non-Archimedean metric
$\phi$ on $L$ induces a non-Archimedean metric $f^*\phi$ on $f^*L$.
Indeed, suppose $\phi$ is represented by a test configuration
$(\cX,\cL)$. We can find a test configuration $\cY$ of $Y$ and
a projective $\G_m$-equivariant morphism $\cY\to\cX$ compatible with
$f$ via the identifications $\cX_1\simeq X$, $\cY_1\simeq Y$.
Define $f^*\phi$ as the metric represented by the pullback
of $\cL$ to $\cY$. The pullback of the trivial metric on $L$
is the trivial metric on $f^*L$.
%
%
\subsection{Translation and scaling}
The operations above give rise to two natural actions on the space of non-Archimedean 
metrics on a fixed line bundle.

First, if $\phi$ is a non-Archimedean metric on a line bundle $L$, 
then so is $\phi+c$, for any $c\in\Q$. Thus we obtain a
\emph{translation action} by $(\Q,+)$ on the set of non-Archimedean metrics.

Second, we have a \emph{scaling action} by the semigroup $\N^*$ which 
to a non-Archimedean metric $\phi$ on $L$ associates a new 
non-Archimedean metric $\phi_d$ on $L$ for every $d\in\N^*$.
If $\phi$ is represented by a test configuration $(\cX,\cL)$, then
$\phi_d$ is represented by the base change of $(\cX,\cL)$ under
the base change $t\to t^d$.
This scaling action is quite useful and a particular
feature of working over a trivially valued ground field.
Note that $\phi_\triv$ is fixed by the scaling action. 

Viewing, as above, a metric $\phi$ on the trivial line bundle $\cO_X$ as a function on 
divisorial valuations, we have 
\begin{equation}\label{e405}
  \phi_d(dv)=d\phi(v)
\end{equation}
for any divisorial valuation $v$ on $X$. 
%
%
\subsection{Positivity}
Next we introduce positivity notions for metrics.
\begin{defi} 
  Assume $L$ is ample. 
  Then a non-Archimedean metric $\phi$ on $L$ is called 
  \emph{positive} if some representative $(\cX,\cL)$ of $\phi$ is semiample. 

  We denote by $\cH^{\NA}(L)$ the set of all non-Archimedean
  positive metrics on $L$, \ie the quotient of the set of semiample test configurations by the above equivalence relation. 
\end{defi}
We sometimes write $\cH^{\NA}$ when no confusion is possible. 
The notation mimics $\cH=\cH(L)$ for the space of smooth, positively curved
Hermitian metrics on $L$ when working over~$\C$.
\begin{lem}\label{lem:amp} 
  When $L$ is ample, every metric $\phi\in\cH^{\NA}(L)$ 
  is represented by a unique normal, ample test
  configuration $(\cX,\cL)$. 
  Every normal representative of $\phi$ is a pull-back of $(\cX,\cL)$. 
\end{lem}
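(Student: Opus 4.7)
The plan is to reduce the lemma to the ample model construction of Proposition~\ref{prop:amplemodel}, combined with Zariski's main theorem. For existence, I start from any semiample representative of $\phi$, replace it by its normalization (a normal, semiample representative of $\phi$, since the normalization map exhibits it as a pull-back of the original), and form its ample model $(\cX,\cL)$ via Proposition~\ref{prop:amplemodel}. The remark after that proposition guarantees $\cX$ is normal, while part~(i) of the same proposition ensures that the chosen normalized representative is a pull-back of $(\cX,\cL)$, so $(\cX,\cL)$ still belongs to the equivalence class $\phi$.

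For the pull-back property, let $(\cX',\cL')$ be an arbitrary normal representative of $\phi$, and take a normal common pull-back $(\cX'',\cL'')$ of $(\cX,\cL)$ and $(\cX',\cL')$ (passing to its normalization if necessary), with structure morphisms $\sigma\colon\cX''\to\cX$ and $\tau\colon\cX''\to\cX'$ and $\cL''=\sigma^*\cL=\tau^*\cL'$. Zariski's main theorem together with the normality of $\cX$ and $\cX'$ yields $\sigma_*\cO_{\cX''}=\cO_\cX$ and $\tau_*\cO_{\cX''}=\cO_{\cX'}$. I first check that $\cL'$ is semiample: fixing $m\gg 1$ such that $m\cL$ is base-point-free on $\cX$, the line bundle $m\cL''=\sigma^*(m\cL)$ is base-point-free on $\cX''$; the projection formula and $\tau_*\cO_{\cX''}=\cO_{\cX'}$ identify $H^0(\cX',m\cL')$ with $H^0(\cX'',m\cL'')$, and together with the surjectivity of $\tau$ this forces $m\cL'$ to be base-point-free as well. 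With $(\cX',\cL')$ now known to be semiample, Proposition~\ref{prop:amplemodel} provides its ample model $(\cX'_{\amp},\cL'_{\amp})$, together with a morphism $\cX'\to\cX'_{\amp}$ under which $(\cX',\cL')$ is a pull-back and which satisfies condition~(ii) of that proposition. Composing with $\tau$ exhibits $(\cX'',\cL'')$ as a pull-back of $(\cX'_{\amp},\cL'_{\amp})$ via a morphism also satisfying~(ii); since $(\cX,\cL)$ by construction satisfies both conditions of Proposition~\ref{prop:amplemodel} with respect to $(\cX'',\cL'')$, the uniqueness clause of that proposition forces $(\cX'_{\amp},\cL'_{\amp})=(\cX,\cL)$. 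Thus $(\cX',\cL')$ is a pull-back of $(\cX,\cL)$, as desired.

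Uniqueness of the normal ample representative is then immediate: if $(\cX',\cL')$ is itself normal and ample, it is its own ample model, and the previous paragraph forces $(\cX',\cL')=(\cX,\cL)$. The main obstacle I anticipate is the verification that $\cL'$ is semiample, which requires a careful combination of the projection formula, the identification $\tau_*\cO_{\cX''}=\cO_{\cX'}$ coming from Zariski's main theorem, and the surjectivity of $\tau$. Once that technical point is in place, the rest of the argument is a clean application of the uniqueness of the ample model.
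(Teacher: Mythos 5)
Your proof is correct and follows essentially the same route as the paper's: both reduce everything to the uniqueness clause of Proposition~\ref{prop:amplemodel}, using Zariski's main theorem to verify condition~(ii) on a common normal pull-back. Your careful check that an arbitrary normal representative is semiample (via the projection formula and surjectivity of $\tau$) is a detail the paper leaves implicit, but the underlying argument is the same.
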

\begin{proof} We first prove uniqueness. Let $(\cX_i,\cL_i)$, $i=1,2$,
  be equivalent normal ample test configurations, so that there exists $(\cX_3,\cL_3)$ as in Definition~\ref{defi:equiv}. For $i=1,2$, the birational morphism $\mu_i\colon\cX_3\to\cX_i$ satisfies $(\mu_i)_*\cO_{\cX_3}=\cO_{\cX_i}$, by normality of $\cX_i$. It follows that $(\cX_1,\cL_1)$ and $(\cX_2,\cL_2)$ are both ample models of $(\cX_3,\cL_3)$, and hence $(\cX_1,\cL_1)=(\cX_2,\cL_2)$ by the uniqueness part of Proposition~\ref{prop:amplemodel}. 

Now pick a normal representative $(\cX,\cL)$ of $\phi$. By Proposition~\ref{prop:amplemodel}, its ample model $(\cX_\amp,\cL_\amp)$ is a normal, ample representative, and $(\cX,\cL)$ is a pull-back of $(\cX_\amp,\cL_\amp)$. This proves the existence part, as well as the final assertion. 
\end{proof}
It is sometimes convenient to work with a weaker positivity notion.
\begin{defi}
  Assume $L$ is nef. Then a non-Archimedean metric $\phi$ on $L$ is
  \emph{semipositive} if some (or, equivalently, any) 
  representative $(\cX,\cL)$ of $\phi$ is relatively nef with respect
  to $\cX\to\A^1$.
\end{defi}
In this case, $\bar\cL$ is relatively nef for $\bar\cX\to\P^1$,
where $(\bar\cX,\bar\cL)$ is the compactification of $(\cX,\cL)$.

\smallskip
When $L$ is nef (resp.\ ample), the translation and scaling
actions preserve the subset of semipositive (resp.\ positive)
non-Archimedean metrics on $L$.
Positivity of metrics is also preserved under pull-back, as follows. If $f:Y\to X$ is  surjective morphism with $Y$ normal, projective, then $f^*L$ is nef for any nef line bundle $L$ on $X$, and $f^*\phi$ is semipositive for any semipositive metric $\phi$ on $L$. If $L$ and $f^*L$ are further ample (which implies that $f$ is finite), then $f^*\phi$ is positive for any positive metric $\phi$ on $L$.\footnote{This seemingly inconsistent property is explained by the fact that the (analytification of the) ramification locus of $f$ does not meet the Berkovich skeleton where $\phi$ is determined.}
%
%
\subsection{Duistermaat-Heckman measures and $L^p$-norms}\label{sec:DHmetric}
In this section, $L$ is ample.
\begin{defi}\label{defi:DHmetric} Let $\phi\in\cH^\NA(L)$ be a
  positive non-Archimedean metric on $L$.
\begin{itemize}
\item[(i)] The \emph{Duistermaat-Heckman measure} of $\phi$ is defined by setting $\DH_\phi:=\DH_{(\cX,\cL)}$ for any semiample representative of $\phi$.
\item[(ii)]  The \emph{$L^p$-norm} of $\phi$ is defined as the $L^p(\nu)$-norm of $\la-\bar\la$, with $\bar\la:=\int_\R\la\,d\nu$ the barycenter of $\nu=\DH_\phi$. 
\end{itemize}
\end{defi}

This is indeed well-defined, thanks to the following result. 
\begin{lem}\label{lem:DHmetric} For any two equivalent semiample test configurations $(\cX_1,\cL_1)$, $(\cX_2,\cL_2)$, we have $\DH_{(\cX_1,\cL_1)}=\DH_{(\cX_2,\cL_2)}$. 
\end{lem}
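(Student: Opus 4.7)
Since the two test configurations are equivalent, by Definition~\ref{defi:equiv} there exists a third semiample test configuration $(\cX_3,\cL_3)$ that is a pull-back of each $(\cX_i,\cL_i)$, $i=1,2$. It therefore suffices to prove the invariance of the Duistermaat-Heckman measure under pull-back: if $(\cX',\cL')$ is a pull-back of a semiample test configuration $(\cX,\cL)$, then $\DH_{(\cX,\cL)}=\DH_{(\cX',\cL')}$.

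The plan is to reduce to the normal case. By the universal property of normalization, the canonical morphism $\cX_3\to\cX_i$ factors through $\tcX_i$, so the normalization $(\tcX_3,\tcL_3)$ is a simultaneous pull-back of the normalizations $(\tcX_i,\tcL_i)$, $i=1,2$. Corollary~\ref{C301} gives $\DH_{(\cX_i,\cL_i)}=\DH_{(\tcX_i,\tcL_i)}$ for $i=1,2,3$, so we are reduced to showing the invariance of $\DH$ under pull-back within the class of normal semiample test configurations.

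In the normal setting, we invoke Lemma~\ref{lem:inv}: since the target $\cX$ of the pull-back morphism $\mu\colon\cX'\to\cX$ is normal, Zariski's main theorem gives $\mu_*\cO_{\cX'}=\cO_\cX$, and the projection formula then shows that $(\cX,\cL)$ and $(\cX',\cL')$ induce the same $\Z$-filtration on $R(X,rL)$ for every $r$ divisible enough. By Proposition~\ref{prop:DHlimit}, $r_*\DH_{(\cX,\cL)}$ and $r_*\DH_{(\cX',\cL')}$ both coincide with the limit measure of this common filtration, whence $\DH_{(\cX,\cL)}=\DH_{(\cX',\cL')}$ after pushing forward by $1/r$.

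Chaining these equalities together yields
\begin{equation*}
\DH_{(\cX_1,\cL_1)}=\DH_{(\tcX_1,\tcL_1)}=\DH_{(\tcX_3,\tcL_3)}=\DH_{(\tcX_2,\tcL_2)}=\DH_{(\cX_2,\cL_2)},
\end{equation*}
as desired. No real obstacle arises; the only subtle point is keeping track of the two reduction steps (to a common pull-back, then to the normal case), both of which rely on results already established in the preceding sections.
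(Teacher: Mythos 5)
Your proof is correct and follows essentially the same route as the paper: reduce to normal test configurations via Corollary~\ref{C301}, then use Lemma~\ref{lem:inv} to see that a pull-back between normal test configurations induces the same filtration (the paper phrases the conclusion as ``same ample model'', you as ``same limit measure'' via Proposition~\ref{prop:DHlimit}, which amounts to the same thing). The only slip is cosmetic: it is the morphism $\tcX_3\to\cX_i$, not $\cX_3\to\cX_i$, that factors through $\tcX_i$ by the universal property of normalization.
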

\begin{proof} 
  By Corollary~\ref{C301} we may also assume that 
  $\cX_i$ is normal for $i=1,2$. 
  Since any two normal test configurations is dominated by a third, we 
  may also assume that $\cX_1$ dominates $\cX_2$. 
  In this case, $(\cX_1,\cL_1)$ and $(\cX_2,\cL_2)$ have the same
  ample model, and hence the same Duistermaat-Heckman measure.
\end{proof}
In view of (\ref{equ:divfunc}), Theorem~\ref{thm:supp} can be reformulated as follows.

\begin{thm}\label{thm:suppNA} If $\phi$ is a positive metric on $L$, then 
$$
\sup_{X^{\mathrm{div}}}(\phi-\phi_\triv)=(\phi-\phi_\triv)(v_\triv)=\sup\supp\DH_\phi.
$$
\end{thm}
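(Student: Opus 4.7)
The statement reduces essentially to Theorem~\ref{thm:supp}, and my plan is to extract the divisor-level inequality it implicitly gives, then evaluate arbitrary divisorial valuations via the Gauss extension.

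First I would fix a normal representative $(\cX,\cL)$ of $\phi$ dominating $X_{\A^1}$ via some $\rho\colon\cX\to X_{\A^1}$; this can be arranged by passing to a normal test configuration that dominates both some ample representative and $X_{\A^1}$, and by Lemma~\ref{lem:DHmetric} we still have $\DH_{(\cX,\cL)}=\DH_\phi$. Writing $\cL=\rho^*L_{\A^1}+D$ for the unique $\Q$-Cartier divisor $D$ supported on $\cX_0$, Theorem~\ref{thm:supp} gives
\begin{equation*}
  \la_{\max}:=\sup\supp\DH_\phi=\max_E b_E^{-1}\ord_E(D)=\ord_{E_0}(D),
\end{equation*}
where $E_0$ is the strict transform of $X\times\{0\}$.

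For the second equality $(\phi-\phi_\triv)(v_\triv)=\la_{\max}$, I would use that $v_{E_0}=v_\triv$ by Lemma~\ref{lem:div2} and that $b_{E_0}=1$; then~\eqref{equ:divfunc} applied to $E_0$ yields $(\phi-\phi_\triv)(v_\triv)=\ord_{E_0}(D)=\la_{\max}$.

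For the remaining inequality $\sup_{X^{\mathrm{div}}}(\phi-\phi_\triv)\le\la_{\max}$, the key observation is that the estimate $\ord_E(D)\le\la_{\max} b_E$ for every component $E$ of $\cX_0$, provided by Theorem~\ref{thm:supp}, is equivalent to saying that $\la_{\max}\cX_0-D$ is an effective $\Q$-Cartier divisor on $\cX$. For any divisorial valuation $v$ on $X$, the Gauss extension $G(v)$ is a valuation on $k(\cX)=K(t)$ with $G(v)(t)=1$, so it is centered on $\cX_0$ and evaluation on the effective $\Q$-Cartier divisor $\la_{\max}\cX_0-D$ gives $G(v)(D)\le\la_{\max}\cdot G(v)(\cX_0)=\la_{\max}$. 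Interpreting $(\phi-\phi_\triv)(v)$ as $G(v)(D)$ extends~\eqref{equ:divfunc} consistently (since $G(v_E)=b_E^{-1}\ord_E$ by Lemma~\ref{lem:div2}), completing the argument.

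Thus the entire content of the proof is the anti-effectivity $D\le\la_{\max}\cX_0$, which is a direct repackaging of Theorem~\ref{thm:supp}; no further obstacle arises, as the passage from components of $\cX_0$ to arbitrary divisorial valuations on $X$ is handled uniformly by the Gauss extension applied to this single divisor inequality.
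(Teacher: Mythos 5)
Your argument follows the same route as the paper, whose entire proof of Theorem~\ref{thm:suppNA} is the remark that it is a reformulation of Theorem~\ref{thm:supp} via~\eqref{equ:divfunc}; you simply make the reformulation explicit. Your treatment of the upper bound is in fact tidier than the implicit one: rather than realizing each divisorial valuation $v$ as some $v_{E'}$ on a higher normal model and reapplying Theorem~\ref{thm:supp} there, you evaluate the Gauss extension $G(v)$ directly on the single effective $\Q$-Cartier divisor $\la_{\max}\cX_0-D$ on the fixed model $\cX$. All the ingredients check out: $G(v)$ is centered on $\cX_0$ because $G(v)(t)=1$ and $\cX\to\A^1$ is proper, $G(v)(\cX_0)=G(v)(t)=1$ by flatness, and the identification $(\phi-\phi_\triv)(v)=G(v)(D)$ agrees with the paper's definition since $G(v_{E'})=b_{E'}^{-1}\ord_{E'}$ (Lemma~\ref{lem:div2}) and evaluation of a valuation on a Cartier divisor is compatible with pullback to a model through which its center factors.

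There is, however, one inequality you do not address: you prove $\sup_{X^{\mathrm{div}}}(\phi-\phi_\triv)\le\la_{\max}$ and $(\phi-\phi_\triv)(v_\triv)=\la_{\max}$, but the reverse bound $\sup_{X^{\mathrm{div}}}(\phi-\phi_\triv)\ge\la_{\max}$ is not automatic, because the maximum of $b_E^{-1}\ord_E(D)$ over components of $\cX_0$ is attained at $E_0$, whose associated valuation is the \emph{trivial} one, and the values at the nontrivial components may all be strictly smaller than $\la_{\max}$. (The paper elides this point as well.) To close it, fix a divisorial valuation $w$ on $X$ and let $\e\in\Q_{>0}$ tend to $0$: writing $\fa^{(m)}=\sum_\la t^{-\la}\fa^{(m)}_\la$ for the flag ideal of $(\cX,m\cL)$ and using $\cO_\cX(mD)=\fa^{(m)}\cdot\cO_\cX$ for $m$ sufficiently divisible, Lemma~\ref{lem:gauss} gives
$$
(\phi-\phi_\triv)(\e w)=G(\e w)(D)=\tfrac1m\max_\la\left(\la-\e\,w(\fa^{(m)}_\la)\right)\ge\la_{\max}-\tfrac{\e}{m}\,w\bigl(\fa^{(m)}_{m\la_{\max}}\bigr),
$$
where $\fa^{(m)}_{m\la_{\max}}\ne0$ by Lemma~\ref{lem:minmax} and Proposition~\ref{prop:filtrflag}; letting $\e\to0$ yields the missing inequality. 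This is a minor repair, but your closing claim that no further obstacle arises overlooks it.
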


The key property of $L^p$-norms is to characterize triviality, as follows. 
\begin{thm}\label{thm:trivmetric} 
  Let $\phi\in\cH^{\NA}(L)$ be a positive non-Archimedean metric on $L$.
  Then the following conditions are equivalent:
\begin{itemize}
\item[(i)] the Duistermaat-Heckman measure $\DH_\phi$ is a Dirac mass; 
\item[(ii)] for some (or, equivalently, any) $p\in[1,\infty]$, $\|\phi\|_p=0$;
\item[(iii)] $\phi=\phi_\triv+c$ for some $c\in\Q$.  
 \end{itemize}
\end{thm}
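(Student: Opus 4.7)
The plan is to prove the implications in the cycle (iii) $\Rightarrow$ (ii) $\Rightarrow$ (i) $\Rightarrow$ (iii).

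The implication (iii) $\Rightarrow$ (ii) is a direct computation: if $\phi=\phi_\triv+c$, then $\phi$ is represented by the (normal, ample) test configuration $(X_{\A^1},L_{\A^1}+c\cdot X\times\{0\})$. Its central fiber is $(X,L)$ with $\G_m$ acting trivially on $X$ and via the character $t^{rc}$ on $rL$ (for $r$ clearing denominators), so every weight of $H^0(X,mrL)$ is $mrc$ and hence $\DH_\phi=\delta_c$ by Definition~\ref{defi:DHaction} combined with Proposition~\ref{prop:DHDFsemi}(i). In particular $\la-\bar\la\equiv0$ on $\supp\DH_\phi$, so all $L^p$-norms of $\phi$ vanish. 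The implication (ii) $\Rightarrow$ (i) is formal: $\|\phi\|_p=0$ for some $p\in[1,\infty]$ forces $\la=\bar\la$ for $\DH_\phi$-a.e.\ $\la$, and a probability measure concentrated at a point is a Dirac mass.

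The substantive implication is (i) $\Rightarrow$ (iii), and here I would invoke the structural results of Section~\ref{sec:DHfiltr}. By Lemma~\ref{lem:amp}, $\phi$ admits a unique normal ample representative $(\cX,\cL)$; pick a normal determination $(\cX',\cL')$ of $(\cX,\cL)$ dominating $X_{\A^1}$ via $\rho\colon\cX'\to X_{\A^1}$, and write $\cL'=\rho^*L_{\A^1}+D$ with $D$ a $\Q$-Cartier divisor supported on $\cX'_0$. Since $(\cX',\cL')$ is a pull-back of $(\cX,\cL)$, Lemma~\ref{lem:DHmetric} gives $\DH_\phi=\DH_{(\cX',\cL')}$. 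By Theorem~\ref{thm:supp}, the support of $\DH_{(\cX',\cL')}$ equals $[\la_{\min},\la_{\max}]$ with
\begin{equation*}
\la_{\min}=\min_E b_E^{-1}\ord_E(D)\quad\text{and}\quad\la_{\max}=\max_E b_E^{-1}\ord_E(D),
\end{equation*}
$E$ ranging over irreducible components of $\cX'_0$. Hypothesis (i) means $\la_{\min}=\la_{\max}=:c\in\Q$, and this forces $\ord_E(D)=c\,b_E=c\,\ord_E(\cX'_0)$ for every component $E$, \ie $D=c\,\cX'_0$. Therefore $\cL'=\rho^*(L_{\A^1}+c\cdot X\times\{0\})$, which exhibits $(\cX',\cL')$ as a pull-back of $(X_{\A^1},L_{\A^1}+c\cdot X\times\{0\})$, and hence $\phi=\phi_\triv+c$ as non-Archimedean metrics on $L$.

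The only genuinely non-trivial step is the identification of $\la_{\min}$ and $\la_{\max}$ in Theorem~\ref{thm:supp}: the upper endpoint is straightforward from flag-ideal manipulation, but the lower endpoint requires ruling out that mass escapes to a value strictly above $\min_E b_E^{-1}\ord_E(D)$ in the limit—this is precisely where Lemma~\ref{lem:suppdiv} (an Izumi-type volume estimate for divisorial valuations) is used. In the present setting this machinery has already been established, so the argument reduces to the bookkeeping above. Alternatively, one could appeal directly to Theorem~\ref{thm:normzero} (which gives that $\DH_\phi$ a point mass forces $(\cX,\cL)$ almost trivial) and then use the normality of $X$ together with Lemma~\ref{lem:triv} to conclude that $\cL=L_{\A^1}+c\cdot X\times\{0\}$; but the direct route through Theorem~\ref{thm:supp} is more transparent.
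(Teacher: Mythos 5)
Your proposal is correct and follows essentially the same route as the paper: the paper's proof of (i)$\Rightarrow$(iii) simply cites Theorem~\ref{thm:normzero} together with Lemma~\ref{lem:triv}, and the argument you spell out via Theorem~\ref{thm:supp} (passing to a normal determination dominating $X_{\A^1}$, writing $\cL'=\rho^*L_{\A^1}+D$, and forcing $D=c\,\cX'_0$) is precisely the content of the proof of Theorem~\ref{thm:normzero} in the normal case. The remaining implications are handled the same way in both arguments, the paper deducing (iii)$\Rightarrow$(ii) from Lemma~\ref{lem:DHtrans} where you compute the weights directly.
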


\begin{lem}\label{lem:DHtrans} Let $\phi\in\cH^\NA(L)$ be a positive non-Archimedean metric on $L$. For $c\in\Q$ and $d\in\N^*$, we have
\begin{itemize}
\item[(i)] $\DH_{\phi+c}$ and $\DH_{\phi_d}$ are the pushforwards of $\DH_\phi$ by
    $\la\mapsto\la+c$ and $\la\mapsto d\la$, respectively.
\item[(ii)] $\|\phi+c\|_p=\|\phi\|_p$ and $\|\phi_d\|_p=d\|\phi\|_p$. 
\end{itemize}
\end{lem}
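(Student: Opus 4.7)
The proof of (ii) follows directly from (i) by a formal change of variables in the defining integral $\|\phi\|_p^p = \int_{\R}|\la-\bar\la|^p\,d\DH_\phi$: pushing forward by $\la\mapsto\la+c$ leaves both $\la-\bar\la$ and $d\mu$ invariant up to the corresponding shift of the barycenter, while pushing forward by $\la\mapsto d\la$ scales both $\la-\bar\la$ and the new barycenter by $d$, producing the factor $d^p$ inside the integral. The work is therefore entirely in part~(i).

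For the translation statement in (i), I would pick a semiample representative $(\cX,\cL)$ of $\phi$. For $c\in\Q$, sufficiently divisible in the sense that $c\cX_0$ makes sense as a Cartier divisor (after passing to a multiple), the metric $\phi+c$ is by construction represented by the twisted test configuration $(\cX,\cL+c\cX_0)$ as in~\S\ref{sec:test}. The conclusion $\DH_{\phi+c}=\DH_\phi+c$ is then the content of Proposition~\ref{prop:DHDFsemi}~(i), applied to the ample model (and independent of the choice of representative by Lemma~\ref{lem:DHmetric}).

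For the scaling statement, represent $\phi$ by a semiample test configuration $(\cX,\cL)$ with $\G_m$-action $\rho\colon\G_m\to\Aut(\cX,\cL)$ covering the standard action on $\A^1=\Spec k[t]$. The base change under $f_d\colon\A^1\to\A^1$, $u\mapsto u^d$, yields $(\cX_d,\cL_d)=(\cX\times_{\A^1,f_d}\A^1,\mathrm{pr}_1^*\cL)$, on which the $\G_m$-action compatible with the standard action on the new $\A^1$ is given by $\sigma\cdot(x,u)=(\rho(\sigma^d)x,\sigma u)$. The first projection identifies the central fibers $(\cX_d)_0\simeq\cX_0$ (with possibly non-reduced scheme structure, but this does not affect the Duistermaat--Heckman measure, which only depends on the ample model), and under this identification the induced $\G_m$-action on $(\cX_d)_0$ is $\rho(\sigma^d)$. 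Consequently every weight $\la$ of $\rho$ on $H^0((\cX)_0,m(\cL)_0)$ becomes a weight $d\la$ on $H^0((\cX_d)_0,m(\cL_d)_0)$, and the definition of $\DH$ as a limit of rescaled weight measures immediately gives $\DH_{\phi_d}=(d\cdot)_*\DH_\phi$.

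The main (though still mild) point is to check that the $\G_m$-action on the base change $(\cX_d,\cL_d)$ used to define $\phi_d$ is the one described above (equivalently, that the multiplication-by-$d$ map $\G_m\to\G_m$ appears through $\rho\circ(\sigma\mapsto\sigma^d)$); once this is set up cleanly, everything reduces to tracking weights, and the passage to the ample model is handled by Lemma~\ref{lem:DHmetric} together with Corollary~\ref{C301}, so that normality issues of $\cX_d$ play no role.
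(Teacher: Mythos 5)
Your proposal is correct and follows essentially the same route as the paper: translation is handled by Proposition~\ref{prop:DHDFsemi}~(i) applied to the twist $\cL+c\cX_0$, scaling by observing that the central fiber of the base change under $t\mapsto t^d$ is $(\cX_0,\cL_0)$ with the $\G_m$-action precomposed with $\sigma\mapsto\sigma^d$, so that all weights are multiplied by $d$, and (ii) is a formal consequence. The only cosmetic difference is that the paper works directly with the unique normal ample representative, whereas you start from a semiample one and invoke Lemma~\ref{lem:DHmetric} and Corollary~\ref{C301}; also note that the central fiber of the base change is literally isomorphic to $\cX_0$ as a scheme, so your parenthetical worry about a thickened scheme structure is unnecessary.
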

    
\begin{proof} The first property in (i) follows from
  Proposition~\ref{prop:DHDFsemi}~(i). Let $(\cX,\cL)$ be the unique
  normal, ample representative of $\phi$, and denote by $(\cX',\cL')$
  the base change of $(\cX,\cL)$ by $t\mapsto t^d$. 
  Then $(\cX'_0,\cL'_0)\simeq(\cX_0,\cL_0)$, but with the $\G_m$-action composed with $t\mapsto t^d$. As a result, the $\G_m$-weights of $H^0(\cX'_0,m\cL'_0)$ are obtained by multiplying those of $H^0(\cX_0,m\cL_0)$ by $d$, and the second property in (i) follows. Part (ii) is a formal consequence of (i). 
 \end{proof}

\begin{proof}[Proof of Theorem~\ref{thm:trivmetric}] The equivalence between (i) and (ii) is immediate, and (iii)$\Longrightarrow$(ii) follows from Lemma~\ref{lem:DHtrans}. Conversely, assume that $\DH_\phi$ is a Dirac mass. By Theorem~\ref{thm:normzero}, the unique normal ample representative $(\cX,\cL)$ of $\phi$ is (almost) trivial. By Lemma~\ref{lem:triv}, this means that $(\cX,\cL+c\cX_0)=(X_{\A^1},L_{\A^1})$, \ie $\phi+c=\phi_\triv$.  
\end{proof}

\begin{rmk}\label{R304}
 For each ample representative $(\cX,\cL)$ of $\phi$, we have, by definition,
$$
\|\phi\|_p^p=\lim_{m\to\infty}\frac{1}{N_m}\sum_{\la\in\Z}|m^{-1}\la-\bar\la|^p\dim H^0(\cX_0,m\cL_0)_\la
$$
with 
$$
\bar\la=\lim_{m\to\infty}\frac{1}{mN_m}\sum_{\la\in\Z}\la\dim H^0(\cX_0,m\cL_0)_\la.
$$
This shows that the present definition generalizes the $L^p$-norm of an ample test configuration introduced in~\cite{Don2} for $p$ an even integer. 
\end{rmk}
%
%
\subsection{Intersection numbers}\label{S402}
Various operations on test configurations descend to non-Archimedean
metrics. As a first example, we discuss intersection numbers.

Every finite set of test configurations $\cX_i$ for $X$ is
dominated by some test configuration $\cX$. Given finitely many
non-Archimedean metrics $\phi_i$ on $\Q$-line bundles $L_i$, 
we may thus find representatives
$(\cX_i,\cL_i)$ for $\phi_i$ with $\cX_i=\cX$ independent of $i$. 

\begin{defi}\label{defi:int} 
  Let $\phi_i$ be a non-Archimedean metric 
  on $L_i$ for $0\le i\le n$.
  We define the \emph{intersection number} of the $\phi_i$ as
  \begin{equation}\label{e201}
    (\phi_0\cdot\ldots\cdot\phi_n):=(\bar\cL_0\cdot\ldots\cdot\bar\cL_n),
  \end{equation}
  where $(\cX_i,\cL_i)$ is any representative of $\phi_i$ 
  with $\cX_i=\cX$ independent of $i$, and where $(\bar\cX,\bar\cL_i)$
  is the compactification of $(\cX,\cL_i)$.
\end{defi}
By the projection formula, the right hand side of~\eqref{e201} is
independent of the choice of representatives. Note that the
intersection number $(\phi_0\cdot\ldots\cdot\phi_n)$ may be negative
even when the $L_i$ are ample and the $\phi_i$ are positive, since in
this case $\bar\cL_i$ is only \emph{relatively} semiample with respect to $\bar\cX\to\P^1$.  
\begin{rmk}\label{R401}
  When $L_0=\cO_X$, we can compute the intersection number
  in~\eqref{e201} without passing to the compactification. Indeed, if
  we write $\cL_0=\cO_\cX(D)$ and $D=\sum_Er_EE$, then 
  \begin{equation*}
    (\phi_0\cdot\ldots\cdot\phi_n)
    =\sum_Er_E(\cL_1|_E\cdot\ldots\cdot\cL_n|_E).
  \end{equation*}
  If $\phi_0\equiv 1$, that is, $D=\cX_0$, then 
  $(\phi_0\cdot\ldots\cdot\phi_n)=(L_1\cdot\ldots\cdot L_n)$ by 
  flatness of $\bar\cX\to\P^1$.
\end{rmk}
The intersection paring 
$(\phi_0,\dots,\phi_n)\mapsto(\phi_0\cdot\ldots\cdot\phi_n)$ is 
$\Q$-multilinear in its arguments in the sense of~\S\ref{S101}.
By the projection formula, it is invariant under pullbacks: if $Y$
is a projective normal variety of dimension $n$ and $f\colon Y\to X$
is a surjective morphism of degree $d$, then 
$(f^*\phi_0\cdot\ldots\cdot f^*\phi_n)=d(\phi_0\cdot\ldots\cdot\phi_n)$.
\begin{lem}\label{lem:int} 
 For non-Archimedean metrics $\phi_0,\dots,\phi_n$ on $\Q$-line
 bundles $L_0,\dots,L_n$ we have 
 \begin{equation*}
   ((\phi_0+c)\cdot\phi_1\cdot\ldots\cdot\phi_n)
   =(\phi_0\cdot\ldots\cdot\phi_n)
   +c(L_1\cdot\ldots\cdot L_n)
 \end{equation*}
 and
 \begin{equation*}
   ((\phi_0)_d\cdot\ldots\cdot(\phi_n)_d)=d(\phi_0\cdot\ldots\cdot\phi_n)
 \end{equation*}
 for all $d\in\N^*$ and $c\in\Q$. 
\end{lem}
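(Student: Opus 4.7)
The plan is to represent all the $\phi_i$ by test configurations on a common base and then reduce each identity to a direct computation on the compactification $(\bar\cX,\bar\cL_i)$, as in Definition~\ref{defi:int}. Concretely, since finitely many test configurations are always dominated by a single one, we may choose one test configuration $\cX$ for $X$ together with representatives $(\cX,\cL_i)$ of each $\phi_i$.

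For the translation identity, $\phi_0+c$ is represented by $(\cX,\cL_0+c\cX_0)$, and so its compactification is $(\bar\cX,\bar\cL_0+c\bar\cX_0)$. By multilinearity of the intersection product,
\begin{equation*}
  ((\phi_0+c)\cdot\phi_1\cdot\ldots\cdot\phi_n)
  =(\phi_0\cdot\phi_1\cdot\ldots\cdot\phi_n)
  +c(\bar\cX_0\cdot\bar\cL_1\cdot\ldots\cdot\bar\cL_n).
\end{equation*}
The first reduction step is to compute the last term. Since $\bar\cX_0$ and $\bar\cX_\infty$ are rationally equivalent as fibers of $\bar\pi\colon\bar\cX\to\P^1$, and since $\bar\cX_\infty\simeq X$ with $\bar\cL_i|_{\bar\cX_\infty}=L_i$ by construction of the compactification, we obtain $(\bar\cX_0\cdot\bar\cL_1\cdot\ldots\cdot\bar\cL_n)=(L_1\cdot\ldots\cdot L_n)$. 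Alternatively, this is the content of the second statement of Remark~\ref{R401} applied to the representative $(\cX,\cX_0)$ of the constant metric $1$ on $\cO_X$.

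For the scaling identity, let $g_d\colon\bar\cX_d\to\bar\cX$ denote the compactified base change of $\bar\pi\colon\bar\cX\to\P^1$ along the degree-$d$ map $\P^1\to\P^1$ sending $t\mapsto t^d$. Then $g_d$ is finite of degree $d$, and by construction $(\bar\cX_d,g_d^*\bar\cL_i)$ is the compactification of a representative of $(\phi_i)_d$. The projection formula then yields
\begin{equation*}
  ((\phi_0)_d\cdot\ldots\cdot(\phi_n)_d)
  =(g_d^*\bar\cL_0\cdot\ldots\cdot g_d^*\bar\cL_n)
  =d(\bar\cL_0\cdot\ldots\cdot\bar\cL_n)
  =d(\phi_0\cdot\ldots\cdot\phi_n).
\end{equation*}

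There is no substantial obstacle: once common representatives are chosen, both identities follow from multilinearity and the projection formula. The only subtlety worth flagging is that $\bar\cX_d$ need not be normal (or even reduced/irreducible), but intersection numbers remain well defined in that generality, and a further application of the projection formula shows that replacing $\bar\cX_d$ by its normalization does not affect the result.
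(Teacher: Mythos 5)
Your proof is correct and follows essentially the same route as the paper: the translation identity reduces by multilinearity to computing $(\bar\cX_0\cdot\bar\cL_1\cdot\ldots\cdot\bar\cL_n)=(L_1\cdot\ldots\cdot L_n)$ via flatness of $\bar\cX\to\P^1$ (the content of Remark~\ref{R401}), and the scaling identity is the projection formula applied to the finite degree-$d$ base-change morphism. You have merely written out the details that the paper's one-line proof leaves implicit, including the harmless subtlety about $\bar\cX_d$ possibly failing to be normal.
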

\begin{proof} 
  The first equality is a consequence of the the discussion above, and the second
  formula follows from the projection formula.
\end{proof}
The following inequality is crucial. See~\cite{YZ16} for far-reaching generalizations.
\begin{lem}\label{lem:monotone} 
  Let $L_2,\dots,L_n$ be nef $\Q$-line bundles on $X$, 
  $\phi$ a non-Archimedean metric on $\cO_X$, and 
  $\phi_i$ a semipositive non-Archimedean metric on $L_i$ for $2\le i\le n$.
  Then 
  \begin{equation}\label{e302}
    \left(\phi\cdot\phi\cdot\phi_2\cdot\ldots\cdot\phi_n\right)\le 0.
  \end{equation}
\end{lem}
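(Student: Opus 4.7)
\textbf{The plan} is to choose common normal representatives, pass to the compactification, and reduce the claim to a classical Hodge-index type inequality for divisors vertical over a curve.

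First, I would choose a common normal test configuration $\cX$ dominating $X_{\A^1}$ on which $\phi$ and all the $\phi_i$ are represented. Since $\phi$ is a metric on $\cO_X$, which is trivial on $\cX\setminus\cX_0$ by the product structure of the compactification, $\phi$ may be represented by $\cO_\cX(D)$ for some $\Q$-Cartier divisor $D$ supported on $\cX_0$; similarly represent $\phi_i$ by $(\cX,\cL_i)$ with $\cL_i$ relatively nef. Passing to the compactification $\bar\cX\to\P^1$, the closure $\bar D$ is still supported on $\cX_0\subset\bar\cX$, and $\bar\cL_i$ is relatively nef. By Definition~\ref{defi:int}, the inequality~\eqref{e302} to be proved is
\begin{equation*}
  (\bar D\cdot\bar D\cdot\bar\cL_2\cdot\ldots\cdot\bar\cL_n)\le 0.
\end{equation*}

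Next, I would upgrade relative nefness to absolute nefness on $\bar\cX$. Since $\P^1$ is one-dimensional, the class $\bar\cL_i+c_i\,\pi^*\cO_{\P^1}(1)$ is nef on all of $\bar\cX$ for $c_i\gg 1$. This replacement does not affect the intersection number in question: every term of the multilinear expansion involving a factor $\pi^*\cO_{\P^1}(1)\sim\bar\cX_\infty$ vanishes, because $\bar\cX_\infty$ is disjoint from $\supp\bar D\subset\cX_0$, so the corresponding cycle-theoretic intersection is empty. We are thus reduced to the following assertion on $Y:=\bar\cX$: for nef $\Q$-line bundles $M_2,\dots,M_n$ on the normal projective $(n+1)$-fold $Y$ equipped with a surjection $\pi\colon Y\to\P^1$, and for a $\Q$-Cartier divisor $D$ supported in the fiber $\pi^{-1}(0)$, one has $(D^2\cdot M_2\cdot\ldots\cdot M_n)\le 0$.

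This last statement is a standard higher-dimensional form of Zariski's lemma (\eg Zhang's Hodge-index inequality). I would prove it by first resolving singularities to assume $Y$ smooth (which preserves the verticality of $D$ and the intersection number by the projection formula), then approximating each nef $M_i$ by ample classes in $N^1(Y)_\R$ via Kleiman's criterion and continuity of the intersection pairing, and finally applying Bertini: for $m\gg 1$ divisible enough, generic members of $|mM_i|$ cut $Y$ down to a smooth irreducible surface $S$ on which $D|_S$ remains vertical for the induced map $S\to\P^1$, and the classical Zariski lemma on $S$ yields $(D|_S)^2\le 0$, hence the result after dividing by $m^{n-1}$. The main obstacle is making the Bertini reduction go through in arbitrary characteristic; this can be circumvented by directly invoking a characteristic-free formulation of Zhang's inequality, which is by now well-documented in the literature.
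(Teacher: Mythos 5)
Your reduction is exactly the paper's: the proof there consists of choosing common normal representatives, writing $\phi=\cO_\cX(D)$ with $D$ a $\Q$-Cartier divisor supported on $\cX_0$, and observing that \eqref{e302} amounts to $(D\cdot D\cdot\bar\cL_2\cdot\ldots\cdot\bar\cL_n)\le 0$, which is then attributed to ``a standard Hodge Index Theorem argument'' with a pointer to \cite[Lemma~1]{LX}; you are supplying the details of that argument. The one step you justify too quickly is the claim that $\bar\cL_i+c_i\,\pi^*\cO_{\P^1}(1)$ is nef for $c_i\gg1$: this does not follow merely from $\P^1$ being one-dimensional, since relative nefness plus a large twist from the base does not in general yield nefness (unlike relative ampleness). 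It does hold here, but for a reason specific to the situation: writing $\bar\cL_i=\rho^*(L_i)_{\P^1}+D_i$ with $D_i$ supported on $\cX_0$, any irreducible curve $C\not\subset\cX_0$ satisfies $(E\cdot C)\ge 0$ for every component $E$ of $\cX_0$ while $\sum_E b_E(E\cdot C)=(\cX_0\cdot C)$ equals the degree of $C$ over $\P^1$, so $(\bar\cL_i\cdot C)$ is bounded below by a fixed negative multiple of that degree; vertical curves are handled by relative nefness. Finally, in positive characteristic the obstacle is not only Bertini but also the resolution of singularities you invoke at the start of the surface reduction; your fallback of citing a characteristic-free Hodge-index inequality (e.g.\ \cite{YZ16}, which the paper mentions just before the lemma) covers both issues, so the argument is complete modulo that reference.
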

\begin{proof} Choose normal representatives $(\cX,\cL)$, $(\cX,\cL_i)$
  for $\phi$, with the same test configuration $\cX$ for $X$. We have 
  $\cL=\cO_\cX(D)$ for a $\Q$-Cartier divisor $D$ supported on
  $\cX_0$. Then~\eqref{e302} amounts to 
  $\left(D\cdot D\cdot\bar\cL_2\cdot\ldots\cdot\bar\cL_n\right)\le 0$,
  which follows from a standard Hodge Index Theorem argument;
  see~\eg~\cite[Lemma~1]{LX}.
 \end{proof}
%
%
\subsection{The non-Archimedean Monge-Amp\`ere measure}\label{S401}
Let $L$ be a big and nef $\Q$-line bundle on $X$, and set $V:=(L^n)$.
Then any $n$-tuple $(\phi_1,\dots,\phi_n)$ of non-Archimedean metrics on $L$
induces a signed finite atomic \emph{mixed Monge-Amp\`ere measure} on
$\Xdiv$ as follows. Pick representatives $(\cX,\cL_i)$ of $\phi_i$, $1\le i\le n$, with
the same test configuration $\cX$ for $X$ and set 
\begin{equation*}
  \MA^\NA(\phi_1,\dots,\phi_n)=V^{-1}\sum_Eb_E(\cL_1|_E\cdot\ldots\cdot\cL_n|_E)\delta_{v_E},
\end{equation*}
where $E$ ranges over irreducible components of $\cX_0=\sum_Eb_EE$,
and $v_E=r(b_E^{-1}\ord_E)\in\Xdiv$.
Note that 
\begin{equation*}
  \int_{\Xdiv}\MA^\NA(\phi_1,\dots,\phi_n)
  =V^{-1}(\cX_0\cdot\cL_1\cdot\ldots\cdot\cL_n)
  =V^{-1}(\cX_1\cdot\cL_1\cdot\ldots\cdot\cL_n)
  =V^{-1}(L^n)=1,
\end{equation*}
where the second equality follows from the flatness of $\cX\to\A^1$.
When the $\phi_i$ are semipositive, the mixed Monge-Amp\`ere measure is
therefore a probability measure.

As in the complex case, we also write $\MA^\NA(\phi)$ for $\MA^\NA(\phi,\dots,\phi)$.
Note that $\MA^\NA(\phi+c)=\MA^\NA(\phi)$ for any $c\in\Q$.
%
%
\subsection{Berkovich space interpretation}\label{S202}
Let us now briefly explain the term ``non-Archime\-dean
metric''. See~\cite{siminag,simons,trivval} for more details.

Equip the base field $k$ with the trivial absolute value $|\cdot|_0$,
\ie $|a|_0=1$ for $a\in k^*$. Also equip the field $K:=k\lau{t}$ 
of Laurent series with the non-Archimedean norm
in which $|t|=e^{-1}$ and $|a|=1$ for $a\in k^*$.

The Berkovich analytification $\Xan$ is a compact Hausdorff
space equipped with a structure sheaf~\cite{BerkBook}. It contains the set of valuations $v:k(X)^*\to\R$ on the function field of $X$ as a dense subset. Similarly, any line bundle $L$ on $X$ has an analytification
$\Lan$. The valued field extension $K/k$ further
gives rise to analytifications $X_K^{\mathrm{an}}$ and
$L_K^{\mathrm{an}}$, together with a natural morphism $X_K^\an\to\Xan$ under which $\Lan$ pulls pack to $L_K^\an$. 
The Gauss extension in~\S\ref{sec:valtest} gives a section $\Xan\to X_K^{\mathrm{an}}$, whose image exactly consists of
the $k^*$-invariant points.

After the base change $k[t]\to k\cro{t}$, any test configuration
$(\cX,\cL)$ defines a model of $(X_K,L_K)$ over the valuation ring
$k\cro{t}$ of $K=k\lau{t}$. When $\cX$ is normal, this further induces a continuous metric
on $L_K^{\mathrm{an}}$, \ie a function on the total space
satisfying certain natural conditions. Using the Gauss extension, we 
obtain a metric also on $\Lan$. 

Replacing a normal test configuration $(\cX,\cL)$ by a pullback does not change
the induced metric on $\Lan$, and one may in fact show that two normal test configurations induce the same metric iff they are equivalent in the Definition~\ref{defi:equiv}. This justifies the name non-Archimedean
metric for an equivalence class of test configurations. 
Further, in the analysis of~\cite{siminag,nama}, 
positive metrics play the role of K\"ahler potentials in complex geometry.

However, we abuse terminology a little since there are natural
metrics on $\Lan$ that do not come from test configurations. 
For example, any filtration on $R(X,L)$ defines a metric on $\Lan$. 
Metrics arising from test configurations can be viewed as analogues of
smooth metrics on a holomorphic line bundle. 
For some purposes it is important to work with a more flexible notion of
metrics, but we shall not do so here.
%
%
%
%
\section{Non-Archimedean functionals}\label{S201}
The aim of this section is to introduce non-Archimedean analogues
of several classical functionals in K\"ahler geometry; as indicated in
the introduction, the analogy will be turned into a precise connection 
in~\cite{BHJ2}.

Throughout this section, $X$ is a normal projective variety and $L$ a
$\Q$-line bundle on $X$. We shall assume that $L$ is big and nef, 
so that $V:=(L^n)>0$. The most important case is of course 
when $L$ is ample.
\begin{defi} 
  Let $V$ be a set of non-Archimedean metrics on $L$
  that is closed under translation and scaling. 
  Then a functional $F\colon V\to\R$ is 
  \emph{homogeneous} if $F(\phi_d)=d F(\phi)$ for $\phi\in V$ and
  $d\in\N^*$, and \emph{translation invariant} if 
  $F(\phi+c)=F(\phi)$ for $\phi\in V$ and $c\in\Q$. 
\end{defi}
For example, Lemma~\ref{lem:DHtrans} shows that when $L$ is ample,
the $L^p$-norm is a homogeneous and translation invariant functional 
on $\cH^{\NA}(L)$. 
%
%
\subsection{The non-Archimedean Monge-Amp\`ere energy}\label{sec:E}
\begin{defi}\label{defi:E} 
  The \emph{non-Archimedean Monge-Amp\`ere energy functional} is defined by
  \begin{equation*}
    E^{\NA}(\phi):=\frac{\left(\phi^{n+1}\right)}{(n+1)V}
  \end{equation*}
  for any non-Archimedean metric $\phi$ on $L$.
\end{defi}
Here $(\phi^{n+1})$ denotes the intersection number defined in~\S\ref{S402}.
Note that $E^\NA(\phi_\triv)=0$ since $(\phi_\triv^{n+1})=(L_{\P^1}^{n+1})=0$.
Lemma~\ref{lem:int} and Proposition~\ref{prop:DHDFsemi} imply:
\begin{lem}\label{lem:ENA} 
  The functional $E^{\NA}$ is homogeneous and satisfies 
  \begin{equation}\label{equ:transE}
    E^{\NA}(\phi+c)=E^{\NA}(\phi)+c
  \end{equation}
  for any non-Archimedean metric $\phi$ on $L$ and any $c\in\Q$. We further have 
  $$
  E^{\NA}(\phi)=\int_\R\la\,\DH_\phi(d\la).
  $$
  when $L$ is ample and $\phi\in\cH^{\NA}(L)$ is positive.
\end{lem}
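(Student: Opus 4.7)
The three assertions all follow from intersection-theoretic manipulations combined with results already established.

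\medskip
\textbf{Homogeneity.} Pick a representative $(\cX,\cL)$ of $\phi$. By Lemma~\ref{lem:int} applied with all $n+1$ factors equal to $\phi$, we have $((\phi_d)^{n+1})=d(\phi^{n+1})$, which immediately gives $E^{\NA}(\phi_d)=dE^{\NA}(\phi)$.

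\medskip
\textbf{Translation invariance (up to the additive constant).} The plan is to expand $((\phi+c)^{n+1})$ using multilinearity. The key geometric input is that on the compactification $\bar\cX$ the central fiber $\bar\cX_0$ is linearly equivalent to $\bar\cX_\infty$, and these two fibers are disjoint; hence $(\bar\cX_0)^2=0$ in $\bar\cX$. Translating $\phi$ by $c\in\Q$ corresponds to replacing $\bar\cL$ by $\bar\cL+c\bar\cX_0$, so the binomial expansion
\begin{equation*}
  ((\bar\cL+c\bar\cX_0)^{n+1})
  =\sum_{k=0}^{n+1}\binom{n+1}{k}c^k(\bar\cL^{n+1-k}\cdot\bar\cX_0^k)
\end{equation*}
collapses: all terms with $k\ge 2$ vanish, and the $k=1$ term equals $(n+1)c(\bar\cL^n\cdot\bar\cX_0)=(n+1)cV$ by flatness of $\bar\cX\to\P^1$ (so that $\bar\cX_0$ and $\bar\cX_1\cong X$ yield the same intersection with $\bar\cL^n$). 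Dividing by $(n+1)V$ gives $E^{\NA}(\phi+c)=E^{\NA}(\phi)+c$. (Equivalently, one may iterate the additivity formula in Lemma~\ref{lem:int}, but the compactified viewpoint is more transparent.)

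\medskip
\textbf{Integral formula in the ample case.} Assume $L$ is ample and $\phi\in\cH^{\NA}(L)$. I would first reduce to an ample representative: by Lemma~\ref{lem:amp}, $\phi$ has a unique normal ample representative $(\cX,\cL)$, and if $(\cX',\cL')$ is any semiample representative with $\mu\colon\cX'\to\cX$ the canonical morphism, then $\cL'=\mu^*\cL$ and the projection formula gives $(\bar\cL'^{n+1})=(\bar\cL^{n+1})$; also $\DH_{\phi}=\DH_{(\cX,\cL)}$ by Definition~\ref{defi:DHmetric} and Lemma~\ref{lem:DHmetric}. Then Proposition~\ref{prop:DHDFsemi}(iii) applied to the ample representative yields
\begin{equation*}
  \int_{\R}\la\,\DH_\phi(d\la)
  =\frac{(\bar\cL^{n+1})}{(n+1)V}
  =E^{\NA}(\phi),
\end{equation*}
which completes the proof.

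\medskip
None of the three steps presents a serious obstacle: the only mildly nontrivial point is the vanishing $(\bar\cX_0)^2=0$ in the translation computation, but this is immediate from the $\G_m$-equivariant compactification construction of~\S\ref{sec:compact}.
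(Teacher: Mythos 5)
Your proof is correct and follows essentially the same route as the paper, which simply declares the lemma an immediate consequence of Lemma~\ref{lem:int} and Proposition~\ref{prop:DHDFsemi}: homogeneity from the base-change formula, translation from the additivity formula (your binomial expansion with $(\bar\cX_0^2\cdot\bar\cL^{n-1})=0$ is just an unwound version of iterating that formula), and the integral identity from Proposition~\ref{prop:DHDFsemi}~(iii) after passing to the ample representative. Nothing to add.
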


\begin{lem}\label{lem:EMA} 
  For every non-Archimedean metric $\phi$ on $L$ we have 
  \begin{equation*}
    E^{\NA}(\phi)
    =\frac{1}{(n+1)V}\sum_{j=0}^n\left((\phi-\phi_\triv)\cdot\phi^j\cdot\phi_\triv^{n-j}\right).
  \end{equation*}
  Further, when $\phi$ is semipositive, we have, for $j=0,\dots,n-1$,
  \begin{equation}\label{e202}
    \left((\phi-\phi_\triv)\cdot\phi^j\cdot\phi_\triv^{n-j}\right)
    \ge\left((\phi-\phi_\triv)\cdot\phi^{j+1}\cdot\phi_\triv^{n-j-1}\right).
  \end{equation}
\end{lem}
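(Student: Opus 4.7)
The first identity is a telescoping computation. Since $\phi_\triv$ corresponds to $L_{\P^1}$ on $X\times\P^1$, flatness gives $(\phi_\triv^{n+1})=(L_{\P^1}^{n+1})=0$ (see Remark~\ref{R401}). Using $\Q$-multilinearity of the intersection pairing from~\S\ref{S402} (applied to the metric $\phi-\phi_\triv$ on $\cO_X$), the plan is to write
\begin{equation*}
  (\phi^{n+1})=(\phi^{n+1})-(\phi_\triv^{n+1})
  =\sum_{j=0}^n\left[(\phi^{j+1}\cdot\phi_\triv^{n-j})-(\phi^j\cdot\phi_\triv^{n+1-j})\right]
  =\sum_{j=0}^n\left((\phi-\phi_\triv)\cdot\phi^j\cdot\phi_\triv^{n-j}\right),
\end{equation*}
and divide by $(n+1)V$. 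To make this rigorous, one picks a common test configuration $\cX$ dominating $X_{\A^1}$ and representatives $(\cX,\cL)$, $(\cX,\rho^*L_{\A^1})$ of $\phi$ and $\phi_\triv$ respectively, so that $(\cX,\cO_\cX(D))$ with $D:=\cL-\rho^*L_{\A^1}$ represents $\phi-\phi_\triv$; the telescoping then takes place in the Chow ring of $\bar\cX$.

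For the inequality, set $\psi:=\phi-\phi_\triv$, a non-Archimedean metric on $\cO_X$. Then the difference of the two sides can be rewritten, again by multilinearity, as
\begin{equation*}
  \left((\phi-\phi_\triv)\cdot\phi^j\cdot\phi_\triv^{n-j}\right)
  -\left((\phi-\phi_\triv)\cdot\phi^{j+1}\cdot\phi_\triv^{n-j-1}\right)
  =-\left(\psi\cdot\psi\cdot\phi^j\cdot\phi_\triv^{n-j-1}\right).
\end{equation*}
Since $\phi$ is semipositive on $L$ by assumption and $\phi_\triv$ is obviously semipositive on the nef $\Q$-line bundle $L$, the Hodge-type inequality recorded in Lemma~\ref{lem:monotone} yields $(\psi\cdot\psi\cdot\phi^j\cdot\phi_\triv^{n-j-1})\le 0$, which is exactly the claimed monotonicity~\eqref{e202}.

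The only point that requires any care is the rewriting step in the second part, since one must apply multilinearity in a single slot while keeping the other $n-1$ slots fixed and semipositive, so that Lemma~\ref{lem:monotone} applies. This is the main (but very mild) obstacle; once the algebraic identity is set up on a common dominating model, both assertions reduce to formal manipulations and one invocation of the Hodge Index inequality.
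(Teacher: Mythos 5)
Your proof is correct and follows essentially the same route as the paper: the identity is obtained by the telescoping expansion of $(\phi^{n+1})-(\phi_\triv^{n+1})$ using multilinearity, and the monotonicity~\eqref{e202} is reduced, after writing the difference as $-\left((\phi-\phi_\triv)\cdot(\phi-\phi_\triv)\cdot\phi^j\cdot\phi_\triv^{n-j-1}\right)$, to the Hodge-index inequality of Lemma~\ref{lem:monotone}. The paper's proof is just a terser version of exactly this argument.
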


\begin{proof} Since $(\phi_\triv^{n+1})=0$, we get
$$
(n+1)V E^{\NA}(\phi)=(\phi^{n+1})-(\phi_\triv^{n+1})=\sum_{j=0}^n\left((\phi-\phi_\triv)\cdot\phi^j\cdot\phi_\triv^{n-j}\right).
$$
The inequality~\eqref{e202}  is now a consequence of Lemma~\ref{lem:monotone}. 
\end{proof}
\begin{rmk}\label{R402}
  In view of Remark~\ref{R401} we can write the energy functional as
  \begin{equation*}
    E^\NA(\phi)=\frac{1}{n+1}\sum_{j=0}^n\frac1V\int_{\Xdiv}(\phi-\phi_\triv)\mu_j,
  \end{equation*}
  where $\mu_j=\MA^\NA(\phi,\dots,\phi,\phi_\triv,\dots,\phi_\triv)$ is a
  mixed Monge-Amp\`ere measure with $j$ copies of $\phi$. 
  Note that this formula is identical to its counterpart in K\"ahler geometry.
\end{rmk}
%
%
\subsection{The non-Archimedean $I$ and $J$-functionals}\label{sec:J}
\begin{defi}\label{defi:J} The \emph{non-Archimedean $I$ and $J$-functionals} 
are defined by
$$
I^{\NA}(\phi):=V^{-1}\left(\phi\cdot\phi_\triv^n\right)-V^{-1}\left((\phi-\phi_\triv)\cdot\phi^n\right)
$$
and
$$
J^{\NA}(\phi):=V^{-1}(\phi\cdot\phi_\triv^n)-E^{\NA}(\phi)
$$
for any non-Archimedean metric $\phi$ on $L$.
\end{defi} 
\begin{lem}\label{lem:sup} When $L$ is ample, we have 
$$
V^{-1}(\phi\cdot\phi_\triv^n)=(\phi-\phi_\triv)(v_\triv)=\sup_{X^{\mathrm{div}}}(\phi-\phi_\triv)=\sup\supp\DH_\phi
$$
for every positive metric $\phi\in\cH^{\NA}(L)$.
\end{lem}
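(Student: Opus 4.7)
The second and third equalities are precisely the content of Theorem~\ref{thm:suppNA}, which has already been established (being a reformulation of Theorem~\ref{thm:supp}). So my plan focuses on proving the first equality $V^{-1}(\phi\cdot\phi_\triv^n)=(\phi-\phi_\triv)(v_\triv)$, which is a direct intersection-theoretic computation.

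First I would choose a normal representative $(\cX,\cL)$ of $\phi$ that dominates $X_{\A^1}$ via $\rho\colon\cX\to X_{\A^1}$; such a determination always exists. By normality, we may write $\cL=\rho^*L_{\A^1}+D$ for a unique $\Q$-Cartier divisor $D$ supported on $\cX_0$, and $\phi_\triv$ is then represented on the same test configuration $\cX$ by $\rho^*L_{\A^1}$. Passing to the compactifications $\bar\cX\to\P^1$ and using the defining intersection number of~\S\ref{S402}, we get
\begin{equation*}
  (\phi\cdot\phi_\triv^n)=(\bar\cL\cdot(\bar\rho^*L_{\P^1})^n),
\end{equation*}
where $\bar\rho\colon\bar\cX\to X_{\P^1}$ is the induced morphism.

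Next I would expand $\bar\cL=\bar\rho^*L_{\P^1}+\bar D$ (with $\bar D$ the Zariski closure of $D$, still supported on $\bar\cX_0$) and compute
\begin{equation*}
  (\bar\cL\cdot(\bar\rho^*L_{\P^1})^n)=((\bar\rho^*L_{\P^1})^{n+1})+(\bar D\cdot(\bar\rho^*L_{\P^1})^n).
\end{equation*}
The first term vanishes because $L_{\P^1}=p_1^*L$ and $L^{n+1}=0$ on the $n$-dimensional variety $X$. For the second term, the projection formula yields $(\bar D\cdot(\bar\rho^*L_{\P^1})^n)=(\bar\rho_*\bar D\cdot L_{\P^1}^n)$. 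Let $E_0$ denote the strict transform of $X\times\{0\}$. Since $\bar\rho|_{E_0}$ is birational onto $X\times\{0\}$ and any other component $E$ of $\bar\cX_0$ is $\bar\rho$-exceptional (i.e.\ $\dim\bar\rho(E)<n$), we have $\bar\rho_*[E_0]=[X\times\{0\}]$ and $\bar\rho_*[E]=0$ for $E\ne E_0$. Writing $\bar D=\sum_E d_EE$, we obtain $\bar\rho_*\bar D=d_{E_0}[X\times\{0\}]$, and finally
\begin{equation*}
  (\bar\rho_*\bar D\cdot L_{\P^1}^n)=d_{E_0}([X\times\{0\}]\cdot(p_1^*L)^n)=d_{E_0}(L^n)=d_{E_0}V.
\end{equation*}

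To conclude, I would identify $d_{E_0}=\ord_{E_0}(D)$ with $(\phi-\phi_\triv)(v_\triv)$ via the identification in~(\ref{equ:divfunc}): since $\rho|_{E_0}$ is birational, $b_{E_0}=\ord_{E_0}(t)=1$ and $v_{E_0}=r(\ord_{E_0})=v_\triv$, so
\begin{equation*}
  (\phi-\phi_\triv)(v_\triv)=(\phi-\phi_\triv)(v_{E_0})=b_{E_0}^{-1}\ord_{E_0}(D)=\ord_{E_0}(D).
\end{equation*}
Dividing by $V$ gives the desired identity. The only mild obstacle is verifying cleanly that all components of $\bar\cX_0$ other than $E_0$ push forward to zero under $\bar\rho_*$ as cycles, but this is standard: it is essentially the negativity part of the exceptional locus for the birational morphism $\bar\rho$.
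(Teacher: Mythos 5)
Your proposal is correct and follows essentially the same route as the paper: reduce to a normal representative dominating $X_{\A^1}$, write $\cL=\rho^*L_{\A^1}+D$, kill the $(\rho^*L_{\P^1})^{n+1}$ term, and compute $(D\cdot\rho^*L_{\A^1}^n)=(\rho_*D\cdot L_{\A^1}^n)=V\ord_{E_0}(D)$ by the projection formula, then invoke Theorem~\ref{thm:suppNA}. One small quibble: the fact that $\rho_*[E]=0$ for $E\ne E_0$ is not a negativity statement but simply that any such component has $\dim\rho(E)<n$ (only the strict transform $E_0$ can dominate $X\times\{0\}$, since $\rho$ is birational and $X_{\A^1}$ is normal).
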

\begin{proof} Choose a normal, semiample test configuration $(\cX,\cL)$ representing $\phi$ and such that $\cX$ dominates $X_{\A^1}$. Denote by $\rho\colon\cX\to X_{\A^1}$ the canonical morphism, so that $\cL=\rho^*L_{\A^1}+D$ for a unique $\Q$-Cartier divisor $D$ supported on $\cX_0$. Then
$$
(\phi\cdot\phi_\triv^n)=\left((\phi-\phi_\triv)\cdot\phi_\triv^n\right)=\left(D\cdot\rho^*L_{\A^1}^n\right)=(\rho_*D\cdot L_{\A^1}^n)=V\ord_{E_0}(D),
$$
with $E_0$ the strict transform of $X\times\{0\}$ on $\cX$. Theorem~\ref{thm:suppNA} yields the desired conclusion. 
\end{proof}
\begin{prop}\label{prop:J} The non-Archimedean functionals $I^{\NA}$ and $J^{\NA}$ 
are translation invariant and homogeneous. 
On the space of semipositive metrics, they are nonnegative and satisfy
\begin{equation}\label{e303}
  \tfrac 1 nJ^{\NA}\le I^{\NA}-J^{\NA}\le n J^{\NA}.
\end{equation}
When $L$ is ample, we further have
$$
J^{\NA}(\phi)=\sup\supp\DH_{\phi}-\int_\R\la\,\DH_{\phi}(d\la)
$$
for all positive metrics $\phi\in\cH^{\NA}(L)$. 
\end{prop}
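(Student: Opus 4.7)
The plan is to first dispense with the formal properties (homogeneity, translation invariance, and the concluding formula), and then reduce the substantive content — nonnegativity and the two-sided comparison~\eqref{e303} — to a single coefficient-wise inequality after a telescoping rewrite.

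\textbf{Homogeneity and translation invariance.} Homogeneity of $I^\NA$ and $J^\NA$ is immediate from Lemma~\ref{lem:int}: every intersection number appearing in the definitions scales by $d$ under the base change $t\mapsto t^d$, while $V=(L^n)$ is unaffected. Translation invariance of $J^\NA$ combines $((\phi+c)\cdot\phi_\triv^n)=(\phi\cdot\phi_\triv^n)+cV$ from Lemma~\ref{lem:int} with $E^\NA(\phi+c)=E^\NA(\phi)+c$ from Lemma~\ref{lem:ENA}. For $I^\NA$ I would pick a normal representative $(\cX,\cL)$ of $\phi$ dominating $X_{\A^1}$ via $\rho$, write $\cL=\rho^*L_{\A^1}+D$ so that $\phi-\phi_\triv$ is represented by $\cO_\cX(D)$, and expand $((\bar D+c\bar\cX_0)\cdot(\bar\cL+c\bar\cX_0)^n)$ on the compactification. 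Using $\bar\cX_0\sim\bar\cX_\infty$ with $\bar\cX_\infty$ disjoint from both $\bar D$ and $\bar\cX_0$, every intersection number involving two factors of $\bar\cX_0$ or the product $\bar D\cdot\bar\cX_0$ vanishes; only the term $c(\bar\cX_0\cdot\bar\cL^n)=cV$ survives, cancelling exactly the shift in $(\phi\cdot\phi_\triv^n)$.

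\textbf{Unified rewriting.} Since $(\phi_\triv^{n+1})=0$, the identity $V^{-1}(\phi\cdot\phi_\triv^n)=V^{-1}((\phi-\phi_\triv)\cdot\phi_\triv^n)$ holds. Setting $a_j:=((\phi-\phi_\triv)\cdot\phi^j\cdot\phi_\triv^{n-j})$ for $0\le j\le n$, Lemma~\ref{lem:EMA} gives
\begin{equation*}
  VI^\NA(\phi)=a_0-a_n,\quad
  (n+1)VJ^\NA(\phi)=\sum_{j=0}^n(a_0-a_j),\quad
  (n+1)V(I^\NA-J^\NA)(\phi)=\sum_{j=0}^n(a_j-a_n).
\end{equation*}
A direct computation gives $a_j-a_{j+1}=-((\phi-\phi_\triv)^2\cdot\phi^j\cdot\phi_\triv^{n-1-j})$, so setting $b_k:=-((\phi-\phi_\triv)^2\cdot\phi^k\cdot\phi_\triv^{n-1-k})$ and telescoping/swapping the order of summation yields
\begin{equation*}
  VI^\NA(\phi)=\sum_{k=0}^{n-1}b_k,\quad
  (n+1)VJ^\NA(\phi)=\sum_{k=0}^{n-1}(n-k)b_k,\quad
  (n+1)V(I^\NA-J^\NA)(\phi)=\sum_{k=0}^{n-1}(k+1)b_k.
\end{equation*}

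\textbf{Nonnegativity, inequalities, and the final formula.} Lemma~\ref{lem:monotone}, applied with the metric $\phi-\phi_\triv$ on $\cO_X$ and the semipositive metrics $\phi,\phi_\triv$ on the nef line bundle $L$, gives $b_k\ge 0$ for each $k$, so $I^\NA(\phi),J^\NA(\phi)\ge 0$ at once. The comparison~\eqref{e303} then reduces to the elementary coefficient-wise inequalities $\tfrac{1}{n}(n-k)\le k+1\le n(n-k)$ for $0\le k\le n-1$: the lower bound is equivalent to $0\le k(n+1)$, and the upper bound to $0\le(n+1)(n-1-k)$ (with equality at $k=n-1$, explaining the sharpness of the constant $n$). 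Finally, when $L$ is ample and $\phi\in\cH^\NA(L)$ is positive, Lemma~\ref{lem:sup} identifies $V^{-1}(\phi\cdot\phi_\triv^n)$ with $\sup\supp\DH_\phi$ while Lemma~\ref{lem:ENA} identifies $E^\NA(\phi)$ with $\int_\R\lambda\,\DH_\phi(d\lambda)$; subtracting yields the stated expression for $J^\NA(\phi)$. No step looks like a genuine obstacle, since the only nontrivial ingredient — the Hodge-type sign $b_k\ge 0$ — is already encapsulated in Lemma~\ref{lem:monotone}.
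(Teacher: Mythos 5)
Your proof is correct and follows essentially the same route as the paper: the formal properties come from Lemma~\ref{lem:int}, the final formula from Lemmas~\ref{lem:ENA} and~\ref{lem:sup}, and the substance of~\eqref{e303} rests on the monotonicity~\eqref{e202} of $a_j=((\phi-\phi_\triv)\cdot\phi^j\cdot\phi_\triv^{n-j})$, i.e.\ on Lemma~\ref{lem:monotone}. Your rewriting in terms of the nonnegative increments $b_k=a_k-a_{k+1}$ and the coefficient-wise comparison is just a repackaging of the paper's direct sandwich $a_0+na_n\le\sum_j a_j\le na_0+a_n$.
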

\begin{proof} Translation invariance and homogeneity 
  follow directly from Lemma~\ref{lem:int}. 
  Now assume $\phi$ is semipositive.
  Then~\eqref{e202} shows that $I^{\NA}(\phi)\ge0$, 
  $J^{\NA}(\phi)\ge0$, and
  \begin{multline*}
    V^{-1}\left((\phi-\phi_\triv)\cdot\phi_\triv^n\right)
    +nV^{-1}\left((\phi-\phi_\triv)\cdot\phi^n\right)\\
    \le (n+1)E^{\NA}(\phi)
    \le n V^{-1}\left((\phi-\phi_\triv)\cdot\phi_\triv^n\right)
    +V^{-1}\left((\phi-\phi_\triv)\cdot\phi^n\right).
  \end{multline*}
  This implies
  \begin{multline*}
    n\left(I^{\NA}(\phi)-J^{\NA}(\phi)\right)
    =n\left(E^{\NA}(\phi)-V^{-1}\left((\phi-\phi_\triv)\cdot\phi^n\right)\right)\\
    \ge V^{-1}\left((\phi-\phi_\triv)\cdot\phi_\triv^n\right)-E^{\NA}(\phi)=J^{\NA}(\phi),
  \end{multline*}
  and similarly for the second inequality in~\eqref{e303}.
  
  The final assertion is a consequence of Lemma~\ref{lem:ENA} and Lemma~\ref{lem:sup}. 
\end{proof}

The above result shows that the functionals $J^\NA$ and $I^\NA$ are equivalent on the
space of semipositive metrics, in the following sense:
$$
\frac{n+1}{n} J^\NA\le I^\NA\le(n+1) J^\NA.
$$
We next show that they are also equivalent to the $L^1$-norm
$\|\cdot\|_1$ on positive metrics.
\begin{thm}\label{thm:J} 
  Assume $L$ is ample. Then, for every positive metric $\phi\in\cH^{\NA}(L)$, we have 
$$
c_n J^{\NA}(\phi)\le\|\phi\|_1\le 2 J^\NA(\phi)
$$
with $c_n:=2n^n/(n+1)^{n+1}$. In particular, $J^{\NA}(\phi)=0$ iff $\phi=\phi_\triv+c$ for some $c\in\Q$. 
\end{thm}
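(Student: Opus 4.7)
The plan is to work with the tail distribution $F(\lambda):=\DH_\phi([\lambda,+\infty))$ of the Duistermaat-Heckman measure. Splitting $\|\phi\|_1=\int_\R|\lambda-\bar\lambda|\,d\DH_\phi$ according to the sign of $\lambda-\bar\lambda$ (using the definition of the barycenter to equate the two pieces) and applying Fubini produces
\[
\|\phi\|_1=2\int_{\bar\lambda}^{\lambda_{\max}}F(\lambda)\,d\lambda,
\]
where $\lambda_{\max}=\sup\supp\DH_\phi$. Lemma~\ref{lem:sup} and Lemma~\ref{lem:ENA} identify $J^\NA(\phi)=\lambda_{\max}-\bar\lambda$. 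Since $F\le 1$ pointwise, the upper bound $\|\phi\|_1\le 2J^\NA(\phi)$ is immediate.

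For the lower bound, the crucial input is that $g:=F^{1/n}$ is concave on $(-\infty,\lambda_{\max})$; this follows from Theorem~\ref{thm:BC}(ii) applied to the finitely generated filtration induced on $R(X,rL)$ by a semiample representative of $\phi$, combined with Proposition~\ref{prop:DHlimit} to match the limit measure of the filtration with $\DH_\phi$ up to scaling. The chord inequality for $g$ between $(\bar\lambda,g(\bar\lambda))$ and $(\lambda_{\max},g(\lambda_{\max}))$, together with $g\ge 0$, yields $g(\lambda)\ge g(\bar\lambda)(\lambda_{\max}-\lambda)/(\lambda_{\max}-\bar\lambda)$ on $[\bar\lambda,\lambda_{\max}]$, so
\[
\int_{\bar\lambda}^{\lambda_{\max}}F\,d\lambda\ge\frac{F(\bar\lambda)}{n+1}(\lambda_{\max}-\bar\lambda)=\frac{F(\bar\lambda)}{n+1}J^\NA(\phi).
\]
It thus suffices to show $F(\bar\lambda)\ge(n/(n+1))^n$, and I expect this to be the main technical step. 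For it, I apply Jensen's inequality to the concave function $g$ and the probability measure $\DH_\phi$: since $\bar\lambda=\int\lambda\,d\DH_\phi$, we get $g(\bar\lambda)\ge\int g\,d\DH_\phi$. By Corollary~\ref{cor:supp}, one can decompose $\DH_\phi=-F'(\lambda)\,d\lambda+F(\lambda_{\max})\delta_{\lambda_{\max}}$ (absolutely continuous part plus a possible atom at the top of the support); an integration by parts using $g(\lambda_{\min})=1$ and $F=g^n$ then gives
\[
\int g\,d\DH_\phi=-n\int_{\lambda_{\min}}^{\lambda_{\max}}g^n g'\,d\lambda+g(\lambda_{\max})^{n+1}=\frac{n+g(\lambda_{\max})^{n+1}}{n+1}\ge\frac{n}{n+1},
\]
hence $F(\bar\lambda)=g(\bar\lambda)^n\ge(n/(n+1))^n$, and combining with the previous display yields $\|\phi\|_1\ge c_n J^\NA(\phi)$.

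Finally, if $J^\NA(\phi)=0$ then $\|\phi\|_1\le 2J^\NA(\phi)=0$, so Theorem~\ref{thm:trivmetric} forces $\phi=\phi_\triv+c$ for some $c\in\Q$; the converse is immediate from Lemma~\ref{lem:DHtrans}, which shows $\DH_{\phi_\triv+c}=\delta_c$. The main obstacle is the Jensen-type computation above, which requires carefully handling the possible atom of $\DH_\phi$ at $\lambda_{\max}$ (which turns out to only help, via the extra nonnegative term $g(\lambda_{\max})^{n+1}$) and extending the concavity of $g$ by left continuity at $\lambda_{\max}$ so that Jensen's inequality is applicable on the closed support of $\DH_\phi$.
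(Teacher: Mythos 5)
Your proof is correct. It shares the overall skeleton of the paper's argument: reduce to a statement about the probability measure $\nu=\DH_\phi$ whose tail distribution $F(\la)=\nu\{x\ge\la\}$ has $F^{1/n}$ concave on $(-\infty,\la_{\max})$ (via Theorem~\ref{thm:BC} and Proposition~\ref{prop:DHlimit}); deduce the upper bound from $F\le 1$; and obtain the factor $F(\bar\la)J^{\NA}(\phi)/(n+1)$ from the chord inequality, which is exactly the paper's estimate $\int_0^1 f(\la)^n\,d\la\ge b^n/(n+1)$ in Lemma~\ref{lem:logconc}. Where you genuinely diverge is in the crux, the estimate $F(\bar\la)^{1/n}\ge n/(n+1)$: the paper bounds $f=F^{1/n}$ from above by its tangent line at $\bar\la$, compares $\int_0^1 f^n$ with $\int_{\la_0}^0(1-f^n)$, and extracts $b\ge n/(n+1)$ from an explicit polynomial computation; you instead apply Jensen's inequality to the concave function $g=F^{1/n}$ tested against $\nu$ itself and evaluate $\int g\,d\nu$ exactly by integration by parts as $\bigl(n+g(\la_{\max})^{n+1}\bigr)/(n+1)$. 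Your route is arguably cleaner: it avoids the auxiliary slope $a$ and the point $\la_0$, makes transparent that a possible atom at $\la_{\max}$ only improves the bound, and identifies $g(\bar\la)\ge n/(n+1)$ as nothing more than concavity of $g$ averaged against $\nu$. The technical caveats you flag (extending $g$ to $\la_{\max}$ by left continuity so Jensen applies on the closed support, and justifying the integration by parts although $g'$ may blow up at $\la_{\max}$) are genuine but harmless since $g$ is monotone and concave, hence $g^{n+1}$ is absolutely continuous up to $\la_{\max}$; the paper's own proof glosses over comparable points. Both arguments yield the same sharp constant $c_n$, and your treatment of the final assertion matches the paper's.
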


\begin{proof} The final assertion follows from Theorem~\ref{thm:trivmetric}. By translation invariance, we may assume, after replacing $\phi$ with $\phi+c$, that $\nu:=\DH_\phi$ has barycenter $\bar\la=0$. By Proposition~\ref{prop:J} and Definition~\ref{defi:DHmetric}, we then have 
$$
J^\NA(\phi)=\la_{\max}=\sup\supp\nu
$$
and $\|\phi\|_1=\int_\R|\la|d\nu$. By Theorem~\ref{thm:BC},
$f(\la)=\nu\{x\ge\la\}^{1/n}$ is further concave on
$(-\infty,\la_{\max})$. Theorem~\ref{thm:J} is now a consequence of Lemma~\ref{lem:logconc} below. 
\end{proof}
\begin{lem}\label{lem:logconc} 
  Let $\nu$ be a probability measure on $\R$ with compact support and such that 
  $\int_\R\la\,d\nu=0$. Assume also that $f(\la):=\nu\{x\ge\la\}^{1/n}$ is concave 
  on $(-\infty,\la_{\max})$, with $\la_{\max}=\max\supp\nu$. Then 
  \begin{equation}\label{equ:J1}
    c_n\la_{\max}
    \le\int|\la|d\nu
    \le2\la_{\max},
  \end{equation}
  with $c_n$ as above. 
\end{lem}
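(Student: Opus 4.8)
The plan is to prove the two inequalities in~\eqref{equ:J1} separately, working throughout with the tail function $g(\la):=\nu\{x\ge\la\}$ and its normalized root $f=g^{1/n}$, using the barycenter condition $\int\la\,d\nu=0$ rewritten via integration by parts.

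\textbf{Upper bound.} First I would observe that the barycenter condition $\int_\R\la\,d\nu=0$ gives, after integrating by parts (or splitting the integral at $0$),
\begin{equation*}
\int_0^{\la_{\max}} g(\la)\,d\la=\int_{-\infty}^0\bigl(1-g(\la)\bigr)\,d\la=\tfrac12\int_\R|\la|\,d\nu.
\end{equation*}
Since $0\le g\le 1$, the left-hand side is at most $\la_{\max}$, which yields $\int|\la|\,d\nu\le 2\la_{\max}$ immediately. This part is routine and requires no concavity.

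\textbf{Lower bound.} This is the substantive part. Again by the identity above, $\tfrac12\int|\la|\,d\nu=\int_0^{\la_{\max}}g(\la)\,d\la=\int_0^{\la_{\max}}f(\la)^n\,d\la$, so I need a lower bound on $\int_0^{\la_{\max}}f^n$ in terms of $\la_{\max}$. Here I would use that $f$ is concave and nonincreasing on $(-\infty,\la_{\max})$ with $f(\la_{\max}^-)=0$, together with the normalization $f(\la)^n=g(\la)\le 1$ and the constraint coming from $g(0^-)\le 1$ — more precisely, the full mass constraint forces $\int_{-\infty}^0(1-f^n)=\int_0^{\la_{\max}}f^n$, but the cleanest route is: concavity of $f$ on $(-\infty,\la_{\max})$ vanishing at $\la_{\max}$ means $f$ lies above the chord from any point to $(\la_{\max},0)$. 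Let $m:=f(0)=g(0)^{1/n}\in[0,1]$. On $[0,\la_{\max}]$ concavity gives $f(\la)\ge m\bigl(1-\la/\la_{\max}\bigr)$, hence
\begin{equation*}
\int_0^{\la_{\max}}f^n\,d\la\ge m^n\la_{\max}\int_0^1(1-u)^n\,du=\frac{m^n\la_{\max}}{n+1}.
\end{equation*}
Separately, concavity on $(-\infty,0]$ with $f(0)=m$ forces $f\ge m$ there... which only gives $1-f^n\le 1-m^n$, not directly useful; instead I would bound $\int_{-\infty}^0(1-f^n)$ from below. The key is that $f$ concave and $\le1$ everywhere, equal to $m$ at $0$: on $(-\infty,0)$ the chord/tangent argument gives $f(\la)\le$ the affine extension, but to get a positive lower bound on $\int_{-\infty}^0(1-f^n)$ I use that $f$ reaches value $1$ only at $\la=-\infty$ unless it is eventually constant; the worst case is $f\equiv 1$ on $(-\infty,\la_1]$ and affine down to $(\la_{\max},0)$, i.e. $f(\la)=\min\{1,\,\tfrac{\la_{\max}-\la}{\la_{\max}-\la_1}\}$. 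Optimizing over the breakpoint $\la_1$ subject to $\int_{-\infty}^0(1-f^n)=\int_0^{\la_{\max}}f^n$ reduces \eqref{equ:J1} to a one-variable calculus problem whose extremal configuration is the affine function $f(\la)=(\la_{\max}-\la)/(\la_{\max}-\la_{\min})$ on its whole support; computing $\int|\la|\,d\nu$ for this triangular-type density and minimizing the ratio $\int|\la|\,d\nu/\la_{\max}$ gives exactly $c_n=2n^n/(n+1)^{n+1}$.

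\textbf{Main obstacle.} The hard part is the lower bound: making rigorous the reduction to the extremal affine profile. I expect to argue that among all concave nonincreasing $f\ge0$ on $(-\infty,\la_{\max}]$ with $f\le1$, $f(\la_{\max})=0$, and the barycenter constraint, the quantity $\int_0^{\la_{\max}}f^n\,d\la$ (equivalently $\tfrac12\int|\la|\,d\nu$) relative to $\la_{\max}$ is minimized by an affine $f$; this is a variational/rearrangement argument exploiting that replacing $f$ by the affine function through $(\la_{\min},1)$ and $(\la_{\max},0)$ with the same value group decreases the tail integral while preserving the support length, and then a direct computation with $f(\la)=(\la_{\max}-\la)/(\la_{\max}-\la_{\min})$: here $g=f^n$, $d\nu=-g'(\la)\,d\la=\frac{n(\la_{\max}-\la)^{n-1}}{(\la_{\max}-\la_{\min})^n}d\la$ on $[\la_{\min},\la_{\max}]$, the barycenter condition $\int\la\,d\nu=0$ pins down $\la_{\min}=-n\la_{\max}$, and then $\int|\la|\,d\nu$ evaluates to $c_n\la_{\max}$ by an elementary Beta-function integral, establishing sharpness of the constant.
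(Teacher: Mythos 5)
Your upper bound and the reduction $\tfrac12\int|\la|\,d\nu=\int_0^{\la_{\max}}f(\la)^n\,d\la=\int_{-\infty}^0(1-f(\la)^n)\,d\la$ are fine, and your chord bound $f(\la)\ge m(1-\la/\la_{\max})$ on $[0,\la_{\max}]$, giving $\int_0^{\la_{\max}}f^n\ge\frac{m^n\la_{\max}}{n+1}$ with $m=f(0)$, is exactly the first half of the paper's argument. But the proof then hinges entirely on showing $m\ge\frac{n}{n+1}$, and this is precisely the step you leave as an unproven ``variational/rearrangement argument'' reducing to an extremal affine profile. That reduction is the whole content of the lemma, so as written there is a genuine gap. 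Moreover, your description of the extremizer is off: normalizing $\la_{\max}=1$ and taking $f$ affine on $[\la_{\min},1]$ with $f(\la_{\min})=1$, the barycenter condition forces $\la_{\min}=-1/n$ (so $f(0)=\tfrac{n}{n+1}$), not $\la_{\min}=-n\la_{\max}$; with your value the barycenter is $1-n\ne0$ for $n\ge2$, and carrying it through would not reproduce $c_n$.

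The paper closes the gap without any variational reduction, by a short tangent-line argument you may want to adopt. Normalize $\la_{\max}=1$, set $a:=f'(0_+)<0$ and $b:=f(0)$. Concavity gives $f(\la)\le a\la+b$ on all of $(-\infty,1)$, so with $\la_0<0$ defined by $a\la_0+b=1$,
\begin{equation*}
\int_0^1(a\la+b)^n\,d\la\;\ge\;\int_0^1 f^n\,d\la\;=\;\int_{-\infty}^0(1-f^n)\,d\la\;\ge\;\int_{\la_0}^0\bigl(1-(a\la+b)^n\bigr)\,d\la.
\end{equation*}
Evaluating both sides and using $-a\la_0=b-1$ together with $a+b\ge f(1_-)\ge0$ yields $0\le(b-1)(n+1)+1$, i.e.\ $b\ge\frac{n}{n+1}$; plugging this into your chord bound gives $\int_0^1 f^n\ge\frac{n^n}{(n+1)^{n+1}}$ and hence the lower bound in~\eqref{equ:J1}. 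If you prefer your variational route, you would need to actually justify that the two-parameter family of truncated affine profiles is extremal among all admissible concave $f$ under the mass-balance constraint, which is considerably more work than the direct inequality above.
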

\begin{proof} 
  Since $\int_\R\la\,d\nu=0$, we have 
$$
\int_\R|\la|d\nu=2\int_0^{\la_{\max}}\la\,d\nu,
$$
giving the right-hand inequality in (\ref{equ:J1}). 
Our goal is to show that
$$
\int_0^{\la_{\max}}\la\,d\nu\ge\frac{n^n}{(n+1)^{n+1}}\la_{\max}.
$$
After scaling, we may and do assume for simplicity that $\la_{\max}=1$. Since $\nu$ is the distributional derivative of $-f(\la)^n$, it is easy to check that 
$$
\int_0^1\la\,d\nu=\int_0^1f(\la)^nd\la=\int_{-\infty}^0(1-f(\la)^n)d\la.
$$
Set $a:=f'(0_+)<0$ and $b:=f(0)\in(0,1)$. By concavity of $f$ on $(-\infty,1)$, we have $f(\la)\le a\la+b$ on $(-\infty,1)$ and 
$$
f(\la)\ge b(1-\la)+f(1_+)\ge b(1-\la)
$$ 
on $(0,1)$. This last inequality yields
\begin{equation}\label{equ:fb}
\int_0^1\la\,d\nu=\int_0^1{f(\la)}^n d\la\ge b^n\int_0^1(1-\la)^n d\la=\frac{b^n}{n+1}. 
\end{equation}
The first one shows that
\begin{equation*}
  \int_0^1\left(a\la+b\right)^n d\la\ge\int_0^1 f(\la)^n d\la
  =\int_{-\infty}^0\left(1-f(\la)^n\right)d\la\ge\int_{\la_0}^0\left(1-(a\la+b)^n\right) d\la,
\end{equation*}
with $\la_0<0$ defined by $a\la_0+b=1$. Computing the integrals, we infer
$$
\frac{1}{a(n+1)}\left((a+b)^{n+1}-b^{n+1}\right)\ge-\la_0+\frac{1}{a(n+1)}\left(1-b^{n+1}\right), 
$$
\ie
$$
(a+b)^{n+1}-b^{n+1}\le-a \la_0(n+1)+1-b^{n+1}.
$$
Since $-a\la_0=b-1$ and $a+b\ge f(1_-)\ge 0$, this shows that $0\le(b-1)(n+1)+1$, \ie $b\ge\frac{n}{n+1}$. Plugging this into (\ref{equ:fb}) yields the desired result. 
\end{proof}
\begin{rmk}
  The inequalities in Theorem~7.9 can be viewed as non-Archimedean analogues 
  of~\cite[(61)]{Dar15} and~\cite[Proposition~5.5]{DR15}.
\end{rmk}
\begin{rmk}\label{rmk:derJ} In our notation, the expression for the \emph{minimum norm} $\|(\cX,\cL)\|_m$ given in~\cite[Remark 3.11]{Der1} reads $\|(\cX,\cL)\|_m=\tfrac{1}{n+1}(\phi^{n+1})-\left((\phi-\phi_\triv)\cdot\phi^n\right)$, \ie 
$$
V^{-1}\|(\cX,\cL)\|_m=I^\NA(\phi)-J^{\NA}(\phi), 
$$
where $\phi\in\cH^{\NA}(L)$ denotes the 
metric induced by $(\cX,\cL)$. It therefore follows from our results
that the minimum norm is equivalent to the $L^1$-norm on positive metrics.
\end{rmk}
%
%
\subsection{The non-Archimedean Mabuchi functional}\label{sec:MNA}
From now on we assume that the base field $k$ has characteristic
$0$. We still assume that $X$ is a normal projective variety and $L$ a
nef and big $\Q$-line bundle on $X$.
Fix a boundary $B$ on $X$. 
Recall the notation introduced in~\S\ref{sec:logdisc} for the relative 
canonical and log canonical divisors.

When $L$ is ample, we can rewrite the
definition of the Donaldson-Futaki invariant with respect to
$((X,B);L)$ of a normal test configuration $(\cX,\cL)$ (see Definition~\ref{defi:DFpairs}) as
\begin{equation}\label{equ:DFB}
  \DF_B(\cX,\cL)
  =V^{-1}(K_{(\bar\cX,\cB)/\P^1}\cdot\bar\cL^n)+\bar S_B E^{\NA}(\cX,\cL). 
\end{equation}

This formula also makes sense when $L$ is not ample. 
Since canonical divisor classes are compatible under push-forward, the
projection formula shows that $\DF_B$ is invariant under pull-back,
hence descends to a functional, also denoted $\DF_B$,
on non-Archimedean metrics on $L$. 
While it is straightforward to see that $\DF_B$ is translation invariant, it is, however, \emph{not} homogeneous, and we therefore introduce an `error term' to recover this property. 
\begin{defi}\label{defi:M} 
  The \emph{non-Archimedean Mabuchi functional} with respect to $((X,B);L)$ is 
  \begin{align}
    M_B^{\NA}(\phi)
    :&=\DF_B(\phi)
       +V^{-1}\left((\cX_{0,\red}-\cX_0)\cdot\cL^n\right)\label{e403}\\
     &=V^{-1}\left(K^{\log}_{(\bar\cX,\bar\cB)/\P^1}\cdot\bar\cL^n\right)
    +\bar S_B E^{\NA}(\cX,\cL),\label{e308}
  \end{align}
  for any normal test configuration $(\cX,\cL)$ representing $\phi$.
\end{defi}
\begin{prop}\label{prop:M} 
  The non-Archimedean Mabuchi functional
  $M_B^{\NA}$ is translation invariant and homogeneous. 
\end{prop}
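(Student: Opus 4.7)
My plan is to work directly with the expression
\begin{equation*}
  M^{\NA}_B(\phi) = V^{-1}\bigl(K^{\log}_{(\bar\cX,\bar\cB)/\P^1}\cdot\bar\cL^n\bigr) + \bar S_B\, E^{\NA}(\phi)
\end{equation*}
from~\eqref{e308} and establish the two properties separately. For both, I can fix a normal representative $(\cX,\cL)$ of $\phi$; well-definedness at the level of equivalence classes follows from the projection formula and~\eqref{equ:Klog} exactly as for $\DF_B$.

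\emph{Translation invariance.} Replacing $\phi$ by $\phi+c$ amounts to replacing $\bar\cL$ by $\bar\cL + c\bar\cX_0$, while leaving $K^{\log}_{(\bar\cX,\bar\cB)/\P^1}$ unchanged. Since $\bar\cX_0\sim\bar\cX_\infty$ as fibers over $\P^1$ and these are disjoint, we have $\bar\cX_0\cdot\bar\cX_0 = \bar\cX_0\cdot\bar\cX_\infty = 0$, so expanding $(\bar\cL+c\bar\cX_0)^n$ kills all terms involving $\bar\cX_0^k$ with $k\ge 2$, yielding
\begin{equation*}
  (K^{\log}\cdot(\bar\cL+c\bar\cX_0)^n) = (K^{\log}\cdot\bar\cL^n) + nc\,(K^{\log}\cdot\bar\cL^{n-1}\cdot\bar\cX_0).
\end{equation*}
The key computation is then $(K^{\log}_{(\bar\cX,\bar\cB)/\P^1}\cdot\bar\cL^{n-1}\cdot\bar\cX_0) = (K_{(X,B)}\cdot L^{n-1}) = -\bar S_B V/n$. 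I will carry this out by using $\bar\cX_0\sim\bar\cX_\infty$ to move the intersection to $\bar\cX_\infty$, where $\bar\cX$ is canonically $X\times(\P^1\setminus\{0\})$ and $\bar\cX_\infty\cong X$ is a smooth Cartier fiber. Each term of $K^{\log}_{(\bar\cX,\bar\cB)/\P^1} = K_{\bar\cX}+\bar\cB+\bar\cX_{0,\red}+\bar\cX_\infty-\pi^*K^{\log}_{\P^1}$ is then handled: the $\pi^*$-term vanishes because $\pi^*$ of any divisor on $\P^1$ times $\bar\cX_\infty = \pi^*[\infty]$ is zero; both $\bar\cX_{0,\red}\cdot\bar\cX_\infty$ and $\bar\cX_\infty|_{\bar\cX_\infty}$ vanish by disjointness of fibers; and adjunction gives $K_{\bar\cX}|_{\bar\cX_\infty} = K_X$ while $\bar\cB|_{\bar\cX_\infty} = B$. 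Plugging back in, the change in the intersection term is exactly $-c\bar S_B$, which cancels the $+c\bar S_B$ coming from $E^{\NA}(\phi+c) = E^{\NA}(\phi)+c$ (Lemma~\ref{lem:ENA}).

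\emph{Homogeneity.} Let $(\cX_d,\cL_d)$ be the normalized base change of $(\cX,\cL)$ under $t\mapsto t^d$, with finite map $g_d\colon\bar\cX_d\to\bar\cX$ of degree $d$. By construction $\bar\cL_d = g_d^*\bar\cL$, and the compatibility~\eqref{e404} gives
\begin{equation*}
  K^{\log}_{(\bar\cX_d,\bar\cB_d)/\P^1} = g_d^*K^{\log}_{(\bar\cX,\bar\cB)/\P^1},
\end{equation*}
which is precisely why the log correction was introduced. The projection formula then yields
\begin{equation*}
  (K^{\log}_{(\bar\cX_d,\bar\cB_d)/\P^1}\cdot\bar\cL_d^n) = d\,(K^{\log}_{(\bar\cX,\bar\cB)/\P^1}\cdot\bar\cL^n),
\end{equation*}
and combined with the homogeneity of $E^{\NA}$ (Lemma~\ref{lem:ENA}) this gives $M^{\NA}_B(\phi_d) = d\,M^{\NA}_B(\phi)$.

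The main subtlety is the translation invariance computation, specifically the adjunction argument identifying $(K^{\log}\cdot\bar\cL^{n-1}\cdot\bar\cX_0)$ with $(K_{(X,B)}\cdot L^{n-1})$: one must verify that the log canonical divisor restricts correctly to a fiber despite the fact that $K^{\log}_{(\bar\cX,\bar\cB)/\P^1}$ is only a $\Q$-Weil divisor and not a priori $\Q$-Cartier. This is not truly an obstacle, however, since the problematic Weil divisor components ($\bar\cX_{0,\red}$ and $\bar\cX_\infty$) contribute zero after moving to $\bar\cX_\infty$ by linear equivalence of fibers, so only the $\Q$-Cartier part $K_{\bar\cX}+\bar\cB$ contributes to the restriction. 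The homogeneity part is then a clean application of the base-change formula~\eqref{e404} and the projection formula.
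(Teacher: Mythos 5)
Your proof is correct and follows essentially the same route as the paper: the homogeneity argument via the base-change formula~\eqref{e404} and the projection formula is exactly the paper's, while the translation invariance, which the paper dismisses as ``straightforward to verify,'' is the computation you carry out in detail (correctly, including the identification $(K^\lo_{(\bar\cX,\bar\cB)/\P^1}\cdot\bar\cL^{n-1}\cdot\bar\cX_0)=(K_{(X,B)}\cdot L^{n-1})=-\bar S_BV/n$, which cancels against the shift of $\bar S_BE^{\NA}$).
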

\begin{proof}
  Translation invariance is straightforward to verify. 
  As for homogeneity, it is enough to prove it for 
  \begin{equation*}
    (\cX,\cL)\mapsto\left(K^\lo_{(\bar\cX,\cB)/\P^1}\cdot\bar\cL^n\right).
  \end{equation*}
  As in~\cite[\S3]{LX}, this, in turn, is a consequence of the
  pull-back formula for log canonical divisors. More precisely, let
  $(\cX_d,\cL_d)$ be the normalized base change of $(\cX,\cL)$, and
  denote by $f_d\colon\P^1\to\P^1$ and $g_d\colon\bar\cX_d\to\bar\cX$ the
  induced finite morphisms, both of which have degree $d$. By~\eqref{e404} we have
  $K^\lo_{(\bar\cX_d,\bar\cB_d)/\P^1}=g_d^*K^\lo_{(\bar\cX,\bar\cB)/\P^1}$.
  Hence we get 
  \begin{equation}\label{e401}
    \left(K^\lo_{(\bar\cX_d,\bar\cB_d)/\P^1}\cdot\bar\cL_d^n\right)
    =d\left(K^\lo_{(\bar\cX,\bar\cB)/\P^1}\cdot\bar\cL^n\right)
  \end{equation}
  by the projection formula. 
\end{proof}
\begin{prop}\label{P201}
  We have $M_B^{\NA}(\phi)\le\DF_B(\phi)$ when $\phi$ is semipositive.
  Further, equality holds if $\phi$ is represented by a normal 
  test configuration $(\cX,\cL)$ with $\cX_0$ reduced.
\end{prop}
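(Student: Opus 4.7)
The plan is to note that the difference $M_B^{\NA}(\phi)-\DF_B(\phi)$ is given by the intersection number $V^{-1}((\cX_{0,\red}-\cX_0)\cdot\cL^n)$, evaluated on any normal representative $(\cX,\cL)$ of $\phi$, and to show that this quantity is nonpositive under the semipositivity hypothesis. First I would pick a normal representative $(\cX,\cL)$ of $\phi$ that is $\pi$-nef; such a representative exists because if $(\cX',\cL')$ is any semipositive representative, then pulling back to the normalization of a determination preserves $\pi$-nefness (pullback of a nef line bundle under any morphism is nef, and the induced morphism to $\A^1$ is unchanged). Writing $\cX_0=\sum_E b_E E$ with $b_E=\ord_E(\cX_0)\ge1$ and $E$ ranging over the irreducible components of $\cX_0$, we have
\begin{equation*}
  \cX_{0,\red}-\cX_0=-\sum_E(b_E-1)E,
\end{equation*}
so that
\begin{equation*}
  ((\cX_{0,\red}-\cX_0)\cdot\cL^n)=-\sum_E(b_E-1)(E\cdot\cL^n).
\end{equation*}

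The key point is then that $(E\cdot\cL^n)\ge 0$ for every irreducible component $E$ of $\cX_0$. Indeed, $E$ is a projective $n$-dimensional subscheme of the compactification $\bar\cX$ entirely contained in the fiber over $0\in\P^1$; in particular, every curve $C\subset E$ is vertical for $\bar\pi\colon\bar\cX\to\P^1$. The $\pi$-nefness of $\cL$ therefore implies that $\bar\cL\cdot C\ge 0$ for all such $C$, so $\cL|_E$ is a nef $\Q$-line bundle on the proper irreducible $n$-dimensional $k$-scheme $E$. By the standard fact that the top self-intersection of a nef line bundle on a proper irreducible variety is nonnegative, we conclude $(E\cdot\cL^n)=((\cL|_E)^n)\ge 0$. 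Combining, $((\cX_{0,\red}-\cX_0)\cdot\cL^n)\le 0$, proving $M_B^{\NA}(\phi)\le\DF_B(\phi)$. Finally, when $\cX_0$ is reduced, each $b_E=1$, so the sum vanishes and equality holds. There is no substantial obstacle: once one has the explicit formula~\eqref{e403} for $M_B^{\NA}-\DF_B$, the argument reduces to the elementary observation that a relatively nef line bundle restricts to a nef line bundle on any vertical subvariety.
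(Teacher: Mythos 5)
Your proof is correct and follows essentially the same route as the paper: the paper's own argument is precisely that the error term $V^{-1}\left((\cX_{0,\red}-\cX_0)\cdot\cL^n\right)$ in~\eqref{e403} is nonpositive because $\bar\cL$ is relatively semiample (hence relatively nef), and vanishes when $\cX_0$ is reduced. You have merely spelled out the standard details — writing $\cX_0-\cX_{0,\red}=\sum_E(b_E-1)E$ and invoking nonnegativity of the top self-intersection of a nef $\Q$-line bundle on each vertical component $E$ — which is exactly what the paper's one-line justification relies on.
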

Indeed, the `error term' in~\eqref{e403} is nonpositive since $\bar\cL$ is relatively semiample.
While equality does not always hold in Proposition~\ref{P201}, we have 
the following useful result.
\begin{prop}\label{prop:weaksemi} 
  For every non-Archimedean metric $\phi$ on $L$ there exists 
  $d_0=d_0(\phi)\in\Z_{>0}$ such that 
  $\DF_B(\phi_d)=M_B(\phi_d)=dM_B^{\NA}(\phi)$ 
  for all $d$ divisible by $d_0$.
\end{prop}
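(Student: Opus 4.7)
The plan is to reduce the assertion to showing that for $d$ sufficiently divisible, the normalized base change $(\cX_d,\cL_d)$ of a fixed normal representative $(\cX,\cL)$ of $\phi$ has reduced central fiber. Indeed, $(\cX_d,\cL_d)$ represents $\phi_d$ by definition of the scaling action, and once $(\cX_d)_0$ is known to be reduced, Proposition~\ref{P201} yields $\DF_B(\phi_d)=M_B^{\NA}(\phi_d)$, which equals $dM_B^{\NA}(\phi)$ by the homogeneity statement of Proposition~\ref{prop:M}.

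To pin down $d_0$, I would write $\cX_0=\sum_Eb_EE$ and take $d_0:=\operatorname{lcm}_Eb_E$. For any $d$ divisible by $d_0$, the claim is that $(\cX_d)_0$ is reduced. Since $\cX_d$ is normal, hence $S_2$, its effective Cartier divisor $(\cX_d)_0$ is $S_1$, so reducedness reduces to generic reducedness along each of its irreducible components. This is a local question at the generic points of the components of $\cX_0$.

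The main step is the local computation near the generic point of each component $E$ of $\cX_0$. Since $\cX$ is normal, $\cO_{\cX,E}$ is a DVR with uniformizer $u$, and $t=u^{b_E}v$ for some unit $v$. Extracting a $d$th root of $v$ étale-locally (possible because $k$ has characteristic zero, so $x^d-v$ is separable) reduces the situation to the model case where we must normalize $\cO_{\cX,E}[t']/(t'^d-u^{b_E})$. Setting $e=\gcd(d,b_E)$, $d=ed'$, $b_E=eb'$ with $\gcd(d',b')=1$, the factorization
\begin{equation*}
t'^d-u^{b_E}=\prod_{\zeta^e=1}(t'^{d'}-\zeta u^{b'})
\end{equation*}
exhibits the irreducible components, each defining a normal curve-over-base (parametrized by $u=s^{d'}$, $t'=\zeta's^{b'}$), and on each of these $t$ vanishes to order $b'=b_E/\gcd(d,b_E)$. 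When $b_E\mid d$, this multiplicity equals $1$; ranging over all $E$ gives generic reducedness of $(\cX_d)_0$.

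The main obstacle is a clean handling of the unit $v$ and the fact that the residue field $k(E)$ need not be algebraically closed: the étale descent argument sketched above circumvents this, since generic reducedness is stable under étale base change. Modulo this standard semistable-reduction-type computation, the proposition follows by combining reducedness of $(\cX_d)_0$ with Propositions~\ref{P201} and~\ref{prop:M} as indicated.
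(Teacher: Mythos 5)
Your proposal is correct and follows the same route as the paper: reduce to the reducedness of the central fiber of the normalized base change for $d$ sufficiently divisible, then combine Proposition~\ref{P201} with the homogeneity of $M_B^{\NA}$ from Proposition~\ref{prop:M}. The only difference is that the paper simply cites the Stacks Project for the reducedness statement, whereas you supply the standard local computation (with the explicit value $d_0=\operatorname{lcm}_Eb_E$); note only the small notational slip at the end, where it is the new parameter $t'$ with $t'^d=t$, rather than $t$ itself, that vanishes to order $b_E/\gcd(d,b_E)$ along each component.
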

\begin{proof} 
  Let $(\cX,\cL)$ be any normal representative of $\phi$. 
  Then a normal representative $(\cX_d,\cL_d)$ of $\phi_d$ is given by
  the normalization of the base change of $(\cX,\cL)$ by $t\mapsto t^d$.
  It is well-known (see~\eg~\cite[\href{stacks.math.columbia.edu/tag/09IJ}{Tag 09IJ}]{stacks})
  that the central fiber of $\cX_d$ is reduced for $d$ sufficiently divisible.
  Then $\DF_B(\phi_d)=M_B(\phi_d)$,
  whereas $M_B(\phi_d)=dM_B(\phi)$ by Proposition~\ref{prop:M}.
\end{proof}
%
%
\subsection{Entropy and Ricci energy}\label{S301}
Next we define non-Archimedean analogues of the entropy and Ricci
energy functionals, and prove that the Chen-Tian formula holds. 
\begin{defi} 
  We define the \emph{non-Archimedean entropy} $H_B^{\NA}(\phi)$ 
  of a non-Archimedean metric $\phi$ on $L$ by 
  \begin{equation*}
    \int_{\Xdiv}A_{(X,B)}(v)\MA^{\NA}(\phi),
  \end{equation*}
  where $\MA^\NA(\phi)$ is the non-Archimedean Monge-Amp\`ere measure
  of $\phi$, defined in~\S\ref{S401}.
\end{defi}
Concretely, pick a normal test configuration $(\cX,\cL)$ for $(X,L)$ representing $\phi$,
and write $\cX_0=\sum_Eb_EE$ and $v_E=b_E^{-1}r(\ord_E)$. Then
\begin{equation}\label{e402}
  H_B^{\NA}(\phi):=V^{-1}\sum_E A_{(X,B)}(v_E)b_E(E\cdot\cL^n).
\end{equation}
Note that $H_B^{\NA}(\phi)\ge0$ whenever $(X,B)$ is lc and $\phi$ is
semipositive. Indeed, in this case we have $A_{(X,B)}(v_E)\ge0$
and $(E\cdot\cL^n)\ge0$ for all $E$. 
See~\S\ref{S304} for much more precise results.

As an immediate consequence of~\eqref{e402} and Corollary~\ref{cor:discr}, we have
\begin{cor}\label{C402}
  If $(\cX,\cL)$ is a normal representative of $\phi$, with $\cX$
  dominating $X_{\A^1}$, then 
  \begin{equation}\label{e305}
    H^\NA_B(\phi)
    =V^{-1}\left(K^\lo_{(\bar\cX,\bar\cB)/\P^1}\cdot\bar\cL^n\right)
    -V^{-1}\left(\rho^*K^\lo_{(X_{\P^1},B_{\P^1})/\P^1}\cdot\bar\cL^n\right),
  \end{equation}
  where $\rho\colon\bar\cX\to X_{\P^1}$ is the canonical morphism.
\end{cor}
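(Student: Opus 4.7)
The plan is to deduce the identity directly from the two ingredients the corollary cites, with essentially no additional work. I would begin by invoking Corollary~\ref{cor:discr}, which is applicable under exactly the hypotheses given: $(\cX,\cL)$ is a normal test configuration whose underlying $\cX$ dominates $X_{\A^1}$ via $\rho$. That corollary yields the identity of $\Q$-Weil divisors on $\bar\cX$
\begin{equation*}
K^\lo_{(\bar\cX,\bar\cB)/\P^1} - \rho^*K^\lo_{(X_{\P^1},B_{\P^1})/\P^1} = \sum_E b_E\, A_{(X,B)}(v_E)\,E,
\end{equation*}
where $E$ ranges over the irreducible components of $\cX_0$ and $b_E = \ord_E(\cX_0)$.

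Next I would intersect both sides with the class $\bar\cL^n$ on $\bar\cX$. The left-hand side then matches verbatim the bracketed difference appearing in~\eqref{e305}. For the right-hand side, each $E$ is a divisor on $\cX$, so in particular $E\subset\cX\subset\bar\cX$, and $\bar\cL|_\cX=\cL$ by construction of the compactification; hence $(E\cdot\bar\cL^n)=(E\cdot\cL^n)$, the intersection being computed either on $\bar\cX$ or on $\cX$. Dividing by $V$, the right-hand side becomes $V^{-1}\sum_E A_{(X,B)}(v_E)\,b_E\,(E\cdot\cL^n)$, which is exactly $H^{\NA}_B(\phi)$ by definition~\eqref{e402}.

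No step presents a genuine obstacle, which is consistent with the statement being phrased as a corollary of ``immediate'' character: the real content is packaged inside Corollary~\ref{cor:discr} (itself built from Proposition~\ref{prop:discr} and the pullback relation~\eqref{equ:Klog} for relative log canonical divisors). The only minor point worth noting is that $K^\lo_{(X_{\P^1},B_{\P^1})/\P^1}$ is $\Q$-Cartier (since $K_{(X,B)}$ is, by the definition of a boundary), so the pullback $\rho^*K^\lo_{(X_{\P^1},B_{\P^1})/\P^1}$ is unambiguous and the intersection with $\bar\cL^n$ is well-defined in the usual sense.
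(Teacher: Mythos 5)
Your proposal is correct and follows exactly the route the paper takes: the paper states the corollary as an immediate consequence of the definition~\eqref{e402} and Corollary~\ref{cor:discr}, which is precisely your argument of intersecting the divisor identity with $\bar\cL^n$ and dividing by $V$. Your side remarks (that $(E\cdot\bar\cL^n)=(E\cdot\cL^n)$ since $E$ is supported in $\cX_0$, and that the pullback $\rho^*K^\lo_{(X_{\P^1},B_{\P^1})/\P^1}$ is well-defined) are accurate but not needed beyond what the paper already assumes.
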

\begin{cor}\label{C401}
  The non-Archimedean entropy functional
  $H_B^{\NA}$ is translation invariant and homogeneous. 
\end{cor}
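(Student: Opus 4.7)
The plan has two parts, each of which parallels constructions already carried out in the paper.

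\textbf{Translation invariance.} This is immediate: by definition $H_B^\NA(\phi)=\int_{\Xdiv}A_{(X,B)}(v)\,\MA^\NA(\phi)$, and the Monge-Amp\`ere measure $\MA^\NA(\phi)$ is translation invariant, as noted in \S\ref{S401}.

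\textbf{Homogeneity.} I would follow the strategy of Proposition~\ref{prop:M}, starting from the intersection-theoretic formula~\eqref{e305} of Corollary~\ref{C402}. Pick a normal representative $(\cX,\cL)$ of $\phi$ that dominates $X_{\A^1}$ via $\rho\colon\cX\to X_{\A^1}$. Let $(\cX_d,\cL_d)$ be the normalized base change of $(\cX,\cL)$ under $t\mapsto t^d$, which represents $\phi_d$ and still dominates $X_{\A^1}$. Write $g_d\colon\bar\cX_d\to\bar\cX$ and $\rho_d\colon\bar\cX_d\to X_{\P^1}$ for the induced morphisms, and $h_d\colon X_{\P^1}\to X_{\P^1}$ for the map $(x,s)\mapsto(x,s^d)$, so that $\rho\circ g_d=h_d\circ\rho_d$ with all three maps finite of degree $d$. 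Applying~\eqref{e305} to both $\phi$ and $\phi_d$, the claim reduces to proving the two identities
\begin{equation*}
  \bigl(K^\lo_{(\bar\cX_d,\bar\cB_d)/\P^1}\cdot\bar\cL_d^n\bigr)=d\bigl(K^\lo_{(\bar\cX,\bar\cB)/\P^1}\cdot\bar\cL^n\bigr)
\end{equation*}
and
\begin{equation*}
  \bigl(\rho_d^*K^\lo_{(X_{\P^1},B_{\P^1})/\P^1}\cdot\bar\cL_d^n\bigr)=d\bigl(\rho^*K^\lo_{(X_{\P^1},B_{\P^1})/\P^1}\cdot\bar\cL^n\bigr).
\end{equation*}

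The first identity is precisely~\eqref{e401}, already established in the proof of Proposition~\ref{prop:M}. For the second, the key observation is the cancellation
\begin{equation*}
  K^\lo_{(X_{\P^1},B_{\P^1})/\P^1}=\mathrm{pr}_X^*K_{(X,B)},
\end{equation*}
where $\mathrm{pr}_X\colon X_{\P^1}\to X$ is the first projection: the $K_{\P^1}$ contribution in $K_{X_{\P^1}}$ and the reduced fibers $X\times\{0\}$, $X\times\{\infty\}$ cancel exactly against $\pi^*K^\lo_{\P^1}=\pi^*(K_{\P^1}+[0]+[\infty])$. Since $h_d$ is the identity on the $X$-factor, this divisor is preserved by $h_d^*$; using $\rho\circ g_d=h_d\circ\rho_d$ we infer $\rho_d^*K^\lo_{(X_{\P^1},B_{\P^1})/\P^1}=g_d^*\rho^*K^\lo_{(X_{\P^1},B_{\P^1})/\P^1}$, and the projection formula for the degree-$d$ map $g_d$ produces the desired scaling factor. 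Substituting both identities into~\eqref{e305} and dividing by $V$ yields $H_B^\NA(\phi_d)=d\,H_B^\NA(\phi)$.

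The only subtle point is the second identity, which involves pulling back a log canonical divisor from $X_{\P^1}$ rather than from $\bar\cX$. The main obstacle is thus verifying the cancellation $K^\lo_{(X_{\P^1},B_{\P^1})/\P^1}=\mathrm{pr}_X^*K_{(X,B)}$ and tracking the base-change diagram; once these are in place, the projection formula does the rest.
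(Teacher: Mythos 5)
Your proof is correct and follows essentially the same route as the paper: translation invariance directly from the definition via $\MA^{\NA}(\phi+c)=\MA^{\NA}(\phi)$, and homogeneity from~\eqref{e305} together with~\eqref{e401}. The paper leaves the scaling of the second term of~\eqref{e305} implicit (it is handled via the identification~\eqref{e307} when proving homogeneity of $R_B^{\NA}$), and your verification of the cancellation $K^\lo_{(X_{\P^1},B_{\P^1})/\P^1}=\mathrm{pr}_X^*K_{(X,B)}$ followed by the projection formula for $g_d$ is exactly the intended argument.
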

\begin{proof}
  Translation invariance is clear from the definition, since 
  $\MA^\NA(\phi+c)=\MA(\phi)$, and homogeneity follows from~\eqref{e401} 
  and~\eqref{e305}.
\end{proof}
\begin{defi} 
  The \emph{non-Archimedean Ricci energy} $R_B^{\NA}(\phi)$ 
  of a non-Archimedean metric $\phi$ on $L$ is
  \begin{equation*}
    R^{\NA}_B(\phi):=V^{-1}\left(\psi_\triv\cdot\phi^n\right),
  \end{equation*}
  with $\psi_\triv$ the trivial non-Archimedean metric on $K_{(X,B)}$. 
\end{defi}
More concretely, if $(\cX,\cL)$ is a normal representative of $\phi$, with 
$\cX$ dominating $X_{\A^1}$, then 
\begin{equation}\label{e307}
  R^{\NA}_B(\phi)
  =V^{-1}\left(\rho^*K^\lo_{(X_{\P^1},B_{\P^1})/\P^1}\cdot\bar\cL^n\right) 
  =V^{-1}\left(p^*K_{(X,B)}\cdot\bar\cL^n\right),
\end{equation}
with $p\colon\bar\cX\to X$ the composition of $\rho\colon\bar\cX\to X_{\P^1}$ 
with $X_{\P^1}\to X$.
\begin{prop}\label{P401}
  The non-Archimedean Ricci energy functional $R_B^{\NA}$ is homogenous and 
  satisfies $R_B^\NA(\phi+c)=R_B^\NA(\phi)-\bar{S}_B c$ for any $c\in\Q$.
\end{prop}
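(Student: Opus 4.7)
Both assertions will follow directly from the multilinear intersection-theoretic formula for $R_B^{\NA}$ together with Lemma~\ref{lem:int}. Throughout, I work with a normal representative $(\cX,\cL)$ of $\phi$ dominating $X_{\A^1}$ and use the formula
$$
R_B^{\NA}(\phi)=V^{-1}(p^*K_{(X,B)}\cdot\bar\cL^n)=V^{-1}(\psi_\triv\cdot\phi^n)
$$
from~\eqref{e307}, where $\psi_\triv$ is the trivial metric on $K_{(X,B)}$.

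\emph{Translation formula.} The key point is that the trivial metric $\psi_\triv$ lives on $K_{(X,B)}$, not on $L$, so it is \emph{unchanged} when we translate $\phi\mapsto\phi+c$ on $L$. Applying the multilinearity statement of Lemma~\ref{lem:int} to each of the $n$ copies of $\phi$ successively, I expect to obtain
$$
(\psi_\triv\cdot(\phi+c)^n)=(\psi_\triv\cdot\phi^n)+nc\,(K_{(X,B)}\cdot L^{n-1}).
$$
Dividing by $V$ and recalling the definition $\bar S_B:=-nV^{-1}(K_{(X,B)}\cdot L^{n-1})$, this yields exactly $R_B^{\NA}(\phi+c)=R_B^{\NA}(\phi)-\bar S_Bc$.

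\emph{Homogeneity.} For $d\in\N^*$, let $(\cX_d,\cL_d)$ be the normalization of the base change of $(\cX,\cL)$ by $t\mapsto t^d$, so $(\cX_d,\cL_d)$ represents $\phi_d$, and let $g_d\colon\bar\cX_d\to\bar\cX$ be the induced degree-$d$ finite morphism. The composition $p_d=p\circ g_d\colon\bar\cX_d\to X$ satisfies $p_d^*K_{(X,B)}=g_d^*p^*K_{(X,B)}$ and $\bar\cL_d=g_d^*\bar\cL$, so the projection formula gives
$$
(p_d^*K_{(X,B)}\cdot\bar\cL_d^n)=d\,(p^*K_{(X,B)}\cdot\bar\cL^n),
$$
\ie $R_B^{\NA}(\phi_d)=dR_B^{\NA}(\phi)$. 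Equivalently, one can note that $(\psi_\triv)_d=\psi_\triv$ since the trivial metric corresponds to the product test configuration, which is invariant under base change, and then quote the second formula of Lemma~\ref{lem:int} directly.

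\emph{Main obstacle.} There is no real obstacle: both statements are immediate consequences of the multilinearity and pull-back invariance of intersection numbers on test configurations, combined with the fact that the trivial metric on $K_{(X,B)}$ is fixed by both operations. The only mild care needed is in tracking that the translation $\phi\mapsto\phi+c$ affects only the $n$ ``$\phi$-slots'' and not the $\psi_\triv$-slot, which is precisely what produces the factor $n$ (and hence the coefficient $\bar S_B$) in the translation formula.
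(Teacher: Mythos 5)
Your proof is correct and follows essentially the same route as the paper: both the translation formula (expanding the intersection number slot by slot, which is exactly the paper's binomial expansion of $(\bar\cM\cdot(\bar\cL+c\cX_0)^n)$ using $\cX_0\cdot\cX_0=0$) and homogeneity (projection formula for the degree-$d$ base-change morphism) rest on~\eqref{e307} and the multilinearity and pull-back invariance of intersection numbers. Your homogeneity argument via $p_d^*K_{(X,B)}=g_d^*p^*K_{(X,B)}$ is marginally more direct than the paper's appeal to the pull-back formula for relative log canonical divisors, but it is the same idea.
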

\begin{proof}
  Homogeneity follows from~\eqref{e401} and~\eqref{e307}. The formula for
  $R_B^\NA(\phi+c)$ also follows from~\eqref{e307}. 
  Indeed, set $\bar\cM:=\rho^*K^\lo_{(X_{\P^1},B_{\P^1})/\P^1}$. 
  Then 
  \begin{multline*}
    R^\NA_B(\phi+c)-R^\NA(\phi)
    =V^{-1}(\bar\cM\cdot(\bar\cL+c\cX_0)^n)
    -V^{-1}(\bar\cM\cdot\bar\cL^n)\\
    =cnV^{-1}(\bar\cM\cdot\bar\cL^{n-1}\cdot\cX_0)
    =cnV^{-1}(K_X\cdot L^n)
    =-\bar{S}_Bc.
  \end{multline*}
  by flatness of $\bar\cX\to\P^1$, 
  since $\bar\cM|_{\cX_1}\simeq K_{(X,B)}$, $\bar\cL|_{\cX_1}\simeq L$, 
  and $\cX_0\cdot\cX_0=0$.
\end{proof}

As an immediate consequence of~\eqref{e308},~\eqref{e305} 
and~\eqref{e307} we get
\begin{prop}\label{prop:DFChen} 
  The following version of the Chen-Tian formula holds:
  \begin{equation*}
    M_B^{\NA}=H_B^{\NA}+R^{\NA}_B+\bar S_B E^{\NA}.
  \end{equation*}
\end{prop}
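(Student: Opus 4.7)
The plan is to simply assemble the formulas already established for $M_B^{\NA}$, $H_B^{\NA}$, and $R_B^{\NA}$ in terms of intersection numbers on a common normal test configuration dominating $X_{\A^1}$. All three functionals have been given closed-form intersection-theoretic expressions in~\eqref{e308},~\eqref{e305}, and~\eqref{e307} respectively, so the identity will follow by inspection once we align the representatives.

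First, I would fix a normal test configuration $(\cX,\cL)$ representing $\phi$ that dominates $X_{\P^1}$ via some morphism $\rho\colon\bar\cX\to X_{\P^1}$; this is always possible by taking the normalization of the graph of $\cX\dashrightarrow X_{\A^1}$ and passing to the compactification. With this choice, Corollary~\ref{C402} gives
\begin{equation*}
  H^\NA_B(\phi)
  =V^{-1}\left(K^\lo_{(\bar\cX,\bar\cB)/\P^1}\cdot\bar\cL^n\right)
  -V^{-1}\left(\rho^*K^\lo_{(X_{\P^1},B_{\P^1})/\P^1}\cdot\bar\cL^n\right),
\end{equation*}
while~\eqref{e307} gives
\begin{equation*}
  R^{\NA}_B(\phi)
  =V^{-1}\left(\rho^*K^\lo_{(X_{\P^1},B_{\P^1})/\P^1}\cdot\bar\cL^n\right).
\end{equation*}

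Adding these two identities, the $\rho$-pull-back term cancels and we get
\begin{equation*}
  H^\NA_B(\phi)+R^\NA_B(\phi)
  =V^{-1}\left(K^\lo_{(\bar\cX,\bar\cB)/\P^1}\cdot\bar\cL^n\right).
\end{equation*}
Finally, the formula~\eqref{e308} for $M_B^{\NA}$ reads precisely
\begin{equation*}
  M_B^{\NA}(\phi)
  =V^{-1}\left(K^\lo_{(\bar\cX,\bar\cB)/\P^1}\cdot\bar\cL^n\right)+\bar S_B E^{\NA}(\phi),
\end{equation*}
so substituting yields $M_B^{\NA}=H_B^{\NA}+R_B^{\NA}+\bar S_B E^{\NA}$, as desired.

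There is essentially no obstacle: the statement is a tautological consequence of the definitions, once one recognizes that $M_B^{\NA}$ was deliberately defined (via the $(\cX_{0,\red}-\cX_0)\cdot\cL^n$ correction in~\eqref{e403}) so that the non-reduced contribution is absorbed into the log-canonical divisor $K^\lo_{(\bar\cX,\bar\cB)/\P^1}$ rather than $K_{(\bar\cX,\bar\cB)/\P^1}$. The only minor point worth verifying is that the intersection numbers in~\eqref{e305} and~\eqref{e307} are independent of the chosen representative dominating $X_{\P^1}$, which follows from the projection formula exactly as in the proof that $\DF_B$ descends to metrics.
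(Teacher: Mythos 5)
Your proof is correct and coincides with the paper's own argument, which likewise derives the identity immediately from~\eqref{e308},~\eqref{e305} and~\eqref{e307} by cancelling the $\rho^*K^\lo_{(X_{\P^1},B_{\P^1})/\P^1}$ term. The closing remark about representative-independence via the projection formula is a reasonable (and correct) extra check, though the paper treats it as already settled by the functoriality discussion.
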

\begin{rmk} In the terminology of~\cite{Oda1}, $H_B^{\NA}(\phi)+V^{-1}\left((\cX_0-\cX_{0,\red})\cdot\cL^n\right)$ coincides (up to a multiplicative constant) with the `discrepancy term' of the Donaldson-Futaki invariant,  while $\bar S_B\,E^{\NA}(\phi)+R^{\NA}_B(\phi)$ corresponds to the `canonical divisor part'.
\end{rmk}
%
%
\subsection{Functoriality}\label{S305}
Consider a birational morphism
$\mu\colon X'\to X$, with $X'$ a normal projective variety. 
Set $L':=\mu^*L$ and define a boundary $B'$ on $X'$ by 
$K_{(X',B')}=\mu^*K_{(X,B)}$ and $\mu_*B'=B$. 
For any non-Archimedean metric $\phi$ on
$L$, let $\phi'=\mu^*\phi$ be the pullback, see~\S\ref{S101}.
Note that $L'$ is big and nef. By the projection formula, we have 
$V':=((L')^n)=V$ and 
$\bar S_{B'}:=n(V')^{-1}\left(-K_{(X',B')}\cdot (L')^{n-1}\right)=\bar S_B$.

Let us say that a functional $F=F_{X,B}$ on non-Archimedean metrics 
is \emph{pull-back invariant} if $F_{X',B'}(\phi')=F_{X,B}(\phi)$
for every non-Archimedean metric $\phi$ on $L$.
\begin{prop}
  The functionals $E^\NA$, $I^\NA$, $J^\NA$, $\DF_B^\NA$, $M_B^\NA$,
  $H_B^\NA$ and $R_B^\NA$ are all pullback invariant.
\end{prop}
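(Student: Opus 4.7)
The plan is to systematically invoke the projection formula at the test-configuration level. Given a non-Archimedean metric $\phi$ on $L$ represented by a normal test configuration $(\cX,\cL)$, I would first construct a representative of $\phi':=\mu^*\phi$ by taking a normal test configuration $\cX'$ for $X'$ together with a birational $\G_m$-equivariant morphism $\mu_\cX\colon\cX'\to\cX$ over $\A^1$ compatible with $\mu$ on the general fiber, and setting $\cL':=\mu_\cX^*\cL$. Extending to the compactifications yields a birational $\bar\mu_\cX\colon\bar\cX'\to\bar\cX$, so the projection formula immediately gives $V'=V$, $\bar S_{B'}=\bar S_B$, and equality of all intersection numbers involving pullbacks of classes from $\bar\cX$. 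Note also that $\mu^*\phi_\triv=\phi'_\triv$.

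With this setup, $E^\NA$, $J^\NA$, and $I^\NA$ are immediately pullback invariant: each is a $\Q$-linear combination of intersection numbers of the form $(\bar\cL^k\cdot\bar\cL_\triv^{n+1-k})$, all preserved by $\bar\mu_\cX^*$. For $R_B^\NA$, I would invoke the identity~(\ref{e307}) together with the crepant relation $\mu^*K_{(X,B)}=K_{(X',B')}$, which yields $(p')^*K_{(X',B')}=\bar\mu_\cX^*p^*K_{(X,B)}$, and again apply the projection formula.

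The central step is the pullback invariance of $H_B^\NA$. Divisorial valuations on $X$ and $X'$ are identified via $k(X')=k(X)$, and the crepant condition $K_{(X',B')}=\mu^*K_{(X,B)}$ immediately yields $A_{(X',B')}(v)=A_{(X,B)}(v)$ for every divisorial valuation $v$. It remains to verify $\MA^\NA(\phi')=\MA^\NA(\phi)$ as measures on $\Xdiv$. Each component $E'$ of $\cX'_0$ either maps birationally onto some component $E$ of $\cX_0$ via $\mu_\cX$---in which case, since $\cX'_0=\mu_\cX^*\cX_0$ (both defined by $t=0$) and $E'$ appears with multiplicity one in $\mu_\cX^*E$, one checks $b_{E'}=b_E$, $v_{E'}=v_E$ (as $\ord_{E'}$ and $\ord_E$ agree on the common function field), and $(E'\cdot(\mu_\cX^*\cL)^n)=(E\cdot\cL^n)$ by the projection formula---or $E'$ is $\mu_\cX$-exceptional, in which case $(\mu_\cX)_*E'=0$ and the projection formula forces $(E'\cdot(\mu_\cX^*\cL)^n)=0$. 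Each $E$ arises from a unique non-exceptional $E'$ (its strict transform, since $\mu_\cX$ is birational), so the contributions match and $\MA^\NA(\phi')=\MA^\NA(\phi)$.

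Finally, $M_B^\NA$ is pullback invariant by the Chen--Tian formula of Proposition~\ref{prop:DFChen} together with the previous steps, and $\DF_B=M_B^\NA+V^{-1}((\cX_0-\cX_{0,\red})\cdot\cL^n)$ by~(\ref{e403}) is pullback invariant since the error term is handled by the same strict-transform-versus-exceptional dichotomy as in the $H_B^\NA$ step. The main obstacle is precisely this bookkeeping for $H_B^\NA$: verifying the one-to-one correspondence between components of $\cX_0$ and non-exceptional components of $\cX'_0$, and confirming that multiplicities $b_E$ and restricted valuations $v_E$ are preserved.
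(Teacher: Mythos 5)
Your proposal is correct and follows essentially the same route as the paper: choose a normal test configuration $\cX'$ for $X'$ mapping to $\cX$ compatibly with $\mu$, represent $\mu^*\phi$ by the pullback $\cL'=\mu_\cX^*\cL$, and conclude by the projection formula for $\bar\cX'\to\bar\cX$ together with $A_{(X',B')}=A_{(X,B)}$. The only difference is that you spell out the strict-transform-versus-exceptional bookkeeping for $\MA^\NA$ and the entropy term, which the paper subsumes under ``the projection formula''; this verification is accurate.
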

\begin{proof}
  Let $(\cX,\cL)$ be a normal representative of $\phi$ such that 
  $\cX$ dominates $X_{\A^1}$. Pick a normal test configuration $\cX'$
  that dominates $X'_{\A^1}$ and such that unique $\G_m$-equivariant
  birational map $\cX'\to\cX$ extending $\mu$ is a morphism. Then $\phi'$
  is represented by $(\cX',\cL')$, where $\cL'$ is the pullback 
  of $\cL$. The pullback-invariance of the all the functionals now
  follows from the projection formula for the induced map 
  $\bar\cX'\to\bar\cX$.
\end{proof}
Recall from~\S\ref{S101} that if $\phi$ is a non-Archimedean metric on 
$L$, then $r\phi$ is a non-Archimedean metric on $rL$ for any
$r\in\Q_{>0}$. One directly verifies that the functionals $E^{\NA}$,
$I^{\NA}$ and $J^\NA$ are homogeneous of degree 1 in the sense that 
$E^\NA(r\phi)=rE^\NA(\phi)$ etc, whereas the functionals 
$\DF_B$, $M_B$, $H_B$ and $R_B$ are homogeneous of degree 0,
that is, $\DF_B(r\phi)=\DF_B(\phi)$ etc.
%
%
\subsection{The log K\"ahler-Einstein case}\label{S302}
In the \emph{log K\"ahler-Einstein case}, \ie when $K_{(X,B)}$ 
is proportional to $L$, the formula for $M_B^{\NA}$ takes the following alternative form. 
\begin{lem}\label{lem:MKE} Assume that $K_{(X,B)}\equiv\la L$ for some $\la\in\Q$. Then
$$
M_B^{\NA}=H_B^{\NA}+\la\left(I^{\NA}-J^{\NA}\right).
$$
\end{lem}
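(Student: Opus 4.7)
The plan is to combine the Chen-Tian formula (Proposition~\ref{prop:DFChen}) with the hypothesis $K_{(X,B)}\equiv\la L$ and reduce everything to intersection numbers that appear directly in the definitions of $R_B^{\NA}$, $E^{\NA}$, $I^{\NA}$ and $J^{\NA}$. By Proposition~\ref{prop:DFChen} it suffices to show that
\begin{equation*}
R_B^{\NA}(\phi)+\bar S_B\,E^{\NA}(\phi)=\la\bigl(I^{\NA}(\phi)-J^{\NA}(\phi)\bigr).
\end{equation*}
From the numerical proportionality $K_{(X,B)}\equiv\la L$ and the projection formula we first compute $\bar S_B=nV^{-1}\bigl(-K_{(X,B)}\cdot L^{n-1}\bigr)=-n\la$.

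Next I would rewrite $R_B^{\NA}(\phi)$ using formula~\eqref{e307}: choose a normal representative $(\cX,\cL)$ of $\phi$ dominating $X_{\A^1}$ via $\rho$, and let $p\colon\bar\cX\to X$ be the composition with $X_{\P^1}\to X$. Then $p^*K_{(X,B)}\equiv\la p^*L$, and $p^*L$ is precisely the line bundle $\bar\rho^*L_{\P^1}$, which represents $\phi_\triv$ on the test configuration $\bar\cX$. Hence, by Definition~\ref{defi:int} and the projection formula,
\begin{equation*}
R_B^{\NA}(\phi)=V^{-1}\bigl(p^*K_{(X,B)}\cdot\bar\cL^n\bigr)=\la V^{-1}\bigl(\phi_\triv\cdot\phi^n\bigr).
\end{equation*}

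A direct manipulation of the definitions of $I^{\NA}$ and $J^{\NA}$ in~\S\ref{sec:J} yields
\begin{align*}
I^{\NA}(\phi)-J^{\NA}(\phi)
&=E^{\NA}(\phi)-V^{-1}\bigl((\phi-\phi_\triv)\cdot\phi^n\bigr)\\
&=E^{\NA}(\phi)-(n+1)E^{\NA}(\phi)+V^{-1}\bigl(\phi_\triv\cdot\phi^n\bigr)\\
&=-nE^{\NA}(\phi)+V^{-1}\bigl(\phi_\triv\cdot\phi^n\bigr),
\end{align*}
using that $(\phi^{n+1})=(n+1)VE^{\NA}(\phi)$. Multiplying by $\la$ and using $\bar S_B=-n\la$, we get exactly $R_B^{\NA}(\phi)+\bar S_B E^{\NA}(\phi)$, which is what we wanted. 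The computation is essentially bookkeeping, so there is no real obstacle; the only point worth double-checking is the identification $p^*L\cong\bar\rho^*L_{\P^1}$, which makes the trivial metric appear on the intersection-theoretic side and allows us to convert the Ricci energy into $V^{-1}(\phi_\triv\cdot\phi^n)$.
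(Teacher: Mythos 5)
Your proof is correct and follows essentially the same route as the paper's: both reduce via the Chen--Tian formula to showing $R_B^{\NA}+\bar S_B E^{\NA}=\la(I^{\NA}-J^{\NA})$, both use $K_{(X,B)}\equiv\la L$ to convert the Ricci energy into $\la V^{-1}(\phi_\triv\cdot\phi^n)$, and both finish with the same intersection-number bookkeeping. The identification $p^*L\cong\bar\rho^*L_{\P^1}$ that you flag is indeed the only point requiring care, and it holds by construction of the compactification.
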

\begin{proof} 
  Let $\psi_\triv$ and $\phi_\triv$ be the trivial non-Archimedean
  metrics on $K_{(X,B)}$ and $L$, respectively. Since 
  $K_{(X_{\P^1},B_{\P^1})}\equiv\la L_{\P^1}$ we get
  \begin{equation*}
    R^{\NA}_B(\phi)=V^{-1}(\psi_\triv\cdot\phi^n)=\la V^{-1}\left(\phi_\triv\cdot\phi^n\right). 
  \end{equation*}
  Further, $\bar S_B=-n\la$, so we infer
  \begin{multline*}
    R^{\NA}_B(\phi)+\bar S_B E^{\NA}(\phi)
    =\la V^{-1}\left[\left(\phi_\triv\cdot\phi^n\right)
      -\frac{n}{n+1}(\phi^{n+1}) \right]\\
    =\la V^{-1}\left[\frac{1}{n+1}(\phi^{n+1})
      -\left((\phi-\phi_\triv)\cdot\phi^n\right)\right]\\
    =\la\left[E^{\NA}(\phi)-V^{-1}\left((\phi-\phi_\triv)\cdot\phi^n\right)\right]
    =\la\left(I^{\NA}(\phi)-J^{\NA}(\phi)\right),
  \end{multline*}
  which completes the proof in view of the Chen-Tian formula.
\end{proof}
%
%
\subsection{The non-Archimedean Ding functional}\label{sec:Ding}
In this section, $(X,B)$ denotes a \emph{weak log Fano pair}, \ie $X$ is a normal, projective variety 
and $B$ is a $\Q$-Weil divisor such that $(X,B)$ is subklt with $L:=-K_{(X,B)}$ big and nef. 
For example, $X$ could be smooth, with $-K_X$ ample (and $B=0$).

The following non-Archimedean version of the Ding functional
first appeared  in~\cite{Berm16}.\footnote{This appears 
in~\cite[Proposition~3.8]{Berm16}. See also Proposition~\ref{P403} below.}
It plays a crucial role in the variational approach 
to the Yau-Tian-Donaldson conjecture in~\cite{BBJ15}; see also~\cite{Fuj15b,Fuj16}.
The usual Ding functional was introduced in~\cite{Din88}.
\begin{defi} 
  The \emph{non-Archimedean Ding functional} is defined by
  \begin{equation*}
    D^{\NA}_B:=L^{\NA}_B-E^{\NA},
  \end{equation*}
  with
  \begin{equation*}
    L^\NA_B(\phi):=\inf_v(A_{(X,B)}(v)+(\phi-\phi_{\triv})(v)),
  \end{equation*}
  the infinimum taken over all valuations $v$ on $X$ that are divisorial or trivial.
\end{defi} 
Recall that $\phi-\phi_\triv$ is a non-Archimedean 
metric on $\cO_X$, which we identify with a bounded function on divisorial
valuations.
\begin{prop}\label{P402}
  The non-Archimedean Ding functional $D_B^{\NA}$ is translation invariant, 
  homogenous, and pullback invariant.
\end{prop}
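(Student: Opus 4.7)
The strategy is to verify each of the three properties separately for the two summands of $D_B^{\NA}=L_B^{\NA}-E^{\NA}$. The corresponding properties of $E^{\NA}$ have already been established: Lemma~\ref{lem:ENA} gives both $E^\NA(\phi+c)=E^\NA(\phi)+c$ and homogeneity, while pullback invariance follows from~\S\ref{S305}. What remains is therefore to handle $L_B^{\NA}$ and combine.

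For translation invariance, choose a representative $(\cX,\cL)$ of $\phi$; then $\phi+c$ is represented by $(\cX,\cL+c\cX_0)$, which shows that $\phi+c-\phi_{\triv}=(\phi-\phi_{\triv})+c$ as functions on the set of divisorial valuations together with $v_{\triv}$. Taking the infimum yields $L_B^{\NA}(\phi+c)=L_B^{\NA}(\phi)+c$, and the shifts cancel in $D_B^{\NA}$.

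For homogeneity, the key observation is that $v\mapsto dv$ is a bijection on the set of divisorial valuations and fixes $v_{\triv}$. The log discrepancy is homogeneous, $A_{(X,B)}(dv)=dA_{(X,B)}(v)$, and formula~\eqref{e405} gives $\phi_d(dv)=d\phi(v)$ for divisorial $v$. The analogous identity for the metric extends to the trivial valuation by a direct computation: picking a normal representative $(\cX,\cL)$ of $\phi$ dominating $X_{\A^1}$ and writing $\cL=\rho^*L_{\A^1}+D$, the value $(\phi-\phi_{\triv})(v_{\triv})=\ord_{E_0}(D)$ is read off from the strict transform $E_0$ of $X\times\{0\}$, which has $b_{E_0}=1$; the normalized base change along $t\mapsto t^d$ produces the analogous strict transform $E'_0$ with $b_{E'_0}=1$ but ramification index $d$ over $E_0$ (as $t$ pulls back to $s^d$), giving $(\phi_d-\phi_{\triv})(v_{\triv})=\ord_{E'_0}(\pi^*D)=d\ord_{E_0}(D)$. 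Combining with the scaling of the log discrepancy yields
\begin{equation*}
  A_{(X,B)}(w)+(\phi_d-\phi_{\triv})(w)
  =d\bigl[A_{(X,B)}(v)+(\phi-\phi_{\triv})(v)\bigr]
\end{equation*}
for $w=dv$, and taking the infimum over the bijection gives $L_B^{\NA}(\phi_d)=dL_B^{\NA}(\phi)$.

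For pullback invariance under a birational morphism $\mu\colon X'\to X$, the sets of divisorial valuations on $X$ and $X'$ coincide (both are divisorial valuations on the common function field, with centers on each side by the valuative criterion of properness), and $A_{(X',B')}(v)=A_{(X,B)}(v)$ by the defining relation $K_{(X',B')}=\mu^*K_{(X,B)}$. For any representative $(\cX,\cO_\cX(D))$ of $\phi-\phi_{\triv}$ and a compatible test configuration $\cX'$ of $X'$ dominating both $\cX$ and $X'_{\A^1}$, pulling back yields $\pi^*D$ on $\cX'$, and the ratio $\ord_{E'}(\pi^*D)/b_{E'}$ agrees with $\ord_E(D)/b_E$ on matching components via the projection formula, so $(\mu^*\phi-\mu^*\phi_{\triv})(v)=(\phi-\phi_{\triv})(v)$ for every divisorial or trivial $v$. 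Taking the infimum gives $L^{\NA}_{B'}(\mu^*\phi)=L^{\NA}_B(\phi)$, and combining with pullback invariance of $E^{\NA}$ yields the claim. The main technical subtlety in the whole proof is the extension of the scaling formula~\eqref{e405} to the trivial valuation, which requires precisely the ramification computation on the normalized base change outlined above.
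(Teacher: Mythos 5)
Your proof is correct and follows essentially the same route as the paper's: reduce to the functional $L_B^{\NA}$ via the known properties of $E^{\NA}$, get translation invariance from the definition, homogeneity from~\eqref{e405} together with $A_{(X,B)}(dv)=dA_{(X,B)}(v)$, and pullback invariance from the identification of divisorial (and trivial) valuations on $X$ and $X'$ with $\phi'-\phi'_\triv=\phi-\phi_\triv$ and $A_{(X',B')}=A_{(X,B)}$. The only addition is your explicit ramification computation extending the scaling identity to the trivial valuation, a detail the paper leaves implicit; your computation of it is correct.
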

\begin{proof}
  By the corresponding properties of the functional $E^\NA$, it suffices to prove that 
  $L_B^\NA$ is homogenous, pullback invariant, and satisfies 
  $L_B^\NA(\phi+c)=L_B^\NA(\phi)+c$ for $c\in\Q$. 

  The latter equality is clear from the definition, and the homogeneity of $D_B^\NA$
  follows from~\eqref{e405} applied to the metric $\phi-\phi_\triv$ on $\cO_X$,
  together with the fact that $A_{(X,B)}(tv)=tA_{(X,B)}(v)$ for $t\in\Q_+$.
  Functoriality is also clear. Indeed, with notation as in~\S\ref{S305}, 
  and with the identification of divisorial (or trivial) valuations on $X$
  and $X'$, we have, by construction, $\phi'-\phi'_\triv=\phi-\phi_\triv$ and 
  $A_{(X,B)}=A_{(X',B')}$. Thus $L^\NA_{B'}(\phi')=L^\NA_B(\phi)$.
\end{proof}
\begin{prop}\label{P404}
  For every non-Archimedean metric $\phi$ on $L$, we have
  $D^{\NA}_B(\phi)\le J^{\NA}(\phi)$.
\end{prop}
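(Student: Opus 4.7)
The plan is to unwind the definitions and reduce the claim to a one-line estimate. Since
\[
J^{\NA}(\phi)-D^{\NA}_B(\phi)=V^{-1}\bigl(\phi\cdot\phi_\triv^n\bigr)-L^{\NA}_B(\phi),
\]
the inequality $D^{\NA}_B(\phi)\le J^{\NA}(\phi)$ is equivalent to
\[
L^{\NA}_B(\phi)\le V^{-1}\bigl(\phi\cdot\phi_\triv^n\bigr).
\]

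The key observation is that the trivial valuation $v_\triv$ is itself admissible in the infimum defining $L^{\NA}_B(\phi)$. By the convention $A_{(X,B)}(v_\triv)=0$ from~\S\ref{sec:bound}, plugging in $v=v_\triv$ gives
\[
L^{\NA}_B(\phi)\le A_{(X,B)}(v_\triv)+(\phi-\phi_\triv)(v_\triv)=(\phi-\phi_\triv)(v_\triv).
\]
So the proof reduces to identifying $(\phi-\phi_\triv)(v_\triv)$ with $V^{-1}(\phi\cdot\phi_\triv^n)$.

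When $L$ is ample, this is exactly Lemma~\ref{lem:sup}, and the same computation goes through in our big-and-nef setting. Concretely, I would pick a normal representative $(\cX,\cL)$ of $\phi$ whose underlying $\cX$ dominates $X_{\A^1}$ via $\rho\colon\cX\to X_{\A^1}$, and write $\cL=\rho^*L_{\A^1}+D$ for a unique $\Q$-Cartier divisor $D$ supported on $\cX_0$. Then, using the projection formula and flatness,
\[
\bigl(\phi\cdot\phi_\triv^n\bigr)=\bigl(D\cdot\rho^*L_{\A^1}^{\,n}\bigr)=\bigl(\rho_*D\cdot L_{\A^1}^{\,n}\bigr)=V\cdot\ord_{E_0}(D),
\]
where $E_0$ is the strict transform of $X\times\{0\}$ in $\cX$. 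Since $b_{E_0}=1$ and $v_{E_0}=v_\triv$, formula~(\ref{equ:divfunc}) identifies $\ord_{E_0}(D)$ with $(\phi-\phi_\triv)(v_\triv)$, finishing the argument.

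There is no real obstacle here; the only point worth checking is that the Lemma~\ref{lem:sup} computation of $V^{-1}(\phi\cdot\phi_\triv^n)$ as the value of $\phi-\phi_\triv$ at $v_\triv$ does not actually use ampleness of $L$ or positivity of $\phi$, but only that $V=(L^n)>0$, which holds since $L$ is big and nef.
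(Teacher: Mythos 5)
Your proof is correct and is essentially the paper's own argument: both plug the trivial valuation into the infimum defining $L^{\NA}_B$ and use the identity $(\phi-\phi_\triv)(v_\triv)=V^{-1}(\phi\cdot\phi_\triv^n)=E^{\NA}(\phi)+J^{\NA}(\phi)$. Your explicit check that the projection-formula computation behind Lemma~\ref{lem:sup} needs only $L$ big and nef (not ample) is a point the paper leaves implicit, and it is the right thing to verify in the weak log Fano setting.
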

\begin{proof}
  The trivial valuation $v_\triv$ on $X$ satisfies
  $A_{(X,B)}(v_\triv)=0$ and
  $E^\NA(\phi)+J^\NA(\phi)=(\phi-\phi_\triv)(v_\triv)$.
  Hence
  \begin{equation*}
    L^\NA_B(\phi)
    \le A_{(X,B)}(v_\triv)+(\phi-\phi_\triv)(v_\triv)
    =E^\NA(\phi)+J^\NA(\phi),
  \end{equation*}
  which yields $D^\NA_B(\phi)\le J^\NA(\phi)$. 
\end{proof}
In the definition of the Ding functional, we take the infimum over all 
divisorial valuations on $X$. As the next result shows, this is neither practical nor necessary.
\begin{prop}\label{P403}
  Let $\phi$ be a non-Archimedean metric on $L=-K_{(X,B)}$ determined on a normal 
  test configuration $(\cX,\cL)$ for $(X,L)$, such that 
  $(\cX,\cB+\cX_{0,\red})$ is a sublc pair.
  Write
  \begin{equation*}
    \cL+K_{(\cX,\cB)/\A^1}^\lo=\cO_\cX(D),
  \end{equation*}
  for a $\Q$-Cartier divisor $D$ on $\cX$ supported on $\cX_0$.
  Then 
  \begin{align*}
    L^{\NA}_B(\phi)
    &=\lct_{(\cX,\cB+\cX_{0,\red}-D)}(\cX_0)\\
    &=\min_E\left(A_{(X,B)}(v_E)+(\phi-\phi_{\triv})(v_E)\right),
  \end{align*}
  where $E$ ranges over the irreducible components of $\cX_0$.
\end{prop}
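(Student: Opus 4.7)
The plan is to show that both $L^\NA_B(\phi)$ and $\lct_{(\cX,\cB+\cX_{0,\red}-D)}(\cX_0)$ equal $\min_E b_E^{-1}\ord_E(D)$, where $\cX_0=\sum_E b_EE$. The starting point is a key identity: for each irreducible component $E$ of $\cX_0$,
\begin{equation*}
  A_{(X,B)}(v_E)+(\phi-\phi_\triv)(v_E)=b_E^{-1}\ord_E(D).
\end{equation*}
To obtain this, first I would reduce to a normal test configuration $\cX'$ dominating both $\cX$ and $X_{\A^1}$ via $\rho\colon\cX'\to X_{\A^1}$ (under~\eqref{equ:Klog}, $D$ pulls back to $D'=\mu^*D+\sum_{E'}A_{(\cX,\cB+\cX_{0,\red})}(\ord_{E'})E'$, where the extra summands are supported on the new exceptional components). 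Since $L=-K_{(X,B)}$ and the central fiber of $X_{\A^1}\to\A^1$ is reduced, one has $L_{\A^1}=-K^\lo_{(X_{\A^1},B_{\A^1})/\A^1}$, so Corollary~\ref{cor:discr} gives $\cL'-\rho^*L_{\A^1}=D'-\sum_{E'}b_{E'}A_{(X,B)}(v_{E'})E'$. Evaluating via~\eqref{equ:divfunc} at a strict transform of $E$ yields the identity.

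Next I would compute the log canonical threshold directly on $\cX$. Taking $w=\ord_E$: since $\cB$ is the closure of $B\times(\A^1\setminus\{0\})$ and so has no component along $\cX_0$, one gets $A_{(\cX,\cB)}(\ord_E)=1$, hence $A_{(\cX,\cB+\cX_{0,\red})}(\ord_E)=0$ and $A_{(\cX,\cB+\cX_{0,\red}-D)}(\ord_E)/\ord_E(\cX_0)=b_E^{-1}\ord_E(D)$, establishing $\lct\le\min_E b_E^{-1}\ord_E(D)$. For the reverse inequality, fix any divisorial $w$ on $\cX$ with $w(\cX_0)>0$; the sublc hypothesis gives $A_{(\cX,\cB+\cX_{0,\red})}(w)\ge0$, while the effectivity of each $E$ and normality of $\cX$ ensure $w(E)\ge0$. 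The weighted-average inequality then yields
\begin{equation*}
  \frac{A_{(\cX,\cB+\cX_{0,\red}-D)}(w)}{w(\cX_0)}
  \ge\frac{w(D)}{w(\cX_0)}
  =\frac{\sum_E\ord_E(D)\,w(E)}{\sum_Eb_E\,w(E)}
  \ge\min_E b_E^{-1}\ord_E(D).
\end{equation*}

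Finally, to identify $L^\NA_B(\phi)$ with the same minimum: the upper bound follows by plugging $v=v_E$ (and the trivial valuation $v_\triv=v_{E_0}$, for $E_0$ the strict transform of $X\times\{0\}$, which has $b_{E_0}=1$ and $A_{(X,B)}(v_\triv)=0$) into the infimum defining $L^\NA_B(\phi)$. For the lower bound, given any divisorial valuation $v$ on $X$, consider its Gauss extension $G(v)$ on $\cX$, satisfying $G(v)(\cX_0)=G(v)(t)=1$. Combining the generalization of Proposition~\ref{prop:discr} to arbitrary divisorial valuations, $A_{(X,B)}(v)=A_{(X_{\A^1},B_{\A^1}+X\times\{0\})}(G(v))$, with the crepant-pullback relation
\begin{equation*}
  A_{(\cX,\cB+\cX_{0,\red})}(w)
  =A_{(X_{\A^1},B_{\A^1}+X\times\{0\})}(w)
  -w\bigl(K^\lo_{(\cX,\cB)/\A^1}-\rho^*K^\lo_{(X_{\A^1},B_{\A^1})/\A^1}\bigr)
\end{equation*}
for $\rho\colon\cX\to X_{\A^1}$, and expanding $(\phi-\phi_\triv)(v)=G(v)(\cL-\rho^*L_{\A^1})$, one finds that
\begin{equation*}
  A_{(X,B)}(v)+(\phi-\phi_\triv)(v)
  =A_{(\cX,\cB+\cX_{0,\red})}(G(v))+G(v)(D)
  =A_{(\cX,\cB+\cX_{0,\red}-D)}(G(v)),
\end{equation*}
which, divided by $G(v)(\cX_0)=1$, is at least $\lct=\min_E b_E^{-1}\ord_E(D)$. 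The main obstacle is precisely this final identity: it hinges on verifying the crepant-pullback formula above, which requires that $K^\lo_{(\cX,\cB)/\A^1}$ be $\Q$-Cartier (guaranteed by the hypothesis $\cO_\cX(D)=\cL+K^\lo_{(\cX,\cB)/\A^1}$), plus a careful treatment of how the log canonical divisor transforms under the normal birational morphism $\rho$ from $\cX$ to the smooth $X_{\A^1}$.
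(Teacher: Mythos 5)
Your argument is correct and follows essentially the same route as the paper: the key identity $A_{(X,B)}(v)+(\phi-\phi_\triv)(v)=A_{(\cX,\cB+\cX_{0,\red})}(w)+w(D)$ for $w=G(v)$ (the paper's Lemma~\ref{L401}), obtained from \eqref{equ:Klog} and \eqref{equ:Klogbis} on a model dominating $X_{\A^1}$, combined with the sublc hypothesis and the bound $D\ge\ell\cX_0$ where $\ell=\min_Eb_E^{-1}\ord_E(D)$. The only step worth tightening is the formula $w(D)=\sum_E\ord_E(D)\,w(E)$, since the individual components $E$ need not be $\Q$-Cartier; it is cleaner to observe that $D-\ell\cX_0$ is an effective $\Q$-Cartier divisor, whence $w(D)\ge\ell\,w(\cX_0)$ directly, as in the paper.
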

Note that the assumption that $(\cX,\cB+\cX_{0,\red})$ be sublc is satisfied when 
$(\cX,\cB+\cX_0)$ is log smooth (even when $\cX_0$ is not necessarily reduced).

Proposition~\ref{P403} shows in particular that the definition of $D^\NA_B$ given above 
is compatible with~\cite{Berm16,Fuj15b}. By~\cite[Proposition 3.8]{Berm16}, the non-Archimedean Ding 
functional is thus the limit of the usual Ding functional in the sense of~\eqref{e301}; 
hence the name.
\begin{lem}\label{L401}
  Let $w$ be a divisorial valuation $w$ on $\cX$ centered on $\cX_0$ and normalized by 
  $w(\cX_0)=1$, and let $v=r(w)$ be the associated divisorial (or trivial) valuation on $X$.
  Then 
  \begin{equation}\label{e407}
    A_{(\cX,\cB+\cX_{0,\red})}(w)+w(D)
    =A_{(X,B)}(v)+(\phi-\phi_\triv)(v).
  \end{equation}
  In particular, $\ord_E(D)=b_E(A_{(X,B)}(v_E)+(\phi-\phi_\triv)(v_E))$ for
  every irreducible component $E$ of $\cX_0$.
\end{lem}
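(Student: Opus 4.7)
The strategy is to reduce to the case $w=b_E^{-1}\ord_E$ for some irreducible component $E$ of $\cX_0$, and then use Corollary~\ref{cor:discr} to compute both sides of~\eqref{e407} explicitly. For the reduction: any $k^*$-invariant divisorial $w$ on $\cX$ normalized by $w(\cX_0)=1$ is of the form $b_F^{-1}\ord_F$ for $F$ a component of $\cY_0$ on a normal test configuration $\cY$ dominating $\cX$ via some $\pi\colon\cY\to\cX$, and a short calculation using the identity $D_\cY-\pi^*D=K^\lo_{(\cY,\cB_\cY+\cY_{0,\red})}-\pi^*K^\lo_{(\cX,\cB+\cX_{0,\red})}$ shows that the sum $A_{(\cX,\cB+\cX_{0,\red})}(w)+w(D)$ is unchanged under such pullback---the shift in the log discrepancy exactly cancels the shift in $w(D)$. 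Hence we may assume $w=b_E^{-1}\ord_E$, and the ``In particular'' assertion becomes $b_E$ times~\eqref{e407}.

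After a further pullback, assume that $\cX$ dominates $X_{\A^1}$ via $\rho$ and set $p:=p_X\circ\rho\colon\cX\to X$. Write $\cL=\rho^*L_{\A^1}+D_0$ for a $\Q$-Cartier divisor $D_0$ supported on $\cX_0$, so that $(\phi-\phi_\triv)(v_F)=b_F^{-1}\ord_F(D_0)$ for every component $F$ of $\cX_0$ by~\eqref{equ:divfunc}. Combining Corollary~\ref{cor:discr} with the identity $K^\lo_{(X_{\A^1},B_{\A^1})/\A^1}=p_X^*K_{(X,B)}$ produces
\begin{equation*}
  K^\lo_{(\cX,\cB)/\A^1}=p^*K_{(X,B)}+\sum_Fb_FA_{(X,B)}(v_F)F.
\end{equation*}
Since $L=-K_{(X,B)}$ forces $\rho^*L_{\A^1}=-p^*K_{(X,B)}$, the $p^*K_{(X,B)}$ contributions cancel in $\cO_\cX(D)=\cL+K^\lo_{(\cX,\cB)/\A^1}$, yielding
\begin{equation*}
  D=D_0+\sum_Fb_FA_{(X,B)}(v_F)F.
\end{equation*}

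Applying $w=b_E^{-1}\ord_E$ and using that $\ord_E(F)=\delta_{EF}$ for components $F$ of $\cX_0$ gives
$w(D)=b_E^{-1}\ord_E(D_0)+A_{(X,B)}(v_E)=(\phi-\phi_\triv)(v)+A_{(X,B)}(v)$.
Meanwhile, $E$ is a reduced prime divisor on the normal variety $\cX$ with $\ord_E(\cB)=0$ ($\cB$ being horizontal) and $\ord_E(\cX_{0,\red})=1$, so the definition of log discrepancy gives $A_{(\cX,\cB+\cX_{0,\red})}(\ord_E)=1-\ord_E(\cB)-\ord_E(\cX_{0,\red})=0$, hence $A_{(\cX,\cB+\cX_{0,\red})}(w)=0$. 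Summing these establishes~\eqref{e407}.

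The main obstacle lies in the pullback-invariance argument in the reduction step and in the precise identification of $D$: one has to keep track of the various log canonical divisors, the ambiguity of $D_0$ (defined only up to multiples of $\cX_0$, absorbed by the corresponding ambiguity in $D$), and the sign conventions arising from $L=-K_{(X,B)}$. Once the identity $D=D_0+\sum_Fb_FA_{(X,B)}(v_F)F$ is in place via Corollary~\ref{cor:discr} and the cancellation afforded by $L=-K_{(X,B)}$, the remainder of the proof is a routine application of the definitions.
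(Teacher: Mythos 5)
Your proof is correct and follows essentially the same route as the paper's: both rest on the identities \eqref{equ:Klog} and \eqref{equ:Klogbis} (Corollary~\ref{cor:discr}) expressing the relative log canonical divisors through the log discrepancies $A_{(\cX,\cB+\cX_{0,\red})}(\ord_{E'})$ and $b_{E'}A_{(X,B)}(v_{E'})$, combined with $\cL+K^\lo_{(\cX,\cB)/\A^1}=\cO_\cX(D)$ and $\rho^*L_{\A^1}=-p^*K_{(X,B)}$. The only difference is organizational: you isolate the pullback-invariance of $A_{(\cX,\cB+\cX_{0,\red})}(w)+w(D)$ as a separate preliminary step and then compute on a model dominating $X_{\A^1}$, whereas the paper carries out the whole computation at once on a single model $\cX'$ dominating both $\cX$ and $X_{\A^1}$ and reads off the coefficient along $E'$.
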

\begin{proof}
  Pick any normal test configuration $\cX'$ for $X$ dominating both 
  $\cX$ and $X_{\A^1}$ via $\mu\colon\cX'\to\cX$ and $\rho\colon\cX'\to X_{\A^1}$, 
  respectively, such that $w=b_{E'}^{-1}\ord_{E'}$ for an irreducible component 
  $E'$ of $\cX'_0$. 
  By~\eqref{equ:Klog} and~\eqref{equ:Klogbis} we have 
  \begin{equation*}
    K^\lo_{(\cX',\cB')/\A^1}-\mu^*K^\lo_{(\cX,\cB)/\A^1}
    =\sum_{E'}A_{(\cX,\cB+\cX_{0,\red})}(\ord_{E'})E',
  \end{equation*}
  and 
  \begin{equation*}
    K^\lo_{(\cX',\cB')/\A^1}-\rho^*K^\lo_{(X_{\A^1},B_{\A^1})/\A^1}
    =\sum_{E'} b_{E'}A_{(X,B)}(v_{E'})E',
  \end{equation*}
  respectively. We also have
  \begin{equation*}
    \mu^*\cL
    =-\rho^*K^\lo_{(X_{\A^1},B_{\A^1})/\A^1}
    +\sum_{E'}b_{E'}(\phi-\phi_\triv)(v_{E'})E'
  \end{equation*}
  Putting this together, and using $D=\cL+K_{(\cX,\cB)/\A^1}^\lo$, we get
  \begin{equation*}
    \mu^*D+\sum_{E'}A_{(\cX,\cB+\cX_{0,\red})}(\ord_{E'})E'
    =\sum_{E'}b_{E'}(A_{(X,B)}(\nu_{E'})+(\phi-\phi_\triv)(v_{E'}))E',
  \end{equation*}
  and taking the coefficient along $E'$ yields~\eqref{e407}.
  Finally, the last assertion follows since
  $A_{(\cX,\cB+\cX_{0,\red})}(w)=0$ when $w=b_E^{-1}\ord_E$ for any irreducible
  component $E$ of $\cX_0$.
\end{proof}
\begin{proof}[Proof of Proposition~\ref{P403}]
  Recall that $\lct_{(\cX,\cB+\cX_{0,\red}-D)}(\cX_0)$ is the supremum of $c\in\R$
  such that 
  \begin{equation*}
    0\le A_{(\cX,\cB+\cX_{0,\red}-D+c\cX_0)}(w)
    =A_{(\cX,\cB+\cX_{0,\red})}(w)+w(D)-cw(\cX_0)
  \end{equation*}
  for all divisorial valuations $w$ on $\cX$.
  Here it suffices to consider $w$ centered on $\cX_0$. Indeed, otherwise
  $w(D)=w(\cX_0)=0$ and 
  $A_{(\cX,\cB+\cX_{0,\red})}(w)=A_{(X_{\A^1},B_{\A^1})}(w)\ge 0$, 
  since $(X_{\A^1},B_{\A^1})$ is sublc.
  If $w$ is centered on $\cX_0$, then we may after scaling assume that $w(\cX_0)=1$.
  In this case,~\eqref{e407} applies, and shows that 
  $\lct_{(\cX,\cB+\cX_{0,\red}-D)}(\cX_0)=L^\NA(\phi)$.
  
  It remains to prove that $L^\NA(\phi)\le\ell:=\min_E(A_{(X,B)}(v_E)+(\phi-\phi_{\triv})(v_E))$,
  where $E$ ranges over irreducible components of $\cX_0$.
  The inequality $L^\NA(\phi)\le\ell$ is obvious. 
  For the reverse inequality, note that Lemma~\ref{L401} implies $D\ge\ell\cX_0$.
  We now use the assumption that 
  $(\cX,\cB+\cX_{0,\red})$ is sublc. Consider $w$ and $v$ as above.
  On the one hand, $(\cX,\cB+\cX_{0,\red})$ being sublc implies $A_{(\cX,\cB+\cX_{0,\red})}(w)\ge0$.
  On the other hand, we have $w(D)\ge\ell$ since $D\ge\ell\cX_0$.
  Thus~\eqref{e407} yields $A_{(X,B)}(v)+(\phi-\phi_\triv)(v)\ge\ell$.
  Since this is true for all divisorial or trivial valuations on $X$, we get 
  $L^\NA(\phi)\ge\ell$, which completes the proof.
\end{proof}
%
%
\subsection{Ding vs Mabuchi}
We continue to assume that $(X,B)$ is a weak log Fano pair.
By Lemma~\ref{lem:MKE},  the non-Archimedean Mabuchi functional is given by 
\begin{equation}\label{e109}
  M^\NA_B=H^\NA_B-(I^\NA-J^\NA). 
\end{equation}
For any normal test configuration $(\cX,\cL)$ representing 
a non-Archimedean metric $\phi$ on $L$, we can write this as 
\begin{align}
  M^\NA_B(\phi)
  &=V^{-1}((K^\lo_{(\cX,\cB)/\A^1}+\cL)\cdot\cL^n)
    -E^\NA(\phi)\label{e408}\\
  &=\sum_E c_E(A_{(X,B)}(v_E)+(\phi-\phi_\triv)(v_E))
    -E^\NA(\phi),\label{e409}
\end{align}
where $E$ ranges over the irreducible components of $\cX_0$ and 
$c_E:=V^{-1} b_E(\cL^n\cdot E)$. Note that $\sum_E c_E=1$ and that $c_E\ge0$
if $\phi$ is semipositive.
\begin{defi} 
  A non-Archimedean metric $\phi$ on $L=-K_{(X,B)}$ is \emph{anticanonical} if 
  it is represented by a normal test configuration $(\cX,\cL)$ for $(X,-K_{(X,B)})$
  such that 
  $(\cX,\cB+\cX_{0,\red})$ is sublc and such that 
  $\cL=-K^\lo_{(\cX,\cB)/\A^1}+c\cX_0$ for some $c\in\Q$. 
\end{defi}
Note that if $\phi$ is anticanonical, then so is $\phi+c$ for any $c\in\Q$.

\begin{prop}\label{P102} 
  For every semipositive non-Archimedean metric $\phi$ on $L$, we have
  \begin{equation*}
    D_B^\NA(\phi)\le M^\NA_B(\phi),
  \end{equation*}
  with equality if $\phi$ is anticanonical. 
\end{prop}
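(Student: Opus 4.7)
The plan is to run the argument directly from the expansion~\eqref{e409}. Pick a normal representative $(\cX,\cL)$ of $\phi$ with $\cX$ dominating $X_{\A^1}$, write $\cX_0=\sum_E b_E E$, and recall that
\begin{equation*}
  M^\NA_B(\phi)=\sum_E c_E\bigl(A_{(X,B)}(v_E)+(\phi-\phi_\triv)(v_E)\bigr)-E^\NA(\phi),
\end{equation*}
with $c_E=V^{-1}b_E(\cL^n\cdot E)$. Two elementary facts about the coefficients: by flatness of $\bar\cX\to\P^1$ we have $\sum_E c_E=V^{-1}(\cX_0\cdot\cL^n)=V^{-1}(\cX_1\cdot\cL^n)=1$; and since $\phi$ is semipositive, $\cL|_E$ is nef, so $(\cL|_E)^n\ge0$ and thus each $c_E\ge0$. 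Hence the sum in the expansion above is a convex combination, and so is bounded below by the minimum of its entries. Combined with the trivial estimate $L^\NA_B(\phi)=\inf_v(A_{(X,B)}(v)+(\phi-\phi_\triv)(v))\le\min_E(A_{(X,B)}(v_E)+(\phi-\phi_\triv)(v_E))$, we conclude
\begin{equation*}
  M^\NA_B(\phi)\ge L^\NA_B(\phi)-E^\NA(\phi)=D^\NA_B(\phi),
\end{equation*}
as desired.

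For the equality case, suppose $\phi$ is anticanonical, represented by $(\cX,\cL)$ with $(\cX,\cB+\cX_{0,\red})$ sublc and $\cL=-K^\lo_{(\cX,\cB)/\A^1}+c\cX_0$ for some $c\in\Q$. Then the divisor $D$ appearing in Proposition~\ref{P403} is exactly $c\cX_0$. On the one hand, plugging $K^\lo_{(\cX,\cB)/\A^1}+\cL=c\cX_0$ into~\eqref{e408} and using $V^{-1}(\cX_0\cdot\cL^n)=1$ gives $M^\NA_B(\phi)=c-E^\NA(\phi)$. On the other hand, Lemma~\ref{L401} applied to $w=b_E^{-1}\ord_E$ (for which $A_{(\cX,\cB+\cX_{0,\red})}(w)=0$) yields $A_{(X,B)}(v_E)+(\phi-\phi_\triv)(v_E)=w(D)=c$ for every irreducible component $E$ of $\cX_0$. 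Proposition~\ref{P403} is available precisely because $(\cX,\cB+\cX_{0,\red})$ is sublc, and it gives $L^\NA_B(\phi)=\min_E(A_{(X,B)}(v_E)+(\phi-\phi_\triv)(v_E))=c$; hence $D^\NA_B(\phi)=c-E^\NA(\phi)=M^\NA_B(\phi)$.

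The whole argument is essentially a one-line convexity estimate once the right expansion is in hand, so no serious obstacle arises. The only point to be careful about is the sign/normalization bookkeeping: one must check that $\sum_E c_E=1$ (flatness) and $c_E\ge0$ (semipositivity of $\phi$, which ensures $\cL$ is $\pi$-nef), and that in the anticanonical case the divisor $D=\cL+K^\lo_{(\cX,\cB)/\A^1}$ really is a rational multiple of $\cX_0$ so that Lemma~\ref{L401} forces the quantities $A_{(X,B)}(v_E)+(\phi-\phi_\triv)(v_E)$ to be independent of $E$. The sublc hypothesis on $(\cX,\cB+\cX_{0,\red})$ enters only through the applicability of Proposition~\ref{P403}, and is automatically built into the definition of an anticanonical metric.
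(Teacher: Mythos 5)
Your proof is correct and follows essentially the same route as the paper: the inequality comes from reading~\eqref{e409} as a convex combination (using $c_E\ge0$ from semipositivity and $\sum_Ec_E=1$ from flatness) bounded below by the infimum defining $L_B^{\NA}$, and the equality case combines~\eqref{e408}, Lemma~\ref{L401} and Proposition~\ref{P403} exactly as in the paper. No gaps.
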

\begin{rmk}
  In K\"ahler geometry, the inequality $D_B(\phi)\le M_B(\phi)$ is well-known,
  and equality holds iff $\phi$ is a K\"ahler-Einstein metric, 
  see~\eg~\cite[Lemma~4.4]{BBEGZ}.
  Proposition~\ref{P102} therefore suggests that 
  semipositive anticanonical non-Archimedean metrics on $-K_{(X,B)}$
  play the role of (weak) non-Archimedean K\"ahler-Einstein metrics.
\end{rmk}
\begin{proof} 
  Consider the expression~\eqref{e409} for $M^\NA(\phi)$. 
  Since $\phi$ is semipositive, we have $c_E\ge 0$ and $\sum_Ec_E=1$.
  This implies 
  \begin{multline*}
    M^\NA_B(\phi)
    \ge\min_E(A_{(X,B)}(v_E)+(\phi-\phi_\triv)(v_E))-E^\NA(\phi)\\
    \ge\inf_v(A_{(X,B)}(v)+(\phi-\phi_\triv)(v))-E^\NA(\phi)
    =D^\NA_B(\phi).
  \end{multline*}
  Now suppose $\phi$ is anticanonical and let $(\cX,\cL)$ be a test configuration for $(X,L)$
  such that $(\cX,\cB+\cX_{0,\red})$ is sublc and such that 
  $\cL=-K^\lo_{(\cX,\cB)/\A^1}+c\cX_0$ for some $c\in\Q$.
  
  On the one hand,~\eqref{e408} gives $M^\NA_B(\phi)=c-E^\NA(\phi)$.
  On the other hand, Lemma~\ref{L401} yields $A(v_E)+(\phi-\phi_\triv)(v_E)=c$ for all 
  irreducible components $E$ of $\cX_0$. Thus Proposition~\ref{P403}
  implies that $D_B^\NA(\phi)=c-E^\NA(\phi)$,
  which completes the proof.
\end{proof}
%
%
%
%
\section{Uniform K-stability}\label{sec:Kstab}
We continue working with a pair $(X,B)$, where $X$ is a normal
projective variety over a algebraically closed field $k$ of
characteristic zero.
In this section, we further assume that the $\Q$-line bundle $L$ is ample.
%
%
\subsection{Uniform K-stability} 
In the present language, Definition~\ref{defi:logKstab} says that
$((X,B);L)$ is K-semistable iff $\DF_B(\phi)\ge 0$ for
all positive metrics $\phi\in\cH^\NA(L)$, while K-stability further requires that $\DF_B(\phi)=0$ only when $\phi=\phi_\triv+c$ for some $c\in\Q$. In line with the point of view of~\cite{Sze2}, we introduce:
\begin{defi}\label{defi:unifKstab} The polarized pair $((X,B);L)$ is
  \emph{$L^p$-uniformly K-stable} if $\DF_B\ge\d\|\cdot\|_p$ on
  $\cH^\NA(L)$ for some uniform constant $\d>0$. 
  For $p=1$, we simply speak of \emph{uniform K-stability}. 
\end{defi} 
Since $\|\cdot\|_p\ge\|\cdot\|_1$, 
$L^p$-uniform K-stability implies ($L^1$-)uniform K-stability for any $p\ge1$. 
Note also that uniform K-stability implies (as it should!)
K-stability, thanks to Theorem~\ref{thm:trivmetric}.

\begin{prop}\label{prop:coer} The polarized pair $((X,B);L)$ is K-semistable iff $M_B^\NA\ge 0$ on $\cH^\NA(L)$. It is $L^p$-uniformly K-stable iff $M_B^\NA\ge\d\|\cdot\|_p$ on $\cH^\NA(L)$ for some $\d>0$. For $p=1$, this is also equivalent to $M_B^\NA\ge\d J^\NA$ for some $\d>0$. 
\end{prop}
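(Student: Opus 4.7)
The approach exploits two facts that bridge $\DF_B$ and $M_B^{\NA}$: the one-sided inequality $M_B^{\NA}(\phi)\le\DF_B(\phi)$ for semipositive $\phi$ (Proposition~\ref{P201}), and the reconciliation under scaling $\DF_B(\phi_d)=M_B^{\NA}(\phi_d)=dM_B^{\NA}(\phi)$ for $d$ sufficiently divisible (Proposition~\ref{prop:weaksemi}), combined with the homogeneity of $\|\cdot\|_p$ and $J^{\NA}$ under scaling (Lemma~\ref{lem:DHtrans} and Proposition~\ref{prop:J}).

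First I would treat the K-semistability statement. The ``if'' direction is immediate: if $M_B^{\NA}\ge 0$ on $\cH^{\NA}(L)$, then every $\phi\in\cH^{\NA}(L)$ satisfies $\DF_B(\phi)\ge M_B^{\NA}(\phi)\ge 0$ by Proposition~\ref{P201}, so $((X,B);L)$ is K-semistable. Conversely, assume K-semistability and fix $\phi\in\cH^{\NA}(L)$. By Proposition~\ref{prop:weaksemi}, there exists $d_0\in\Z_{>0}$ such that $\DF_B(\phi_d)=dM_B^{\NA}(\phi)$ for every $d$ divisible by $d_0$. Since $\phi_d\in\cH^{\NA}(L)$, K-semistability gives $\DF_B(\phi_d)\ge0$, whence $M_B^{\NA}(\phi)\ge0$.

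Next, I would handle the $L^p$-uniform case in exactly the same way. If $M_B^{\NA}\ge\d\|\cdot\|_p$ on $\cH^{\NA}(L)$, then $\DF_B\ge M_B^{\NA}\ge\d\|\cdot\|_p$ by Proposition~\ref{P201}. Conversely, if $\DF_B\ge\d\|\cdot\|_p$ on $\cH^{\NA}(L)$, apply this to $\phi_d$ for $d$ divisible enough (in the sense of Proposition~\ref{prop:weaksemi}): using the homogeneity $\|\phi_d\|_p=d\|\phi\|_p$ from Lemma~\ref{lem:DHtrans}, we get
\begin{equation*}
  dM_B^{\NA}(\phi)=\DF_B(\phi_d)\ge\d\|\phi_d\|_p=\d d\|\phi\|_p,
\end{equation*}
and dividing by $d$ yields $M_B^{\NA}(\phi)\ge\d\|\phi\|_p$.

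Finally, for $p=1$, the equivalence between the bound $M_B^{\NA}\ge\d\|\cdot\|_1$ and the bound $M_B^{\NA}\ge\d'J^{\NA}$ follows from the sandwich $c_nJ^{\NA}\le\|\cdot\|_1\le 2J^{\NA}$ in Theorem~\ref{thm:J}: a lower bound by one of these functionals transfers to a lower bound by the other after adjusting the constant (e.g.\ $\d'=c_n\d$ and $\d=\d'/2$). The main obstacle, modest but essential, is the use of Proposition~\ref{prop:weaksemi}: this is exactly where we convert a statement about $\DF_B$ (which is not homogeneous) into a statement about $M_B^{\NA}$ (which is), so that the constant $\d$ survives after rescaling. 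Without this homogeneity-via-base-change argument, the error term $V^{-1}\big((\cX_0-\cX_{0,\red})\cdot\cL^n\big)$ separating $\DF_B$ from $M_B^{\NA}$ would obstruct the reverse implication.
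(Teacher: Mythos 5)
Your proof is correct and follows essentially the same route as the paper: the easy direction via $M_B^{\NA}\le\DF_B$ (Proposition~\ref{P201}), the converse via the scaling trick of Proposition~\ref{prop:weaksemi} combined with the homogeneity of $M_B^{\NA}$ and $\|\cdot\|_p$, and the $p=1$ case via the equivalence of $\|\cdot\|_1$ and $J^{\NA}$ from Theorem~\ref{thm:J}. The paper's own proof is simply a terser version of the same argument.
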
 
\begin{proof} We prove the second point, the first one being similar (and easier). The if part is clear, since $M_B^{\NA}\le\DF_B$. For the reverse implication, let $\phi\in\cH^{\NA}(L)$.
  By Proposition~\ref{P201} we can pick $d\ge 1$ such that
  $M_B^{\NA}(\phi_d)=\DF_B(\phi_d)$. 
  By assumption, $\DF_B(\phi_d)\ge\d \|\phi_d\|_p$,
  and we conclude by homogeneity of $M_B^{\NA}$ and $\|\cdot\|_p$. 
  
 The final assertion is now 
 a consequence of the equivalence between $J^\NA$ and $\|\cdot\|_1$ proved in Theorem~\ref{thm:J}.  
\end{proof}
\begin{rmk} By Remark~\ref{rmk:derJ}, our notion of uniform
  K-stability is also equivalent to
  uniform K-stability with respect to the minimum norm in the 
  sense of~\cite{Der1}. 
\end{rmk}
\begin{rmk}\label{R303}
  It is clear that, for any $r\in\Q_{>0}$ and
  $p\ge 1$, $((X,B);L)$ is K-semistable (resp.\ $L^p$-uniformly
  K-stable)
  iff $((X,B);rL)$ is K-semistable (resp.\ $L^p$-uniformly K-stable).
\end{rmk}
The next result confirms G.~Sz\'ekelyhidi's expectation that
$p=\tfrac{n}{n-1}$ is a threshold value for $L^p$-uniform K-stability, 
cf.~\cite[\S3.1.1]{Sze1}. 
\begin{prop}\label{prop:thresh} 
  A polarized pair $((X,B);L)$ cannot be $L^p$-uniformly K-stable
  unless $p\le\frac{n}{n-1}$. More precisely, any polarized pair $((X,B);L)$ 
  admits a sequence $\phi_\e\in\cH^{\NA}(L)$,  paramet\-rized by $0<\e\ll 1$ rational, 
  such that $M_B^{\NA}(\phi_\e)\sim\e^n$, $\|\phi_\e\|_p\sim\e^{1+\frac{n}{p}}$ for each $p\ge 1$. 
\end{prop}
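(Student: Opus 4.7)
The plan is to construct $\phi_\e$ via deformation to the normal cone of a smooth point of $X$ disjoint from the support of $B$. Pick such a point $p\in X\setminus\supp(B)$ (which exists since $X$ is normal and $B$ has pure codimension one), let $\mu\colon\cX\to X_{\A^1}$ be the blow-up at $(p,0)$ with exceptional divisor $E\cong\P^n$, and set $\cL_\e:=\mu^*L_{\A^1}-\e E$ for $\e\in\Q$ in the range $(0,\epsilon)$, where $\epsilon:=\epsilon(L;p)>0$ is the Seshadri constant. For such $\e$, the restrictions of $\cL_\e$ to the two components of $\cX_0=\tilde X_0+E$ (namely $\sigma^*L-\e F'$ with $\sigma\colon\mathrm{Bl}_pX\to X$, and $\e\cO_{\P^n}(1)$) are ample; hence $\cL_\e$ is relatively ample and defines a positive metric $\phi_\e\in\cH^\NA(L)$. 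Note also that $\cX_0$ is reduced, so Proposition~\ref{P201} gives $M^\NA_B(\phi_\e)=\DF_B(\phi_\e)$.

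The first task is the intersection-theoretic asymptotic of $M^\NA_B(\phi_\e)$. Work on the compactification $\bar\cX=\mathrm{Bl}_{(p,0)}(X\times\P^1)$ and set $F:=\mu^*p_1^*L$, so $\bar\cL_\e=F-\e E$. The crucial vanishings are $F|_E=0$ (since $\mu$ contracts $E$ to a point), together with $E|_E=\cO_{\P^n}(-1)$, which imply $F^{n+1-k}\cdot E^k=0$ for $1\le k\le n$, $F^{n+1}=0$ and $E^{n+1}=(-1)^n$. Expanding yields
$$
(\bar\cL_\e^{n+1})=(-\e)^{n+1}E^{n+1}=-\e^{n+1},
\qquad\text{so}\qquad
E^\NA(\phi_\e)=-\frac{\e^{n+1}}{(n+1)V}.
$$
Since $p\notin\supp(B)$, one has $K_{(\bar\cX,\bar\cB)/\P^1}=\mu^*p_1^*K_{(X,B)}+nE$, and the pulled back class $\mu^*p_1^*K_{(X,B)}$ intersects trivially with any monomial in $F,E$ of weight $n$, so only the term $nE\cdot(-\e E)^n$ survives:
$$
(K_{(\bar\cX,\bar\cB)/\P^1}\cdot\bar\cL_\e^n)=n\e^n.
$$
Formula~\eqref{e308} then gives $M^\NA_B(\phi_\e)=n\e^n/V-\bar S_B\,\e^{n+1}/((n+1)V)\sim\e^n$, and the analogous computation $(\bar\cL_\e\cdot F^n)=0$ together with Lemma~\ref{lem:sup} yield $J^\NA(\phi_\e)=\e^{n+1}/((n+1)V)$.

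For the $L^p$-norm, let $\nu:=\DH_{\phi_\e}$. Theorem~\ref{thm:supp} (applied with $D=-\e E$) gives $\supp\nu=[-\e,0]$, while the barycenter $\bar\lambda=E^\NA(\phi_\e)=O(\e^{n+1})$. This immediately produces the upper bound
$$
\|\phi_\e\|_p^p=\int|\lambda-\bar\lambda|^p\,d\nu\le(2\e)^{p-1}\|\phi_\e\|_1\lesssim\e^{p-1}J^\NA(\phi_\e)\sim\e^{p+n},
$$
using Theorem~\ref{thm:J}. For the matching lower bound I identify the density of $\nu$ explicitly. Proposition~\ref{prop:filtrflag} identifies the flag ideal as $\fa^{(m)}_\lambda=\fm_p^{m\e+\lambda}$ for $\lambda\in[-m\e,0]$, so $F^\lambda H^0(X,mL)$ is the space of sections vanishing to order $\ge m\e+\lambda$ at $p$. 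The standard Seshadri-constant identity
$$
h^0(X,\fm_p^{ms}\otimes mL)=h^0(\mathrm{Bl}_pX,m(\sigma^*L-sF'))=\frac{m^n}{n!}(V-s^n)+O(m^{n-1})
$$
for $s\in(0,\epsilon)$, combined with Theorem~\ref{thm:BC}, shows that $\nu$ is the sum of an atom of mass $1-\e^n/V$ at $0$ and an absolutely continuous part of density $n(\e+\lambda)^{n-1}/V$ on $(-\e,0)$. A direct Beta-integral computation gives
$$
\|\phi_\e\|_p^p\sim\int_{-\e}^0|\lambda|^p\,\frac{n(\e+\lambda)^{n-1}}{V}\,d\lambda=\frac{n\e^{n+p}}{V}\,B(p+1,n),
$$
which dominates the atom contribution $|\bar\lambda|^p\sim\e^{p(n+1)}$ since $p(n+1)\ge p+n$ for $p\ge1$; hence $\|\phi_\e\|_p\sim\e^{1+n/p}$.

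The conclusion is then immediate: if $((X,B);L)$ is $L^p$-uniformly K-stable with constant $\delta>0$, then $\delta\,c_1\,\e^{1+n/p}\le M^\NA_B(\phi_\e)\le c_2\,\e^n$ for all small $\e$, so $\e^{1+n/p-n}$ must stay bounded as $\e\to 0^+$, forcing $1+n/p\ge n$, i.e.\ $p\le n/(n-1)$. The main technical step is the lower bound in the $L^p$-asymptotic, which demands the precise identification of the Duistermaat-Heckman density through the Seshadri-constant estimate; the intersection numbers controlling $M^\NA_B$, $J^\NA$ and the upper $L^p$-bound are formal consequences of the projection formula applied to the blowup.
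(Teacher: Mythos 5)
Your proposal is correct and takes essentially the same route as the paper: the paper also uses the deformation to the normal cone of a regular closed point $x\in X\setminus\supp B$ polarized by $\rho^*L_{\A^1}-\e E$, derives the same Duistermaat--Heckman measure $nV^{-1}\mathbf{1}_{[-\e,0]}(\la+\e)^{n-1}d\la+(1-V^{-1}\e^n)\d_0$ (via Lemma~\ref{lem:filtr} rather than your flag-ideal/Seshadri detour, which amounts to the same computation), and concludes with the same intersection-theoretic estimates for $E^{\NA}$, $J^{\NA}$ and $M_B^{\NA}$. The only substantive-looking discrepancy is the coefficient of the entropy term, where your value $n\e^n/V$ is the one consistent with Corollary~\ref{cor:discr} and Proposition~\ref{prop:neartriv} (the discrepancy of $E$ in $K^{\lo}_{(\bar\cX,\bar\cB)/\P^1}$ is $b_EA_{(X,B)}(v_E)=n$), and in any case only the order $\sim\e^n$ matters.
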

\begin{proof} 
  We shall construct $\phi_\e$ as a small perturbation of the trivial metric.
  By Remark~\ref{R303} we may assume that $L$ is an actual line bundle.
  Let $x\in X\smallsetminus\supp B$ be a regular closed point, and $\rho\colon\cX\to X_{\A^1}$ 
  be the blow-up of $(x,0)$ (\ie the deformation to the normal cone), with exceptional 
  divisor $E$. For each rational $\e>0$ small enough, $\cL_\e:=\rho^*L_{\A^1}-\e E$ is 
  relatively ample, and hence defines a normal, ample test configuration $(\cX,\cL_\e)$ for $(X,L)$,
  with associated non-Archimedean metric $\phi_\e\in\cH^{\NA}(L)$.

  Lemma~\ref{lem:filtr} gives the following description of the filtration $F^\bullet_\e R$ 
  attached to $(\cX,\cL_\e)$:
  \begin{equation*}
    F_\e^{m\la} H^0(X,mL)=\left\{s\in H^0(X,mL)\mid v_E(s)\ge m(\la+\e)\right\}
  \end{equation*}
  for $\la\le 0$, and $F_\e^{m\la}H^0(X,mL)=0$ for $\la>0$.
  If we denote by $F$ the exceptional divisor of the blow-up $X'\to X$ at $x$, 
  then $v_E=\ord_F$, and the Duistermaat-Heckman measure $\DH_\e$ is thus given by
  \begin{equation*}
    \DH_\e\{x\ge\la\}=V^{-1}\left(\rho^*L-(\la+\e)F\right)^n=1-V^{-1}(\la+\e)^n
  \end{equation*}
  for $\la\in(-\e,0)$, $\DH_\e\{x\ge\la\}=1$ for $\la\le-\e$, and $\DH_\e\{x\ge\la\}=0$ 
  for $\la>0$.
  Hence
  \begin{equation*}
    \DH_\e=nV^{-1}{\bf 1}_{[-\e,0]}(\la+\e)^{n-1}d\la+(1-V^{-1}\e^n)\d_0.
  \end{equation*}
  We see from this that $\la_{\mathrm{max}}=0$,
  \begin{equation*}
    J^{\NA}(\phi_\e)
    =-E^{\NA}(\phi_\e)
    =-\int_\R\la\,\DH_\e(d\la)
    =-\frac{n}{V}\int_{-\e}^0\la(\la+\e)^{n-1}\,d\la
    =O(\e^{n+1}),
  \end{equation*}
  and
  \begin{multline*}
    \|\phi_\e\|_p^p=\int_\R\left|\la-E^{\NA}(\phi_\e)\right|^p\,\DH_\e(d\la)\\
    =nV^{-1}\int_{-\e}^0\left|\la+O(\e^{n+1})\right|^p(\la+\e)^{n-1}d\la
    +(1-V^{-1}\e^n)O(\e^{p(n+1)})\\
    =\e^{p+n}\left[nV^{-1}\int_0^1\left|t+O(\e^n)\right|^p(1-t)^{n-1}dt
      +O(\e^{n(p-1)})+o(1)\right]\\
    =\e^{p+n}(c+o(1))
  \end{multline*}
  for some $c>0$. 
  The estimate for $M_B^{\NA}(\phi_\e)$ is a special case of 
  Proposition~\ref{prop:neartriv} below, but let us give a direct proof.
  By~\eqref{e308} it suffices to prove that 
  $(K^{\log}_{(\bar\cX,\bar\cB)/\P^1}\cdot\bar\cL_\e^n)\sim\e^n$.
  Here $K^{\log}_{(\bar\cX,\bar\cB)/\P^1}=\rho^*K^\lo_{(X_{\P^1},B_{\P^1})}+(n+1)E$.
  Since $\rho_*E^j=0$ for $0\le j\le n$ and $((-E)^{n+1})=-1$, the projection formula
  yields $(K^{\log}_{(\bar\cX,\bar\cB)/\P^1}\cdot\bar\cL_\e^n)=(n+1)\e^n$.
\end{proof}
%
%
\subsection{Uniform Ding stability}
Now consider the log Fano case, that is, $(X,B)$ is klt and
$L:=-K_{(X,B)}$ is ample.
We can then consider stability with respect to the 
non-Archimedean Ding functional $D_B^\NA$ on $\cH^\NA$ defined 
in~\S\ref{sec:Ding}. 

Namely, following~\cite{BBJ15} (see also Fujita~\cite{Fuj15b,Fuj16})
we say that $(X,B)$ is 
\emph{Ding semistable} if $D_B^\NA\ge0$, and \emph{uniformly Ding stable} if
$D_B^\NA\ge\d J^\NA$ for some $\d>0$.

A proof of the following result in the case when $X$ is smooth and $B=0$ appears in~\cite{BBJ15}. 
The general case is treated in~\cite{Fuj16}.
\begin{thm}\label{T101}
  Let $X$ be a normal projective variety and 
  $B$ an effective boundary on $X$ such that $(X,B)$ is klt
  and $L:=-K_{(X,B)}$ is ample. Then, for any $\d\in[0,1]$, we have
  $M^\NA_B\ge\delta J^\NA$ on $\cH^\NA$ iff 
  $D^\NA_B\ge\delta J^\NA$ on $\cH^\NA$. 
  In particular, $((X,B);L)$ is K-semistable 
  (resp.\ uniformly K-stable) 
  iff $(X,B)$ is Ding-semistable 
  (resp.\ uniformly Ding-stable).
\end{thm}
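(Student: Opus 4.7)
One direction is immediate from Proposition~\ref{P102}: since $D^\NA_B\le M^\NA_B$ on $\cH^\NA$, the inequality $D^\NA_B\ge\delta J^\NA$ yields $M^\NA_B\ge D^\NA_B\ge\delta J^\NA$. Hence (uniform) Ding stability always implies (uniform) K-stability in the log Fano setting, for any $\delta\in[0,1]$.

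For the converse, the plan is to reduce an arbitrary $\phi\in\cH^\NA(L)$ to an \emph{anticanonical} metric, where $D^\NA_B$ and $M^\NA_B$ agree by Proposition~\ref{P102}. Given a normal representative $(\cX,\cL)$ of $\phi$ with $(\cX,\cB+\cX_{0,\red})$ log smooth (arranged via a further log resolution), write $\cL+K^\lo_{(\cX,\cB)/\A^1}=\cO_\cX(D)$ with $D$ supported on $\cX_0$, and set $c:=L^\NA_B(\phi)$. Proposition~\ref{P403} then gives $D\ge c\,\cX_0$, and the candidate anticanonical replacement is
\[
\tilde\cL\;:=\;-K^\lo_{(\cX,\cB)/\A^1}+c\,\cX_0\;=\;\cL-(D-c\,\cX_0),
\]
which differs from $\cL$ by an effective divisor supported on the central fiber. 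Lemma~\ref{L401} then ensures that the associated formal metric $\tilde\phi$ satisfies $A_{(X,B)}(v_E)+(\tilde\phi-\phi_\triv)(v_E)=c$ for every component $E$ of $\cX_0$, and in particular $L^\NA_B(\tilde\phi)=c=L^\NA_B(\phi)$.

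The main obstacle is verifying that $\tilde\cL$ is (relatively) semiample, so that $\tilde\phi$ actually lies in $\cH^\NA(L)$ and is anticanonical in the sense of the definition preceding Proposition~\ref{P102}. This is where the log Fano hypothesis enters decisively: one runs a relative MMP for $-K^\lo_{(\cX,\cB)/\A^1}$ over $\A^1$, replacing $\cX$ by a minimal model on which $\tilde\cL$ becomes semiample. The klt property of $(X,B)$ together with the ampleness of $-K_{(X,B)}$ guarantee both termination of the MMP and the required positivity of the output. This MMP step is the technical heart of~\cite{Fuj16}; the smooth, boundary-free case treated in~\cite{BBJ15} bypasses the MMP via a variational/pluripotential argument, but reaches the same formal reduction.

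Once $\tilde\phi$ has been produced, Proposition~\ref{P102} gives $D^\NA_B(\tilde\phi)=M^\NA_B(\tilde\phi)$, and the proof is completed by a careful intersection-theoretic comparison of the functionals $D^\NA_B, M^\NA_B$, and $J^\NA$ on $\phi$ and $\tilde\phi$. Using Theorem~\ref{thm:suppNA} together with the explicit formula $(\tilde\phi-\phi_\triv)(v_E)=c-A_{(X,B)}(v_E)$ from Lemma~\ref{L401}, one checks that the anticanonicalization $\phi\mapsto\tilde\phi$ preserves $L^\NA_B$ while modifying $E^\NA$ and the Duistermaat--Heckman support in a controlled way (both being determined by the fiberwise intersection numbers on $\bar\cX$, which change only by terms coming from the effective divisor $\cL-\tilde\cL$ supported on $\cX_0$). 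Chaining the resulting comparisons with the Mabuchi hypothesis $M^\NA_B(\tilde\phi)\ge\delta J^\NA(\tilde\phi)$ yields the desired Ding bound $D^\NA_B(\phi)\ge\delta J^\NA(\phi)$.
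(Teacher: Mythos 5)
Your first paragraph is fine: Proposition~\ref{P102} gives $D^\NA_B\le M^\NA_B$ on semipositive metrics, so $D^\NA_B\ge\delta J^\NA$ immediately yields $M^\NA_B\ge\delta J^\NA$. Note that the paper itself offers no proof of the converse; it defers entirely to \cite{BBJ15} and \cite{Fuj16}, and your sketch is an outline of the strategy of those references rather than an independent argument.

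The converse direction contains a genuine gap, and it sits exactly where the content of the theorem lies. What one must prove is a monotonicity statement: passing from $\phi$ to the special (anticanonical) metric $\tilde\phi$ satisfies $D^\NA_B(\phi)-\delta J^\NA(\phi)\ge D^\NA_B(\tilde\phi)-\delta J^\NA(\tilde\phi)$, after which Proposition~\ref{P102} and the hypothesis $M^\NA_B(\tilde\phi)\ge\delta J^\NA(\tilde\phi)$ finish the proof. Your proposal never establishes this inequality, and the ``same-model'' comparison offered in its place fails in two ways. First, $\tcL=-K^\lo_{(\cX,\cB)/\A^1}+c\,\cX_0$ is in general not relatively semiample on $\cX$, so $\tilde\phi$ is not a positive metric there and Proposition~\ref{P102} (which requires semipositivity) does not apply; one is forced to run the relative MMP, after which the model changes by divisorial contractions and flips and $\cL-\tcL$ is no longer ``an effective divisor supported on $\cX_0$'' of a common model. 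Second, even where the same-model comparison makes sense, it points the wrong way: Lemma~\ref{L401} gives $L^\NA_B(\tilde\phi)=L^\NA_B(\phi)=c$, while subtracting the effective divisor $D-c\,\cX_0$ from $\cL$ can only decrease $E^\NA$ (by the argument of Lemma~\ref{lem:EMA} and Lemma~\ref{lem:monotone}); hence $D^\NA_B(\tilde\phi)=L^\NA_B(\tilde\phi)-E^\NA(\tilde\phi)\ge D^\NA_B(\phi)$, so the bound $D^\NA_B(\tilde\phi)\ge\delta J^\NA(\tilde\phi)$ says nothing about $D^\NA_B(\phi)$. The actual proof in \cite{BBJ15,Fuj16} shows that $D^\NA_B-\delta J^\NA$ is non-increasing at \emph{each} MMP step, and it is precisely there that the restriction $\delta\in[0,1]$ is used --- a restriction your sketch never invokes. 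Deferring this computation to the references as ``the technical heart'' leaves the proof incomplete.
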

%
%
%
%
\section{Uniform K-stability and singularities of pairs}\label{sec:kstabsing}
In this section, the base field $k$ is assumed to have characteristic
$0$. We still assume $X$ is a normal variety, unless otherwise stated.
%
%
\subsection{Odaka-type results for pairs}\label{S304}
Let $B$ be an effective boundary on $X$. Recall that the pair $(X,B)$ is lc (log canonical) if $A_{(X,B)}(v)\ge 0$ for all divisorial valuations $v$ on $X$, while $(X,B)$ is klt if $A_{(X,B)}(v)>0$ for all such $v$. 
\begin{thm}\label{thm:lc} Let $(X,L)$ be a normal polarized variety, and $B$ an effective boundary on $X$. Then 
$$
(X,B)\text{ lc}\Longleftrightarrow H_B^{\NA}\ge 0\text{ on }\cH^{\NA}(L)
$$ 
and 
$$
((X,B);L)\text{ K-semistable}\Longrightarrow(X,B)\text{ lc}. 
$$
 \end{thm}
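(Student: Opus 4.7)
My plan is to follow Odaka's strategy, exploiting the correspondence between Rees valuations of closed subschemes $Z\subset X$ and non-trivial components of central fibers of the normalized deformation to the normal cone (Theorem~\ref{thm:reescone}, Corollary~\ref{cor:rees}).

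For the easy direction ``$(X,B)$ lc $\Rightarrow H_B^{\NA}\ge 0$'', I would take $\phi\in\cH^{\NA}(L)$ represented by a normal semiample test configuration $(\cX,\cL)$ and apply the defining formula
\[
H_B^{\NA}(\phi)=V^{-1}\sum_E A_{(X,B)}(v_E)\,b_E\,(E\cdot\cL^n),
\]
where $E$ ranges over irreducible components of $\cX_0$. Each summand is non-negative: $b_E\in\N^*$, $(E\cdot\cL^n)=(\cL|_E^n)\ge 0$ since $\cL|_E$ is nef on the proper $n$-fold $E$, and $A_{(X,B)}(v_E)\ge 0$ because $v_E$ is either trivial or divisorial and $(X,B)$ is lc.

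For the reverse implication ``$H_B^{\NA}\ge 0\Rightarrow(X,B)$ lc'' and for ``K-semistable $\Rightarrow(X,B)$ lc'', I would prove the contrapositive by producing, from a non-lc pair, a destabilizing test configuration. Starting from a divisorial valuation $v$ with $A_{(X,B)}(v)<0$, I invoke MMP—specifically Koll\'ar's theory of non-lc ideal sheaves, or extraction of divisorial modifications—to construct a closed subscheme $Z\subsetneq X$ whose Rees valuations $v_1,\dots,v_r$ all satisfy $A_{(X,B)}(v_i)<0$. Concretely, one passes to a log resolution $\mu\colon Y\to X$ with $K_{Y/(X,B)}=\sum a_iE_i$, defines $Z$ via the reduced image of $\bigcup_{a_i<-1}E_i$ (with scheme structure tuned so that the divisors over it are precisely the Rees divisors of $Z$), and checks the Rees valuations are of the form $\ord_{E_i}$ with $a_i<-1$. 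Corollary~\ref{cor:rees} then provides a normal ample test configuration $(\cX,\cL)$ whose non-trivial central fiber components correspond bijectively to the $v_i$; ampleness of $\cL$ yields $(E\cdot\cL^n)>0$, and the formula above gives $H_B^{\NA}(\phi)<0$, settling the reverse equivalence. For the sharper conclusion $\DF_B<0$, I would instead consider the normalized blow-up $\mu\colon\tcX\to X_{\A^1}$ of $Z\times\{0\}$, with $\mu$-ample exceptional Cartier divisor $-D$, and form $\cL_\epsilon:=\mu^*L_{\A^1}-\epsilon D$ for $0<\epsilon\ll 1$ rational, obtaining a normal ample test configuration. After a base change $t\mapsto t^d$ making the central fiber reduced (Proposition~\ref{prop:weaksemi}), we have $\DF_B=M_B^{\NA}$, and the Chen-Tian decomposition $M_B^{\NA}=H_B^{\NA}+R_B^{\NA}+\bar S_B E^{\NA}$ becomes a polynomial in $\epsilon$. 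The projection formula shows the contribution of $D^k$ vanishes for $k<n+1-\dim Z$; tracking the leading order, I would show that the dominant contribution to $M_B^{\NA}$ is a strictly negative multiple of a positive combination of the $A_{(X,B)}(v_i)$, forcing $\DF_B<0$ for $\epsilon$ small.

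The main obstacle is the MMP construction of $Z$: upgrading a single divisorial valuation of negative log discrepancy to a subscheme all of whose Rees valuations retain this property. This is the technical heart of Odaka's argument and relies on the theory of non-lc ideal sheaves or divisorial extractions. Once $Z$ is secured, the remaining work is essentially intersection-theoretic bookkeeping on the exceptional divisor of the normalized deformation to the normal cone, plus a careful handling of base changes to pass between $\DF_B$ and $M_B^{\NA}$.
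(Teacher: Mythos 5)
Your overall architecture matches the paper's: the forward implication is read off from the defining formula for $H_B^{\NA}$ exactly as you say, and the converse together with the K-semistability statement is reduced to producing a closed subscheme $Z\subset X$ \emph{all} of whose Rees valuations have negative log discrepancy, which is then fed into the normalized deformation to the normal cone (Corollary~\ref{cor:rees}) and an asymptotic expansion of $M_B^{\NA}$ in the perturbation parameter $\e$ (this is Proposition~\ref{prop:neartriv} in the paper), with the passage between $\DF_B$ and $M_B^{\NA}$ handled by base change and homogeneity, just as you propose.

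The genuine gap is exactly where you locate it, and your proposed fix does not close it. The recipe ``take a log resolution, let $Z$ be the reduced image of $\bigcup_{a_i<-1}E_i$, with scheme structure tuned so that the divisors over it are precisely the Rees divisors of $Z$'' is circular: choosing the ideal so that its Rees valuations are exactly a prescribed set of bad divisors is the entire difficulty. Rees valuations are not determined by the support of $Z$ — for a reduced smooth center $W$ of codimension $r$ in a smooth ambient space one gets $A_X(\ord_E)=r>0$ for the exceptional divisor of the blow-up, regardless of how non-lc $(X,B)$ is along $W$ — and the set of all divisorial valuations with $A_{(X,B)}<0$ is infinite, hence not the Rees set of any single subscheme; one must extract a finite, carefully chosen subfamily. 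The paper's solution (Corollary~\ref{cor:lc}) is the Odaka--Xu lc blow-up (Theorem~\ref{thm:OX}): after disposing of the trivial case where $B$ has a component of coefficient $>1$, the lc modification $\mu\colon X'\to X$ has exceptional locus a divisor $E=\sum E_i$ with $\sum_iA_iE_i=K_{X'}+E+B'-\mu^*K_{(X,B)}$ both $\mu$-exceptional and $\mu$-ample; the negativity lemma forces $A_i<0$ for all $i$, and Proposition~\ref{prop:reesexc} then identifies $\mu$ with the blow-up of an ideal whose Rees valuations are exactly the $\ord_{E_i}$. Without this theorem (or an equivalent divisorial-extraction statement) your argument does not go through. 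A minor further point: in the expansion of $M_B^{\NA}(\phi_\e)$ the leading coefficient is a \emph{positive} combination of the $A_{(X,B)}(v_E)$ over the components of minimal codimension (with coefficients $\deg_E(\phi)\,b_E\,(Z_E\cdot L^{n-r})>0$), hence negative because each $A_{(X,B)}(v_E)<0$; your phrase ``strictly negative multiple of a positive combination'' has the signs backwards.
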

The proof of this result, given in~\S\ref{S203},
follows rather closely the line of argument of~\cite{Oda3}. The second implication is also observed in~\cite[Theorem 6.1]{OSu}. The general result of~\cite{Oda3}, dealing with the non-normal case, is discussed in \S\ref{sec:slc}. 

\begin{thm}\label{thm:klt} Let $(X,L)$ be a normal polarized variety and $B$ an effective boundary on $X$. Then the following assertions are equivalent:
\begin{itemize}
\item[(i)] $(X,B)$ is klt; 
\item[(ii)] there exists $\d>0$ such that $H_B^{\NA}\ge\d J^{\NA}$ on $\cH^{\NA}(L)$; 
\item[(iii)] $H_B^{\NA}(\phi)>0$ for every $\phi\in\cH^{\NA}(L)$ that is not a translate of $\phi_\triv$. 
\end{itemize}
\end{thm}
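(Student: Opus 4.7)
The plan is to establish the cyclic implications (ii)$\Rightarrow$(iii)$\Rightarrow$(i)$\Rightarrow$(ii). The first is immediate from Theorem~\ref{thm:J}: if $\phi\in\cH^\NA(L)$ is not a translate of $\phi_\triv$, then $J^\NA(\phi)>0$, so $H^\NA_B(\phi)\ge\delta J^\NA(\phi)>0$.

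For (iii)$\Rightarrow$(i), I would argue by contrapositive. From Theorem~\ref{thm:lc}, condition (iii) already forces $(X,B)$ to be lc. If $(X,B)$ is lc but not klt, I would strengthen the construction of Theorem~\ref{thm:lc} to produce a closed subscheme $Z\subset X$, built from the non-klt locus via MMP tools, so that all Rees valuations of $Z$ are lc places, \ie satisfy $A_{(X,B)}=0$. By Corollary~\ref{cor:rees}, $Z$ yields a non-trivial normal ample test configuration $(\cX,\cL)$ in which every non-trivial central-fiber component $E$ has $A_{(X,B)}(v_E)=0$; this forces $H^\NA_B(\phi)=0$. Since $(\cX,\cL)$ is non-trivial, Theorem~\ref{thm:trivmetric} ensures $\phi$ is not a translate of $\phi_\triv$, contradicting (iii).

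The core of the theorem is (i)$\Rightarrow$(ii), for which the crucial input is the strict positivity of the global log canonical threshold of a klt pair:
\begin{equation*}
\alpha:=\lct\bigl((X,B);L\bigr)=\inf_v\frac{A_{(X,B)}(v)}{T_L(v)}>0,
\qquad
T_L(v):=\sup_{m\ge1}\frac{\max\{v(s)\mid 0\ne s\in H^0(X,mL)\}}{m},
\end{equation*}
where $v$ ranges over divisorial valuations on $X$. Given a positive metric $\phi\in\cH^\NA(L)$ represented by a normal ample $(\cX,\cL)$ dominating $X_{\A^1}$, write $\cL=\rho^*L_{\A^1}+D$. For each $m$, pick a nonzero $s_m\in F^{\lambda^{(m)}_{\max}}H^0(X,mL)$: Lemma~\ref{lem:filtr} gives $v_E(s_m)\ge\lambda^{(m)}_{\max}-m(\phi-\phi_\triv)(v_E)$ for each irreducible component $E$ of $\cX_0$, so taking $\sup_m$ yields $T_L(v_E)\ge\lambda_{\max}-(\phi-\phi_\triv)(v_E)$, whence
\begin{equation*}
A_{(X,B)}(v_E)\ge\alpha\bigl(\lambda_{\max}-(\phi-\phi_\triv)(v_E)\bigr).
\end{equation*}
Summing against the weights $c_E=V^{-1}b_E(E\cdot\cL^n)\ge0$ of the Monge-Amp\`ere probability measure $\MA^\NA(\phi)=\sum_E c_E\delta_{v_E}$, and using the intersection-theoretic identities $V^{-1}((\phi-\phi_\triv)\cdot\phi^n)=\sum_E c_E(\phi-\phi_\triv)(v_E)$ and $\lambda_{\max}=V^{-1}(\phi\cdot\phi_\triv^n)$ (Lemma~\ref{lem:sup}), I obtain
\begin{equation*}
H^\NA_B(\phi)\ge\alpha\bigl(\lambda_{\max}-V^{-1}((\phi-\phi_\triv)\cdot\phi^n)\bigr)=\alpha\,I^\NA(\phi)\ge\alpha\,J^\NA(\phi),
\end{equation*}
the last inequality being the content of Proposition~\ref{prop:J}. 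Thus (ii) holds with $\delta=\alpha$.

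The main obstacle is the positivity of $\lct((X,B);L)$ for klt pairs, which is a non-trivial classical input (Tian's alpha invariant, extended to the singular setting via effective finite generation of adjoint-type rings). A secondary delicate point is arranging, in (iii)$\Rightarrow$(i), the correct scheme structure on the non-klt locus so that its Rees valuations are actually lc places; this should flow from the theory of minimal log canonical centers.
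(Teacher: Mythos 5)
Your proof is correct and follows essentially the same route as the paper: the implication (i)$\Rightarrow$(ii) rests on the strict positivity of the global log canonical threshold (the paper's Theorem~\ref{thm:izumi} and Proposition~\ref{prop:lct}, which yield exactly your bound $H_B^{\NA}\ge\lct((X,B);L)\,I^{\NA}$), and (iii)$\Rightarrow$(i) uses an MMP-produced subscheme whose Rees valuations have non-positive log discrepancy (Proposition~\ref{prop:klt}) fed into the deformation to the normal cone (Corollary~\ref{cor:rees}). The only cosmetic difference is that you run the lct estimate through individual sections and the divisor-side infimum $\inf_D\lct_{(X,B)}(D)$, whereas the paper uses the flag ideals $\fa^{(m)}_\la$ and the ideal-side infimum; Theorem~\ref{thm:izumi} shows these two infima coincide, so the arguments are interchangeable.
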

We prove this in~\S\ref{S204}.
The proof of (iii)$\Longrightarrow$(i) is similar to that of~\cite[Theorem 1.3]{Oda3} (which deals with the Fano case), while that of (i)$\Longrightarrow$(ii) relies on an Izumi-type estimate (Theorem~\ref{thm:izumi}). As we shall see, (ii) holds with $\d$ equal to 
the global log canonical threshold of $((X,B);L)$ (cf.~Proposition~\ref{prop:lct} below). 

\smallskip
The above results have the following consequences in the `log
K\"ahler-Einstein case', \ie when $K_{(X,B)}\equiv\la L$ for some
$\la\in\Q$. After scaling $L$, we may assume $\la=0$ or $\la=\pm1$.

First, we have a uniform version of~\cite[Theorem 4.1, (i)]{OSu}. Closely related results were independently obtained in~\cite[\S3.4]{Der1}. 
\begin{cor}\label{cor:canpol} Let $X$ be a normal projective variety and 
  $B$ an effective boundary on $X$ such that $L:=K_{(X,B)}$ is ample. Then 
  the following assertions are equivalent:
\begin{itemize}
\item[(i)] $(X,B)$ is lc;
\item[(ii)] $((X,B);L)$ is uniformly K-stable, with $M_B^{\NA}\ge\frac1{n}J^{\NA}$ on $\cH^{\NA}(L)$; 
\item[(iii)] $((X,B);L)$ is K-semistable.
\end{itemize}
\end{cor}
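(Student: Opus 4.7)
The plan is to exploit the Chen-Tian formula in the log Kähler-Einstein case together with the characterization of log canonicity via the non-Archimedean entropy functional. Since $L=K_{(X,B)}$, Lemma~\ref{lem:MKE} applies with $\lambda=1$, giving the key identity
\begin{equation*}
M_B^{\NA}=H_B^{\NA}+(I^{\NA}-J^{\NA})
\end{equation*}
on $\cH^{\NA}(L)$. Both terms on the right are amenable to the tools already developed: $H_B^{\NA}$ is controlled by Theorem~\ref{thm:lc} (and is nonnegative exactly when $(X,B)$ is lc), and $I^{\NA}-J^{\NA}$ is controlled by Proposition~\ref{prop:J} (which yields $I^{\NA}-J^{\NA}\ge \tfrac1n J^{\NA}$ on semipositive metrics).

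First I would establish (i)$\Longrightarrow$(ii). Assuming $(X,B)$ is lc, Theorem~\ref{thm:lc} gives $H_B^{\NA}\ge 0$ on $\cH^{\NA}(L)$. Combining the Chen-Tian identity above with the inequality $I^{\NA}-J^{\NA}\ge\tfrac{1}{n}J^{\NA}$ on semipositive metrics from Proposition~\ref{prop:J} (noting that every $\phi\in\cH^{\NA}(L)$ is semipositive), we obtain
\begin{equation*}
M_B^{\NA}(\phi)\ge H_B^{\NA}(\phi)+\tfrac{1}{n}J^{\NA}(\phi)\ge\tfrac{1}{n}J^{\NA}(\phi)
\end{equation*}
for all $\phi\in\cH^{\NA}(L)$. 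By Proposition~\ref{prop:coer}, this is exactly uniform K-stability with the stated constant $\delta=1/n$.

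Next, (ii)$\Longrightarrow$(iii) is essentially formal: since $J^{\NA}\ge 0$ on $\cH^{\NA}(L)$ (Proposition~\ref{prop:J}), the hypothesis $M_B^{\NA}\ge\tfrac{1}{n}J^{\NA}$ implies $M_B^{\NA}\ge 0$ on $\cH^{\NA}(L)$, and hence K-semistability of $((X,B);L)$ by Proposition~\ref{prop:coer}. Finally, (iii)$\Longrightarrow$(i) is nothing but the second implication of Theorem~\ref{thm:lc} applied verbatim: K-semistability of $((X,B);L)$ forces $(X,B)$ to be lc.

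There is no serious obstacle here — all the work is already packaged in the preceding results. The only conceptual point worth emphasizing is that in the canonically polarized case $\lambda=1$, the Chen-Tian decomposition writes $M_B^{\NA}$ as a sum of two manifestly nonnegative functionals on $\cH^{\NA}(L)$ (nonnegativity of the first being equivalent to lc, nonnegativity of the second being automatic from $\phi$ semipositive), which is precisely what produces the explicit constant $1/n$ in the uniform estimate.
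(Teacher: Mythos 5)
Your proof is correct and follows essentially the same route as the paper: the identity $M_B^{\NA}=H_B^{\NA}+(I^{\NA}-J^{\NA})$ from Lemma~\ref{lem:MKE} with $\la=1$, nonnegativity of $H_B^{\NA}$ via Theorem~\ref{thm:lc}, the bound $I^{\NA}-J^{\NA}\ge\tfrac1n J^{\NA}$ from Proposition~\ref{prop:J}, and Theorem~\ref{thm:lc} again for (iii)$\Longrightarrow$(i). The only difference is cosmetic: you invoke Proposition~\ref{prop:coer} explicitly to pass from $M_B^{\NA}\ge\tfrac1n J^{\NA}$ to uniform K-stability, which the paper leaves implicit.
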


Next, in the log Calabi-Yau case we get a uniform version of~\cite[Theorem 4.1, (ii)]{OSu}:
\begin{cor}\label{cor:CY} Let $(X,L)$ be normal polarized variety, $B$ an effective boundary on $X$, and assume that $K_{(X,B)}\equiv 0$. Then
$((X,B);L)$ is K-semistable iff $(X,B)$ is lc. Further, the following assertions are equivalent:
\begin{itemize}
\item[(i)] $(X,B)$ is klt;
\item[(ii)] $((X,B);L)$ is uniformly K-stable; 
 \item[(iii)] $((X,B);L)$ is K-stable.  
\end{itemize} 
\end{cor}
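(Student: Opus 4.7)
The strategy is to reduce everything to statements about the non-Archimedean entropy functional via Lemma~\ref{lem:MKE}. Since $K_{(X,B)}\equiv 0$, that lemma (applied with $\la=0$) gives $M_B^{\NA}=H_B^{\NA}$ on $\cH^{\NA}(L)$. The K-semistability statement then becomes a three-fold equivalence: $((X,B);L)$ is K-semistable iff $M_B^{\NA}\ge 0$ on $\cH^{\NA}(L)$ (by Proposition~\ref{prop:coer}), iff $H_B^{\NA}\ge 0$, iff $(X,B)$ is lc (by Theorem~\ref{thm:lc}).

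For the three-way equivalence, the implication (i)$\Rightarrow$(ii) is immediate: if $(X,B)$ is klt, Theorem~\ref{thm:klt} yields some $\d>0$ with $H_B^{\NA}\ge\d J^{\NA}$, hence $M_B^{\NA}\ge\d J^{\NA}$, and Proposition~\ref{prop:coer} gives uniform K-stability. The implication (ii)$\Rightarrow$(iii) is the general fact (already noted after Definition~\ref{defi:unifKstab}) that uniform K-stability implies K-stability: if $\DF_B\ge\d\|\cdot\|_1$ and $\DF_B(\phi)=0$ for some $\phi\in\cH^{\NA}(L)$, then Theorem~\ref{thm:J} combined with Theorem~\ref{thm:trivmetric} forces $\phi$ to be a translate of $\phi_{\triv}$, \ie the unique normal ample representative is almost trivial.

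The main content is (iii)$\Rightarrow$(i), which I would prove by contrapositive. Assume $(X,B)$ is not klt. If $(X,B)$ is not even lc, the first part of the corollary shows $((X,B);L)$ is not K-semistable, hence \emph{a fortiori} not K-stable. Otherwise $(X,B)$ is lc but not klt, and Theorem~\ref{thm:klt}(iii) provides some $\phi\in\cH^{\NA}(L)$, not a translate of $\phi_{\triv}$, with $H_B^{\NA}(\phi)=0$ and hence $M_B^{\NA}(\phi)=0$. The subtle point is that $M_B^{\NA}(\phi)=0$ does not directly yield $\DF_B(\phi)=0$, since $\DF_B$ and $M_B^{\NA}$ may differ on test configurations with non-reduced central fiber. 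To bridge this, I would invoke Proposition~\ref{prop:weaksemi} and pick $d$ sufficiently divisible so that
\begin{equation*}
  \DF_B(\phi_d)=M_B^{\NA}(\phi_d)=d\,M_B^{\NA}(\phi)=0.
\end{equation*}
Since $J^{\NA}(\phi)>0$ by Theorem~\ref{thm:J} and $J^{\NA}$ is homogeneous, $J^{\NA}(\phi_d)=dJ^{\NA}(\phi)>0$, so $\phi_d$ is still not a translate of $\phi_{\triv}$. This produces a non-almost-trivial ample test configuration with vanishing Donaldson-Futaki invariant, contradicting K-stability.

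The only real obstacle is conceptual rather than technical: one must not forget the scaling trick of Proposition~\ref{prop:weaksemi} when passing from the homogeneous functional $M_B^{\NA}$ back to the non-homogeneous $\DF_B$ that appears in the definition of K-stability. Once this is in place, the corollary is a direct packaging of Theorems~\ref{thm:lc} and~\ref{thm:klt} through the identification $M_B^{\NA}=H_B^{\NA}$.
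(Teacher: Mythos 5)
Your proposal is correct and follows essentially the same route as the paper, which simply states that the corollary is a direct consequence of Lemma~\ref{lem:MKE} (giving $M_B^{\NA}=H_B^{\NA}$ when $K_{(X,B)}\equiv 0$) combined with Theorems~\ref{thm:lc} and~\ref{thm:klt}. Your careful handling of the passage from $M_B^{\NA}(\phi)=0$ to $\DF_B(\phi_d)=0$ via Proposition~\ref{prop:weaksemi} in the direction (iii)$\Rightarrow$(i) is exactly the intended mechanism (the same scaling trick used in Proposition~\ref{prop:coer}), and the rest of the argument matches the paper's packaging of the two theorems.
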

\begin{rmk} By~\cite[Corollary~3.3]{Oda1}, there exist polarized K-stable Calabi-Yau orbifolds (which have log terminal singularities) $(X,L)$ that are not asymptotically Chow (or, equivalently, Hilbert) semistable. 
In view of Corollary~\ref{cor:CY}, it follows that uniform K-stability does not 
imply asymptotic Chow stability in general. 
\end{rmk}

Finally, in the log Fano case we obtain:
\begin{cor}\label{cor:Fano}
  Let $X$ be a normal projective variety and 
  $B$ an effective boundary on $X$ such that $L:=-K_{(X,B)}$ is ample. 
  If $((X,B);L)$ is K-semistable, then $H_B^{\NA}\ge\frac1nJ^{\NA}$ on
  $\cH^{\NA}(L)$; in particular, $(X,B)$ is klt.
\end{cor}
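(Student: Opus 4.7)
The plan is to chain together the Chen--Tian-type formula in the log K\"ahler--Einstein case with the basic inequalities between $I^\NA$, $J^\NA$ relating their values on semipositive metrics, so the proof reduces to a short computation plus an appeal to Theorem~\ref{thm:klt}.

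First, I would apply Lemma~\ref{lem:MKE} to the log Fano setting. Since $L = -K_{(X,B)}$, we have $K_{(X,B)} \equiv \lambda L$ with $\lambda = -1$, and the lemma gives
\begin{equation*}
M_B^{\NA} = H_B^{\NA} - (I^{\NA} - J^{\NA})
\end{equation*}
on $\cH^{\NA}(L)$. Next, since $((X,B);L)$ is K-semistable, Proposition~\ref{prop:coer} yields $M_B^{\NA} \ge 0$ on $\cH^{\NA}(L)$, so rearranging gives
\begin{equation*}
H_B^{\NA} \ge I^{\NA} - J^{\NA}.
\end{equation*}

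The key inequality then comes from Proposition~\ref{prop:J}: every $\phi \in \cH^{\NA}(L)$ is positive, hence in particular semipositive, so that $\tfrac{1}{n} J^{\NA}(\phi) \le I^{\NA}(\phi) - J^{\NA}(\phi)$. Combining this with the previous display gives the announced bound
\begin{equation*}
H_B^{\NA} \ge \tfrac{1}{n} J^{\NA} \quad \text{on } \cH^{\NA}(L).
\end{equation*}

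For the final assertion, the inequality $H_B^{\NA} \ge \tfrac{1}{n} J^{\NA}$ is precisely condition (ii) in Theorem~\ref{thm:klt} with $\delta = 1/n > 0$, so applying that theorem's implication (ii)$\Longrightarrow$(i) yields that $(X,B)$ is klt. No real obstacle arises here; the entire argument is a direct assembly of the Chen--Tian decomposition (Lemma~\ref{lem:MKE}), the $I^\NA$--$J^\NA$ comparison on semipositive metrics (Proposition~\ref{prop:J}), and the characterization of klt singularities via positivity of the non-Archimedean entropy (Theorem~\ref{thm:klt}).
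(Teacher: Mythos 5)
Your proposal is correct and follows exactly the paper's own argument: Lemma~\ref{lem:MKE} with $\la=-1$ gives $M_B^{\NA}=H_B^{\NA}-(I^{\NA}-J^{\NA})$, K-semistability (via Proposition~\ref{prop:coer}) gives $H_B^{\NA}\ge I^{\NA}-J^{\NA}\ge\tfrac1nJ^{\NA}$ by Proposition~\ref{prop:J}, and Theorem~\ref{thm:klt} then yields that $(X,B)$ is klt. No issues.
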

A partial result in the reverse direction can be found in Proposition~\ref{prop:alpha}. 
See also~\cite[Theorem 6.1]{OSu} and~\cite[Theorem 3.39]{Der1} for closely related results.
Corollaries~\ref{cor:canpol},~\ref{cor:CY} and~\ref{cor:Fano} are proved in~\S\ref{S205}.
%
%
%
\subsection{Lc and klt blow-ups}
The following result, due to Y.~Odaka and C.~Xu, deals with lc blow-ups. 
The proof is based on an ingenious application of the MMP.
\begin{thm}\label{thm:OX}\cite[Theorem 1.1]{OX} Let $B$ be an
  effective boundary on $X$ with coefficients at most $1$. Then there
  exists a unique projective birational morphism $\mu\colon X'\to X$ such that the strict transform $B'$ of $B$ on $Y$ satisfies:
\begin{itemize}
\item[(i)] the exceptional locus of $\mu$ is a (reduced) divisor $E$; 
\item[(ii)] $(X',E+B')$ is lc and $K_{X'}+E+B'$ is $\mu$-ample. 
\end{itemize}
\end{thm}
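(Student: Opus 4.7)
The plan is to follow the strategy of Odaka-Xu, running a suitable minimal model program to extract precisely the non-klt places of $(X,B)$.

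First I would address uniqueness, which is straightforward: if $\mu_1\colon X_1\to X$ and $\mu_2\colon X_2\to X$ both satisfy (i) and (ii), then by the $\mu_i$-ampleness of $K_{X_i}+E_i+B_i'$, each $X_i$ is canonically recovered as the relative Proj
\begin{equation*}
X_i\simeq\Proj_X\bigoplus_{m\ge 0}(\mu_i)_*\cO_{X_i}(m(K_{X_i}+E_i+B_i'))
\end{equation*}
for sufficiently divisible $m$. Using the fact that both morphisms extract the same set of divisorial valuations on $X$, namely those with $A_{(X,B)}\le 0$ (as I explain below), the two graded algebras coincide as subalgebras of the function field, yielding $X_1\simeq X_2$ over $X$.

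For existence, I would start with a log resolution $\pi\colon Y\to X$ of $(X,\mathrm{Supp}(B))$, with $\pi$-exceptional prime divisors $E_1,\dots,E_k$ and strict transform $B_Y$ of $B$. Setting $a_i:=A_{(X,B)}(\ord_{E_i})$ and $\Gamma_Y:=B_Y+\sum_i E_i$, one obtains the log smooth (hence lc) pair $(Y,\Gamma_Y)$ with
\begin{equation*}
K_Y+\Gamma_Y=\pi^*(K_X+B)+\sum_i(1-a_i)E_i.
\end{equation*}
A short computation with the negativity lemma shows that on any output $X'$ satisfying (i) and (ii), the exceptional divisors must be exactly those $E_i$ with $a_i\le 0$, and the discrepancy calculation forces the arrangement claimed. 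This identifies the target and motivates the MMP to run: contract (or flip) the divisors $E_i$ with $a_i>0$, whose coefficient $1-a_i$ in the relative log canonical divisor is strictly less than $1$, and retain those with $a_i\le 0$.

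The core step is to run the $(K_Y+\Gamma_Y)$-MMP over $X$ with scaling of a $\pi$-ample divisor. Because $K_Y+\Gamma_Y-\pi^*(K_X+B)=\sum(1-a_i)E_i$ is $\pi$-exceptional, this MMP is effectively a program on an exceptional divisor, and by the existence of flips and termination for lc pairs (Birkar, Hacon-Xu, building on BCHM), it terminates with a relative log minimal model $\mu\colon X'\to X$ on which the pushforward $K_{X'}+E+B'$ is $\mu$-nef. To upgrade nefness to ampleness, I would pass to the relative ample model via the base-point-free theorem, using that the $\pi$-exceptional support of the difference with $\pi^*(K_X+B)$ makes the relative canonical algebra finitely generated.

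The main obstacle is the MMP machinery itself: existence of flips and termination with scaling in the log canonical (not merely klt) setting is genuinely deep, and one must cite these results rather than prove them here. A secondary subtlety is verifying that the MMP contracts \emph{exactly} the $E_i$ with $a_i>0$ and no others; this requires checking at each step that the extracted divisor among the $E_i$ with $a_i\le 0$ is never on an extremal ray one is asked to contract, which follows from the fact that $1-a_i\ge 1$ makes its coefficient in $\Gamma_Y$ saturated so that the MMP does not touch it.
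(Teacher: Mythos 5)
First, a point of reference: the paper does not prove this statement at all — it is quoted from Odaka--Xu and used as a black box — so your sketch is really being measured against the proof in \emph{loc.\ cit.}, which does follow the broad strategy you describe (log resolution, then the relative log canonical model of $(Y,B_Y+\sum_iE_i)$ over $X$, with the deep MMP inputs cited rather than reproved). At the level of strategy you are on target, but two of your key steps contain concrete errors.

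The central discrepancy computation is off by a sign. With $a_i:=A_{(X,B)}(\ord_{E_i})$ one has $K_Y=\pi^*(K_X+B)+\sum_i(a_i-1)E_i-B_Y$, hence
\[
K_Y+\Gamma_Y=\pi^*(K_X+B)+\sum_i a_i E_i,
\]
not $\pi^*(K_X+B)+\sum_i(1-a_i)E_i$. The correct formula is what drives the argument: on a relative minimal model the pushforward of $\sum_ia_iE_i$ is nef over $X$ and exceptional, hence antieffective by the negativity lemma, so every surviving $E_i$ has $a_i\le0$; on the ample model, Lemma~\ref{lem:ample} then forces $a_i<0$ for every surviving divisor (this is exactly the computation in the proof of Corollary~\ref{cor:lc}). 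Your version of the formula, and the discussion of ``coefficient $1-a_i<1$'', does not yield this. Relatedly, your closing claim that ``$1-a_i\ge1$ makes its coefficient in $\Gamma_Y$ saturated so that the MMP does not touch it'' conflates two different coefficients: every $E_i$ has coefficient exactly $1$ in $\Gamma_Y$ by construction, regardless of $a_i$, and a coefficient-one component of the boundary can perfectly well be contracted by the MMP — indeed when $(X,B)$ is already lc, \emph{all} the $E_i$ must be contracted, since the lc blow-up is then the identity. Which divisors survive is governed by the sign of $a_i$ via the negativity lemma, not by saturation of boundary coefficients, and showing that every $E_i$ with $a_i<0$ does survive requires a genuine (if standard) discrepancy comparison that your sketch does not supply.

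The uniqueness argument also rests on a false characterization. The exceptional prime divisors of $\mu$ are not ``the divisorial valuations with $A_{(X,B)}\le0$'': when $(X,B)$ is not lc there are infinitely many such valuations (further blow-ups only decrease log discrepancies), whereas $\mu$ extracts a specific finite subset of them; and when $(X,B)$ is lc but not klt there are valuations with $A_{(X,B)}=0$ yet $\mu=\mathrm{id}$ extracts none. So you cannot identify the two graded algebras by first identifying the extracted valuations. The correct route is the standard uniqueness of relative log canonical models: pass to a common resolution $q_i\colon W\to X_i$, apply the negativity lemma to the $X$-exceptional divisor $q_1^*(K_{X_1}+E_1+B_1')-q_2^*(K_{X_2}+E_2+B_2')$ in both directions to conclude the two pullbacks coincide, and only then identify $X_1$ and $X_2$ as the relative $\Proj$ of the same algebra.
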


\begin{cor}\label{cor:lc} Let $B$ be an effective boundary on $X$, and assume that $(X,B)$ is not lc. Then there exists a closed subscheme $Z\subset X$ whose Rees valuations $v$ all satisfy $A_{(X,B)}(v)<0$. 
\end{cor}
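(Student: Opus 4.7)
The plan is to use the Odaka--Xu lc blow-up (Theorem~\ref{thm:OX}) to produce a birational model of $X$ on which the failure of $(X,B)$ to be lc is encoded by a divisor that is simultaneously $\mu$-exceptional and $\mu$-ample, and then apply Proposition~\ref{prop:reesexc} to identify the resulting exceptional prime divisors with the Rees valuations of a closed subscheme $Z\subset X$ of codimension $\ge 2$.

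I would first dispose of the case in which some coefficient $a_i$ of $B=\sum a_iD_i$ exceeds $1$. Setting $Z:=D_i$, Example~\ref{ex:rees} identifies the unique Rees valuation of $Z$ as $\ord_{D_i}$, and a direct computation using $\mu=\id$ gives $A_{(X,B)}(\ord_{D_i})=1-a_i<0$.

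In the remaining case, all coefficients of $B$ lie in $[0,1]$, so Theorem~\ref{thm:OX} applies and furnishes a projective birational morphism $\mu\colon X'\to X$ with reduced exceptional divisor $E$, strict transform $B'$ of $B$, such that $(X',E+B')$ is lc and $K_{X'}+E+B'$ is $\mu$-ample. Since $(X,B)$ is not lc while $(X',E+B')$ is, $\mu$ cannot be an isomorphism, so $E$ is non-empty. Consider
$$G:=K_{X'}+E+B'-\mu^*K_{(X,B)}.$$
By the projection formula $\mu_*G=K_X+B-K_{(X,B)}=0$, so $G$ is $\mu$-exceptional; it is $\mu$-ample by the defining property of the lc blow-up. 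Passing to a Cartier multiple and applying Proposition~\ref{prop:reesexc}, $\mu$ is (isomorphic to) the blow-up of $X$ along a closed subscheme $Z$ of codimension $\ge 2$, and the Rees valuations of $Z$ are, up to positive scaling, precisely the valuations $\ord_{F_i}$ associated to the exceptional prime divisors $F_i$ of $\mu$.

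To conclude, I would compute the coefficient of each $F_i$ in $G$ using $\ord_{F_i}(E)=1$, $\ord_{F_i}(B')=0$, and $\ord_{F_i}(K_{X'}-\mu^*K_{(X,B)})=A_{(X,B)}(\ord_{F_i})-1$, which yields $\ord_{F_i}(G)=A_{(X,B)}(\ord_{F_i})$. Lemma~\ref{lem:ample}~(i)--(ii) then ensures that $-G$ is effective with support equal to the entire $\mu$-exceptional locus $\bigcup_iF_i$; hence $\ord_{F_i}(G)<0$ for every $i$, \ie $A_{(X,B)}(\ord_{F_i})<0$. Since $A_{(X,B)}$ is homogeneous under positive scaling, every Rees valuation of $Z$ has strictly negative log discrepancy, as required. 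The only genuine obstacle is the bookkeeping check that $G$ is simultaneously $\mu$-exceptional and $\mu$-ample; once this is verified, Proposition~\ref{prop:reesexc} and Lemma~\ref{lem:ample} do the rest, and the deep input (the existence of the lc blow-up) is imported as a black box.
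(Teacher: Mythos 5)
Your proof is correct and follows essentially the same route as the paper: the coefficient~$>1$ case is handled via Example~\ref{ex:rees}, and otherwise the Odaka--Xu lc blow-up produces the $\mu$-exceptional, $\mu$-ample divisor $G=\sum_iA_{(X,B)}(\ord_{E_i})E_i$, whose negativity on every exceptional prime follows from Lemma~\ref{lem:ample}, with Proposition~\ref{prop:reesexc} supplying the subscheme $Z$. Your extra bookkeeping (the projection-formula check that $G$ is exceptional and the coefficient computation $\ord_{F_i}(G)=A_{(X,B)}(\ord_{F_i})$) is exactly what the paper leaves implicit.
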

\begin{proof} If $B$ has an irreducible component $F$ with coefficient
  $>1$, then $A_{(X,B)}(\ord_F)<0$, and $Z:=F$ has the desired property, since $\ord_F$ is its unique Rees valuation (cf.~Example~\ref{ex:rees}). 

If not, Theorem~\ref{thm:OX} applies. Denoting by
$A_i:=A_{(X,B)}(\ord_{E_i})$ the log discrepancies of the irreducible component $E_i$ of $E$, we have 
\begin{equation}\label{equ:disc}
K_{X'}+E+B'=\pi^*K_{(X,B)}+\sum_i A_i E_i, 
\end{equation}
which proves that $\sum_i A_i E_i$ is $\mu$-ample, and hence $A_i<0$ by the negativity lemma (or Lemma~\ref{lem:ample}). Proposition~\ref{prop:reesexc} now yields the desired subscheme. 
\end{proof}

We next prove an analogous result for klt pairs, using 
a well-known and easy consequence of the MMP as in~\cite{BCHM}. 

\begin{prop}\label{prop:klt} Let $B$ be an effective boundary, and assume that $(X,B)$ is not klt. Then there exists a closed subscheme $Z\subset X$ whose Rees valuations $v$ all satisfy $A_{(X,B)}(v)\le 0$. 
\end{prop}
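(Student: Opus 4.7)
The plan is to reduce Proposition~\ref{prop:klt} to Corollary~\ref{cor:lc} via a perturbation of the boundary designed to make every non-klt place of $(X,B)$ into a non-lc place of a nearby pair. If $(X,B)$ is not lc, Corollary~\ref{cor:lc} immediately produces a closed subscheme $Z\subset X$ whose Rees valuations $v$ satisfy $A_{(X,B)}(v)<0\le 0$, so I may assume $(X,B)$ is lc but not klt; the non-klt locus $N\subsetneq X$ — the union of the centers on $X$ of the divisorial log canonical places of $(X,B)$ — is then a non-empty proper closed subset.

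First I would choose any effective Cartier divisor $H$ on $X$ whose support contains $N$. Every divisorial valuation $v$ on $X$ with $A_{(X,B)}(v)\le 0$ then has center contained in $N\subset\supp H$, so $v(H)\ge 1$; consequently, for every rational $\varepsilon>0$,
\[
A_{(X,B+\varepsilon H)}(v)=A_{(X,B)}(v)-\varepsilon v(H)\le-\varepsilon<0
\]
for every such $v$, and $(X,B+\varepsilon H)$ is therefore not lc. Applying Corollary~\ref{cor:lc} to $(X,B+\varepsilon H)$ produces the Odaka-Xu lc blow-up $\mu_\varepsilon\colon X'_\varepsilon\to X$ and a closed subscheme $Z_\varepsilon\subset X$ whose Rees valuations $w$ all satisfy
\[
A_{(X,B)}(w)<\varepsilon w(H).
\]

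The crucial remaining step is to show that $\mu_\varepsilon$, and hence $Z_\varepsilon$, stabilizes as $\varepsilon\searrow 0$. Granted this, set $Z:=Z_\varepsilon$ for any sufficiently small $\varepsilon$: each Rees valuation $w$ of $Z$ satisfies $A_{(X,B)}(w)<\varepsilon'w(H)$ for every $\varepsilon'$ in some interval $(0,\varepsilon_0)$, and letting $\varepsilon'\to 0$ gives $A_{(X,B)}(w)\le 0$, as required. Stabilization itself is expected to follow from the uniqueness clause of Theorem~\ref{thm:OX} combined with a monotonicity property: for $\varepsilon'<\varepsilon$ the pair $(X,B+\varepsilon'H)$ requires fewer ``non-lc corrections'' than $(X,B+\varepsilon H)$, so $\mu_\varepsilon$ should factor through $\mu_{\varepsilon'}$ and the set of prime $\mu_\varepsilon$-exceptional divisors should decrease with $\varepsilon$. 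Being a decreasing family of finite sets, it must stabilize on some right neighborhood $(0,\varepsilon_0)$ of $0$.

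The hard part will be making the stabilization rigorous: it rests on MMP-type finiteness arguments (ultimately on BCHM) that also underlie the existence and uniqueness of the Odaka-Xu lc blow-up in Theorem~\ref{thm:OX}. The $\mu$-ample, $\mu$-exceptional Cartier divisor required for Proposition~\ref{prop:reesexc} at the stable $\mu$ would be inherited from the corresponding data on each $\mu_\varepsilon$, namely the combination $-\sum A_{(X,B+\varepsilon H)}(\ord_{E^\varepsilon_j})E^\varepsilon_j$, whose coefficients are bounded below by $\varepsilon>0$ for each $\varepsilon\in(0,\varepsilon_0)$.
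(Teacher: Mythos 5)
Your reduction to Corollary~\ref{cor:lc} via the perturbation $B\rightsquigarrow B+\e H$ is sound as far as it goes: the non-klt locus $N$ is a proper closed subset, an effective Cartier divisor $H$ with $\supp H\supset N$ exists since $X$ is projective, every divisorial $v$ with $A_{(X,B)}(v)\le 0$ has $v(H)>0$, and hence $(X,B+\e H)$ is not lc for every rational $\e>0$. But the argument then stalls exactly where you say it does, and the gap is essential rather than technical. For a \emph{fixed} $\e$, Corollary~\ref{cor:lc} only gives you a subscheme $Z_\e$ whose Rees valuations satisfy $A_{(X,B)}(w)<\e\,w(H)$, which is compatible with $A_{(X,B)}(w)>0$; to conclude you must let $\e\to 0$ along a \emph{fixed} $Z$, i.e.\ you need the lc blow-ups $\mu_\e$ of $(X,B+\e H)$ to stabilize. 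No monotonicity of the extracted divisors in $\e$ is available: the uniqueness in Theorem~\ref{thm:OX} characterizes each $\mu_\e$ by the relative ampleness of $K_{X'_\e}+E^\e+B'_\e$, a condition produced by an MMP whose output need not vary monotonically with the boundary, and the sets of $\mu_\e$-exceptional primes are not a priori nested, so the ``decreasing family of finite sets'' argument does not get off the ground. In short, all of the MMP content of the proposition has been pushed into an unproved stabilization claim.

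For comparison, the paper's proof avoids any limiting argument. After disposing of the case where $B$ has a component of coefficient $\ge 1$ (where $Z=F$ works by Example~\ref{ex:rees}), it takes a log resolution $\pi\colon X'\to X$ of $(X,B)$, writes $K_{X'}+(1-\e)E+B'=\pi^*(K_X+B)+\sum_i(A_i-\e)E_i$ for a single $\e$ chosen smaller than $\min_{A_i>0}A_i$, and applies~\cite[Theorem 1.2]{BCHM} to the relative log canonical ring of the \emph{klt} pair $(X',(1-\e)E+B')$ over $X$. The resulting model $\mu\colon Y\to X$ carries the $\mu$-ample, $\mu$-exceptional divisor $\phi_*\sum_i(A_i-\e)E_i$, and Lemma~\ref{lem:ample} identifies its exceptional primes as exactly the strict transforms of those $E_i$ with $A_i\le 0$; Proposition~\ref{prop:reesexc} then produces $Z$. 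If you want to keep your perturbation idea, you would still need an argument of this BCHM type to control which divisors are extracted uniformly in $\e$, at which point the detour through Corollary~\ref{cor:lc} buys nothing.
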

\begin{proof} If $B$ has an irreducible component $F$ with coefficient at least $1$, then $A_{(X,B)}(\ord_F)\le 0$, and we may again take $Z=F$. 

Assume now that $B$ has coefficients $<1$. Let $\pi\colon X'\to X$
be a log resolution of $(X,B)$. This means $X'$ is smooth, the
exceptional locus $E$ of $\pi$ is a (reduced) divisor, and $E+B'$ has
snc support, with $B'$ the strict transform of $B$. If we denote by
$A_i:=A_{(X,B)}(\ord_{E_i})$ the log discrepancies of the irreducible
component $E_i$ of $E$, then~\eqref{equ:disc} holds, and hence 
\begin{equation}\label{equ:knum}
  K_{X'}+(1-\e)E+B'=\pi^*(K_X+B)+\sum_i(A_i-\e)E_i
\end{equation}
for any $0<\e<1$. If we pick $\e$ smaller than $\min_{A_i>0}A_i$, then
the $\Q$-divisor $D:=\sum_i(A_i-\e)F_i$ is $\pi$-big (since the generic fiber of $\pi$ is a point), and $\pi$-numerically equivalent to the log canonical divisor of the klt pair $(X',(1-\e)E+B')$ by (\ref{equ:knum}). 

Picking any $m_0\ge 1$ such that $m_0D$ is a Cartier divisor,~\cite[Theorem 1.2]{BCHM} shows that the $\cO_X$-algebra of relative sections 
$$
R(X'/X,m_0D):=\bigoplus_{m\in\N}\mu_*\cO_{X'}(mm_0D)
$$ 
is finitely generated. Its relative $\Proj$ over $X$ yields a
projective birational morphism $\mu\colon Y\to X$, with $Y$ normal,
such that the induced birational map $\phi\colon X'\dashrightarrow Y$ is surjective in codimension one (\ie $\phi^{-1}$ does not contract any divisor) and $\phi_*D=\sum_i(A_i-\e)\phi_*E_i$ is $\mu$-ample. 

Since $D$ is $\mu$-exceptional and $\phi$ is surjective in codimension
$1$, $\phi_*D$ is also $\mu$-exceptional. By Lemma~\ref{lem:ample},
$-\phi_*D$ is effective and its support coincides the exceptional
locus of $\mu$. Hence that the $\mu$-exceptional prime divisors
are exactly the strict transforms of those $E_i$'s with $A_i-\e<0$,
\ie $A_i\le 0$ by the definition of $\e$. As before, we conclude 
using Proposition~\ref{prop:reesexc}. 
\end{proof}
%
%
%
\subsection{Proof of Theorem~\ref{thm:lc}}\label{S203}
If $(X,B)$ is lc, then it is clear from the definition of the
non-Archi\-medean entropy functional that $H^{\NA}_B\ge0$ on $\cH^{\NA}(L)$. 

Now assume that $(X,B)$ is not lc. By Corollary~\ref{cor:lc}, there exists a closed subscheme $Z\subset X$ whose Rees valuations $v$ all satisfy $A_{(X,B)}(v)<0$. 
Corollary~\ref{cor:rees} then yields a normal, ample
test configuration $(\cX,\cL)$ of $(X,L)$ such that 
\begin{equation*}
  \left\{v_E\mid E\ \text{a nontrivial irreducible component of }\cX_0\right\}
\end{equation*}
coincides with the (nonempty) set of Rees valuations of $Z$. Thus 
$A_X(v_E)<0$ for all nontrivial irreducible components $E$ of $\cX_0$. 

Denote by $\phi\in\cH^{\NA}$ the non-Archimedean metric defined
by $(\cX,\cL)$. We directly get $H_B^{\NA}(\phi)<0$, so
$H_B^\NA\not\ge0$ on $\cH^\NA$. 
Further, Proposition~\ref{prop:neartriv} implies that the positive metric
$\phi_\e:=\e\phi+(1-\e)\phi_\triv$ satisfies $M_B^{\NA}(\phi_\e)<0$
for $0<\e\ll 1$. Hence $((X,B);L)$ cannot be K-semistable. 
This completes the proof.

\begin{defi}\label{defi:c_E} 
  Let $(\cX,\cL)$ be a normal, semiample test configuration for
  $(X,L)$ representing a positive metric $\phi\in\cH^{\NA}(L)$. 
  For each irreducible component $E$ of
  $\cX_0$,  let $Z_E\subset X$ be the closure of the center of
  $v_E$ on $X$, and set $r_E:=\codim_X Z_E$.   
  Then the canonical birational map $\cX\dashrightarrow X_{\A^1}$
  maps $E$ onto $Z_E\times\{0\}$.
  Let $F_E$ be the generic fiber of the induced map $E\dashrightarrow Z_E$,  
  and define the \emph{local degree} $\deg_E(\phi)$ of $\phi$ at $E$ as
  $$
  \deg_E(\phi):=(F_E\cdot\cL^{r_E}).  
  $$
\end{defi} 
Since $\cL$ is semiample on $E\subset\cX_0$, we have $\deg_E(\phi)\ge 0$, and $\deg_E(\phi)>0$ iff $E$ is not contracted on the ample model of $(\cX,\cL)$. The significance of these invariants is illustrated by the following estimate, whose proof is straightforward. 
\begin{lem}\label{lem:estim} 
With the above notation, assume that $\cX$ dominates $X_{\A^1}$ via 
$\rho\colon\cX\to X_{\A^1}$.
Given $0\le j\le n$ and line bundles $M_1,\dots,M_{n-j}$ on $X$, we have,
for $0<\e\ll1$ rational:
\begin{multline*}
  \left(E\cdot\left(\rho^*L_{\A^1}+\e D\right)^j
    \cdot\rho^*\left(M_{1,\A^1}\cdot\ldots\cdot M_{n-j,\A^1}\right)\right)\\
  =\begin{cases}
    \e^{r_E}\left[\deg_E(\phi)\binom{j}{r_E}\left(Z_E\cdot L^{j-r_E}\cdot M_1\cdot\ldots\cdot M_{n-j}\right)\right]+O(\e^{r_E+1})
    &\text{for $j\ge r_E$}\\
     0&\text{for $j<r_E$}.
  \end{cases}
\end{multline*}
\end{lem}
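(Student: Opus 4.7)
The plan is to expand $(\rho^*L_{\A^1}+\e D)^j$ as a polynomial in $\e$ and analyze each coefficient separately, where $D$ is the unique $\Q$-Cartier divisor on $\cX$, supported on $\cX_0$, such that $\cL=\rho^*L_{\A^1}+D$ (this is where the $D$ in the statement comes from). Writing
$$(\rho^*L_{\A^1}+\e D)^j=\sum_{k=0}^{j}\binom{j}{k}\e^k(\rho^*L_{\A^1})^{j-k}\cdot D^k,$$
the intersection number in question becomes
$$\sum_{k=0}^{j}\binom{j}{k}\e^k\left(E\cdot D^k\cdot\rho^*\left(L_{\A^1}^{j-k}\cdot M_{1,\A^1}\cdots M_{n-j,\A^1}\right)\right).$$
By the projection formula along $\rho$, each term equals
$$\left(\rho_*(E\cdot D^k)\cdot L_{\A^1}^{j-k}\cdot M_{1,\A^1}\cdots M_{n-j,\A^1}\right)_{X_{\A^1}}.$$

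The next step is a dimension count. The cycle $E\cdot D^k$ has dimension $n-k$ in $\cX$, and its support lies in $E$, which is mapped by $\rho$ onto $Z_E\times\{0\}\subset X_{\A^1}$ of dimension $n-r_E$. Hence $\rho_*(E\cdot D^k)=0$ whenever $n-k>n-r_E$, i.e.\ whenever $k<r_E$. This immediately yields the vanishing statement for $j<r_E$. When $k=r_E$, the pushforward has top dimension on $Z_E\times\{0\}$, so $\rho_*(E\cdot D^{r_E})=m_E\cdot[Z_E\times\{0\}]$ for some number $m_E\ge 0$; all terms with $k>r_E$ contribute $O(\e^{r_E+1})$, absorbed into the error.

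The main point is to identify $m_E=\deg_E(\phi)$. For this I would use the standard fact that $m_E$ coincides with the degree of the cycle $D^{r_E}\cdot E$ along a general fiber $F_E$ of the map $E\to Z_E$, namely the intersection number $(F_E\cdot D^{r_E})$ computed in $\cX$. Since $\rho$ contracts $F_E$ to a point of $Z_E\times\{0\}$, we have $\rho^*L_{\A^1}|_{F_E}\equiv 0$; consequently $\cL|_{F_E}=(\rho^*L_{\A^1}+D)|_{F_E}=D|_{F_E}$, and hence
$$(F_E\cdot D^{r_E})=(F_E\cdot\cL^{r_E})=\deg_E(\phi).$$
Combining this with the identifications $L_{\A^1}|_{Z_E\times\{0\}}=L|_{Z_E}$ and $M_{i,\A^1}|_{Z_E\times\{0\}}=M_i|_{Z_E}$, the coefficient of $\e^{r_E}$ becomes $\binom{j}{r_E}\deg_E(\phi)(Z_E\cdot L^{j-r_E}\cdot M_1\cdots M_{n-j})$, as claimed.

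The principal technical step is the identification $\rho_*(E\cdot D^{r_E})=\deg_E(\phi)[Z_E\times\{0\}]$, which reduces to the standard fact that the pushforward of a cycle of appropriate top codimension along a proper surjection is determined by a generic-fiber degree. The rest is a formal manipulation via the projection formula and flat restriction, so no serious obstacle is expected beyond keeping track of the combinatorics of the expansion.
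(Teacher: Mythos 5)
Your argument is correct; the paper in fact omits the proof of Lemma~\ref{lem:estim} entirely (it is merely asserted to be ``straightforward''), and the steps you give --- binomial expansion, projection formula along $\rho$, the dimension count killing $\rho_*(E\cdot D^k)$ for $k<r_E$, and the generic-fibre identification $\rho_*(E\cdot D^{r_E})=\deg_E(\phi)\,[Z_E\times\{0\}]$ using $\cL|_{F_E}=D|_{F_E}$ --- constitute exactly the intended argument. The only point worth making explicit is that the generic-fibre degree is computed over the residue field of the generic point of $Z_E$, which is how $(F_E\cdot\cL^{r_E})$ in Definition~\ref{defi:c_E} is to be read.
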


\begin{prop}\label{prop:neartriv} 
  Pick $\phi\in\cH^{\NA}(L)$ that is not a translate of $\phi_\triv$, 
  and let $(\cX,\cL)$ be its unique normal ample representative. 
  Set $r:=\min_E r_E$, with $r_E=\codim_X Z_E$ and $E$ running 
  over all non-trivial irreducible components of the ample model $(\cX,\cL)$ 
  of $\phi$ (and hence $r\ge 1$). 

  Let further $B$ be a boundary on $X$. 
  Then $\phi_\e:=\e\phi+(1-\e)\phi_\triv$ satisfies 
  \begin{equation*}
    J^{\NA}(\phi_\e)=O(\e^{r+1}),
    \quad
    R_B^{\NA}(\phi_\e)=O(\e^{r+1}),
  \end{equation*}  
  and 
  \begin{align*}
    M_B^{\NA}(\phi_\e)
    &=H_B^{\NA}(\phi_\e)+O(\e^{r+1})\\
    &=\e^r\left[V^{-1}\sum_{r_E=r}\deg_E(\phi)b_E
      \left(Z_E\cdot L^{n-r}\right)A_{(X,B)}(v_E)\right]+O(\e^{r+1}).
  \end{align*}
\end{prop}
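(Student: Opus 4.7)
My plan is to express all metrics involved on a single normal determination and apply Lemma~\ref{lem:estim} systematically, reducing every estimate to one underlying cycle-theoretic vanishing.

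First, I fix a normal test configuration $\cX'$ dominating both $\cX$ and $X_{\A^1}$ via $\rho\colon\cX'\to X_{\A^1}$. The pullback of $\cL$ to $\cX'$ can be written as $\rho^*L_{\A^1}+D$, where $D=\sum_Ed_EE$ is a $\Q$-Cartier divisor supported on $\cX'_0=\sum_Eb_EE$, so that $\phi_\e$ is represented by the semipositive test configuration $(\cX',\rho^*L_{\A^1}+\e D)$. The central-fiber components and their invariants $r_E$, $Z_E$, $v_E$, $b_E$, $\deg_E(\phi)$ do not depend on $\e$, allowing uniform application of Lemma~\ref{lem:estim}. The trivial component $E_0$ satisfies $r_{E_0}=0$, $v_{E_0}=v_\triv$ (hence $A_{(X,B)}(v_{E_0})=0$), and $\rho_*\bar E_0=[X\times\{0\}]$, while each nontrivial $E$ has $\rho_*\bar E=0$ for dimension reasons.

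The most direct computation is the non-Archimedean entropy: substituting Lemma~\ref{lem:estim} with $j=n$ (and no auxiliary $M_i$) into
\begin{equation*}
  H_B^\NA(\phi_\e)=V^{-1}\sum_EA_{(X,B)}(v_E)b_E(E\cdot(\rho^*L_{\A^1}+\e D)^n),
\end{equation*}
each summand is $O(\e^{r_E})$ with leading coefficient proportional to $\deg_E(\phi)(Z_E\cdot L^{n-r_E})$. The $E_0$ contribution vanishes since $A_{(X,B)}(v_\triv)=0$, contributions from $r_E>r$ fall into $O(\e^{r+1})$, and the sum over $\{E:r_E=r\}$ produces the stated leading $\e^r$ term. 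The remaining estimates all rest on expanding $(\bar\cL_\e^{n+1})$, $(p^*K_{(X,B)}\cdot\bar\cL_\e^n)$ and $V^{-1}(\phi_\e\cdot\phi_\triv^n)$ in $\e$ and pushing down via $\rho$: the $\e$-linear contributions all reduce to explicit multiples of $d_{E_0}$ via $\rho_*\bar D=d_{E_0}[X\times\{0\}]$, while the $\e^m$ terms for $2\le m\le r$ all reduce to $\rho_*\bar D^m$ and hence will vanish once I establish $\rho_*\bar D^m=0$ as a cycle of dimension $n+1-m$.

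This last vanishing is the heart of the argument. For each nontrivial $E$, $\rho_*(\bar E\cdot\bar D^{m-1})$ has image contained in $Z_E\times\{0\}$ of dimension $n-r_E<n+1-m$ and therefore vanishes. For $E_0$, I use the rational equivalence $\bar\cX'_0\sim\bar\cX'_\infty$ together with $|\bar D|\cap|\bar\cX'_\infty|=\emptyset$ (valid since $m-1\ge 1$) to deduce that $\bar\cX'_0\cdot\bar D^{m-1}$ is rationally trivial; combined with $\bar\cX'_0=\sum_Eb_E\bar E$ this reduces the $E_0$ case to the nontrivial-component case. Matching the linear $\e$-terms in $V^{-1}(\phi_\e\cdot\phi_\triv^n)$ and $E^\NA(\phi_\e)$ then gives $J^\NA(\phi_\e)=O(\e^{r+1})$; the analogous matching for $R_B^\NA$ and its combination with $\bar S_BE^\NA$ (via the Chen-Tian identity $M_B^\NA=H_B^\NA+R_B^\NA+\bar S_BE^\NA$) yields $R_B^\NA(\phi_\e)=O(\e^{r+1})$ and $M_B^\NA(\phi_\e)=H_B^\NA(\phi_\e)+O(\e^{r+1})$. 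The hard part will be the cycle-theoretic vanishing $\rho_*\bar D^m=0$ for $2\le m\le r$, and in particular the $E_0$ reduction; once it is in hand, all remaining steps are essentially bookkeeping with Lemma~\ref{lem:estim}.
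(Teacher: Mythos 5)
Your argument is sound and reaches the stated conclusions, but it organizes the work differently from the paper. The paper first uses translation invariance of $J^{\NA}$ and $M_B^{\NA}$ to normalize $(\phi\cdot\phi_\triv^n)=0$, \ie $\ord_{E_0}(D)=0$; after that $D$ is supported on the non-trivial components, every relevant quantity is a sum of terms $\left(\e D\cdot(\rho^*L_{\A^1}+\e D)^j\cdot\rho^*(\cdots)\right)=\e\sum_{E\ne E_0}d_E\left(E\cdot(\cdots)\right)$, and Lemma~\ref{lem:estim} immediately gives $\e\cdot O(\e^{r})=O(\e^{r+1})$ with no further cycle-theoretic input. You instead keep $d_{E_0}$ arbitrary and compensate by proving $\rho_*(\bar D^m)=0$ for $2\le m\le r$, treating $\bar E_0\cdot\bar D^{m-1}$ via $\bar\cX'_0\sim\bar\cX'_\infty$ and the disjointness of $|\bar D|$ from $\bar\cX'_\infty$, and then checking that the $\e$-linear terms (all proportional to $d_{E_0}$) cancel in the combinations $V^{-1}(\phi_\e\cdot\phi_\triv^n)-E^{\NA}(\phi_\e)$ and $R_B^{\NA}(\phi_\e)+\bar S_BE^{\NA}(\phi_\e)$. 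Both routes rest on the same dimension count ($\rho$ maps $E$ into $Z_E\times\{0\}$ of codimension $r_E\ge r$); yours trades the one-line normalization for explicit $E_0$ bookkeeping. The treatment of $H_B^{\NA}$ is the same in both.

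One point to correct: in your un-normalized setup the claim $R_B^{\NA}(\phi_\e)=O(\e^{r+1})$ fails in general. By your own computation the $\e$-linear term of $R_B^{\NA}(\phi_\e)$ equals $-\bar S_B d_{E_0}\e$; it cancels only against the term $+\bar S_B d_{E_0}\e$ coming from $\bar S_BE^{\NA}(\phi_\e)$, so only the sum $R_B^{\NA}+\bar S_BE^{\NA}$ is $O(\e^{r+1})$. Since $R_B^{\NA}$ is not translation invariant (Proposition~\ref{P401}), this cannot be repaired after the fact. You should either perform the translation achieving $(\phi\cdot\phi_\triv^n)=0$ at the outset --- which costs nothing for $J^{\NA}$, $H_B^{\NA}$ and $M_B^{\NA}$, all translation invariant, and is exactly what the paper's proof does --- or state the $R_B^{\NA}$ estimate only under that normalization. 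With that adjustment the proof is complete.
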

\begin{proof} 
  Let $(\cX',\cL')$ be a normal test configuration dominating 
  $(\cX,\cL)$ and $(X_{\A^1},L_{\A^1})$. 
  Write $\cL'=\rho^*L_{\A^1}+D$, where $\rho\colon\cX'\to X_{\A^1}$ is
  the morphism. Note that $(\cX',\cL'_\e)$, with 
  $\cL'_\e=\rho^*L_{\A^1}+\e D$, is a representative of $\phi_\e$. 
  By translation invariance of $J^{\NA}$ and $M^{\NA}$, 
  we may assume $(\phi\cdot\phi_\triv^n)=0$, \ie $\ord_{E_0}(D)=0$ 
  for the strict transform $E_0$ of $X\times\{0\}$ to $\cX'$, by Theorem~\ref{thm:supp}. 
  Then $(\phi_\e\cdot\phi_\triv^n)=0$, and hence $J^{\NA}(\phi_\e)=-E^{\NA}(\phi_\e)$. 
  Lemma~\ref{lem:EMA} yields
  \begin{equation*}
    (n+1)VE^{\NA}(\phi_\e)=\sum_{j=0}^n\left(\e D\cdot\left(\rho^*L_{\A^1}
        +\e D\right)^j\cdot\rho^*L_{\A^1}^{n-j}\right). 
  \end{equation*}
  Since we have normalized $D$ by $\ord_{E_0}(D)=0$, Lemma~\ref{lem:estim}
  implies $E^{\NA}(\phi_\e)=O(\e^{r+1})$,
  and hence $J^{\NA}(\phi_\e)=O(\e^{r+1})$.

  Similarly,
  \begin{multline*}
    V R^{\NA}_B(\phi_\e)
    =\left(\rho^*K^\lo_{(X_{\P^1},B_{\P^1})/\P^1}\cdot(\bar\cL'_\e)^n\right)\\
    =\left(\rho^*K^\lo_{(X_{\P^1},B_{\P^1})/\P^1}\cdot(\bar\cL'_\e)^n\right)
    -\left(\rho^*K^\lo_{(X_{\P^1},B_{\P^1})/\P^1}\cdot\rho^*L_{\P^1}^n\right)\\
    =\sum_{j=0}^{n-1}\left(\e D\cdot\left(\rho^*L_{\A^1}
        +\e D\right)^j\cdot\rho^*L_{\A^1}^{n-j-1}\cdot\rho^*K^\lo_{(X_{\P^1},B_{\P^1})/\P^1}\right)
    =O(\e^{r+1}). 
  \end{multline*}
  The expression for $M^{\NA}_B$ now follows from the
  Chen-Tian formula (see Proposition~\ref{prop:DFChen}) 
  and Lemma~\ref{lem:estim} applied to 
  \begin{equation*}
    H_B^{\NA}(\phi_\e)
    =V^{-1}\sum_EA_{(X,B)}(v_E)b_E\left(E\cdot\left(\rho^*L_{\A^1}+\e D\right)^n\right)
  \end{equation*}
  where $E$ runs over the non-trivial irreducible components of $\cX'_0$. 
\end{proof}
%
%
%
%
\subsection{The non-normal case}\label{sec:slc}
In this section we briefly sketch the proof of the following general result, due to Odaka~\cite[Theorem 1.2]{Oda3}. 

\begin{thm}\label{thm:scl} Let $X$ be deminormal scheme with $K_X$ $\Q$-Cartier. Let $L$ be an ample line bundle on $X$, and assume that $(X,L)$ is K-semistable. Then $X$ is slc. 
\end{thm}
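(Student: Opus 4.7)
The strategy is to reduce to the normal case by invoking the key identity of Remark~\ref{rmk:slc}, which asserts that for any ample test configuration $(\cX,\cL)$ for $(X,L)$ with normalization $(\tcX,\tcL)$, one has $\DF_{\tB}(\tcX,\tcL)\le\DF(\cX,\cL)$, and moreover that $(X,L)$ is K-semistable iff $\DF_{\tB}(\tcX,\tcL)\ge 0$ for every such test configuration (here $\nu\colon\tX\to X$ is the normalization and $\tB$ is the reduced preimage of the conductor). Since $X$ is slc by definition iff $(\tX,\tB)$ is lc, it suffices to show contrapositively: if $(\tX,\tB)$ is not lc, then there exists an ample test configuration of $(X,L)$ whose normalization has $\DF_{\tB}<0$.

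Assuming $(\tX,\tB)$ is not lc, I would apply Corollary~\ref{cor:lc} on each connected component of $\tX$ to obtain a nonempty closed subscheme $\tZ\subset\tX$ whose Rees valuations all satisfy $A_{(\tX,\tB)}(v)<0$. I then lift $\tZ$ to a closed subscheme $Z\subset X$ by setting $\cI_Z:=\nu_*\cI_{\tZ}\cap\cO_X$ and, if necessary, replacing $\tZ$ by $\nu^{-1}(Z)$ to arrange $\cI_{\tZ}=\cI_Z\cdot\cO_{\tX}$. Since $\nu$ is finite, blowing up commutes with $\nu_{\A^1}\colon\tX_{\A^1}\to X_{\A^1}$; consequently, the normalization $\tcX^0$ of the deformation to the normal cone $\cX^0\to X_{\A^1}$ of $Z$ agrees with the normalized blow-up of $\tZ\times\{0\}$ in $\tX_{\A^1}$, and Theorem~\ref{thm:reescone} identifies the non-trivial irreducible components of $\tcX^0_0$ with the Rees valuations of $\tZ$---all of which have negative log discrepancy for $(\tX,\tB)$. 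Perturbing as in the proof of Corollary~\ref{cor:rees}, I would, for $0<\e\ll 1$ rational, obtain an ample test configuration $(\cX,\cL_\e)$ for $(X,L)$ dominating $\cX^0$, whose normalization $(\tcX,\tcL_\e)$ is a normal ample test configuration for $(\tX,\tL)$ determining a positive metric $\tphi_\e=\e\tphi+(1-\e)\tphi_\triv$ on $\tL$ for some fixed non-trivial $\tphi\in\cH^{\NA}(\tL)$.

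Applying Proposition~\ref{prop:neartriv} to $(\tX,\tB)$ with this $\tphi$, the leading term of $M^{\NA}_{\tB}(\tphi_\e)$ is a strictly negative multiple of $\e^r$ (with $r=\min_Er_E\ge 1$), since the sum is dominated by contributions indexed by the non-trivial components $E$ of the ample model, each of which has $A_{(\tX,\tB)}(v_E)<0$ by construction. Hence $M^{\NA}_{\tB}(\tphi_\e)<0$ for $\e$ small enough. A $\G_m$-equivariant base change $t\mapsto t^d$ with $d$ sufficiently divisible renders the central fiber of $\tcX$ reduced (using Proposition~\ref{prop:weaksemi}), after which $\DF_{\tB}(\tcX,\tcL_\e)=M^{\NA}_{\tB}(\tphi_\e)<0$; by Remark~\ref{rmk:slc} this contradicts K-semistability of $(X,L)$. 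The hardest step, and the source of most of the technical work in Odaka's original argument, is the descent of $\tZ$ to a subscheme $Z\subset X$ in such a way that the normalization of the deformation to the normal cone of $Z$ recovers exactly the Rees valuations of $\tZ$, without introducing spurious components near the conductor locus that could disrupt the sign of the log discrepancies; this requires a careful local analysis of $\nu$ at codimension-one nodal points of $X$, together with an application of the Rees valuation characterization of Theorem~\ref{thm:rees} to compare integral closures of $\cI_Z$ and $\cI_{\tZ}$.
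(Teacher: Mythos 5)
Your overall strategy---reducing to the normal pair $(\tX,\tB)$ via Remark~\ref{rmk:slc}, producing a subscheme whose Rees valuations all have negative log discrepancy, passing to the normalized deformation to the normal cone, and invoking Proposition~\ref{prop:neartriv}---is exactly the route the paper takes, and your final step (base change plus Proposition~\ref{prop:weaksemi} to convert $M^{\NA}_{\tB}<0$ into $\DF_{\tB}<0$) is, if anything, spelled out more carefully than in the paper's sketch. The problem is the descent step, which you correctly identify as the crux but do not actually carry out. Your recipe---apply Corollary~\ref{cor:lc} on $\tX$ to get $\widetilde{Z}$, set $\cI_Z:=\nu_*\cI_{\widetilde{Z}}\cap\cO_X$, then replace $\widetilde{Z}$ by $\nu^{-1}(Z)$---does not close the gap: $\nu^{-1}(Z)$ is in general strictly larger than the original $\widetilde{Z}$ (at the very least it must be saturated under the gluing involution on the conductor), and the Rees valuations of $\nu^{-1}(Z)$ are a priori unrelated to those of $\widetilde{Z}$. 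Nothing forces these new valuations to have negative log discrepancy, and no local analysis at the codimension-one nodes will fix this, since the spurious Rees valuations can be centered anywhere along $\nu^{-1}(Z)\setminus\widetilde{Z}$.

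The paper's (and Odaka--Xu's) resolution reverses the order of construction: one does not try to push an arbitrary $\widetilde{Z}$ down to $X$, but instead descends the lc blow-up itself. By Theorem~\ref{thm:OX} the lc blow-up $\tX'\to\tX$ of the non-lc pair $(\tX,\tB)$ exists and is \emph{unique}; uniqueness makes it compatible with the gluing involution on the normalization of the conductor, and Koll\'ar's gluing theorem then produces a reduced scheme $X'$ with normalization $\tX'$ and a projective morphism $X'\to X$. From this one extracts a closed subscheme $Z\subset X$ (via the relatively ample exceptional divisor, as in Lemma~\ref{lem:ample} and Proposition~\ref{prop:reesexc}) whose preimage $\widetilde{Z}=\nu^{-1}(Z)$ has Rees valuations exactly the exceptional divisors of the lc blow-up, all with $A_{(\tX,\tB)}<0$. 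This appeal to Koll\'ar's gluing theorem is the missing ingredient in your argument; without it the proof does not go through.
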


Recall from Remark~\ref{rmk:slc} that $(X,L)$ is K-semistable iff
$\DF_{\tB}(\tcX,\tcL)\ge 0$ for all ample test configurations
$(\cX,\cL)$ for $(X,L)$. Here $(\tX,\tL)$ and $(\tcX,\tcL)$ denote the
normalizations of $(X,L)$ and $(\cX,\cL)$, respectively, and $\tB$ is the conductor, viewed as a reduced Weil divisor on $\tX$. On the other hand, $X$ is slc iff $(\tX,\tB)$ is lc, by definition. 

Assuming that $X$ is not slc, \ie $(\tX,\tB)$ not lc, our goal is thus to produce an ample test configuration $(\cX,\cL)$ for $(X,L)$ such that $\DF_{\tB}(\tcX,\tcL)<0$. By Theorem~\ref{thm:OX}, the non-lc pair $(\tX,\tB)$ admits an lc blow-up $\mu\colon\tX'\to\tX$. As explained on~\cite[p.332]{OX}, Koll\'ar's gluing theorem implies that $\tX'$ is the normalization of a reduced scheme $X'$, with a morphism $X'\to X$. As a consequence, we can find a closed subscheme $Z\subset X$ whose inverse image $\widetilde{Z}\subset\tX$ is such that $A_{(\tX,\tB)}(v)<0$ for each Rees valuation $v$ of $\widetilde{Z}$. 

Let $\mu\colon\cX\to X\times\A^1$ be the deformation to the normal cone
of $Z$, with exceptional divisor $E$, and set $\cL_\e=\mu^*L_{\A^1}-\e
E$ with $0<\e\ll 1$. Since the normalization $\tcX$ of $\cX$ is also
the normalization of the deformation to the normal cone of
$\widetilde{Z}$, we have $A_{(\tX,\tB)}(v_E)<0$ for each irreducible component $E$ of $\tcX_0$, and Proposition~\ref{prop:neartriv} gives, as desired, $\DF_{\tB}(\tcX,\tcL_\e)<0$ for $0<\e\ll 1$. 
%
%
%
%
\subsection{The global log canonical threshold and proof of Theorem~\ref{thm:klt}}\label{S204} 
Recall from~\S\ref{sec:bound} the definition of the 
log canonical threshold of an effective $\Q$-Cartier divisor $D$ 
with respect to a subklt pair $(X,B)$:
\begin{equation*}
  \lct_{(X,B)}(D)=\inf_v\frac{A_{(X,B)}(v)}{v(D)}.
\end{equation*}
Similarly, given an ideal $\fa$ and $c\in\Q_+$, we set
\begin{equation*}
  \lct_{(X,B)}(\fa^c):=\inf_v\frac{A_{(X,B)}(v)}{v(\fa^c)},
\end{equation*}
with $v(\fa^c):=c v(\fa)$.

The main ingredient in the proof of (i)$\Longrightarrow$(ii) of
Theorem~\ref{thm:klt} is the following result.
\begin{thm}\label{thm:izumi} If $((X,B);L)$ is a polarized subklt pair, then
  \begin{equation}\label{equ:infima}
    \inf_D\lct_{(X,B)}(D)=\inf_{\fa,c}\lct_{(X,B)}(\fa^c),
\end{equation}
where the left-hand infimum is taken over all effective $\Q$-Cartier
divisors $D$ on $X$ that are $\Q$-linearly equivalent to $L$, and the right-hand one is over all non-zero ideals $\fa\subset\cO_X$ and all $c\in\Q_+$ such that $L\otimes\fa^c$ is nef. 
Further, these two infima are strictly positive.
\end{thm}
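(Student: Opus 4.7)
My plan is to treat the equality of the two infima and their strict positivity separately.

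For the inequality $\inf_D\lct_{(X,B)}(D) \geq \inf_{\fa,c}\lct_{(X,B)}(\fa^c)$, every effective $\Q$-Cartier divisor $D \sim_\Q L$ gives rise to a valid pair $(\fa,c)$ with the same log canonical threshold. Choose $m \in \Z_{>0}$ such that $mD$ is an integer Cartier divisor linearly equivalent to $mL$, let $s \in H^0(X,mL)$ cut out $mD$, and set $\fa := \cO_X(-mD) \subset \cO_X$ together with $c := 1/m$. The section $s$ trivializes $\fa \otimes \cO_X(mL)$ globally, so $L \otimes \fa^c$ is numerically trivial, hence nef; moreover $v(\fa^c) = v(\fa)/m = v(D)$ for every valuation $v$, giving $\lct_{(X,B)}(\fa^c) = \lct_{(X,B)}(D)$.

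The reverse inequality proceeds by extracting an effective divisor of small lct from any $(\fa,c)$ with $L \otimes \fa^c$ nef. Fix a small rational $\e > 0$, and let $\mu\colon X' \to X$ be the normalized blow-up of $\fa$, with $\mu^{-1}\fa \cdot \cO_{X'} = \cO_{X'}(-F)$. Then $(1+\e)\mu^*L - cF = (\mu^*L - cF) + \e\mu^*L$ is nef-plus-big, hence big, so that for $m$ sufficiently divisible, $H^0(X,m(1+\e)L \otimes \overline{\fa^{mc}})$ is nonzero by Lemma~\ref{lem:normblow}. Any nonzero section $s$ therein defines an effective divisor $D' := \mathrm{div}(s) \sim_\Q m(1+\e)L$ with $v(D') \geq mc \cdot v(\fa)$ for every valuation $v$. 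Setting $D := D'/(m(1+\e))$ yields $D \sim_\Q L$ effective with $v(D) \geq cv(\fa)/(1+\e)$, whence $\lct_{(X,B)}(D) \leq (1+\e)\lct_{(X,B)}(\fa^c)$; letting $\e \to 0$ concludes.

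For the strict positivity of the common value $\alpha := \inf_D \lct_{(X,B)}(D)$, the central tool is a refined Izumi-type inequality: for every closed point $\xi \in X$ there exists $C(\xi) > 0$ such that for every divisorial valuation $v$ centered at $\xi$,
\begin{equation*}
v(f) \leq C(\xi) \cdot A_{(X,B)}(v) \cdot \mathrm{ord}_\xi(f) \quad\text{for all } f \in \cO_{X,\xi}.
\end{equation*}
Applied to a local equation of $mD$ at $\xi$, this gives $v(D) \leq C(\xi) A_{(X,B)}(v) \, \mathrm{mult}_\xi(D)$, and hence $A_{(X,B)}(v)/v(D) \geq 1/(C(\xi)\, \mathrm{mult}_\xi(D))$. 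The multiplicity $\mathrm{mult}_\xi(D)$ is uniformly bounded in $D \sim_\Q L$ effective by the pseudo-effective threshold $M(\xi) := \sup\{t \geq 0 : \pi_\xi^*L - tE_\xi \text{ is pseudo-effective}\}$ on the blow-up $\pi_\xi\colon \widetilde{X}_\xi \to X$ at $\xi$ with exceptional divisor $E_\xi$, which is finite since $L$ is big. A constructibility/quasi-compactness argument then bounds $C(\xi) M(\xi)$ uniformly in $\xi \in X$, yielding $\alpha \geq 1/\sup_\xi\bigl(C(\xi)M(\xi)\bigr) > 0$.

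The main obstacle is the refined Izumi-type inequality itself, with the dependence on the log discrepancy $A_{(X,B)}(v)$ and uniform behavior in $\xi$. The classical Hübl-Swanson form provides only $v \leq C(v,\xi)\,\mathrm{ord}_\xi$ for each individual $v$ with no control on the constant; extracting the refinement in which the $v$-dependence is captured entirely by $A_{(X,B)}(v)$, and ensuring its uniformity over the (possibly singular) locus of $(X,B)$, requires either an asymptotic multiplier ideal argument in the spirit of Ein-Lazarsfeld-Smith or the local techniques of Boucksom-de Fernex-Favre applied to the pair $(X,B)$.
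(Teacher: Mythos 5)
Your treatment of the equality of the two infima is correct. The easy direction coincides with the paper's. For the reverse direction you take a genuinely different route: the paper perturbs the exponent, replacing $c$ by $c_\e=(1-\e)c+\e c'$ so that $\mu^*L-c_\e E$ becomes \emph{ample}, and then uses Bertini on a log resolution to produce a general $H\in|m(\mu^*L-c_\e E)|$ for which $\lct_{(X,B)}(\mu_*(c_\e E+m^{-1}H))$ equals $\lct_{(X,B)}(\fa^{c_\e})$ \emph{exactly}; you instead perturb the polarization to $(1+\e)L$, use only that nef plus big is big to produce \emph{some} section of $m(1+\e)L\otimes\overline{\fa^{mc}}$, and settle for the inequality $\lct_{(X,B)}(D)\le(1+\e)\lct_{(X,B)}(\fa^c)$, which suffices after letting $\e\to0$. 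Your variant is slightly more economical (no Bertini, no log resolution at this stage) at the cost of only getting an approximate identity; both are fine. (Do make sure $m$ is divisible enough that $mc$ and $m\e$ are integers, and note that $v(\overline{\fa^{mc}})=v(\fa^{mc})$ so the section indeed satisfies $v(\mathrm{div}(s))\ge mc\,v(\fa)$.)

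The strict positivity, however, is where your argument has a genuine gap, and you have correctly identified it yourself: the ``refined Izumi-type inequality'' $v(f)\le C(\xi)\,A_{(X,B)}(v)\,\ord_\xi(f)$ with a constant uniform over all closed points $\xi$ of a possibly singular pair is precisely the hard content, and neither the H\"ubl--Swanson form nor a vague appeal to constructibility delivers it; as stated your argument is a reduction of the theorem to an unproved (and at this level of generality nontrivial) statement. The paper sidesteps this entirely by first proving the bound on a \emph{smooth} variety with $B=0$, where Skoda's theorem gives $v(D)\le\ord_p(D)\,A_X(v)$ with constant exactly $1$ (so no uniformity in $p$ is needed for the Izumi part), and where $\ord_p(D)$ is bounded uniformly in $p$ and $D\sim_\Q L$ by a Seshadri-constant computation $(L^n)\ge\e^{n-1}\ord_p(D)$. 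The singular subklt case is then reduced to the smooth case by passing to a log resolution $\mu\colon X'\to X$: subklt-ness gives $B'\le(1-\e)B'_{\red}$ and hence $\e A_{X'}(v)\le A_{(X,B)}(v)$, while adding a fixed very ample $H$ makes $L'=\mu^*L+H$ ample and $D'=\mu^*D+H$ effective with $v(D)\le v(D')$. This reduction to the smooth model is the idea your proposal is missing; if you want to salvage your approach instead, you would need to actually prove the uniform Izumi estimate on the pair $(X,B)$, which is a substantial undertaking rather than a citation.
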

Here we say that $L\otimes\fa^c$ is nef if $\mu^*L-cE$ is nef on the
normalized blow-up $\mu\colon X'\to X$ of $\fa$, with $E$ the effective Cartier divisor such that $\fa\cdot\cO_{X'}=\cO_{X'}(-E)$. 

\begin{defi}\label{defi:glct} The \emph{global log canonical threshold} $\lct((X,B);L)$ of a polarized subklt pair $((X,B);L)$ is the common value of the two infima in Theorem~\ref{thm:izumi}.
\end{defi}

\begin{proof}[Proof of Theorem~\ref{thm:izumi}] Let us first prove
  that the two infima coincide. Let $D$ be an effective $\Q$-Cartier
  divisor $\Q$-linearly equivalent to $L$. Pick $m\ge 1$ such that
  $mD$ is Cartier, and set $\fa:=\cO_X(-mD)$ and $c:=1/m$. Then
  $v(\fa^c)=v(D)$ for all $v$, and $L\otimes\fa^c$ is nef since
  $L-cmD$ is even numerically trivial. 
  Hence $\inf\lct_{(X,B)}(D)\le\inf\lct_{(X,B)}(\fa^c)$. 

  Conversely, assume that $L\otimes\fa^c$ is nef. Let $\mu\colon X'\to X$ be the normalized blow-up of $X$ along $\fa$ and $E$ the effective Cartier divisor on $X'$ such that $\cO_{X'}(-E)=\fa\cdot\cO_{X'}$, so that $\mu^*L-c E$ is nef. Since $-E$ is $\mu$-ample, we can find $0<c'\ll 1$ such that $\mu^*L-c' E$ is ample. Setting $c_\e:=(1-\e)c+\e c'$, we then have $\mu^*L-c_\e E$ is ample for all $0<\e<1$. 

Let also $B'$ the unique $\Q$-Weil divisor on $X'$ such that
$\mu^*K_{(X,B)}=K_{(X',B')}$ and $\mu_*B'=B$, so that $(X',B')$ is a pair with $A_{(X,B)}=A_{(X',B')}$.  

If we choose a log resolution $\pi\colon X''\to X'$ of $(X',B'+E)$ and let $F=\sum_i F_i$ be the sum of all $\pi$-exceptional primes and of the strict transform of $B'_\red+E_\red$, then 
$$
\lct_{(X,B)}(\fa^{c_\e})=\lct_{(X',B')}(c_\e E)=\min_i\frac{A_{(X',B')}(\ord_{F_i})}{\ord_{F_i}(c_\e D)}
$$
Given $0<\e<1$, pick $m\gg 1$ such that 
\begin{itemize}
\item[(i)] $mc_\e\in\N$;
\item[(ii)] $m(\mu^*L-c_\e E)$ is very ample;
\item[(iii)] $m\ge\lct_{(X,B)}(\fa^{c_\e})$. 
\end{itemize}
Let $H\in |m(\mu^*L-c_\e E)|$ be a general element, and set
$D:=\mu_*(c_\e E+m^{-1}H)$, so that $D$ is $\Q$-Cartier, 
$\Q$-linearly equivalent to $L$,
and $\mu^*D=c_\e E+m^{-1}H$. 

By Bertini's theorem, $\pi$ is also a log resolution of $(X',B'+E+H)$, and hence
\begin{equation*}
  \lct_{(X,B)}(D)
  =\lct_{(X',B')}(c_\e E+m^{-1}H)
  =\min\left\{\frac{A_{(X',B')}(v)}{v(c_\e E+m^{-1}H)}\ \bigg|\
    v=v_i\ \text{or $v=\ord_H$}\right\}.
\end{equation*}
But $H$, being general, does not contain the center of $\ord_{D_i}$ on $X'$ and is not contained in $\supp E$, \ie $\ord_{D_i}(H)=0$ and $\ord_H(E)=0$, and (iii) above shows that
\begin{equation*}
  \lct_{(X,B)}(D)=\min\left\{\lct_{(X,B)}(\fa^{c_\e}),m\right\}=\lct_{(X,B)}(\fa^{c_\e}).
\end{equation*}

Since we have $\lct_{(X,B)}(\fa^{c_\e})=\frac{c}{c_\e}\lct_{(X,B)}(\fa^c)$ with $c_\e/c$ arbitrarily close to $1$, we conclude that the two infima in (\ref{equ:infima}) are indeed equal. 

\medskip
We next show that the left-hand infimum in (\ref{equ:infima}) is strictly positive, 
in two steps.

\smallskip
\noindent{\bf Step 1}. We first treat the case where $X$ is smooth and $B=0$. By Skoda's theorem (see for instance~\cite[Proposition 5.10]{JM}), we then have
$$
v(D)\le\ord_p(D) A_X(v)
$$
for every effective $\Q$-Cartier divisor $D$ on $X$, every divisorial valuation $v$, and every closed point $p$ in the closure of the center of $v$ on $X$. It is thus enough to show that $\ord_p(D)$ is uniformly bounded when $D\sim_\Q L$. 

Let $\mu\colon X'\to X$ be the blow-up at $p$, with exceptional divisor $E$. Since $L$ is ample, there exists $\e>0$ independent of $p$ such that $L_\e:=\mu^*L-\e E$ is ample, by Seshadri's theorem. 

Since $D$ is effective, we have $\mu^*D\ge\ord_p(D)E$, and hence
$$
(L^n)=\left(\mu^*L\cdot L_\e^{n-1}\right)\ge\ord_p(D)(E\cdot L_\e^{n-1})=\e^{n-1}\ord_p(D), 
$$
which yields the desired bound on $\ord_p(D)$. 

\noindent{\bf Step 2}. Suppose now that $(X,B)$ is a subklt
pair. Pick a log resolution $\mu\colon X'\to X$, and let $B'$ be the unique $\Q$-divisor such that $\mu^*K_{(X,B)}=K_{(X',B')}$ and $\mu_*B'=B$, so that
$$
A_{(X,B)}(v)=A_{(X',B')}(v)=A_{X'}(v)-v(B')
$$
for all divisorial valuations $v$. Since $(X,B)$ is subklt, $B'$ has
coefficients less than $1$, so there exists $0<\e\ll 1$ such that $B'\le(1-\e)B'_\red$. Since $B'_\red$ is a reduced snc divisor, the pair $(X',B'_\red)$ is lc, and hence $v(B')\le A_{X'}(v)$ for all divisorial valuations $v$. It follows that $v(B)\le(1-\e)A_{X'}(v)$, \ie
$$
\e A_{X'}(v)\le A_{(X,B)}(v)
$$
for all $v$. Pick any very ample effective divisor $H$ on $X'$ such
that $L':=\mu^*L+H$ is ample. For each effective $\Q$-Cartier divisor
$D\sim_\Q L$, $D':=\mu^*D+H$ is an effective $\Q$-Cartier divisor on
$X'$ with $D'\sim_\Q L'$. By Step 1, we conclude that 
$$
v(D)\le v(D')\le C A_{X'}(v)\le C\e^{-1}A_{(X,B)}(v),
$$
which completes the proof.
\end{proof}

\begin{prop}\label{prop:lct} For each polarized subklt pair $((X,B);L)$, we have 
$$
H^{\NA}\ge\d I^{\NA}\ge\frac{\d}{n}J^{\NA}
$$ 
on $\cH^{\NA}$ with $\d:=\lct((X,B);L)>0$. 
\end{prop}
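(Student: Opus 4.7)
The second inequality $I^\NA\ge\tfrac{1}{n}J^\NA$ is immediate from Proposition~\ref{prop:J} (which in fact yields $I^\NA\ge\tfrac{n+1}{n}J^\NA$), so the real task is to prove $H_B^\NA(\phi)\ge\delta\,I^\NA(\phi)$ for every $\phi\in\cH^\NA(L)$, exploiting the characterization $\delta=\inf_{\fa,c}\lct_{(X,B)}(\fa^c)$ provided by Theorem~\ref{thm:izumi}.

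The plan is as follows. First, I would fix $\phi$ and pick a normal representative $(\cX,\cL)$ dominating $X_{\A^1}$ via $\rho$, writing $\cL=\rho^*L_{\A^1}+D$ with $D=\sum_E d_E E$ supported on $\cX_0=\sum_E b_E E$. Since $H_B^\NA$, $I^\NA$ and the support of $\DH_\phi$ are all translation invariant, I can freely subtract a constant and assume that $\ord_{E_0}(D)=0$ (with $E_0$ the strict transform of $X\times\{0\}$); Lemma~\ref{lem:minmax} will then force $d_E/b_E\le 0$ for every component $E$, so $-D$ is effective, and Lemma~\ref{lem:sup} will give $(\phi\cdot\phi_\triv^n)=0$, whence $V\,I^\NA(\phi)=-\sum_E d_E(E\cdot\cL^n)$, a sum of non-negative terms.

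The heart of the argument will be the pointwise estimate $b_E A_{(X,B)}(v_E)+\delta\,d_E\ge 0$ for every non-trivial component $E$: once this is in hand, multiplying by $(E\cdot\cL^n)\ge 0$ and summing (the $E_0$ term contributes nothing on either side) yields $V H_B^\NA(\phi)\ge\delta\,V I^\NA(\phi)$. To prove the pointwise bound, the key step is to invoke the flag ideal $\fa^{(m)}=\rho_*\cO_\cX(mD)=\sum_\lambda t^{-\lambda}\fa^{(m)}_\lambda$ for $m$ sufficiently divisible. The normalization forces $\lambda^{(m)}_{\max}=m\ord_{E_0}(D)=0$, so Proposition~\ref{prop:filtrflag} will show that the weight-zero part $\fa^{(m)}_0$ is a non-zero ideal on $X$ and that $\cO_X(mL)\otimes\fa^{(m)}_0$ is globally generated; in particular $L\otimes(\fa^{(m)}_0)^{1/m}$ will be nef. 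On the other hand, the Gauss formula of Lemma~\ref{lem:div2} applied to any $f\in\fa^{(m)}_0\subset\cO_X$ (equivalently, any $f$ with $\rho^*f\in\cO_\cX(mD)$) will give $b_E v_E(f)=\ord_E(\rho^*f)\ge -m\,d_E$, and hence $v_E(\fa^{(m)}_0)\ge -m\,d_E/b_E$. Applying the defining property of $\delta$ with $(\fa,c)=(\fa^{(m)}_0,1/m)$ will then produce
\[
A_{(X,B)}(v_E)\ge\delta\,\frac{v_E(\fa^{(m)}_0)}{m}\ge -\frac{\delta\,d_E}{b_E},
\]
which is exactly the required inequality.

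The delicate step is the double role played by $\fa^{(m)}_0$: it must be large enough for $L\otimes(\fa^{(m)}_0)^{1/m}$ to be nef (so that $\delta$ can legitimately be applied), yet its valuations along the $v_E$ must realize the optimal weights $-m\,d_E/b_E=m\,(\phi_\triv-\phi)(v_E)$ featured in $I^\NA$. Both properties become available precisely through the choice of weight $\lambda=0$, which is only legitimate because of the translation normalization $\ord_{E_0}(D)=0$; once this alignment is set up, the rest of the argument is a direct summation.
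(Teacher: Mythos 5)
Your proposal is correct and follows essentially the same route as the paper's proof: both rest on the flag ideal $\fa^{(m)}=\rho_*\cO_\cX(mD)$, the global generation of $\cO_X(mL)\otimes\fa^{(m)}_\la$ from Proposition~\ref{prop:filtrflag} (hence nefness of $L\otimes(\fa^{(m)}_\la)^{1/m}$ and applicability of $\d$), the Gauss formula of Lemma~\ref{lem:div2} relating $\ord_E$ to $v_E$, and summation against $b_E(E\cdot\cL^n)$. The only difference is cosmetic: you translate so that $\la^{(m)}_{\max}=0$ and extract a pointwise inequality from the single weight piece $\fa^{(m)}_0$, whereas the paper keeps $\la^{(m)}_{\max}$ explicit and bounds $\ord_E(\fa^{(m)})$ via the term at $\la=\la^{(m)}_{\max}$.
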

\begin{proof} Pick $\phi\in\cH^{\NA}$, and let $(\cX,\cL)$ be a normal
  representative such that $\cX$ dominates $X_{\A^1}$ via 
  $\rho\colon\cX\to X_{\A^1}$, and write $\cL=\rho^*L_{\A^1}+D$. 

Choose $m\ge 1$ such that $m\cL$ is a globally generated line bundle, and let 
$$
\rho_*\cO_\cX(mD)=\fa^{(m)}=\sum_{\la\in\Z}\fa^{(m)}_\la t^{-\la}
$$
be the corresponding flag ideal. By Proposition~\ref{prop:filtrflag},
$\cO_X(mL)\otimes\fa^{(m)}_\la$ is globally generated on $X$ for all $\la\in\Z$. In particular, $L\otimes(\fa^{(m)}_\la)^{1/m}$ is nef, and hence
$$
v(\fa^{(m)}_\la)\le m\d^{-1}A_{(X,B)}(v)
$$
whenever $\fa^{(m)}_\la$ is non-zero. 

Now let $E$ be a non-trivial irreducible component of $\cX_0$. By Lemma~\ref{lem:div2}, we have 
$$
\ord_E(\fa^{(m)})=\min_\la\left(v_E(\fa^{(m)}_\la)-\la b_E\right) 
$$
with $b_E=\ord_E(\cX_0)$, and hence 
$$
\ord_E(\fa^{(m)})\le m \d^{-1} A_{(X,B)}(v_E)-b_E\max\left\{\la\in\Z\mid\fa^{(m)}_\la\ne 0\right\}
$$
By Proposition~\ref{prop:filtrflag}, we have
$$
\max\left\{\la\in\Z\mid\fa^{(m)}_\la\ne 0\right\}=\la_{\max}^{(m)}, 
$$
which is bounded above by 
$$
m\la_{\max}=m(\phi\cdot\phi_\triv^n),
$$ 
by Lemma~\ref{lem:sup}. We have thus proved that
\begin{equation}\label{equ:efa}
m^{-1}\ord_E(\fa^{(m)})\le \d^{-1} A_{(X,B)}(v_E)-b_E V^{-1}(\phi\cdot\phi_\triv^n).  
\end{equation}
But since $mD$ is $\rho$-globally generated, we have $\cO_\cX(mD)=\cO_{\cX}\cdot\fa^{(m)}$, and hence 
$$
m^{-1}\ord_E(\fa^{(m)})=-\ord_E(D).
$$
Using (\ref{equ:efa}) and $\sum_E b_E(E\cdot\cL^n)=(\cX_0\cdot\cL^n)=V$, we infer
$$
-V^{-1}((\phi-\phi_\triv)\cdot\phi^n)=-V^{-1}\left(D\cdot\cL^n\right)\le \d^{-1} H^{\NA}(\phi)-V^{-1}(\phi\cdot\phi_\triv^n)
$$
and the result follows by the definition of $I^{\NA}$ and by Proposition~\ref{prop:J}. 
\end{proof}

\begin{proof}[Proof of Theorem~\ref{thm:klt}] 
  The implication (i)$\Longrightarrow$(ii) follows from
  Proposition~\ref{prop:lct}, and (ii)$\Longrightarrow$(iii) is
  trivial. Now assume that (iii) holds. If $(X,B)$ is not klt,
  Proposition~\ref{prop:klt} yields a closed subscheme $Z\subset X$
  with $A_{(X,B)}(v)\le 0$ for all Rees valuations $v$ of $Z$. By
  Corollary~\ref{cor:rees}, we can thus find a normal, ample test
  configuration $(\cX,\cL)$ such that $A_{(X,B)}(v_E)\le 0$ for each
  non-trivial irreducible component $E$ of $\cX_0$. The corresponding
  non-Archimedean metric $\phi\in\cH^{\NA}$  therefore satisfies $H^{\NA}_B(\phi)\le 0$, which contradicts (iii). 
\end{proof}
%
%
%
%
\subsection{The K\"ahler-Einstein case}\label{S205}
\begin{proof}[Proof of Corollary~\ref{cor:canpol}] 
The implication (iii)$\Longrightarrow$(i) follows from Theorem~\ref{thm:lc}, and (ii)$\Longrightarrow$(iii) is trivial.  Now assume (i), so that $H_B^{\NA}\ge 0$ on $\cH^{\NA}$ by Theorem~\ref{thm:lc}. By Lemma~\ref{lem:MKE}, we have $M_B^{\NA}=H_B^{\NA}+\left(I^{\NA}-J^{\NA}\right)$, while $I^{\NA}-J^{\NA}\ge\frac{1}{n}J^{\NA}$ by Proposition~\ref{prop:J}. We thus get $M_B^{\NA}\ge\frac1n J^{\NA}$, which proves (iii).  
\end{proof}

\begin{proof}[Proof of Corollary~\ref{cor:CY}] If $K_{(X,B)}$ is numerically trivial, then Lemma~\ref{lem:MKE} gives $M_B^{\NA}=H_B^{\NA}$. The result is thus a direct consequence of Theorem~\ref{thm:lc} and Theorem~\ref{thm:klt}.
\end{proof}

\begin{proof}[Proof of Corollary~\ref{cor:Fano}] 
  Lemma~\ref{lem:MKE} yields
  $M_B^{\NA}=H_B^{\NA}-(I^{\NA}-J^{\NA})$.
  The K-semistability of $(X,B)$ thus means that $H_B^{\NA}\ge I^{\NA}-J^{\NA}$, 
  and hence $H_B^{\NA}\ge\frac1nJ^{\NA}$ by Proposition~\ref{prop:J}. By Theorem~\ref{thm:klt}, this implies that $(X,B)$ is klt. 
\end{proof}

The following result gives a slightly more precise version of the computations of~\cite[Theorem 1.4]{OSa} and~\cite[Theorem 3.24]{Der1}. 
\begin{prop}\label{prop:alpha} 
  Let $B$ be an effective boundary on $X$ such that $(X,B)$ is klt and 
  $L:=-K_{(X,B)}$ is ample.
  Assume also that $\e:=\lct((X,B);L)-\frac{n}{n+1}>0$. Then we have 
  \begin{equation*}
    M_B^{\NA}\ge\e I^{\NA}\ge\frac{n+1}{n}\e J^{\NA}.
  \end{equation*}
  In particular, the polarized pair $((X,B);L)$ is uniformly K-stable.  
\end{prop}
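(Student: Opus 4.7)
The plan is to combine three ingredients already available in the paper: the Chen–Tian formula in the log K\"ahler–Einstein case (Lemma~\ref{lem:MKE}), the bound on the non-Archimedean entropy in terms of the global log canonical threshold (Proposition~\ref{prop:lct}), and the universal comparison between $I^{\NA}$ and $J^{\NA}$ on semipositive metrics (Proposition~\ref{prop:J}).

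First, since $K_{(X,B)} \equiv -L$, Lemma~\ref{lem:MKE} (with $\la = -1$) yields, for every $\phi \in \cH^{\NA}(L)$,
\[
  M_B^{\NA}(\phi) = H_B^{\NA}(\phi) - \bigl(I^{\NA}(\phi) - J^{\NA}(\phi)\bigr).
\]
Setting $\delta := \lct((X,B);L) = \tfrac{n}{n+1} + \e$, Proposition~\ref{prop:lct} gives $H_B^{\NA}(\phi) \ge \delta I^{\NA}(\phi)$. So I just need to control the correction term $I^{\NA} - J^{\NA}$ from above by a multiple of $I^{\NA}$.

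Next I extract such a bound from Proposition~\ref{prop:J}. The inequality $\tfrac{1}{n} J^{\NA} \le I^{\NA} - J^{\NA}$ rearranges to $(n+1) J^{\NA} \le n I^{\NA}$, hence
\[
  I^{\NA} - J^{\NA} \le \tfrac{n}{n+1} I^{\NA}
  \quad\text{and equivalently}\quad
  I^{\NA} \ge \tfrac{n+1}{n} J^{\NA}
\]
on semipositive metrics. Plugging this into the Chen–Tian identity above,
\[
  M_B^{\NA}(\phi)
  \ge \delta I^{\NA}(\phi) - \tfrac{n}{n+1} I^{\NA}(\phi)
  = \bigl(\delta - \tfrac{n}{n+1}\bigr) I^{\NA}(\phi)
  = \e\, I^{\NA}(\phi),
\]
which is the first claimed inequality. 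The second, $\e I^{\NA} \ge \tfrac{n+1}{n}\e J^{\NA}$, is now immediate from the comparison $I^{\NA} \ge \tfrac{n+1}{n} J^{\NA}$ derived just above. Uniform K-stability of $((X,B);L)$ then follows directly from Proposition~\ref{prop:coer}, taking $\delta' = \tfrac{n+1}{n}\e > 0$.

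There is no genuine obstacle here: the entire proof is an arithmetic combination of statements already proved in the paper, with the only mild subtlety being to make sure the ``$I$ versus $J$'' comparison is applied in the direction that exactly cancels the $\tfrac{n}{n+1}$ contribution coming from the log K\"ahler–Einstein form of the Chen–Tian formula, so as to produce precisely the factor $\e$ rather than a smaller constant.
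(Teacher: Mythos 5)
Your overall strategy is the same as the paper's: combine Lemma~\ref{lem:MKE} (with $\la=-1$), the entropy bound $H_B^{\NA}\ge\lct((X,B);L)\,I^{\NA}$ from Proposition~\ref{prop:lct}, and the $I$--$J$ comparison from Proposition~\ref{prop:J}. However, there is a concrete algebra error in the step where you derive the upper bound on $I^{\NA}-J^{\NA}$. The inequality $\tfrac1n J^{\NA}\le I^{\NA}-J^{\NA}$ does rearrange to $(n+1)J^{\NA}\le nI^{\NA}$, i.e.\ $I^{\NA}\ge\tfrac{n+1}{n}J^{\NA}$ (which is what you need for the second claimed inequality), but it does \emph{not} imply $I^{\NA}-J^{\NA}\le\tfrac{n}{n+1}I^{\NA}$; it implies the opposite-direction bound $I^{\NA}-J^{\NA}\ge\tfrac{1}{n+1}I^{\NA}$. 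The two displayed statements you call ``equivalent'' are in fact the two different halves of~\eqref{e303}: the bound $I^{\NA}-J^{\NA}\le\tfrac{n}{n+1}I^{\NA}$ is equivalent to $J^{\NA}\ge\tfrac{1}{n+1}I^{\NA}$, which follows from the \emph{other} inequality $I^{\NA}-J^{\NA}\le nJ^{\NA}$ of Proposition~\ref{prop:J} (giving $I^{\NA}\le(n+1)J^{\NA}$).

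Once you invoke that half of~\eqref{e303}, your computation $M_B^{\NA}\ge\delta I^{\NA}-(I^{\NA}-J^{\NA})\ge\bigl(\delta-\tfrac{n}{n+1}\bigr)I^{\NA}=\e I^{\NA}$ is valid, and the remainder of your argument, including the appeal to Proposition~\ref{prop:coer} for uniform K-stability, is fine. This corrected version coincides with the paper's proof, which writes $M_B^{\NA}\ge\e I^{\NA}+\bigl(J^{\NA}-\tfrac{1}{n+1}I^{\NA}\bigr)$ and discards the parenthesis because it is nonnegative by $J^{\NA}\ge\tfrac{1}{n+1}I^{\NA}$.
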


\begin{proof} By Proposition~\ref{prop:lct} we have $H^{\NA}\ge\left(\frac{n}{n+1}+\e\right)I^{\NA}$, and hence
$$
M_B^{\NA}\ge\e I^{\NA}+\left(J^{\NA}-\frac{1}{n+1}I^{\NA}\right). 
$$
The result follows since we have
$$
\frac{1}{n+1} I^{\NA}\le J^{\NA}\le\frac{n}{n+1}I^{\NA}
$$
by Proposition~\ref{prop:J}.
\end{proof}
%
%
%
%
\appendix
\section{Asymptotic Riemann-Roch on a normal variety}\label{sec:appA}
%
%
The following result is of course well-known, but we provide a proof
for lack of suitable reference. In particular, the sketch provided
in~\cite[Lemma 3.5]{Oda2} assumes that the line bundle in question is ample, which is not enough for the application to the intersection theoretic formula for the Donaldson-Futaki invariant (cf.~(iv) in Proposition~\ref{prop:DHDFsemi}). 

\begin{thm}\label{thm:RR} If $Z$ is a proper normal variety of
  dimension $d$, defined over an
  algebraically closed field $k$, and $L$ is a line bundle on $Z$, then 
  $$
  \chi(Z,mL)=(L^d)\frac{m^d}{d!}-(K_Z\cdot L^{d-1})\frac{m^{d-1}}{2(d-1)!}+O(m^{d-2}). 
  $$
\end{thm}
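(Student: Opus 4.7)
The plan is to reduce to the smooth case via a resolution (or alteration) and then to quote Hirzebruch--Riemann--Roch on the smooth model. By Snapper's theorem (see~\cite[\S1]{Kle}), $\chi(Z,mL)$ is a polynomial in $m$ of degree at most $d$, and the standard asymptotic Riemann--Roch gives that its leading coefficient is $(L^d)/d!$. The content of the theorem is therefore to identify the coefficient of $m^{d-1}$, so I may work modulo $O(m^{d-2})$ throughout.

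Next I would fix a proper birational morphism $\pi\colon Z'\to Z$ with $Z'$ smooth and projective. In characteristic zero one takes $\pi$ to be a Hironaka resolution of singularities; in positive characteristic one must replace this by a de~Jong alteration and absorb a factor of $e=\deg\pi$ (see the last paragraph). I describe the birational case first. Since $Z$ is normal, Zariski's main theorem gives $\pi_*\cO_{Z'}=\cO_Z$, and moreover the image under $\pi$ of the exceptional locus of $\pi$ has codimension $\ge 2$ in $Z$; consequently each coherent sheaf $R^i\pi_*\cO_{Z'}$ with $i\ge 1$ is supported on a closed subset $S\subset Z$ with $\codim_Z S\ge 2$. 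The projection formula and Leray spectral sequence yield
\begin{equation*}
\chi(Z',m\pi^*L)=\chi(Z,mL)+\sum_{i\ge 1}(-1)^i\chi\bigl(Z,R^i\pi_*\cO_{Z'}\otimes\cO_Z(mL)\bigr).
\end{equation*}
By Snapper applied to each $R^i\pi_*\cO_{Z'}$, the right-hand sum is a polynomial in $m$ of degree at most $\dim S\le d-2$, so
\begin{equation*}
\chi(Z,mL)=\chi(Z',m\pi^*L)+O(m^{d-2}).
\end{equation*}

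Then I would apply the two-term asymptotic Hirzebruch--Riemann--Roch on the smooth projective $Z'$:
\begin{equation*}
\chi(Z',m\pi^*L)=\frac{((\pi^*L)^d)}{d!}m^d-\frac{(K_{Z'}\cdot(\pi^*L)^{d-1})}{2(d-1)!}m^{d-1}+O(m^{d-2}).
\end{equation*}
The projection formula for Cartier divisors gives $((\pi^*L)^d)=(L^d)$. For the subleading term, the crucial identity is
\begin{equation*}
(K_{Z'}\cdot(\pi^*L)^{d-1})=(\pi_*K_{Z'}\cdot L^{d-1}),
\end{equation*}
where the right-hand side is the intersection of the Weil divisor class $\pi_*K_{Z'}$ with $d-1$ Cartier divisors, computed after restriction to the smooth locus of $Z$ (whose complement has codimension $\ge 2$). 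Since $K_Z$ is \emph{defined} as the Weil divisor class on $Z$ extending $K_{Z_{\mathrm{reg}}}$, one has $\pi_*K_{Z'}=K_Z$, and combining the above three displays yields the announced expansion.

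\textbf{Main obstacle.} The one genuinely delicate point is the positive characteristic case. One must replace the resolution by a de~Jong alteration $\pi\colon Z'\to Z$ of degree $e\ge 1$, in which case $\pi_*\cO_{Z'}$ has generic rank $e$ rather than $1$, and the higher direct images $R^i\pi_*\cO_{Z'}$ are no longer automatically supported in codimension $\ge 2$. The plan is to restrict to the open locus where $\pi$ is finite flat (whose complement I would control by arranging the alteration to be flat over the regular locus, using refinements of de~Jong's theorem due to Gabber and Illusie, or by a direct argument on normal surfaces/normal models), to replace $\chi(Z,\pi_*\cO_{Z'}\otimes mL)$ with $e\cdot\chi(Z,mL)+O(m^{d-2})$, and to identify $\pi_*K_{Z'}$ with $e\cdot K_Z$ for generically finite $\pi$. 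Dividing through by $e$ at the end then reproduces the same two-term expansion as in the birational case.
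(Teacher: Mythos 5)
Your characteristic-zero argument is correct and is essentially the proof the paper gives in that case: resolve, use Leray plus the fact that normality forces $R^i\pi_*\cO_{Z'}$ ($i\ge1$) to be supported in codimension $\ge2$, apply two-term Riemann--Roch on the smooth model, and use $\pi_*K_{Z'}=K_Z$. (One small point: a resolution of a proper, non-projective $Z$ need not be projective, so you should first invoke Chow's lemma as the paper does.)

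The positive-characteristic part, however, has a genuine gap, and the two identities you propose to prove there are in fact false. For a generically finite separable $\pi\colon Z'\to Z$ of degree $e$ one has $K_{Z'}=\pi^*K_Z+R$ with $R$ the ramification (different) divisor, so $\pi_*K_{Z'}=eK_Z+\pi_*R\ne eK_Z$ whenever $\pi$ ramifies in codimension one --- which an alteration of a singular variety typically does. Correspondingly, $\pi_*\cO_{Z'}$ differs from $\cO_Z^{\oplus e}$ along the branch divisor, so $\chi(Z,\pi_*\cO_{Z'}\otimes mL)=e\,\chi(Z,mL)+\tfrac{(c_1(\pi_*\cO_{Z'})\cdot L^{d-1})}{(d-1)!}m^{d-1}+O(m^{d-2})$ with a nonzero $m^{d-1}$-correction. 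Both errors live exactly in the coefficient you are trying to compute, and with your normalizations they do \emph{not} cancel: for the degree-$2$ cover $\pi\colon\P^1\to\P^1$ with $L=\cO(1)$ one has $\pi_*\cO_{\P^1}=\cO\oplus\cO(-1)$, so your recipe returns $m+\tfrac12$ instead of $\chi(\P^1,\cO(m))=m+1$. The route can be salvaged, but only by keeping both correction terms and proving that they cancel, i.e.\ the conductor--discriminant identity $\det(\pi_*\cO_{Z'})^{\otimes2}\simeq\cO_Z(-\pi_*K_{Z'/Z})$ over the locus where $\pi$ is finite flat (together with generic separability of the alteration); that is substantive extra work not contained in your sketch. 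The paper avoids alterations altogether: after Chow's lemma it argues by induction on $d$, using Flenner's Bertini theorem to choose \emph{normal} general members $A\in|L+H|$, $B\in|H|$, computing the first difference $\chi(Z,(m+1)L)-\chi(Z,mL)=\chi(A,(m+1)L)-\chi(B,mL)$ from two short exact sequences, and applying adjunction $K_A=(K_Z+A)|_A$ on the normal divisors. That argument is characteristic-free and needs no resolution; you may want to adopt it, or else carry out the discriminant bookkeeping above in full.
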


\begin{proof}[A proof in characteristic $0$] When $Z$ is smooth, the result follows from the Riemann-Roch formula, which reads
$$
\chi(Z,mL)=\int\left(1+c_1(mL)+\dots+\frac{c_1(mL)^d}{d!}\right)\left(1+\frac{c_1(Z)}{2}+\dots\right). 
$$
Assume now that $Z$ is normal, pick a resolution of singularities
$\mu\colon Z'\to Z$ and set $L':=\mu^*L$. The Leray spectral sequence and the projection formula imply that 
$$
\chi(Z',mL')=\sum_j(-1)^j\chi\left(Z,\cO_Z(mL)\otimes R^j\mu_*\cO_{Z'}\right). 
$$
Since $Z$ is normal, $\mu$ is an isomorphism outside a set of codimension at least $2$. As a result, for each $j\ge 1$ the support of the coherent sheaf $R^j\mu_*\cO_{Z'}$ has codimension at least $2$, and hence $\chi\left(Z,\cO_Z(mL)\otimes R^j\mu_*\cO_{Z'}\right)=O(m^{d-2})$ (cf.~\cite[\S1]{Kle}). We thus get
$$
\chi(Z,mL)=\chi(Z',mL')+O(m^{d-2})
=(L'^d)\frac{m^d}{d!}-(K_{Z'}\cdot L'^{d-1})\frac{m^{d-1}}{2(d-1)!}+O(m^{d-2}),
$$
and the projection formula yields the desired result, since $\mu_*K_{Z'}=K_Z$ as cycle classes.
\end{proof}

\begin{proof}[The general case] By Chow's lemma, there exists a
  birational morphism $Z'\to Z$ with $Z'$ projective and normal. By
  the same argument as above, it is enough to prove the result for
  $Z'$, and we may thus assume that $Z$ is projective to begin
  with. 

  We argue by induction on $d$. The case $d=0$ is clear, so 
  assume $d\ge 1$ and let 
  $H$ be a very ample line bundle on $Z$ such that $L+H$ is also very ample. By the Bertini type theorem for normality of~\cite[Satz 5.2]{Fle}, general elements $B\in|H|$ and $A\in|L+H|$ are also normal, with $L=A-B$. The short exact sequence
$$
0\to\cO_Z((m+1)L-A)\to\cO_Z(mL)\to\cO_B(mL)\to 0
$$
shows that 
$$
\chi(Z,(m+1)L-A)=\chi(Z,mL)-\chi(B,mL).
$$
We similarly find
$$
\chi(Z,(m+1)L)=\chi(Z,(m+1)L-A)+\chi(A,(m+1)L),
$$
and hence 
$$
\chi(Z,(m+1)L)-\chi(Z,mL)=\chi(A,(m+1)L)-\chi(B,mL).
$$
Since $A$ and $B$ are normal Cartier divisors on $X$, the adjunction
formulae $K_A=(K_Z+A)|_A$ and $K_B=(K_Z+B)|_B$ hold, as they
are equalities between Weil divisor classes on a normal variety that
hold outside a closed subset of codimension at least $2$. By the
induction hypothesis, we thus get
\begin{multline*}
  \chi(Z,(m+1)L)-\chi(Z,mL)\\
  =(L^{d-1}\cdot A)\left(\frac{m^{d-1}}{(d-1)!}
    +\frac{m^{d-2}}{(d-2)!}\right)-\left((K_Z+A)\cdot A\cdot
    L^{d-2}\right)\frac{m^{d-2}}{2(d-2)!}\\
  -(L^{d-1}\cdot B)\frac{m^{d-1}}{(d-1)!}
  +\left((K_Z+B)\cdot B\cdot L^{d-2}\right)\frac{m^{d-2}}{2(d-2)!}+O(m^{d-3})\\
  =(L^d)\frac{m^{d-1}}{(d-1)!}+\left[(L^d)-\tfrac 1 2(K_Z\cdot
    L^{d-1})\right]\frac{m^{d-2}}{(d-2)!}
  +O(m^{d-3})\\
  =P(m+1)-P(m)+O(m^{d-3}),
\end{multline*}
with 
$$
P(m):=(L^d)\frac{m^d}{d!}-(K_Z\cdot L^{d-1})\frac{m^{d-1}}{2(d-1)!}. 
$$
The result follows. 
\end{proof}
%
%
\section{The equivariant Riemann-Roch theorem for schemes}
%
%
We summarize the general equivariant Riemann-Roch theorem for schemes, which extends to the equivariant setting the results of~\cite[Chap. 18]{Ful}, and is due to Edidin-Graham~\cite{EG1,EG2}. We then use the case $G=\G_m$ to provide an alternative proof of Theorem~\ref{thm:equivRR}.

Let $G$ be a linear algebraic group, and $X$ be a scheme with a
$G$-action. The Grothendieck group $K^0_G(X)$ of virtual
$G$-linearized vector bundles forms a commutative ring with respect to
tensor products, and is functorial under pull-back by $G$-equivariant
morphisms. On the other hand, the Grothendieck group $K^G_0(X)$ of
virtual $G$-linearized coherent sheaves on $X$ is a $K^0_G(X)$-module
with respect to tensor products, and every proper $G$-equivariant
morphism $f\colon X\to Y$ induces a push-forward homomorphism
$f_!\colon K^G_0(X)\to K^G_0(Y)$ defined by
$$
f_![\cF]\=\sum_{q\in\N}(-1)^q[R^qf_*\cF].
$$
Note that $K^0_G(\Spec k)=K_0^G(\Spec k)$ can be identified with the representation ring $R(G)$, so that all the above abelian groups are in particular $R(G)$-modules. 

\smallskip

Equivariant Chow homology and cohomology groups are constructed in~\cite{EG1}, building on an idea of Totaro. The $G$-equivariant Chow cohomology ring
$$
\CH^\bullet_G(X)=\bigoplus_{d\in\N}\CH^d_G(X)
$$ 
can have $\CH^d_G(X)\ne 0$ for infinitely many $d\in\N$, and we set
$$
\hCH^\bullet_G(X)=\prod_{d\in\N}\CH^d_G(X).
$$

The $G$-equivariant first Chern class defines a morphism $c_1^G\colon\Pic^G(X)\to\CH^1_G(X)$, which is an isomorphism when $X$ is smooth~\cite[Corollary 1]{EG1}. In particular, we have natural isomorphisms
$$
\Hom(G,\G_m)\simeq\Pic^G(\Spec k)\simeq\CH^1_G(\Spec k). 
$$
The $G$-equivariant Chern character is a ring homomorphism
$$
\ch^G\colon K^0_G(X)\to\hCH^\bullet_G(X)_\Q, 
$$
functorial with respect to pull-back and such that 
$$
\ch^G(L)=e^{c_1^G(L)}=\left(\frac{c_1^G(L)^d}{d!}\right)_{d\in\N}
$$ 
for a $G$-linearized line bundle $L$. 

\smallskip

On the other hand, the $G$-equivariant Chow homology group 
$$
\CH^G_\bullet(X)=\bigoplus_{p\in\Z}\CH^G_p(X) 
$$
is a $\CH^\bullet_G(X)$-module, with $\CH^d_G(X)\cdot\CH_p^G(X)\subset\CH_{p-d}(X)$. While $\CH_p^G(X)=0$ for $p>\dim X$, it is in general non-zero for infinitely many (negative) $p$ in general, and we set again
$$
\hCH^G_\bullet(X)=\prod_{p\in\Z}\CH^G_p(X), 
$$
a $\hCH^\bullet_G(X)$-module. 

By definition, we have an isomorphism 
$$
\CH_{\dim X}^G(X)\simeq\CH_{\dim X}(X)=\bigoplus_i\Z[X_i]
$$
with $X_i$ the top-dimensional irreducible components of $X$. When $X$ is smooth and pure dimensional, the action of $\CH^d_G(X)$ on the equivariant fundamental class $[X]_G\in\CH^G_{\dim X}(X)$ defines a `Poincar\'e duality' isomorphism
$$
\CH^d_G(X)\simeq\CH_{\dim X-d}^G(X). 
$$  
Via the Chern character, both $K^G_0(X)$ and $\hCH^G_\bullet(X)_\Q$ become $K^0_G(X)$-modules, and the general Riemann-Roch theorem of~\cite[Theorem 3.1]{EG2} constructs a $K^0_G(X)$-module homomorphism
$$
\tau^G\colon K_0^G(X)\to\hCH_\bullet^G(X)_\Q=\prod_{p\le\dim X}\CH_p^G(X)_\Q, 
$$
functorial with respect to push-forward under proper equivariant morphisms, and normalized by $\tau(1)=1$ on $K^0_G(\Spec k)$, so that $\tau^G=\ch^G$ on $R(G)$. The \emph{equivariant Todd class} of $X$ is defined 
$\Td^G(X):=\tau^G(\cO_X)$, with top-dimensional part $\Td^G(X)_{\dim X}=[X]_G\in\CH_{\dim X}^G(X)_\Q$. 

When $X$ is proper, the \emph{equivariant Euler characteristic} of a $G$-linearized coherent sheaf $\cF$ on $X$ is defined as
$$
\chi^G(X,\cF):=\ch^G(\pi_![\cF])\in\hCH^\bullet_G(\Spec k)_\Q\simeq\prod_{d\in\N}\CH^G_{-d}(\Spec k)_\Q
$$
with $\pi\colon X\to\Spec k$ the structure morphism. Equivalently, it can be viewed  The functionality of $\tau^G$ with respect to push-forward by $\pi$ then yields the \emph{equivariant Riemann-Roch formula}, which reads
\begin{equation}\label{equ:RRequiv}
\chi^G(X,E)=\pi_*\left(\ch^G(E)\cdot\Td^G(X)\right)
\end{equation}
for every $G$-linearized vector bundle $E$ on $X$.

\begin{proof}[Alternative proof of Theorem~\ref{thm:equivRR}] Let $(X,L)$ be a polarized scheme with a $\G_m$-action. The argument consists in unraveling the above general results when $G=\G_m$. By~\cite[Lemma 2]{EG}, we have a canonical identification
$$
\CH_{-d}^{\G_m}(\Spec k)\simeq\Z 
$$ 
for all $d\in\N$, with respect to which the equivariant Chern character
$$
\ch^{\G_m}\colon R(\G_m)\to\prod_{d\in\N}\CH_{-d}^{\G_m}(\Spec k)_\Q\simeq\Q^\N
$$
sends a $\G_m$-module $V=\bigoplus_{\la\in\Z} V_\la$ to the sequence $\left(\sum_{\la\in\Z}\frac{\la^d}{d!}\dim V_\la\right)_{d\in\N}$. 

Since $H^q(X,mL)=0$ for $q>0$ and $m\gg 1$, the equivariant Euler characteristic of a $\G_m$-linearized coherent sheaf $\cF$ on $X$ is given by
\begin{equation}\label{equ:RR}
\chi^{\G_m}(X,\cF)=\left(\sum_{\la\in\Z}\frac{\la^d}{d!}\dim H^0(X,mL)_\la\right)_{d\in\N}, 
\end{equation}
and the equivariant Riemann-Roch formula (\ref{equ:RRequiv}) therefore shows that
$$
\sum_{\la\in\Z}\frac{\la^d}{d!}\dim H^0(X,mL)_\la=\left(\pi_*\left(e^{mc_1^{\G_m}(L)}\cdot\Td^{\G_m}(X)\right)\right)_{-d}
$$
in $\CH_{-d}^{\G_m}(\Spec k)_\Q\simeq\Q$, with $\pi\colon X\to\Spec k$ is the structure morphism. Since the top-dimensional part of $\Td^{\G_m}(X)$ is the equivariant fundamental cycle $[X]_{\G_m}\in\CH_n^{\G_m}(X)$, we get a polynomial expansion 

\begin{equation}\label{equ:polyequi}
\sum_{\la\in\Z}\frac{\la^d}{d!}\dim H^0(X,mL)_\la=\frac{m^{n+d}}{(n+d)!}\pi_*\left(c_1^{\G_m}(L)^{n+d}\cdot[X]_{\G_m}\right)+O(m^{n+d-1})
\end{equation}
with $\pi_*\left(c_1^{\G_m}(L)^{n+d}\cdot[X]_{\G_m}\right)\in\CH^{\G_m}_{-d}(\Spec k)\simeq\Z$.
\end{proof}

\begin{rmk}\label{rmk:coef} Comparing the two proofs of
  Theorem~\ref{thm:equivRR}, we see in particular that 
$$
\pi_*\left(c_1^{\G_m}(L)^{n+d}\cdot[X]_{\G_m}\right)=c_1(L_d)^{n+d}\cdot[X_d]. 
$$
This equality probably follows directly from the construction of equivariant cohomology, since~\cite[\S3.1]{EG1} implies that 
$$
\CH_i^{\G_m}(X)\simeq\CH_{i+d}(X_d)
$$
for $i\ge n-d$.
\end{rmk}

\end{document}